\newcommand*\mycirc[1]{%
  \begin{tikzpicture}
      \node[draw,circle,inner sep=.5pt] {#1};
   \end{tikzpicture}}
\mathchardef\mhyphen="2D
\def\AA{\mathbf A}
\def\Cone{\operatorname{Cone}}
\def\Mon{\mathbf{Mon}}
\def\RPC{\mathbf{RPC}}
\def\Spec{\operatorname{Spec}}
\def\ZZ{\mathbf Z}
\def\trop{\operatorname{trop}}
\def\et{\mathrm{\acute{e}t}}
\def\overnorm#1{\overline{#1}\vphantom{#1}}
\def\TroPic{\operatorname{Tro\,Pic}}
\def\bTroPic{\operatorname{\mathbf{Tro\,Pic}}}
\def\TroJac{\operatorname{Tro\,Jac}}
\def\LogPic{\operatorname{Log\,Pic}}
\def\bLogPic{\operatorname{\mathbf{Log\,Pic}}}
\def\Pic{\operatorname{Pic}}
\def\bPic{\operatorname{\mathbf{Pic}}}
\def\Gm{\mathbf{G}_m}
\def\BGm{\mathrm B\Gm}
\def\Ga{\mathbf{G}_a}
\def\BGa{\mathrm B\Ga}
\def\tropGm{\mathbf{G}_m^{\mathrm{trop}}}
\def\logGm{\mathbf{G}_{\log}}
\def\ologGm{\overnorm{\mathbf{G}}_{\log}}
\def\LogSch{\mathbf{LogSch}}
\def\ShpVal{\mathbf{Cone}^\circ}
\def\tropfont{\mathscr}
\def\NS{\operatorname{NS}}
\def\PL{\mathsf{PL}}
\def\L{\mathsf{L}}
\def\Hom{\operatorname{Hom}}
\def\Sym{\operatorname{Sym}}
\newcounter{margmark}
\newtheorem{maintheorem}{Theorem}
\newtheorem{maincorollary}[maintheorem]{Corollary}
\newtheorem{theorem}{Theorem}
\numberwithin{theorem}{subsection}
\newtheorem{lemma}[theorem]{Lemma}
\newtheorem{corollary}[theorem]{Corollary}
\newtheorem{proposition}[theorem]{Proposition}
\newtheorem{sublemma}[equation]{Lemma}
\theoremstyle{definition}
\newtheorem{definition}[theorem]{Definition}
\theoremstyle{remark}
\newtheorem{example}[theorem]{Example}
\newtheorem{remark}[theorem]{Remark}
\newtheorem{subremark}[equation]{Remark}
\begin{document}

\title{The logarithmic Picard group and its tropicalization}
\author{Samouil Molcho}
\email{samouil.molcho@math.ethz.ch}
\address{ETH Z\"urich, R\"amistrasse 101, CH-8092, Z\"urich, Switzerland}

\author{Jonathan Wise}
\email{jonathan.wise@colorado.edu}
\address{University of Colorado, Campus Box 395, Boulder, CO 80309-0395 USA}

\subjclass[2020]{14A21, 14H10, 14C20, 14C22, 14D20, 14D23, 14H40, 14K30, 14T10, 14T90}
\keywords{logarithmic geometry, tropical geometry, Picard group, Jacobian, algebraic curves}

\date{\today}
\maketitle

\begin{abstract}
We construct the logarithmic and tropical Picard groups of a family of logarithmic curves and realize the latter as the quotient of the former by the algebraic Jacobian.  We show that the logarithmic Jacobian is a proper family of logarithmic abelian varieties over the moduli space of Deligne--Mumford stable curves, but does not possess an underlying algebraic stack.  However, the logarithmic Picard group does have logarithmic modifications that are representable by logarithmic schemes, all of which are obtained by pullback from subdivisions of the tropical Picard group.
\end{abstract}

\setcounter{tocdepth}{1}
\tableofcontents

\numberwithin{theorem}{subsection}
\numberwithin{equation}{theorem}
\section{Introduction}

Our concern in this paper is the extension of the universal Picard group to the boundary of the Deligne--Mumford moduli space of stable curves.  Over the interior, the Picard group of a smooth, proper, connected curve is well-known to be an extension of the integers by a smooth, proper, connected, commutative group scheme, the Jacobian.  These properties do not persist over the boundary, and natural variants sacrifice one or another of them to obtain others.

The Deligne--Mumford compactification of the moduli space of curves admits curves with nodal singularities.  As long as the dual graph of the curve is a tree, the Picard group remains an extension of a discrete, free abelian group --- the group of multidegrees --- by an abelian variety, but it becomes nonseparated in families because the multidegrees do.  One can focus here on the component of multidegree zero, which is an abelian variety and is well-behaved in families.

Should a curve degenerate so that its dual graph contains nontrivial loops, the multidegree~$0$ component of the Picard group remains separated, but fails to be universally closed.  The construction of compactifications of this group is the subject of a vast literature~\cite{ishida1978compactifications,dsouza1979compactification,oda1979compactifications,altman1980compactifying1,altman1980compactifying2,kajiwara1993logarithmic,caporaso1994a,MR1308406,jarvis2000compactification,MR1828599,caporaso2008compactified,caporaso2008neron,melo2011compactified,chiodo2015n}, of which the above references are only a sample.  We must direct the reader to the references for a history of the subject.

While we do not attempt to summarize all of the different approaches to compactifying the Picard group, we emphasize that all operate in the category of schemes, and none produces a proper group scheme.  Indeed, it is not possible to produce a proper group scheme, for the multidegree~$0$ component of the Picard group of a maximally degenerate curve is a torus, and there is no way of completing a torus to a proper group \emph{scheme}.

On the other hand, K.~Kato observed that the multiplicative group does have compactifications --- with group structure --- in the category of \emph{logarithmic} schemes~\cite[Section~2.1]{Kato-LogDeg}.  This gives reason to hope that the Picard group might also find a natural compactification in the category of logarithmic schemes, as Kato himself anticipated.  Kato proposed a definition for, and then calculated, the logarithmic Picard group of the Tate curve~\cite[Section~2.2.4]{Kato-LogDeg}.  Illusie advanced the natural generalization of Kato's calculation as a definition for the Picard group of an arbitrary logarithmic scheme~\cite[Section~3.3]{illusie1994logarithmic}. In the analytic category, Kajiwara, Kato and Nakayama constructed the logarithmic Picard group using Hodge-theoretic methods~\cite{kajiwara_kato_nakayama_2008}.  Significantly, they discovered the need to restrict attention to a subfunctor of the one defined by Illusie in order to get the logarithmic Picard group, and logarithmic abelian varieties in general, to vary well geometrically over logarithmic base schemes.  In the present work, we work entirely in the algebraic category -- but the condition of Kajiwara, Kato and Nakayama, which appears here under the heading of \emph{bounded monodromy}, first introduced in Section~\ref{sec:monodromy}, will play an essential role throughout.

\subsection*{The provenance of logarithmic geometry}

This section is intended to motivate the presence of logarithmic geometry in the compactification of the Picard group.  Consider a family of logarithmic curves $X$ over a $1$-parameter base $S$ with generic point $\eta$ and a line bundle $L_\eta$ on the general fiber of $X$.  Let $i : s \to S$ denote the inclusion of the closed point and also write $i : X_s \to X$ for the inclusion of the closed fiber; write $j : \eta \to S$ and $j : X_\eta \to X$ for the inclusion of the generic point and the generic fiber.    

Let $\tropfont S$ denote the ringed space $(s, i^{-1} j_\ast \mathcal O_\eta)$ and let $\tropfont X$ denote the ringed space $(X_s, i^{-1} j_\ast \mathcal O_{X_\eta})$.  Then $\tropfont L = i^{-1} j_\ast L_\eta$ is a line bundle on $\tropfont X$.

We can describe $\tropfont L$ by giving local trivializations and transition functions in $\Gm$.  However, these cannot necessarily be restricted to $X_s$ because a unit of $i^{-1} j_\ast \mathcal O_{X_\eta}^\ast$ may have zeroes or poles along components of the special fiber.

If the dual graph of $X_s$ is a tree then it is possible to modify the local trivializations to ensure that the transition functions have no zeroes or poles, but in general such a modification may not exist.

The degeneration of transition functions suggests we might compactify the Picard group by allowing `line bundles' whose transition functions are sometimes allowed to vanish or have poles.  If transition functions are thus permitted not to take values in a group then the objects assembled from them will no longer have a group structure.  However, this leads naturally to the consideration of rank~$1$, torsion-free sheaves.

Logarithmic geometry takes a different approach to the same idea.  Instead of keeping track of only the zeroes and poles of the transition functions, we instead keep track of their orders of vanishing and leading coefficients.  Together, order of vanishing and leading coefficient have the structure of a group and therefore the objects glued with transition functions in this group can be organized into a group as well.

The way this is actually done is to take the image of a transition function $f \in i^{-1} j_\ast \mathcal O_{X_\eta}^\ast$, not in $\mathcal O_{X_s} \cup \{ \infty \}$, but instead in $M_{X_s}^{\rm gp}$:
\begin{equation*}
M_{X_s}^{\rm gp} = i^{-1} j_\ast \mathcal O_{X_\eta}^\ast / \ker (i^{-1} \mathcal O_X^\ast \to \mathcal O_{X_s}^\ast)
\end{equation*}
That is, we obtain a natural limit $M_{X_s}^{\rm gp}$-torsor $P$ for $L_\eta$, whose isomorphism class lies in $H^1(X_s, M_{X_s}^{\rm gp})$.

Taking transition functions in $M_{X_s}$ has an added benefit, even when the dual graph of the special fiber is a tree.  Indeed, if $L_\eta$ extends to $L$ with limit $L_s$, one can always produce another limit $L(D)_s$ by twisting $L_s$ by a component $D$ of the special fiber.  But the effect of twisting by $D$ on $\tropfont L$ is to modify the local trivializations of $\tropfont L$ by units of $\mathcal O_{X_\eta}$.  This changes the local trivializations of $P$ by elements of $M_{X_s}^{\rm gp}$, but that only affects a cocycle representative by a coboundary.  In other words, the class of $P$ in $H^1(X_s, M_{X_s}^{\rm gp})$ is independent of twisting by components of $X_s$.

\subsection*{Logarithmic line bundles} 

It is sensible to take $M_X^{\rm gp}$-torsors as a candidate for a compactification of $\mathcal{O}_X^*$ in general, even when the base $S$ of the family is an arbitrary logarithmic scheme. In this paper, we use this observation to define logarithmic line bundles on a family of logarithmic curves $X \rightarrow S$ as torsors under the logarithmic multiplicative group, as Kato and Illusie proposed, that satisfy the additional bounded monodromy condition. This definition produces a stack $\bLogPic(X/S)$ --- the logarithmic Picard stack --- with respect to the strict \'etale topology on $S$, and an associated sheaf $\LogPic(X/S)$ --- the logarithmic Picard group --- via rigidification.  In Section~\ref{sec:unint} we explain why the bounded monodromy condition is necessary if infinitesimal deformation of logarithmic line bundles is to have the expected relationship to formal families of logarithmic line bundles. Thus the bounded monodromy condition can be considered to be the first subtlety of the theory. The second subtlety is that even with this condition, the logarithmic Picard stack and the logarithmic Picard group are not representable by an algebraic stack or algebraic space with a logarithmic structure respectively. The reason is essentially that the logarithmic multiplicative group is \emph{itself} not representable (see Section~\ref{sec:logGm}).  Nevertheless, $\bLogPic(X/S)$ and $\LogPic(X/S)$ do have all the formal properties of an algebraic stack and an algebraic space, albeit only in the logarithmic category. Specifically, $\LogPic(X/S)$ has a \emph{logarithmically} \'etale cover by a logarithmic scheme, and we prove that it is a smooth algebraic group object in the category of logarithmic schemes, with proper components:

\begin{maintheorem} \label{thm:A}
Let $X$ be a proper, vertical logarithmic curve over $S$.  The logarithmic Picard group $\LogPic(X/S)$ has a logarithmically smooth cover by a logarithmic scheme, is logarithmically smooth with proper components, is a commutative group object, has finite diagonal, and contains $\Pic^{[0]}(X/S)$ as a subgroup.
\end{maintheorem}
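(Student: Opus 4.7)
The plan is to work with the short exact sequence
\[ 0 \to \Pic^{[0]}(X/S) \to \LogPic(X/S) \to \TroPic(X/S) \to 0, \]
realising $\LogPic$ as an extension of a tropical polyhedral object by the multidegree~$0$ Jacobian, as foreshadowed by the abstract. Each claim will then be reduced to a combination of the corresponding claim for $\Pic^{[0]}(X/S)$, which is already a smooth commutative algebraic group (semi-abelian over the boundary), and for $\TroPic(X/S)$, which the abstract promises to admit logarithmically étale covers by logarithmic schemes obtained from polyhedral subdivisions.

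For the logarithmically smooth cover, I would choose a subdivision $P \to \TroPic(X/S)$ representable by a logarithmic scheme and logarithmically étale, and form the fibre product $Q = \LogPic(X/S) \times_{\TroPic(X/S)} P$. This is a $\Pic^{[0]}(X/S)$-torsor over the logarithmic scheme $P$, hence itself representable by a logarithmic scheme smooth over $P$; the projection $Q \to \LogPic(X/S)$ then provides the required cover, and logarithmic smoothness of $\LogPic(X/S)$ follows at once. The commutative group structure and the inclusion of $\Pic^{[0]}(X/S)$ as a subgroup are built into the exact sequence.

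The main obstacle is properness, to be checked by the logarithmic valuative criterion. Given a logarithmic line bundle with bounded monodromy over the generic fibre of a logarithmic trait, one must produce a unique extension over the closed point after an admissible base change. Projecting along the exact sequence, one first extends the tropicalisation --- a piecewise-linear cocycle on the dual graph --- and then lifts the algebraic remainder. The tropical step is where the bounded monodromy hypothesis earns its keep: it confines the cocycle to a bounded region, so that after a suitable subdivision of the cone of the base the cocycle specialises uniquely. The algebraic lift then follows from the Néron mapping property for the semi-abelian $\Pic^{[0]}(X/S)$. I expect this extension argument, together with the role of the vertical hypothesis in ensuring that the tropical data lives in a cone of the expected form, to absorb most of the technical work.

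Finally, finite diagonal follows by examining the isotropy of a geometric point: the $\Pic^{[0]}$-contribution is trivial because $\Pic^{[0]}(X/S)$ is separated, and the $\TroPic$-contribution is at worst a finite subquotient of the character lattice of the characteristic monoid of the base. Combined with the properness argument this completes the list of assertions.
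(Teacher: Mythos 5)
Your proposal has two genuine gaps, both concentrated where you predicted the technical work would be. First, the properness argument does not decompose the way you describe. Given a point of $\LogPic(X/S)$ over the generic point of a logarithmic trait, you propose to extend its image in $\TroPic$ first and then lift the ``algebraic remainder'' using the N\'eron mapping property of $\Pic^{[0]}(X/S)$. But $\Pic^{[0]}(X/S)$ is only semi-abelian: at a degenerate fibre it has a torus part, a $K$-point of a torus need not extend to an $R$-point, and the N\'eron (lft) model is strictly larger than $\Pic^{[0]}$. Moreover, over the valuation ring the logarithmic structure is taken to be the \emph{maximal} extension of the generic one (Theorem~\ref{thm:val-prop}), so $\ologGm(\overnorm S)\to\ologGm(S)$ has kernel the value group and the tropical extension is very far from unique. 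The two steps are coupled: one must choose the tropical extension so that the algebraic lift exists, and uniqueness only holds for the pair. The paper avoids the decomposition entirely and proves the valuative criterion directly at the level of $M^{\rm gp}$-torsors on the curve, using that $M_{X_R}^{\rm gp}\to j_\ast M_{X_K}^{\rm gp}$ is an isomorphism and $R^1j_\ast\mathcal O_{X_K}^\ast=0$ (Theorem~\ref{thm:curve-val}, Corollary~\ref{cor:r1}, Theorem~\ref{thm:valcrit}); the non-properness of the torus is exactly absorbed by the extra room in the logarithmic structure. You also omit boundedness, which properness requires and which is a substantial separate argument (Theorem~\ref{thm:trop-bdd}, Corollary~\ref{cor:log-bdd}).

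Second, your smooth cover presupposes a subdivision $P\to\TroPic(X/S)$ representable by a logarithmic scheme, citing the abstract; but that statement (Theorem~\ref{thm:E}) is a \emph{consequence} of the structure theory, and the paper neither proves nor uses the existence of such a global subdivision to establish algebraicity. Instead the cover is assembled from the algebraic stacks $\bPic(Y/T)$ for \'etale-local logarithmic modifications $T\to S$ and models $Y$ of $X_T$ (Theorem~\ref{thm:algebraic}, Lemmas~\ref{lem:rep-log-et} and~\ref{lem:val-geom-surj}), with $\TroPic$ itself only pro-representable. Relatedly, your treatment of the finite diagonal understates the problem: the isotropy coming from the tropical direction is the full lattice $H_1(\frak X)$ mapping to $\Hom(H_1(\frak X),\logGm)^\dagger$, and finiteness of the diagonal amounts to the discreteness of this period lattice, which rests on the positive definiteness of the tropical intersection pairing (Corollary~\ref{cor:pairing-bdd}) together with Proposition~\ref{prop:ologGm-zero} and the valuative criterion -- not on a finite subquotient of a character lattice.
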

\begin{proof}
See Corollary~\ref{cor:sheaf-algebraic} for the existence of a logarithmically smooth cover, Theorem~\ref{thm:smooth} for the logarithmic smoothness, Corollary~\ref{cor:logpic-proper} for the properness, and Theorem~\ref{thm:diag} for the finiteness of the diagonal.  The group structure and inclusion of $\Pic^{[0]}(X/S)$ are immediate from the construction in Definition~\ref{def:logpic}.
\end{proof}

\begin{maincorollary} \label{cor:C}
The logarithmic Jacobian is a logarithmic abelian variety, in the sense of Kajiwara, Kato, and Nakayama~\cite{kajiwara_kato_nakayama_2008a,kajiwara_kato_nakayama_2008b}.
\end{maincorollary}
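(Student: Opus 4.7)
The plan is to verify, clause by clause, the axioms of Kajiwara--Kato--Nakayama for a logarithmic abelian variety, using Theorem~\ref{thm:A} as the primary input. The bulk of the structural demands---proper, logarithmically smooth, commutative group object with finite diagonal and a logarithmically smooth cover by a logarithmic scheme---are supplied directly by Theorem~\ref{thm:A} applied to the logarithmic Jacobian, i.e., the multidegree-zero part of $\LogPic(X/S)$.

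The content left to verify splits into two pieces. First is the local extension structure: \'etale-locally on $S$, the logarithmic Jacobian must fit into an exact sequence expressing it as an extension of a logarithmic torus by the classical Jacobian $\Pic^{[0]}(X/S)$, with cocharacter lattice the first homology of the dual graph of $X$ and extension class prescribed by the monodromy pairing. This is exactly the structure built into $\LogPic(X/S)$ by Definition~\ref{def:logpic}, so confirming it amounts to unwinding the construction and matching it with the KKN local model. Second is the bounded monodromy condition, which has been imposed throughout our definition of logarithmic line bundles (see Section~\ref{sec:monodromy}); this is precisely the hypothesis KKN isolate as necessary for log abelian varieties to vary well in families, so it is a built-in feature of our setup rather than something to be established.

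The principal obstacle is dictionary translation between the two formalisms: Kajiwara, Kato, and Nakayama work with a different presentation of the same geometric objects, and one must carefully identify their notion of logarithmic torus, their structure sheaves, and their prescribed local models with the corresponding objects arising from our construction. Once this translation is in place, each axiom on the KKN list either reduces to a clause of Theorem~\ref{thm:A} or is built directly into the definition of $\LogPic(X/S)$, and the corollary follows.
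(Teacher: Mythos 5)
Your overall strategy is the paper's: Corollary~\ref{cor:C} is proved by checking the axioms of Definition~\ref{definition:logabelianvariety} one at a time, with the finite diagonal coming from Theorem~\ref{thm:diag}, the fiberwise constant-degeneration structure from the quotient presentation of Corollary~\ref{cor:quotient}, and the local extension from the exact sequence of Theorem~\ref{thm:trop-ex-seq}. So the skeleton is right. But you dismiss as ``built-in'' the one step that actually carries mathematical content. The KKN axioms do not refer to bounded monodromy at all: they single out the subsheaf $\overnorm T^{\log}_{(K)} \subset \overnorm T^{\log}$ of those $\phi$ such that for every $x \in H$ there exist $y, y' \in K$ with $\langle x, y\rangle \le \phi(x) \le \langle x, y'\rangle$. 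To match axiom (2) with the sequence $0 \to \Pic^{[0]}(X/S) \to \LogPic^0(X/S) \to \TroJac(X/S) \to 0$, one must prove that this subsheaf coincides with the bounded monodromy subsheaf $(\overnorm T^{\log})^\dagger$ when $H = K = H_1(\frak X)$ and the pairing is the intersection pairing. This is Lemma~\ref{lemma:boundedmonodromyisYpart} in the paper, and its nontrivial direction is not a tautology: given the KKN sandwich $\langle x,y\rangle \le \phi(x) \le \langle x,y'\rangle$, one must bound $\langle x,y\rangle$ by an integer multiple of $\langle x,x\rangle$, which uses the explicit form of the intersection pairing (take $n$ to be the largest coefficient of $y$ in the edge basis, and use $|x_e| \le x_e^2$ for integers $x_e$). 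Saying the bounded monodromy condition ``is precisely the hypothesis KKN isolate'' asserts the conclusion of this lemma without proving it.

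You also omit axiom (3), the polarization condition: the existence of $\phi : \overnorm K_s \to \overnorm H_s$ with finite cokernel satisfying $\langle \phi(y), z\rangle = \langle y, \phi(z)\rangle$ and $\langle \phi(y), y\rangle \in \overnorm M_{S,s}$. This does not follow from Theorem~\ref{thm:A} nor from Definition~\ref{def:logpic}; it requires identifying both lattices with $H_1(\frak X)$, taking $\phi = \mathrm{id}$, and observing that the self-intersection $\langle y, y\rangle$ is a nonnegative combination of edge lengths, hence lies in $\overnorm M_{S,s}$ (Definition~\ref{def:intersection}). This is short, but it is a separate verification and should appear in any complete proof.
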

\begin{proof}
See Theorem~\ref{thm:log-ab-var}.
\end{proof}

Our results for the logarithmic Picard \emph{stack}, which remembers automorphisms, are similar, but a bit more technical:

\begin{maintheorem} \label{thm:D}
Let $X$ be a proper, vertical logarithmic curve over $S$.  The logarithmic Picard stack $\bLogPic(X/S)$ has a logarithmically smooth cover by a logarithmic scheme and its diagonal is representable by logarithmic spaces (sheaves with logarithmically smooth covers by logarithmic schemes).  The logarithmic Picard stack is logarithmically smooth and proper, is a commutative group stack, and receives a canonical homomorphism from the algebraic stack $\bPic^{[0]}(X/S)$.
\end{maintheorem}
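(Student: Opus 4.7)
The plan is to reduce Theorem~\ref{thm:D} to Theorem~\ref{thm:A} by realising the sheafification morphism $\bLogPic(X/S) \to \LogPic(X/S)$ as a gerbe banded by a logarithmically smooth, proper, commutative group. In the algebraic setting the analogous map $\bPic^{[0]}(X/S) \to \Pic^{[0]}(X/S)$ is a $\Gm$-gerbe, because the automorphisms of a line bundle on a proper, geometrically connected fibre are precisely the global units of the structure sheaf. The same picture should hold here: relative to $S$, the automorphisms of a logarithmic line bundle with bounded monodromy on a proper, vertical logarithmic curve reduce to $\Gm$, and every such bundle is locally trivial in an appropriate logarithmic topology, so $\bLogPic(X/S) \to \LogPic(X/S)$ ought to be a $\Gm$-gerbe.

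Given this gerbe structure, each assertion of Theorem~\ref{thm:D} follows from the corresponding assertion of Theorem~\ref{thm:A}. For the cover, pull back the logarithmically smooth cover $U \to \LogPic(X/S)$ provided by Theorem~\ref{thm:A} along the gerbe map; the resulting $\Gm$-gerbe over $U$ is locally trivial and therefore admits a logarithmically smooth cover by a logarithmic scheme, whose composition with $U \to \LogPic(X/S)$ covers $\bLogPic(X/S)$. For the diagonal, the inertia of the gerbe is the pullback of the band, so the diagonal of $\bLogPic(X/S)$ sits in a short exact sequence with the diagonal of $\LogPic(X/S)$ and $\Gm$; both are representable by logarithmic spaces, and hence so is the full diagonal. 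Logarithmic smoothness and properness pass from $\LogPic(X/S)$ to $\bLogPic(X/S)$ through the gerbe, since $\BGm$ is itself logarithmically smooth and proper.

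The commutative group stack structure arises, as in the algebraic case, from the tensor product of logarithmic line bundles, and the canonical homomorphism from $\bPic^{[0]}(X/S)$ is the functor sending an algebraic line bundle to its associated logarithmic line bundle, which automatically has trivial (hence bounded) monodromy. The principal obstacle will be the first step: confirming that $\bLogPic(X/S) \to \LogPic(X/S)$ really is a $\Gm$-gerbe in a usable topology. This amounts to identifying the correct site in which logarithmic line bundles become locally trivial and then checking that the automorphism band is representable and agrees with $\Gm$; once those identifications are in place, the remainder of the argument is essentially a structural transfer of properties along the gerbe.
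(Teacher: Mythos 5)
Your overall strategy --- transfer every property of $\LogPic(X/S)$ to $\bLogPic(X/S)$ along the gerbe $\bLogPic(X/S)\to\LogPic(X/S)$ --- is close to how the paper itself moves between the two objects, but your identification of the band is wrong, and the error is not cosmetic. A logarithmic line bundle is a $\logGm$-torsor on $X_T$, so its automorphism group is $\Gamma(X_T, M_{X_T}^{\rm gp})$, which by Lemma~\ref{lem:global-sections} equals $\Gamma(T, M_T^{\rm gp})=\logGm(T)$, \emph{not} $\Gamma(T,\mathcal O_T^\ast)$. The correct analogue of ``global units of the structure sheaf'' is ``global sections of $M^{\rm gp}$'', and these do not reduce to $\Gm$: the quotient $\overnorm M_T^{\rm gp}$ survives. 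So $\bLogPic(X/S)\to\LogPic(X/S)$ is a gerbe banded by $\logGm$ (as stated in Corollary~\ref{cor:lfp} and used in Corollary~\ref{cor:logpic-proper}), not by $\Gm$. This matters most for properness: a $\Gm$-gerbe over a proper base is not proper, because $\Gm$ does not satisfy the valuative criterion and the inertia obstructs the uniqueness half of the lifting property. The paper's proof of Theorem~\ref{thm:valcrit} relies precisely on $M_{X_R}^{\rm gp}\to j_\ast M_{X_K}^{\rm gp}$ being an isomorphism, i.e.\ on the band being the \emph{log-proper} group $\logGm$ (which is covered by $\mathbf P^1$ and satisfies the valuative criterion by construction of the maximal extension). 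With the band corrected, your transfers of the cover, the diagonal, smoothness, and properness all go through essentially as you describe, since $\logGm$ and $\mathrm B\logGm$ are logarithmic spaces and log smooth.

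A secondary structural point: in the paper the logarithmically smooth cover is built directly on $\bLogPic(X/S)$ (Theorem~\ref{thm:algebraic}, using the stacks $\bPic(Y/T)$ for logarithmic models $Y$ of $X$ over logarithmically \'etale $T\to S$), and the cover of $\LogPic(X/S)$ in Theorem~\ref{thm:A} is \emph{deduced} from it in Corollary~\ref{cor:sheaf-algebraic} by pulling back along a local section. Quoting Theorem~\ref{thm:A} to produce the cover of $\bLogPic(X/S)$ therefore inverts the paper's logical order and would be circular unless you supply an independent construction of the cover of $\LogPic(X/S)$; as a reduction of one theorem to the other your direction is fine, but it cannot stand on its own.
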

\begin{proof}
See Theorem~\ref{thm:algebraic} for the existence of a logarithmically smooth cover and Corollary~\ref{cor:diag} for the claim about the diagonal.  The logarithmic smoothness is proved in Theorem~\ref{thm:smooth} and the properness is Corollary~\ref{cor:logpic-proper}.  The group structure and the map from $\Pic^{[0]}(X/S)$ are come directly from Definition~\ref{def:logpic}.  
\end{proof}

The difference between Theorems~\ref{thm:A} and~\ref{thm:D} and Olsson's result~\cite[Theorem~4.4]{olsson2004semistable} is that Olsson works with a fixed logarithmic structure on the base while we allow the logarithmic structure to vary.  This is necessary for the logarithmic Picard group to be proper.  Our method of proof also differs from Olsson's: we do not rely on the Artin--Schlessinger representability criteria (for which there is not yet an analogue in logarithmic geometry) and instead construct logarithmically smooth covers directly.

\subsection*{Connection with tropical geometry} 

Our analysis of the logarithmic Picard group, and our construction of the covers invoked in Theorems \ref{thm:A} and \ref{thm:D}, are direct:  we do not rely on general representability criteria, nor the theory of log 1-motives. Instead, our main tool is the intimate connection between algebraic, logarithmic and tropical geometry. This connection with tropical geometry is in our view a significant advantage, and perhaps the central point of this paper. From a strictly algebraic perspective $\bLogPic$ and $\LogPic$ may be mysterious objects, lying outside of the province of algebraic geometry.  However, the logarithmic perspective affords them a modular description and a \emph{tropicalization} --- which has a modular description of its own --- that precisely controls and explains our transgression beyond the boundaries of algebraic geometry.

We are not the first to oberve a connection between the logarithmic Picard group and tropical geometry: indeed, Foster, Ranganathan, Talpo, and Ulirsch observed that the geometry of the logarithmic Picard group is intimately tied up with the geometry of the tropical Picard group~\cite{foster2016logarithmic}, and the connection can also be seen in Kajiwara's work~\cite{kajiwara1993logarithmic}, albeit without explicit mention of tropical geometry. Our main contribution here is perhaps to extend connection to a family of logarithmic curves over an arbitrary base.

A tropical curve is simply a metric graph.  Baker and Norine introduced the tropical Jacobian as a quotient of tropical divisors by linear equivalence~\cite{baker_norine_2007}.  At first, the tropical Jacobian of a fixed graph (not yet metrized) was only a finite set, but subdivision of the graph suggests the presence of a finer geometric structure.  This was explained by Gathmann and Kerber~\cite{gathmann_kerber_2008}, who extended Baker and Norine's results to metric graphs, and Amini and Caporaso added a vertex weighting~\cite{AMINI20131}.  Mikhalkin and Zharkov defined tropical line bundles as torsors under a suitably defined sheaf of linear functions on a tropical curve~\cite[Definition~4.5]{mikhalkin2008tropical}.  They gave a separate definition of the tropical Jacobian as a quotient of a vector space by a lattice~\cite[Section~6.1]{mikhalkin2008tropical}, and proved an analogue of the Abel--Jacobi theorem, showing that the tropical Jacobian parameterizes tropical line bundles of degree~$0$.  We will recover this result in Corollary~\ref{cor:tropic}.

In order to relate the tropical Picard group and tropical Jacobian to their logarithmic analogues, we require a formalism by which tropical data may vary over a logarithmic base scheme.  This formalism is supplied by Cavalieri, Chan, Ulirsch, and the second author~\cite[Section~5]{cavalieri2017moduli}, who allow an arbitrary partially ordered abelian group to stand in for the real numbers in the definition of a tropical curve as a metric graph.  Logarithmic schemes come equipped with sheaves of partially ordered abelian groups and one can therefore speak of tropical curves over logarithmic base schemes.  We summarize these ideas in Sections~\ref{sec:trop-mod}--\ref{sec:trop-curves}. In a nutshell, given a logarithmic curve $X \rightarrow S$, we obtain a family of tropical curves $\tropfont X$ over $S$, whose fiber over a geometric point $s \in S$ is the dual graph of $X_s$, metrized by the characteristic monoid $\overnorm{M}_{S,s}^{\rm gp}$ of the logarithmic structure at $s$. We call the family $\tropfont X$ the tropicalization of $X/S$. The family $\tropfont X$, although an object over an arbitrary logarithmic scheme $S$, is essentially a combinatorial object: the logarithmic scheme $S$ has a stratification on which the characteristic monoid $\overnorm{M}_S$ is constant, and the fibers of $\tropfont X$ are constant on each stratum. The tropicalization $\tropfont X$ can thus be thought of as a combinatorial shadow of $X/S$, which remembers the combinatorics of the irreducible components of each fiber, and how the nodes of each fiber deform as one moves along strata of $S$. 

However, the connection of $X/S$ with $\tropfont X$ only comes to life after we begin doing some geometry on $\tropfont X$. Indeed, two of the most crucial constructions of the paper occur in Section~\ref{sec:tropicjac}, where we define a topology, and sheaves $\PL$ and $\L$ of piecewise linear and linear functions respectively on $\tropfont X$, over an arbitrary base $S$. This allows us, so to speak, to do some honest tropical \emph{geometry} on $\tropfont X$. 

Our sheaf $\PL$ is different from the sheaves of piecewise linear functions that are usually encountered in tropical geometry in that our piecewise linear functions are allowed to take values in a group $\overnorm{M}^{\rm gp}$ of arbitrary finite rank instead of the integers or real numbers. The groups $\overnorm{M}^{\rm gp}$ that appear vary over points of $S$, but are essentially the groups of sections of $\overnorm{M}_S^{\rm gp}$ over appropriately small neighborhoods around each point of $S$. This is the formalism that allows us to capture the fact that $X$ varies over a logarithmic base scheme $S$ instead of being the total space of a $1$-parameter degeneration. The sheaf of linear functions is built from $\PL$ by imposing the analogue of the \emph{balancing condition} that is ubiquitous in tropical geometry. 

We use the sheaf $\L$ to define the tropical Picard group $\TroPic(\tropfont X/S)$, and Picard stack $\bTroPic(\tropfont X/S)$ over logarithmic bases: We define them as the sheaf or stack of \emph{tropical line bundles}, which are the bounded monodromy torsors under $\L$. The sheaf $\TroPic(\tropfont X/S)$ and stack $\bTroPic(\tropfont X/S)$ are combinatorial objects, which in practice are simple to compute. For example, one still has a formula for the tropical Jacobian analogous to the formula of \cite{mikhalkin2008tropical}, which, over a point $s \in S$ takes the form 
\numberwithin{equation}{section}
\begin{equation} \label{eqn:130}
\TroJac(\tropfont X_s/s) = \Hom \bigl(H_1(\tropfont X_s, \overnorm{M}_{S,s}^\textup{gp})\bigr)^{\dagger}/H_1(\tropfont X_s)
\end{equation}
Here the $\dagger$ symbol indicates the bounded monodromy condition, which has a simple description in terms of the above formula: every loop $\gamma$ in the homology of the tropical curve $\tropfont X_s$ has a length $\ell(\gamma)$ valued in the monoid $\overnorm{M}_{S,s}$, and a homomorphism in $\Hom (H_1(\tropfont X_s, \overnorm{M}_{S,s}^\textup{gp}))$ has bounded monodromy if it sends every loop to an element of $\overnorm{M}_{S,s}^{\rm gp}$ that is bounded by some multiple of the length of the loop. This has the effect that if one generizes from a point $s$ to a point $t$, smoothing some of the nodes of the curve $X_s$, and therefore contracting some edges in $\tropfont X_s$, the homomorphism descends to be well defined on the homology of $\tropfont X_t$.  We refer the reader to Section~\ref{sec:tropicjac} for a thorough explanation of this phenomenon and the rest of the terms appearing in the definition of $\TroJac(\tropfont X/S)$.  We note also that when we are working with a one parameter degeneration of a curve, the group $\overnorm{M}_{S,s}^{\rm gp}$ reduces to $\mathbf{R}$, the bounded monodromy condition is automatic, and our formula recovers the \cite{mikhalkin2008tropical} formula. 

The connection with the theory of tropical divisors is also simple: the sheaves of linear and piecewise linear functions fit into an exact sequence 
\begin{equation} \label{eqn:129}
0 \to \L \to \PL \to \mathsf V \to 0
\end{equation}
with $\mathsf V$ the sheaf of tropical divisors. Thus, each tropical divisor $D$ determines a tropical line bundle $\L(D)$, which is the $\L$-torsor describing the obstruction of lifting $D$ to a piecewise linear function whose bend locus is $D$. It is shown in Section \ref{sec:monodromy} that, if the base $S$ is a valuation ring of arbitrary rank,  $\TroJac(\tropfont X/S)$ is precisely the group of tropical divisors on all semistable models (that is, subdivisions) of $\tropfont X$, up to piecewise linear functions. This gives another interpretation of the bounded monodromy condition in the valuative case, as those torsors that can be represented by a divisor on a semistable model; but for general $S$, the group $\TroJac(\tropfont X/S)$ maybe larger, with additional torsors that correspond to divisors on semistable models of $\tropfont X$ over logarithmic modifications of $S$ as well.

The presentation~\eqref{eqn:130}, and the exact sequence~\eqref{eqn:129}, describe two different means of producing tropical line bundles: from local systems and from tropical divisors.  The relationship between these is encoded in diagram~\eqref{eqn:34}.  The referee pointed out to us that this gives a compelling third method of producing tropical line bundles, from a labelling of the \emph{edges} of the tropical curve by integers (see Section~\ref{sec:degree}).

The connection of the tropical picture with logarithmic geometry is obtained through the process of \emph{tropicalization}. We do not attempt to explain this in the introduction, but we mention that the germ of the idea is elementary, utilizing the following formula, which is valid for every geometric point $s \in S$: 
\begin{align*}
H^i(X_s,\overnorm{M}_{X,s}) = H^i(\tropfont X_s,\PL_{\tropfont X_s})
\end{align*}
Thus, in a sense, $\tropfont X$ together with $\PL$ capture all information of $X/S$ that is reflected in $\overnorm{M}_X$.  

The connection between $M_X^{\rm gp}$ and the sheaf of linear functions, $\L$, and the connection between logarithmic line bundles and tropical line bundles, are more subtle.  In Section~\ref{sec:trop-pic}, we observe that the sheaf $\mathsf V$ is the tropicalization of the N\'eron--Severi group:  while the N\'eron--Severi group is only a presheaf on $X$, it descends to a sheaf on the tropicalization $\tropfont X$.  We obtain a tropicalization map $\bLogPic(X/S) \to \bTroPic(\tropfont X/S)$ from a morphism of complexes of presheaves $M_X^{\rm gp} \to [ \overnorm M_X^{\rm gp} \to \NS ]$ that is derived from the fundamental exact sequence of logarithmic geometry:
\begin{equation} \label{eqn:126}
0 \rightarrow \mathcal{O}_{X}^* \rightarrow M_{X}^{\rm gp} \rightarrow \overnorm{M}_{X}^{\rm gp} \rightarrow 0
\end{equation}

\begin{maintheorem} \label{thm:B}
Let $X$ be a proper, vertical logarithmic curve over $S$ and let $\tropfont X$ be its tropicalization.  There is an exact sequence:
\begin{equation*}
0 \to \Pic^{[0]}(X/S) \to \LogPic(X/S) \to \TroPic(\tropfont X/S) \to 0
\end{equation*}
There is also an exact sequence of group stacks:
\begin{equation*}
0 \to \bPic^{[0]}(X/S) \to \bLogPic(X/S) \to \bTroPic(\tropfont X/S) \to 0
\end{equation*}
\end{maintheorem}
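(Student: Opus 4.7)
The natural starting point is the canonical short exact sequence of sheaves on $X$:
\[
1 \to \mathcal{O}_X^\ast \to M_X^{\mathrm{gp}} \to \overline{M}_X^{\mathrm{gp}} \to 1.
\]
The plan is to push this sequence along $\pi: X \to S$ and extract the desired sequence from the resulting long exact sequence of low-degree cohomology,
\[
\pi_\ast \mathcal{O}_X^\ast \to \pi_\ast M_X^{\mathrm{gp}} \to \pi_\ast \overline{M}_X^{\mathrm{gp}} \xrightarrow{\delta} R^1\pi_\ast \mathcal{O}_X^\ast \to R^1\pi_\ast M_X^{\mathrm{gp}} \to R^1\pi_\ast \overline{M}_X^{\mathrm{gp}},
\]
after restricting the middle $R^1$-terms to the classes satisfying the bounded monodromy condition.

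The main steps are as follows. First, by the usual torsor/line-bundle dictionary, identify $R^1\pi_\ast \mathcal{O}_X^\ast$ with $\Pic(X/S)$. Second, by Definition~\ref{def:logpic}, identify the bounded-monodromy part of $R^1\pi_\ast M_X^{\mathrm{gp}}$ with $\LogPic(X/S)$, and verify that its image in $R^1\pi_\ast \overline{M}_X^{\mathrm{gp}}$ is precisely $\TroPic(\frak X/S)$; this requires both that the map $M^{\mathrm{gp}}\to\overline M^{\mathrm{gp}}$ preserves bounded monodromy (essentially formal, since monodromy is measured downstairs in $\overline{M}^{\mathrm{gp}}$) and that every tropical line bundle with bounded monodromy admits a logarithmic lift \'etale-locally on $S$. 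Third, analyze the boundary map $\delta$: a global section of $\overline{M}_X^{\mathrm{gp}}$ is a piecewise linear function on the tropicalization $\frak X$, and $\delta$ sends it to the $\mathcal{O}^\ast$-torsor of its local liftings to $M^{\mathrm{gp}}$, which is the line bundle of the principal tropical divisor it cuts out. The image of $\delta$ is therefore precisely the subgroup generated by component boundary divisors, which is the kernel of the quotient $\Pic(X/S) \twoheadrightarrow \Pic^{[0]}(X/S)$. Fourth, assemble these identifications into the claimed short exact sequence.

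The stacky version proceeds along identical lines: the short exact sequence of sheaves induces a fiber sequence of classifying stacks $\mathrm{B}\mathcal{O}_X^\ast \to \mathrm{B}M_X^{\mathrm{gp}} \to \mathrm{B}\overline{M}_X^{\mathrm{gp}}$, and pushforward to $S$ followed by restriction to the bounded-monodromy substacks yields the corresponding fiber sequence of group stacks, from which the second exact sequence is read off. The passage from sheaves of isomorphism classes to stacks of torsors automatically upgrades the set-level surjectivity to exactness in the $2$-categorical sense, while the kernel $\bPic^{[0]}(X/S)$ is recovered as the stack of $\mathcal{O}^\ast$-torsors of multidegree $[0]$.

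The main obstacle I anticipate is the second step, namely the surjectivity of $\LogPic(X/S) \to \TroPic(\frak X/S)$. Since $M_X^{\mathrm{gp}} \to \overline{M}_X^{\mathrm{gp}}$ is a local epimorphism, every tropical line bundle lifts \'etale-locally to a logarithmic one, and the obstruction to a global lift lies in an $H^2$-type term. The content here is that this obstruction vanishes precisely on the bounded-monodromy locus, reflecting the idea, visible already in Kato's treatment of the Tate curve, that $\LogPic$ is the correct refinement of the na\"ive $R^1\pi_\ast M^{\mathrm{gp}}$ once one restricts to well-behaved torsors. The remaining technical work is to match the image of $\delta$ precisely with the kernel of $\Pic(X/S) \to \Pic^{[0]}(X/S)$, which amounts to a concrete calculation relating piecewise linear functions on $\frak X$ to divisors supported on components of the geometric fibers.
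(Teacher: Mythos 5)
There is a genuine gap here, and it is structural rather than technical. Pushing forward $1 \to \mathcal O_X^\ast \to M_X^{\rm gp} \to \overnorm M_X^{\rm gp} \to 1$ does produce an exact sequence, but not the one in Theorem~\ref{thm:B}. The term $R^1\pi_\ast \overnorm M_X^{\rm gp}$ is, fiberwise, $H^1(\frak X, \frak P)$ --- cohomology with coefficients in the sheaf of \emph{piecewise} linear functions --- whereas $\TroPic(\frak X/S)$ lives inside $H^1(\frak X, \frak L)$, with $\frak L$ the sheaf of \emph{balanced} linear functions. By the middle column of diagram~\eqref{eqn:34}, $H^1(\frak X,\frak P)$ is the quotient of $H^1(\frak X,\frak L)$ by the image of $H^0(\frak X,\mathscr V)$, i.e.\ by classes of tropical divisors supported at vertices; in particular it kills the degree and is a proper quotient of $\TroJac(\frak X/S)$ already for the genus-$2$ graph of Section~\ref{sec:ex-gen2}. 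Correspondingly, the connecting map $\delta$ has image the subgroup of $\Pic(X/S)$ generated by twists by components of fibers, and $\Pic(X/S)/\operatorname{im}\delta$ is not $\Pic^{[0]}(X/S)$: the latter is the multidegree-zero \emph{subgroup} of $\Pic(X/S)$, not a quotient of it, and the component twists do not exhaust the degree-zero multidegrees (their cokernel is the component group of the N\'eron model). What your pushforward yields is the globalization of the \emph{second} sequence of Corollary~\ref{cor:tropic}; Theorem~\ref{thm:B} globalizes the first. The multidegree data must be removed from the kernel and retained in the quotient, which your construction does not accomplish.

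A second, related problem is that the tropicalization map $\LogPic(X/S)\to\TroPic(\frak X/S)$ cannot be obtained by applying $M_X^{\rm gp}\to\overnorm M_X^{\rm gp}$: that lands in $H^1(\frak X,\frak P)$, and the natural comparison map runs $H^1(\frak X,\frak L)\to H^1(\frak X,\frak P)$, not the other way. The paper instead defines the map on honest line bundles via the multidegree, $\pi_\ast\mathrm B\Gm\to\rho_\ast\mathscr V\to\bTroPic(\frak X/S)$ as in diagram~\eqref{eqn:60}, and extends it to all of $\bLogPic(X/S)$ by logarithmic \'etale descent from models $Y/T$ on which the logarithmic line bundle is representable by a line bundle (Lemmas~\ref{lem:rep-log-et} and~\ref{lem:val-geom-surj}, Corollary~\ref{cor:log-etale-desc}). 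Exactness is then reduced to the degree-zero statement of Theorem~\ref{thm:trop-ex-seq}, which does use a pushforward argument of the kind you propose --- but for the sequence $0\to\mathcal O_X^\ast\to\pi^\ast M_S^{\rm gp}\to\pi^\ast\overnorm M_S^{\rm gp}\to 0$ with coefficients pulled back from the base, combined with the quotient presentation by $H_1(\frak X)$ of Corollary~\ref{cor:quotient}. If you want to salvage your strategy, that is the short exact sequence to push forward.
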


Here the symbol $[0]$ denotes the multi-degree 0 part of $\Pic$. This is an instance where the connection between algebraic, logarithmic, and tropical geometry becomes exceptionally transparent. The tropicalization morphism $\LogPic(X/S) \to \TroPic(\tropfont X/S)$ allows us to to understand $\LogPic(X/S)$ in terms of a simple combinatorial object and a classical algebro-geometric object, the semi-abelian scheme $\Pic^{[0]}(X/S)$. The tropicalization morphism also allows us to identify the combinatorial data associated with $\TroPic(\tropfont X/S)$ necessary to construct proper, schematic compactifications of the Picard group, following Kajiwara, Kato, and Nakayama~\cite{kajiwara1993logarithmic,kajiwara_kato_nakayama_2015}, in Section~\ref{sec:prop-mod}.  

\begin{maintheorem} \label{thm:E}
	Let $X$ be a proper, vertical logarithmic curve over $S$ with tropicalization $\tropfont X$.  Polyhedral subdivisions of $\TroJac(\tropfont X/S)$ correspond to toroidal compactifications of $\Pic^{[0]}(X/S)$.
\end{maintheorem}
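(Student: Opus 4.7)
The plan is to use the exact sequence
\begin{equation*}
0 \to \Pic^{[0]}(X/S) \to \LogPic(X/S) \to \TroPic(\frak X/S) \to 0
\end{equation*}
from Theorem~\ref{thm:B} to reduce the question to a calculation on the tropical side. Because $\LogPic(X/S)$ is a torsor under the algebraic group $\Pic^{[0]}(X/S)$ over the tropical quotient $\TroPic(\frak X/S)$, any polyhedral subdivision of the base can be transported upstairs to a logarithmic modification of $\LogPic(X/S)$, and the representability of $\Pic^{[0]}(X/S)$ will promote this modification to a genuine scheme, giving the desired toroidal compactification.

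In the first direction, given a polyhedral subdivision $\Sigma$ of $\TroPic(\frak X/S)$, the toric dictionary (applied \'etale-locally on $S$, where $\TroPic(\frak X/S)$ is modelled on a cone complex) produces a logarithmic modification $\TroPic(\frak X/S)_\Sigma \to \TroPic(\frak X/S)$ that is representable by a logarithmic scheme. I would pull this modification back along the tropicalization morphism to obtain $\LogPic(X/S)_\Sigma \to \LogPic(X/S)$. Since the pullback is a $\Pic^{[0]}(X/S)$-torsor over a representable base, its total space is itself representable by a logarithmic scheme; combined with Corollary~\ref{cor:logpic-proper}, it is proper over $S$. Its underlying algebraic space then contains $\Pic^{[0]}(X/S)$ as a dense open with toroidal boundary, producing the compactification.

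In the converse direction, a toroidal compactification $P$ of $\Pic^{[0]}(X/S)$, regarded as a logarithmic scheme via its boundary divisor, admits a canonical morphism to $\LogPic(X/S)$ extending the inclusion on the open part, because its generic logarithmic line bundle extends across the boundary to a torsor under $M_X^{\mathrm{gp}}$ with bounded monodromy. Composing with the tropicalization morphism yields a map $P \to \TroPic(\frak X/S)$ whose image on the boundary stratification assembles into a polyhedral subdivision $\Sigma$. Verifying that the two assignments $\Sigma \mapsto \LogPic(X/S)_\Sigma$ and $P \mapsto \Sigma(P)$ are mutually inverse reduces to a local toric computation on each face of $\TroPic(\frak X/S)$.

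The main obstacle I expect is verifying that the pullback of a representable modification from $\TroPic(\frak X/S)$ along the non-representable morphism $\LogPic(X/S) \to \TroPic(\frak X/S)$ is itself representable, and indeed is representable by a \emph{scheme} rather than only by a logarithmic space or algebraic stack. This is delicate precisely because $\LogPic(X/S)$ is not a logarithmic scheme (the logarithmic multiplicative group is not), and the representability of the pullback depends essentially on the \emph{polyhedral} nature of $\Sigma$: the rationality of the cones is what allows the torsor to trivialize against the cover of $\LogPic(X/S)$ produced by Theorem~\ref{thm:A}. A subordinate technical issue is globality: one must ensure that $\Sigma$ is chosen compatibly with the stratification of $S$ induced by $\frak X$, so that the \'etale-local toric constructions descend to a single scheme over $S$.
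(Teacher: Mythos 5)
Your proposal follows essentially the same route as the paper's Section~\ref{sec:prop-mod}: pull the polyhedral subdivision back along the tropicalization morphism and use the fact that $\LogPic(X/S)$ is a $\Pic^{[0]}(X/S)$-torsor over $\TroPic(\frak X/S)$ (Theorem~\ref{thm:trop-pic}) to transfer representability, with properness following from Corollary~\ref{cor:logpic-proper} since subdivisions are proper. The one ingredient you elide is Lemma~\ref{lem:aut}: the torsor argument a priori yields only an algebraic \emph{stack} with a logarithmic structure, and the paper needs the pseudominimality of objects of the subdivision to kill automorphisms and land in algebraic spaces (not schemes, as you claim); the converse direction you sketch is asserted but not carried out in detail in the paper either.
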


The exact sequences of Theorem~\ref{thm:B} are also needed in our demonstrations of Theorems~\ref{thm:A} and~\ref{thm:D}, particularly in the demonstration of the boundedness of $\bLogPic$ and its diagonal.  The tropical boundedness statements, proved in Sections~\ref{sec:trop-qcpt} and \ref{sec:trojac-bdd-diag}, are surely the most technical parts of the paper, and were the most difficult parts for us.  We rely on what might be called an `arithmetic $\epsilon$-$\delta$' formalism, in which $\epsilon$ and $\delta$ take values in a monoid; one cannot simply `choose $\delta > 0$' but must choose it from the monoid of available positive elements of the monoid.

\subsection*{Invariance properties and construction of the cover}

Polyhedral subdivisions of $\TroJac(\tropfont X/S)$ yield toroidal compactifications of $\Pic^{[0]}(X/S)$, which can in turn be interpreted as \emph{logarithmic modifications} of $\LogPic(X/S)$. Logarithmic modifications, together with logarithmic root stacks are purely combinatorial operations yielding proper monomorphisms of logarithmic schemes. Together with \'etale maps, the two operations generate the (full) \emph{logarithmic \'etale topology} of a logarithmic scheme. Logarithmic modifications, and similarily roots, form an inverse system, where $f_2: X_2 \to X$ can be considered finer than $f_1: X_1 \to X$ if $f_2$ factors as $g \circ f_1$ for a log modification $g:X_2 \to X_1$. Thus, Theorem~\ref{thm:E} can be seen as a heuristic `formula':
\begin{equation*}
\LogPic^0(X/S) = \varinjlim  \{ \text{toroidal compactifications of }\Pic^{[0]}(X/S) \}
\end{equation*}

Of course, this colimit does not exist as a scheme, but logarithmically it expresses $\LogPic^0(X/S)$ as the colimit of all its logarithmic modifications --- the minimal toroidal compactification of $\Pic^{[0]}(X/S)$, so to speak.

	One of the remarkable properties of $\LogPic(X/S)$ is that it is often invariant under both logarithmic modifications and root constructions: suppose that $S$ is logarithmically flat and that $f: T \rightarrow S$ is a logarithmic modification or a root $S$, and $Y$ is a logarithmic modification or root of the pullback of $X$ on $T$, for which $Y \rightarrow T$ is a logarithmic curve. Then $f^*\LogPic(X/S) = \LogPic(Y/T)$. In particular $\LogPic(X/S)$ forms a sheaf for the (small) full logarithmic \'etale topology on $S$: see Corollary~\ref{cor:log-etale-desc}. This invariance, together with the fundamental exact sequence, allows us to relate $\LogPic(X/S)$ with \emph{all} Picard groups $\Pic(Y/T)$ of all semistable models $Y$ of $X$ over logarithmic modifications and roots $T$ of $S$, by combining the natural map $\Pic(Y/T) \to \LogPic(Y/T)$ with the isomorphism to $\LogPic(X/S)$. In fact, it is this collection of spaces $\Pic(Y/T)$ that provides the cover in Theorem~\ref{thm:A}. As the kernel of the map $\Pic(Y/T) \to \LogPic(Y/T)$ is precisely the group of piecewise linear functions on the tropicalization of $Y/T$, we can write another heuristic `formula': 
\begin{equation*}
\LogPic(X/S) = \varinjlim \Pic(Y)/\PL(\tropfont Y)
\end{equation*}  
with $\PL(\tropfont Y)$ denoting the piecewise linear functions on the tropicalization of $Y$.  The map $\PL(\tropfont Y) \to \Pic(Y)$ comes from the fundamental exact sequence,~\eqref{eqn:126}.

	With the benefit of hindsight, this formula could have been used as a definition of $\LogPic(X/S)$ (at least over a logarithmically flat base): as line bundles on semistable models of $X/S$, up to the equivalence relation generated by pulling back to a further semistable model and the action of piecewise linear functions.  In our point of view, this presentation of $\LogPic(X/S)$ is an extrinsic presentation, whereas the definition we have chosen, in terms of torsors, is intrinsic. 

This intrinsic/extrinsic interplay is now seen in multiple places in logarithmic geometry. For example, it is observed in logarithmic Gromov-Witten theory, where an `intrinsic' definition of logarithmic stable maps is given by \cite{Chen,AC,GS}, whereas an extrinsic definition is given in the work of \cite{Li1,Li2,Kim,Dhruv}. In the stable map setting, the different definitions produce different spaces; yet, their Gromov-Witten invariants coincide.  This as an incarnation of the principle that logarithmic geometry captures the geometry of the interior of a space, and not of the specific logarithmic compactification chosen. Remarkably, for $\LogPic$, both intrisic and extrinsic approach yield the same space in many cases (rather than the same invariants of the space). This property has proved to be very useful in the study of $\LogPic$; it is for example key in the construction of a principal polarization, or in the study of N\'eron models via $\LogPic$. In recent years, various central problems in logarithmic geometry have been studied, using either an intrinsic or extrinsic approach.  For example, Chow theory for logarithmic schemes (by Barrott extrinsically~\cite{Barrott}, or Herr intrinsically~\cite{Herr}) or Donaldson-Thomas theory (\cite{MR}, extrinsically). We expect that understanding the connection between the dual approaches in any given problem would prove to be very fruitful.

\subsection*{Future work}

The Jacobian (and even the Picard stack) is equipped with a canonical principal polarization.  We are mute about the logarithmic analogue in this paper, but we will construct it in a subsequent one.

Our results are limited to relative dimension~$1$ because we do not have the means yet to study families of tropical varieties of higher dimension over logarithmic bases.  We also do not yet understand the higher dimensional analogue of the bounded monodromy condition.

Neither have we addressed any algebraicity properties of the tropical Picard group in a systematic way.  It follows from our results that the tropical Picard group has a logarithmically \'etale cover by a Kato fan, but it is less clear how one should characterize its diagonal (we prove only that it is quasicompact here), or whether one should demand further properties of a purely tropical cover. 

In Section~\ref{sec:prop-mod}, we indicate how the tropical Picard group can be used to construct proper schematic models of the logarithmic Picard group over a local base. Recent work of Abreu and Pacini describes polyhedral subdivisions of $\TroPic(\tropfont X/S)$ when $\tropfont X$ is the universal curve over the moduli space of $1$-pointed tropical curves (and, for certain degrees, over the moduli space of unpointed tropical curves)~\cite{abreu-pacini}.  They show that the corresponding compactification of the Picard group coincides with Esteves's compactification~\cite{MR1828599}.  We are pursuing a global construction of more general toroidal compactifications over the moduli space of stable curves in collaboration with Melo, Ulirsch, and Viviani.

The tropicalization method used in Section~\ref{sec:trop-pic} appears to generalize well to higher dimensional logarithmic varieties.  We hope to make further use of this construction in the future.

\subsection*{Conventions}

Let $X$ be a curve over $S$.  We use the term `Picard group' to refer to the sheaf on $S$ of isomorphism classes of line bundles on $X$, up to isomorphism and denote it $\Pic(X/S)$.  The stack of $\Gm$-torsors on $X$ is denoted in boldface:  $\bPic(X/S)$.  We use a superscript to denote a restriction on degree, and we refer to $\Pic^0(X/S)$ as the Jacobian of $X$.  We apply similar terminology when $X$ is a logarithmic curve or tropical curve over a logarithmic base $S$.

Throughout, we consider a logarithmic curve $X$ over $S$.  We regularly use $\pi : X \to S$ to denote the projection.

\subsection*{Acknowledgements}
\label{sec:ack}

This work benefitted from the suggestions, corrections, and objections of Dan Abramovich, Sebastian Casalaina-Martin, William D.\ Gillam, David Holmes, Dhruv Ranganathan, Martin Ulirsch, and the anonymous referee.  The example in Section~\ref{sec:ex-gen2} was worked out with the help of Margarida Melo, Martin Ulirsch, and Filippo Viviani at the workshop on Foundations of Tropical Schemes at the American Institute of Mathematics.  We thank them all heartily.

We are also grateful to the participants of the 2019 Intercity Geometry Seminar, organized by David Holmes, Chris Lazda, Adrien Sauvaget, and Arne Smeets for their feedback, from which this paper has benefitted considerably.

A substantial part of this paper was written, and some of its results proven, during the Workshop on Tropical Varieties in Higher Dimensions and a subsequent research visit at the Intitut Mittag--Leffler in April, 2018.  We gratefully acknowledge the Institute's hospitality during this period.

S.M.\ was supported by ERC-2017-AdG-786580-MACI.  This project has received funding from the European Research Council (ERC) under the European Union Horizon 2020 research and innovation program (grant agreement No.\ 786580).

J.W.\ was supported by an NSA Young Investigator's Grant, Award Number H98230-16-1-0329, a Simons Collaboration Grant, Award \#636210, and a Simons Fellowship, Award \#822534.

\section{Monoids, logarithmic structures, and tropical geometry}
\label{sec:background}

\subsection{Monoids}
\label{sec:monoids}
	
	In this paper, all monoids will be commutative, unital, integral, and saturated, although some results in this section are valid without those assumptions.  The monoid operation will be written additively, unless indicated otherwise.  Homomorphisms of monoids are assumed to preserve the unit.

\numberwithin{theorem}{subsubsection}
\subsubsection{Partially ordered groups}
\label{sec:pogroups}

\begin{definition} \label{def:sharp}
	A homomorphism of monoids $f : N \to M$ is called \emph{sharp} if each invertible element of $M$ has a unique preimage under $f$.  A monoid $M$ is called \emph{sharp} if the unique homomorphism $0 \to M$ is sharp.
\end{definition}

\begin{remark} 
Our definition is different from the one given in \cite[4.1.1]{Ogus}; it is equivalent to the \emph{logarithmic} homomorphisms of op.\ cit.
\end{remark}

  We write $M^\ast$ for the subgroup of invertible elements of $M$ and $\overnorm M$ for the quotient $M/M^\ast$, which we call the \emph{sharpening} of $M$.  Even when they do not arise as sharpenings of other monoids, we often notate sharp monoids with a bar above them.

\begin{remark} \label{rem:sharp}
	A homomorphism $f : N \to M$ of sharp monoids is sharp if and only if $f^{-1} \{ 0 \} = \{ 0 \}$.  Note that $f^{\rm gp}$ need not necessarily be injective.

	In this situation, sharp homomorphisms are analogous to local homomorphisms of local rings, and some authors prefer to call sharp homomorphisms between sharp monoids \emph{local}.  We will favor sharp in order not to create a conflict with connections to topology to be explored in \cite{GW}.  Some indications about those connections are given in Section~\ref{sec:trojac-bdd-diag}.
\end{remark}

	Every monoid $M$ is contained in a smallest associated group $M^{\rm gp}$, and $M$ determines a partial semiorder on $M^{\rm gp}$ in which $M$ is the subset of elements that are $\geq 0$.  If $M$ is sharp then the semiorder is a partial order.  As $M$ can be recovered from the induced partial order on $M^{\rm gp}$, we are free to think of monoids as partially (semi)ordered groups, and we frequently shall.

\subsubsection{Valuative monoids}

\begin{definition}
A \emph{valuative monoid} is an integral\footnote{Despite our convention that all monoids are saturated, we allow valuative monoids not to be saturated a priori, since they are so posteriori.} monoid $M$ such that, for all $x \in M^{\rm gp}$, either $x \in M$ or $-x \in M$.
\end{definition}

If $M$ is an integral monoid, and $x,y \in M^{\rm gp}$, we say that $x \leq y$ if $y - x \in M$.  We say that $x$ and $y$ are \emph{comparable} if $x \leq y$ or $y \leq x$.

\begin{lemma}
All valuative monoids are saturated.
\end{lemma}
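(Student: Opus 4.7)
The plan is short: unwind the definition of saturation and use the dichotomy provided by valuativity. Let $M$ be valuative, and suppose $x \in M^{\rm gp}$ satisfies $nx \in M$ for some positive integer $n$; the goal is to show $x \in M$ itself.

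First I would apply the valuative hypothesis to $x$. Either $x \in M$, in which case there is nothing to prove, or $-x \in M$. In the second case, since $M$ is a monoid (closed under addition), $(n-1)(-x) \in M$, and then
\begin{equation*}
x = nx + (n-1)(-x) \in M,
\end{equation*}
since both summands lie in $M$. That establishes saturation.

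There is no real obstacle here—the argument is a one-line combinatorial identity in $M^{\rm gp}$—so the only thing to be careful about is that the convention in the paper already builds integrality into the definition of a valuative monoid (as noted in the footnote), so the passage to $M^{\rm gp}$ is legitimate and the subtractions make sense.
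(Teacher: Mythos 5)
Your proof is correct and is essentially the paper's own argument: from $-x\in M$ and $nx\in M$ one concludes $x\in M$, the paper phrasing this as ``$-x$ is a unit'' while you write the identity $x = nx + (n-1)(-x)$ explicitly. The edge case $n=1$ is harmless since $(n-1)(-x)=0\in M$.
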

\begin{proof}
Suppose that $M$ is valuative, $x \in M^{\rm gp}$, and $nx \in M$.  If $x \not\in M$ then $-x \in M$.  But as $nx \in M$ this means $-x$ is a unit of $M$, which means that $x \in M$.
\end{proof}

\begin{corollary}
All sharp valuative monoids are torsion free.
\end{corollary}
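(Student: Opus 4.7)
The plan is to show that any torsion element of $M^{\rm gp}$ must lie in the unit group $M^\ast$, at which point sharpness forces it to be zero. So suppose $x \in M^{\rm gp}$ satisfies $nx = 0$ for some positive integer $n$. By the valuative hypothesis, either $x \in M$ or $-x \in M$, and after possibly replacing $x$ by $-x$ we may assume $x \in M$.

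Once $x \in M$, the monoid $M$ also contains $(n-1)x$. But in $M^{\rm gp}$ we have $(n-1)x = -x$, so $-x \in M$ as well. This exhibits $x$ as an invertible element of $M$, i.e.\ $x \in M^\ast$. Since $M$ is sharp, $M^\ast = \{0\}$, and hence $x = 0$.

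There is no real obstacle here: the whole argument is essentially one line, and is entirely parallel to the proof of the preceding lemma on saturation — in that proof one deduced $x \in M$ from $nx \in M$ by observing that $-x \in M$ together with $nx \in M$ makes $-x$ a unit; here one applies the same trick with $nx = 0$ to conclude that $x$ itself is a unit. The only thing to be careful about is the case $n = 1$, which is trivial, and the symmetric reduction to $x \in M$, which is exactly what the valuative axiom provides.
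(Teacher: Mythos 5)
Your proof is correct. It does take a slightly different route from the paper's: the paper simply cites the preceding lemma (every valuative monoid is saturated) together with the general fact that a sharp, integral, saturated monoid is torsion free, whereas you give a direct, self-contained argument straight from the valuative axiom. In the standard proof of that general fact one gets $x \in M$ and $-x \in M$ from $nx = 0 \in M$ and $n(-x) = 0 \in M$ via saturation; you instead obtain $x \in M$ (up to sign) from the valuative dichotomy and then produce $-x = (n-1)x \in M$ from the torsion relation. The two computations are nearly identical in substance, but yours has the minor virtue of not needing saturation at all, while the paper's has the virtue of reusing an already-established lemma. Your reading of sharpness as $M^\ast = \{0\}$ is the correct unwinding of Definition~\ref{def:sharp}, and the WLOG reduction via $x \mapsto -x$ is harmless since $-x$ is torsion of the same order.
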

\begin{proof}
If $nx=0$ for some $x \in M^{\rm gp}$, then $x \in M$ since $0 \in M$ and $M$ is saturated.  But $M$ is sharp so $x$ must be $0$.
\end{proof}

\begin{example}
The non-negative elements of $\mathbf Z$ and of $\mathbf R$ are valuative monoids.  More generally, elements that are $\geq 0$ in the lexiographic order on $\mathbf R^n$ form a valuative monoid, as are the $\geq 0$ elements in any subgroup.  More generally still, if $\Omega$ is a totally ordered set then formula sums of well-ordered subsets of $\Omega$, with real coefficients, form a totally ordered abelian group.  The elements $\geq 0$ in this group are a valuative monoid, and a theorem of Hahn asserts that all valuative monoids arise as the elements $\geq 0$ in a subgroup of such a group~\cite[\S2]{Hahn}.
\end{example}

\begin{remark}
Finitely generated monoids arise as the monoids of functions on rational polyhedral cones that take integral values on the integral lattice.  Monoids that are not finitely generated can nevertheless be approximated by an ascending union of finitely generated monoids.  The ascending union corresponds dually to a descending intersection of rational polyhedral cones.  This gives a way to visualize valuative monoids inside of a real vector space as infinitesimal thickenings of rays in the dual vector space (see Figure~\ref{fig:2}).  This perspective will be important when we study prorepresentability in Section~\ref{sec:trojac-subdiv}.  

The reader who is so inclined may verify that an extension of a finitely generated monoid to a valuative monoid corresponds to a ray in its dual cone, together with a flag of infinitesimal extensions of that ray. 
\end{remark}

\begin{lemma} \label{lem:shpvalinj}
Suppose that $f : M \rightarrow N$ is a sharp homomorphism of monoids and $M$ is valuative.  Then $f$ is injective.
\end{lemma}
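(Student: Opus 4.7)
The plan is to start with $x, y \in M$ satisfying $f(x) = f(y)$, and to deduce $x = y$ using valuativity to ``divide'' $x$ by $y$ inside $M$ itself, then applying sharpness to conclude.

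More precisely, I would consider the element $z = x - y \in M^{\rm gp}$. Since $M$ is valuative, either $z \in M$ or $-z \in M$; by symmetry of the argument, I may assume $z \in M$. Since $f$ is a homomorphism of integral monoids, its groupification $f^{\rm gp} : M^{\rm gp} \to N^{\rm gp}$ sends $z$ to $f(x) - f(y) = 0$. Because $N$ is integral (so that $N \hookrightarrow N^{\rm gp}$) and $z \in M$, this forces $f(z) = 0$ in $N$.

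Now the sharpness hypothesis enters. The element $0 \in N$ is invertible, and $0 \in M$ is a preimage of it, so by the uniqueness requirement in Definition~\ref{def:sharp}, $0$ is the only preimage of $0$. Hence $z = 0$, that is, $x = y$, establishing injectivity.

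I expect no real obstacle here: the only point needing a bit of care is that one really can identify $f(z)$ with $f^{\rm gp}(z) = f(x) - f(y)$, which is legitimate precisely because $z \in M$ and because $N$ embeds in $N^{\rm gp}$ (both $M$ and $N$ being integral). The argument uses valuativity in its minimal form — just the dichotomy $z \in M$ or $-z \in M$ — and sharpness only at the unit, so the proof is quite short.
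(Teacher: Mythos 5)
Your proof is correct and is essentially the argument the paper gives: both reduce to the element $z = x - y$ (the paper works directly with an element of $\ker f^{\rm gp}$), use valuativity to place $z$ or $-z$ in $M$, and then invoke sharpness via the unique preimage of $0 \in N$. The paper's version additionally records the slightly stronger conclusion that $f^{\rm gp}$ itself is injective, but the reasoning is the same.
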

\begin{proof}
Suppose that $x \in M^{\rm gp}$ and $f(x) = 0$.  Either $x \in M$ or $-x \in M$.  We assume the former without loss of generality.  But $0 \in N$ has a unique preimage in $M$ by sharpness, so $x = 0$ and $f^{\rm gp}$ is injective.
\end{proof}

\begin{remark}
This property is similar to one enjoyed by fields in commutative algebra.  Valuative monoids will play a role in tropical geometry analogous to that played by fields in algebraic geometry.
\end{remark}

\begin{lemma} \label{lem:val-gp}
Suppose that $f : N \rightarrow M$ is a sharp homomorphism of valuative monoids.  Then $f$ is an isomorphism if and only if it induces an isomorphism on associated groups.
\end{lemma}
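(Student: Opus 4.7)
The forward direction is immediate: any isomorphism of monoids induces an isomorphism of associated groups. The substance is the converse, so assume $f^{\rm gp}$ is an isomorphism and try to show $f : N \to M$ is bijective.

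Injectivity of $f$ follows at once from Lemma~\ref{lem:shpvalinj}, or alternatively from the injectivity of $f^{\rm gp}$. The real work is surjectivity. The plan is to pick $m \in M$, let $x \in N^{\rm gp}$ be the unique element with $f^{\rm gp}(x) = m$, and show that $x$ in fact lies in $N$.

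Here I would invoke valuativity of $N$, which gives the key dichotomy: either $x \in N$, in which case we are done, or $-x \in N$. In the latter case, $f(-x) = -m$ lies in $M$, so both $m$ and $-m$ belong to $M$, forcing $m$ to be invertible in $M$. Sharpness of $f$ then produces a (unique) preimage $y \in N$ of $m$, and injectivity of $f^{\rm gp}$ applied to $y$ and $x$ (both mapping to $m$) forces $y = x$, so $x \in N$ after all.

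I do not anticipate any serious obstacle: the proof is a short case analysis combining the valuative dichotomy with the defining property of sharpness, plus the injectivity already provided by Lemma~\ref{lem:shpvalinj}. The only subtlety worth emphasizing is that sharpness is invoked only in the "bad" case where the candidate preimage $x \in N^{\rm gp}$ fails to be in $N$ on the nose; valuativity converts this apparent obstruction into a statement about invertible elements of $M$, where sharpness then takes over.
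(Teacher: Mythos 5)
Your proof is correct and follows essentially the same route as the paper: injectivity via Lemma~\ref{lem:shpvalinj}, then the valuative dichotomy on the candidate preimage, with sharpness handling the case where the preimage's negative lies in $N$ by showing the target element is invertible in $M$. The paper streamlines the bookkeeping by first replacing $N$ with its image so that $f$ becomes an inclusion, but the substance is identical.
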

\begin{proof}
By Lemma~\ref{lem:shpvalinj}, we know $f$ is injective, so we replace $N$ by its image and assume $f$ is the inclusion of a submonoid with the same associated group.  If $\alpha \in M$ then either $\alpha$ or $-\alpha$ is in $N$.  In the former case we are done, and in the latter, $\alpha$ is an invertible element of $M$, so $\alpha \in N$ since the inclusion is sharp.
\end{proof}

\begin{definition}
A homomorphism of monoids $\varrho : N \rightarrow M$ is called \emph{relatively valuative} or an \emph{infinitesimal extension} if, whenever $\alpha \in N^{\rm gp}$ and $\varrho(\alpha) \in M$ either $\alpha \in N$ or $-\alpha \in N$.
\end{definition}

\begin{lemma}
If $\varrho : N \rightarrow M$ is relatively valuative and $M$ is valuative then $N$ is valuative.
\end{lemma}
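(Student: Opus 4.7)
The plan is to chase the definitions in essentially one step. Let $\alpha \in N^{\rm gp}$; I want to show $\alpha \in N$ or $-\alpha \in N$. Pushing forward via $\varrho^{\rm gp}$, consider $\varrho(\alpha) \in M^{\rm gp}$. Since $M$ is valuative, either $\varrho(\alpha) \in M$ or $\varrho(-\alpha) = -\varrho(\alpha) \in M$.

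In the first case, the hypothesis that $\varrho$ is relatively valuative, applied to $\alpha \in N^{\rm gp}$, immediately yields $\alpha \in N$ or $-\alpha \in N$. In the second case, apply relative valuativity to $-\alpha \in N^{\rm gp}$ instead, getting the same conclusion. Either way, $N$ is valuative.

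There is essentially no obstacle: the definition of relatively valuative is precisely what is needed to transport the dichotomy from $M^{\rm gp}$ back to $N^{\rm gp}$, and valuativity of $M$ guarantees at least one of the two sides of the dichotomy holds after pushing forward. The only mild subtlety is noticing that the relatively valuative condition must be applied to whichever of $\pm\alpha$ lands in $M$, but since $\alpha \mapsto -\alpha$ is a symmetry of the statement this costs nothing.
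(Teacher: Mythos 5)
Your proof is correct and is essentially the same as the paper's: both use valuativity of $M$ to conclude that $\varrho(\alpha)$ or $-\varrho(\alpha)$ lies in $M$, and then apply the relatively valuative condition to whichever of $\pm\alpha$ works. The paper's version is just a more terse phrasing of the identical argument.
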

\begin{proof}
Suppose that $\alpha \in N^{\rm gp}$.  Either $\varrho(\alpha) \in M$ or $-\varrho(\alpha) \in M$.  In either case, either $\alpha$ or $-\alpha$ is in $N$, by definition.
\end{proof}

\begin{lemma} \label{lem:extend}
Any partial order on an abelian group can be extended to a total order.
\end{lemma}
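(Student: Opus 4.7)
The plan is to use Zorn's lemma on the set of partial orders extending the given one. Rephrasing via the correspondence between monoids and partial (semi)orders from Section~\ref{sec:pogroups}: a partial order on an abelian group $G$ corresponds to a sharp, saturated submonoid $P \subseteq G$, and such a partial order is total precisely when $P$ is valuative, i.e., $P \cup (-P) = G$. So we must show that every sharp saturated submonoid $P \subseteq G$ is contained in a valuative one.

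I will consider the poset $\mathcal{P}$ of sharp saturated submonoids $N \subseteq G$ with $N \supseteq P$, ordered by inclusion. This is nonempty, and a directed union of sharp saturated submonoids remains sharp and saturated, so every chain has an upper bound. By Zorn, pick a maximal element $N \in \mathcal{P}$, and suppose for contradiction that $N$ is not valuative. Then there exists $x \in G$ with $x \notin N$ and $-x \notin N$. I will enlarge $N$ by adjoining $x$: let $N'$ denote the smallest saturated submonoid of $G$ containing $N$ and $x$, explicitly
\begin{equation*}
N' = \bigl\{ y \in G \bigm| n y \in N + \mathbb{N} x \text{ for some } n \geq 1 \bigr\}.
\end{equation*}
By construction $N'$ is saturated and strictly contains $N$, so to contradict maximality it suffices to show $N'$ is sharp.

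The heart of the argument, and where I expect the main obstacle, is checking sharpness: suppose $y \in N' \cap (-N')$, so that $ny = a + kx$ and $-my = b + \ell x$ for some $a, b \in N$ and integers $n, m \geq 1$, $k, \ell \geq 0$. Multiplying and adding gives $0 = ma + nb + (mk + n\ell)x$ in $G$, hence $(mk + n\ell)x = -(ma + nb) \in -N$. If $mk + n\ell > 0$, then $-(mk + n\ell)x \in N$, so saturation of $N$ forces $-x \in N$, contradicting the choice of $x$. Therefore $k = \ell = 0$, meaning $ny = a \in N$ and $-my = b \in N$; saturation of $N$ then gives $y, -y \in N$, and sharpness of $N$ gives $y = 0$.

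This shows $N' \in \mathcal{P}$ properly contains $N$, contradicting maximality. Hence the maximal $N$ must satisfy $N \cup (-N) = G$, i.e., $N$ is valuative, so the induced total order on $G$ extends the given partial order.
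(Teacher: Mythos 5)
Your proof is correct and follows essentially the same route as the paper: Zorn's lemma produces a maximal sharp saturated extension, and adjoining an incomparable element $x$ and saturating leads, via the relation $ma+nb+(mk+n\ell)x=0$ and saturation of $N$, to the contradiction $-x\in N$. You have merely organized the paper's computation in contrapositive form (showing $N[x]^{\mathrm{sat}}$ \emph{is} sharp when $\pm x\notin N$, rather than deducing $-x\in N$ from its failure to be sharp), and your explicit handling of the saturation and of the case $k=\ell=0$ is, if anything, slightly more careful than the original.
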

\begin{proof}
By Zorn's lemma, every partial order on an abelian group has a maximal extension.  Assume therefore that $\overnorm M^{\rm gp}$ is a maximal partially ordered abelian group and let $\overnorm M \subset \overnorm M^{\rm gp}$ be the submonoid of elements $\geq 0$.  Let $x$ be an element of $\overnorm M^{\rm gp}$ that is not in $\overnorm M$.  Then $\overnorm M[x]^{\rm sat}$ is the monoid of elements $\geq 0$ in a semiorder on $\overnorm M^{\rm gp}$ strictly extending the one corresponding to $\overnorm M$.  This semiorder cannot be a partial order because $\overnorm M$ was maximal, so $\overnorm M[x]^{\rm sat}$ cannot be sharp.  Therefore there are some $y,z \in \overnorm M$ and some positive integer $n$ and $m$ such that $(y + nx) + (z + mx) = 0$.  That is $y + z = -(n+m)x$.  As $\overnorm M$ is saturated (by its maximality), this implies that $-x \in \overnorm M$, which shows that every $x \in \overnorm M^{\rm gp}$ is either $\geq 0$ or $\leq 0$.
\end{proof}

\begin{figure}
\begin{tikzpicture}
\begin{scope}
\clip (-4,2.5) -- (4,-2.5) -- (5,-2.5) -- (5,0) -- (5,5) -- (-5,5) -- cycle;
\shade[opacity=.5,inner color=blue, outer color=white] (0,0) circle (5);
\end{scope} 
\begin{scope}
\clip (0,0) rectangle (5,5);
\shade[opacity=.75,inner color=black, outer color=white] (0,0) circle (5);
\end{scope}
\draw[thick,->] (0,0) -- (0,5);
\draw[thick,->] (0,0) -- (5,0);
\draw[thick,->, color=blue] (0,0) -- (-4,2.5);
\draw[thick,->, color=blue, dashed] (0,0) -- (4,-2.5);
\end{tikzpicture}
\caption{The monoid $\mathbf R_{\geq 0}^2$ and an extension to a valuative monoid in blue.}
\label{fig:1}
\end{figure}

\begin{figure}
\begin{tikzpicture}
\draw[thick,->] (0,0) -- (0,5);
\draw[thick,->] (0,0) -- (5,0);
\begin{scope}
\clip (0,0) -- (2.5,4) -- (2.3,4) -- cycle;
\shade[shading=radial,inner color=blue, outer color=white] (0,0) circle (5); 
\end{scope}
\draw[thick,->, color=blue] (0,0) -- (2.8125,4.5);
\end{tikzpicture}
\caption{The dual of Figure~\ref{fig:1}, with the respect to the standard Euclidean pairing.  Notice that the ray is thickened slightly on one side.}
\label{fig:2}
\end{figure}

\begin{example}
Suppose that $R$ is a valuation ring.  The valuation group of $R$ is a totally ordered abelian group, $\overnorm V^{\rm gp}$, and the valuations of nonzero elements of $R$ are the submonoid of positive elements, $\overnorm V \subset \overnorm V^{\rm gp}$.  The ideals, and in particular the prime ideals, of $R$ are totally ordered by inclusion.  Therefore the spectrum of $R$ is totally ordered by the specialization relation.  

Now suppose $\overnorm M$ is a finitely generated monoid.  Let $k$ be a field and let $X = \Spec k[\overnorm M]$ be the associated affine toric variety.  An extension of $\overnorm M$ to a valuative monoid $\overnorm V$ corresponds to an extension of $k[\overnorm M]$ to a valuation ring, $R$.  The dual map $\Spec R \to X$ gives a chain of specializations between generic points of torus-invariant strata in $X$.

Conversely, one may imagine a complete flag of torus invariant subspaces $X = X_0 \supset X_1 \supset \cdots$.  This correspond to a sequence of localization homomorphisms $\overnorm M = \overnorm M_0 \to \overnorm M_1 \to \cdots$ such that the kernel of $\overnorm M_\ell^{\rm gp} \to \overnorm M_{\ell+1}^{\rm gp}$ is isomorphic to $\mathbf Z$.  In fact, the isomorphism to $\mathbf Z$ is canonical, because $\overnorm M_\ell \to \overnorm M_{\ell+1}$ is a localization homomorphism, so the kernel contains an element of $\overnorm M_\ell$; we choose the isomorphism to $\mathbf Z$ so this element correpsonds to a positive element of $\mathbf Z$.

Let $p_\ell : \overnorm M \to \overnorm M_\ell$ be the projection.  One obtains a valuative monoid extending $\overnorm M$ by including $\alpha \in \overnorm M^{\rm gp}$ in $\overnorm V$ if $\alpha = 0$ or $p_\ell(\alpha)$ is a nonzero element of $\overnorm M_\ell$ for some $\ell$.  Indeed, suppose that $\alpha \in \overnorm M^{\rm gp}$ is nonzero and select the largest $\ell$ such that $p_{\ell+1}(\alpha) = 0$.  Then $p_\ell(\alpha)$ is a nonzero element of $\ker(\overnorm M_\ell^{\rm gp} \to \overnorm M_{\ell+1}^{\rm gp}) = \mathbf Z$.  If $p_\ell(\alpha) > 0$ in $\mathbf Z$ then $\alpha \in \overnorm V$ and if $p_\ell(\alpha) < 0$ in $\mathbf Z$ then $-\alpha \in \overnorm V$.

Not all valuative extensions of $\overnorm M^{\rm gp}$ arise this way, although one does get all of the ones where $|R| = \dim X + 1$.  For a complete list, one must add limits of affine charts of toric modifications.  These correspond to rays of irrational slope in the toric fan, and infinitesimal extensions thereof, in the manner illustrated in Figures~\ref{fig:1} and~\ref{fig:2} (note that the boundary of the blue region would not contain any lattice points in this case).
\end{example}

\subsubsection{Bounded elements of monoids}
\label{sec:bounded}

\begin{definition} \label{def:bounded}
Suppose that $\alpha$ and $\delta$ are elements of a partially ordered abelian group, with $\delta \geq 0$.  We will say that $\alpha$ is \emph{bounded} by $\delta$ if there are integers $m$ and $n$ such that $m \delta \leq \alpha \leq n \delta$.  We write $\alpha \prec \delta$ to indicate that $\alpha$ is bounded by $\delta$.

We say that $\alpha$ is \emph{dominated} by $\delta$, and write $\alpha \ll \delta$, if $n \alpha \leq \delta$ for all integers $n$.
\end{definition}

\begin{lemma} \label{lem:root-bdd}
Let $M$ be a (saturated) monoid, let $\delta \in M$, and let $\alpha \in M^{\rm gp}$.  Then $\alpha \prec \delta$ in $M$ if and only if $\alpha \prec \delta$ in $\mathbf QM$.
\end{lemma}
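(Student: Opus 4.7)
The plan is to exploit saturation directly. The forward direction ($\Rightarrow$) is essentially tautological: if integers $m,n$ satisfy $\alpha - m\delta \in M$ and $n\delta - \alpha \in M$, then both differences land in $\mathbf QM$ via the canonical inclusion $M \hookrightarrow \mathbf QM$, so the same $m,n$ witness $\alpha \prec \delta$ in $\mathbf QM$.

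For the reverse direction ($\Leftarrow$), suppose there are integers $m, n$ with $\alpha - m\delta \in \mathbf QM$ and $n\delta - \alpha \in \mathbf QM$. The crucial observation is that these differences already lie in $M^{\rm gp}$, since $\alpha \in M^{\rm gp}$, $\delta \in M \subseteq M^{\rm gp}$, and $m, n$ are integers. By the characterization of $\mathbf QM$ as the rational saturation of $M$ inside $M^{\rm gp} \otimes \mathbf Q$, there is a positive integer $k$ such that $k(\alpha - m\delta) \in M$. Since $M$ is saturated and $\alpha - m\delta \in M^{\rm gp}$, this forces $\alpha - m\delta \in M$ itself. The same reasoning, applied to $n\delta - \alpha$, yields $n\delta - \alpha \in M$. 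Thus the very integers $m, n$ produced in $\mathbf QM$ already demonstrate $\alpha \prec \delta$ in $M$.

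The only potential subtlety is to be precise about what $\mathbf QM$ means; once one identifies it with $\{x \in M^{\rm gp} \otimes \mathbf Q : \text{some positive multiple lies in } M\}$ — which under the saturation hypothesis coincides with $\mathbf Q_{\geq 0} \cdot M$ — the argument is just the definition of saturation applied twice. There is no real obstacle: the content of the lemma is precisely that saturation allows one to upgrade rational boundedness to integer boundedness, which is what saturation is designed to do.
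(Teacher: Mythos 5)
Your proof is correct and follows the same route as the paper: the forward direction is the obvious inclusion, and for the converse you observe that the differences $\alpha - m\delta$ and $n\delta - \alpha$ lie in $M^{\rm gp}$ and have a positive integer multiple in $M$, so saturation puts them in $M$. This is exactly the paper's argument.
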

\begin{proof}
If $m \delta \leq \alpha \leq n \delta$ in $\mathbf QM$ then there is a positive integer $k$ such that $k (\alpha - m \delta)$ and $k(n \delta - \alpha)$ are both in $M$.  But $M$ is saturated, so this implies $m \delta \leq \alpha \leq n \delta$, as required.
\end{proof}

\begin{lemma} \label{lem:bounded}
Let $M$ be a monoid. Suppose $\delta \in M$.  The elements of $M^{\rm gp}$ that are bounded by $\delta$ is precisely $M[-\delta]^\ast$.
\end{lemma}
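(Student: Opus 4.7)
The plan is to unfold both sides of the claimed equality directly from the definitions. Recall that $M[-\delta]$ is the submonoid of $M^{\rm gp}$ generated by $M$ and $-\delta$; concretely, it consists of all elements of the form $\beta - n\delta$ with $\beta \in M$ and $n \in \mathbf Z_{\geq 0}$. An element $\alpha \in M^{\rm gp}$ lies in $M[-\delta]^\ast$ precisely when both $\alpha$ and $-\alpha$ belong to $M[-\delta]$.

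First I would prove the inclusion $\{\alpha : \alpha \prec \delta\} \subseteq M[-\delta]^\ast$. Given $\alpha \in M^{\rm gp}$ with $m\delta \leq \alpha \leq n\delta$ for some integers $m,n$, I unpack this as $\alpha - m\delta \in M$ and $n\delta - \alpha \in M$. Rewriting $\alpha = (\alpha - m\delta) + m\delta = (\alpha - m\delta) - (-m)\delta$ exhibits $\alpha$ as an element of $M[-\delta]$ whether $m$ is positive or negative; the same argument applied to $-\alpha = (n\delta - \alpha) - n\delta$ shows $-\alpha \in M[-\delta]$. Hence $\alpha$ is invertible in $M[-\delta]$.

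Conversely, suppose $\alpha \in M[-\delta]^\ast$. Then there exist $\beta, \gamma \in M$ and nonnegative integers $n, m$ such that $\alpha = \beta - n\delta$ and $-\alpha = \gamma - m\delta$. The first equality gives $\alpha + n\delta = \beta \in M$, i.e.\ $-n\delta \leq \alpha$, and the second gives $m\delta - \alpha = \gamma \in M$, i.e.\ $\alpha \leq m\delta$. Thus $\alpha \prec \delta$.

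The argument is essentially a definition chase, so there is no serious obstacle; the one minor point that needs care is handling the possibly negative sign of the integer bounds in the definition of boundedness, which is resolved by observing that $M[-\delta]$ already contains all integer multiples (positive or negative) of $\delta$.
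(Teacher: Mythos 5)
Your proof is correct and follows essentially the same definition-chasing route as the paper: the converse direction is word-for-word the paper's argument, and your forward inclusion, which exhibits $\alpha$ and $-\alpha$ explicitly as elements of $M[-\delta]$, is just an unpacked version of the paper's observation that $\alpha$ becomes both $\geq 0$ and $\leq 0$ in the sharpening $\overline{M[-\delta]}$. Your care with the signs of the integer bounds is appropriate and handled correctly.
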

\begin{proof}
If $k \delta \leq \alpha \leq \ell \delta$ then $0 \leq \alpha \leq 0$ in the sharpening $\overline{ M[-\delta] }$ of $M[-\delta]$ and therefore $\alpha \in M[-\delta]^\ast$.  Conversely, if $\alpha \in M^{\rm gp}$ is a unit of $M[-\delta]$ then there is some $\beta \in M$ such that $\alpha + \beta \in \mathbf Z \delta$ --- in other words, $\alpha \leq \ell \delta$ for some integer $\ell$.  Applying the same reasoning to $-\alpha$ supplies an integer $k$ such that $-\alpha \leq k \delta \in M$.  Therefore $-k \delta \leq \alpha \leq \ell \delta$, as required.
\end{proof}

\begin{definition} \label{def:archimedean}
An \emph{archimedean group} is a totally ordered abelian group $M^{\rm gp}$ such that if $x, y \in M^{\rm gp}$ with $x > 0$ then $y$ is bounded by $x$.
\end{definition}

\begin{remark} 
A totally ordered abelian group $M^{\rm gp}$ is archimedean if and only $M$ it has no $\prec$-closed submonoids other than $0$ and $M^{\rm gp}$.
\end{remark}

The following theorem is due to H\"older~\cite{Holder}, but is also a special case of the theorem of Hahn~\cite{Hahn}.

\begin{theorem}[H\"older] \label{thm:archimedean}
Every archimedean group can be embedded by an order preserving homomorphism into the real numbers.  The homomorphism is unique up to scaling.
\end{theorem}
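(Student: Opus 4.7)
The plan is to construct the embedding by cutting $M^{\mathrm{gp}}$ against a fixed positive element, imitating the classical Dedekind-type proof. If $M^{\mathrm{gp}} = 0$ there is nothing to do, so I fix a positive element $\delta \in M^{\mathrm{gp}}$ once and for all, and aim to produce an order-preserving homomorphism $\varphi : M^{\mathrm{gp}} \to \mathbf{R}$ with $\varphi(\delta) = 1$.

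First I would use the archimedean hypothesis to define, for each $\alpha \in M^{\mathrm{gp}}$ and each positive integer $n$, the unique integer $m_n(\alpha)$ characterized by $m_n(\alpha)\delta \leq n\alpha < (m_n(\alpha)+1)\delta$; the existence and uniqueness of $m_n(\alpha)$ follow immediately from the total order together with the archimedean boundedness of $\alpha$ by $\delta$. A short estimate then shows that $\bigl| m_{nk}(\alpha)/(nk) - m_n(\alpha)/n \bigr| \leq 1/n$, so the sequence $(m_n(\alpha)/n)_n$ is Cauchy in $\mathbf{Q}$, and I would define $\varphi(\alpha)$ to be its limit in $\mathbf{R}$.

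Next I would verify the three structural properties. Additivity follows from the observation that $m_n(\alpha) + m_n(\beta) \leq m_n(\alpha+\beta) \leq m_n(\alpha) + m_n(\beta) + 1$, which passes to the limit. Order-preservation follows from the fact that $\alpha \leq \beta$ forces $m_n(\alpha) \leq m_n(\beta)$. For injectivity, suppose $\alpha > 0$; by the archimedean property applied to the pair $(\alpha,\delta)$ there is some integer $n$ with $n\alpha \geq \delta$, so $m_n(\alpha) \geq 1$ and therefore $\varphi(\alpha) \geq 1/n > 0$; applying this to $-\alpha$ when $\alpha < 0$ shows $\varphi$ is order-reflecting, hence injective.

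Finally, for uniqueness up to scaling: if $\psi : M^{\mathrm{gp}} \to \mathbf{R}$ is any order-preserving homomorphism then $\psi(\delta) > 0$, and after rescaling I may assume $\psi(\delta) = 1$. The inequality defining $m_n(\alpha)$ then forces $m_n(\alpha)/n \leq \psi(\alpha) < (m_n(\alpha)+1)/n$ for every $n$, so $\psi(\alpha) = \varphi(\alpha)$. The only real obstacle in this program is the verification that $(m_n(\alpha)/n)_n$ converges, which is where the archimedean hypothesis is genuinely used; everything else is bookkeeping once the approximation $m_n$ has been set up.
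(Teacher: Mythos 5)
Your proof is correct and follows essentially the same route as the paper's: both fix a positive reference element ($\delta$ here, $x$ there) and define the image of $\alpha$ by rational approximation of the ratio $\alpha/\delta$, the paper packaging those approximations as a Dedekind cut $\{p/q : p\delta \leq q\alpha\}$ where you package them as the Cauchy sequence $m_n(\alpha)/n$. The additivity check and the squeeze argument for uniqueness are the same computations in both versions; your write-up is merely more explicit about injectivity, which the paper leaves implicit.
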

\begin{proof}
This is trivial for the zero group, so assume $M$ is a nonzero archimedean group.  Choose a nonzero element $x$ of $M$.  It will be equivalent to show that there is a unique order-preserving homomorphism $M \to \mathbf R$ sending $x$ to $1$.

For any $y \in M$, let $S$ be the set of rational numbers $p/q$ such that $px \leq qy$ in $M$.  Let $T$ be the set of rationals $p/q$ such that $px \geq qy$.  Then $S$ and $T$ are a Dedekind cut of $\mathbf Q$, hence define a unique real number $f(y)$.  This proves the uniqueness part.

All that remains is to show that $f$ is a homomorphism.  This amounts to the assertion that if $px \leq qy$ and $p'x \leq q'y'$ then $(pq' + p'q)x \leq qq'(y + y')$, which is an immediate verification.
\end{proof}

\begin{lemma} \label{lem:prec-total}
If $x$ and $y$ are positive elements of a totally ordered abelian group then $x \prec y$ or $y \ll x$.
\end{lemma}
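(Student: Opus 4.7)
The plan is to reduce the statement to a dichotomy directly visible from totality of the order. First I would observe that, because $x > 0$, the lower-bound half of the definition of $x \prec y$ is automatic: the integer $m = 0$ always satisfies $my = 0 \leq x$. Consequently $x \prec y$ holds if and only if there exists an integer $n$ with $x \leq ny$, and the entire question reduces to whether such an $n$ exists.

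If some integer $n$ satisfies $x \leq ny$, then combined with $0 \leq x$ we obtain $0 \cdot y \leq x \leq ny$, so $x \prec y$ and we are done. Suppose instead that no such $n$ exists. Then for every integer $n$ we have $x \not\leq ny$, and totality of the order upgrades this to $ny < x$ for every integer $n$. In particular $ny \leq x$ for every positive integer $n$; and for $n \leq 0$ the inequality $ny \leq 0 < x$ is automatic from $y > 0$. Combining these, $ny \leq x$ for every integer $n$, which is precisely $y \ll x$.

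There is essentially no obstacle here: the only ingredient beyond the definitions is the observation that in a totally ordered group, the failure of $x \leq ny$ is equivalent to $ny < x$, so the two cases above really are exhaustive. The positivity of $x$ is used only to make the lower bound $m = 0$ work, and the positivity of $y$ is used only to dispatch the non-positive values of $n$ in the second case.
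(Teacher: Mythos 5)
Your proof is correct and follows essentially the same route as the paper: positivity of $x$ makes the lower bound in $x \prec y$ automatic (take $m=0$), and totality converts the failure of $x \leq ny$ for all $n$ into $ny \leq x$ for all $n$, which is exactly $y \ll x$. The only difference is that you spell out the non-positive values of $n$ separately, which the paper leaves implicit.
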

\begin{proof}
Suppose that $y$ does not bound $x$.  As $x \geq 0$, this means there is no integer such that $x \leq ny$.  But the group is totally ordered, so we must therefore have $x \geq ny$ for all $n$.  That is $x \gg y$.
\end{proof}

\begin{proposition} \label{prop:arch-filt}
Let $M$ be a valuative monoid.  The collection of subsets $N$ of $M$ closed under $\prec$ are submonoids and are totally ordered by inclusion.  The graded pieces of this filtration are archimedean.
\end{proposition}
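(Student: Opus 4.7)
The plan is to use that every valuative monoid $M$ is totally ordered by $\leq$ --- by definition, for any $\alpha \in M^{\rm gp}$ either $\alpha \in M$ or $-\alpha \in M$ --- so that Lemma~\ref{lem:prec-total} applies to any two elements. For a nonempty $\prec$-closed subset $N \subseteq M$, I would first note that any $\alpha \in N$ gives $0 \prec \alpha$ (since $0 \leq 0 \leq 1 \cdot \alpha$), hence $0 \in N$, and that for $\alpha,\beta \in N$ with (say) $\alpha \leq \beta$ one has $0 \leq \alpha + \beta \leq 2\beta$, i.e.\ $\alpha + \beta \prec \beta$, so $\alpha + \beta \in N$. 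Thus $N$ is a submonoid.

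For the total ordering by inclusion, I would argue contrapositively: suppose $N_1, N_2$ are $\prec$-closed and pick $\alpha \in N_1 \setminus N_2$. For any $\beta \in N_2$, total orderedness of $M$ gives $\alpha \leq \beta$ or $\beta \leq \alpha$; the first would imply $\alpha \prec \beta$ and force $\alpha \in N_2$, contradicting our choice. So $\beta \leq \alpha$, hence $\beta \prec \alpha$, hence $\beta \in N_1$, proving $N_2 \subseteq N_1$.

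For the graded pieces, for each $\delta \in M$ I would consider $N_\delta = \{\alpha \in M : \alpha \prec \delta\}$, the smallest $\prec$-closed submonoid containing $\delta$, and $N_{\delta^-} = \{\alpha \in M : \alpha \ll \delta\}$, the largest $\prec$-closed submonoid not containing $\delta$; both are $\prec$-closed submonoids by the transitivity of $\prec$ and $\ll$. The graded piece at $\delta$ is the partially ordered quotient group $N_\delta^{\rm gp}/N_{\delta^-}^{\rm gp}$ with positive cone the image of $N_\delta$; order-convexity of $N_{\delta^-}$ inside $N_\delta$ guarantees that the quotient inherits a total order. Given a positive class $\bar\alpha$, represented by some $\alpha \in N_\delta \setminus N_{\delta^-}$, and any other class $\bar y$ represented (after possibly negating) by some $y \in N_\delta$, Lemma~\ref{lem:prec-total} applied in $M$ yields either $y \prec \alpha$, whence $\bar y \prec \bar\alpha$, or $\alpha \ll y$. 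The latter combined with $y \leq m\delta$ would force $n\alpha \leq m\delta$ for every $n$; taking $n = km$ gives $m(k\alpha - \delta) \leq 0$, and saturatedness of $M$ then forces $k\alpha \leq \delta$ for every $k$, i.e.\ $\alpha \ll \delta$, contradicting $\alpha \notin N_{\delta^-}$. The main obstacle is this last step --- identifying the right pair $(N_\delta, N_{\delta^-})$ that captures a single graded piece, checking order-convexity so the quotient is totally ordered, and juggling $\prec$ against $\ll$ via Lemma~\ref{lem:prec-total} and saturatedness to conclude the archimedean property.
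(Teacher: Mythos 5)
Your proof is correct, and while the first two claims go essentially as in the paper, your treatment of the archimedean claim is genuinely different. For the submonoid property the paper cites Lemma~\ref{lem:bounded} (the elements bounded by $\delta$ form the group $M[-\delta]^\ast$), whereas you check closure directly via $0 \leq \alpha + \beta \leq 2\beta$; the total-ordering argument is the same one the paper runs through Lemma~\ref{lem:prec-total}. For the graded pieces, the paper takes an abstract adjacent pair $N \subset P$ and appeals to the (unproved) remark after Definition~\ref{def:archimedean} — that a totally ordered group is archimedean iff its positive cone has no $\prec$-closed submonoids other than $0$ and itself — together with an implicit correspondence between $\prec$-closed submonoids of the image of $P$ in $P^{\rm gp}/N^{\rm gp}$ and intermediate $\prec$-closed submonoids. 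You instead realize every adjacent pair concretely as $(N_{\delta^-}, N_\delta)$ and verify the archimedean property by direct computation: the dichotomy $y \prec \alpha$ or $\alpha \ll y$ from Lemma~\ref{lem:prec-total}, combined with $y \leq m\delta$ and saturation of $M$ (valuative monoids are saturated), excludes the second alternative precisely because $\alpha \notin N_{\delta^-}$. The key step $km\alpha \leq m\delta \Rightarrow k\alpha \leq \delta$ does require saturation and you invoke it correctly. What your route buys is a self-contained argument that does not lean on the unproved remark; what it costs is extra bookkeeping — in particular the order-convexity of $N_{\delta^-}$ in $N_\delta$, which you assert rather than prove, though it is routine (an element of $N_\delta^{\rm gp}$ lying in $M$ is bounded by any element of $N_\delta$ above it, hence lies in $N_\delta$, and likewise for $N_{\delta^-}$).
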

\begin{proof}
Lemma~\ref{lem:bounded} implies that these subsets are submonoids.  Suppose that $N$ and $P$ are $\prec$-closed subgroups and there is some $x \in N$ that is not contained in $P$.  If $y \in P$ then either $y \prec x$ or $x \prec y$ by Lemma~\ref{lem:prec-total}, but $P$ is $\prec$-closed so it must be the former.  Thus $P \prec x$ so $P \subset N$ since $N$ is $\prec$-closed.

Now suppose that $N \subset P$ and there are no intermediate $\prec$-closed submonoids.  The image of $P$ in $P^{\rm gp} / N^{\rm gp}$ therefore has no $\prec$-closed submonoids other than $0$ and itself, so it is archimedean.
\end{proof}

\subsection{Logarithmic structures}
\label{sec:log-str}

We review some of the basics of logarithmic geometry.  The canonical reference is Kato's original paper~\cite{kato1989logarithmic}.

\subsubsection{Systems of invertible sheaves}
\label{sec:inv-sh}

We recall a perspective on logarithmic structures favored by Borne and Vistoli~\cite[Definition~3.1]{MR2964607}.

\begin{definition} \label{def:log-str}
Let $X$ be a scheme.  A \emph{logarithmic structure} on $X$ is an (integral, saturated) \'etale sheaf of monoids $M_X$ on $X$ and a sharp homomorphism $\varepsilon : M_X \to \mathcal O_X$, the target given its multiplicative monoid structure.  The quotient $M_X / \varepsilon^{-1} \mathcal O_X^\ast$ is known as the \emph{characteristic monoid} of $M_X$ and is denoted $\overnorm M_X$.

A morphism of logarithmic structures $M_X \to N_X$ is a homomorphism of monoids commuting with the homomorphisms $\varepsilon$.
\end{definition}


Let $X$ be a logarithmic scheme.  For each local section $\alpha$ of $\overnorm M_X^{\rm gp}$, we denote the fiber of $M_X$ over $\overnorm M_X$ by $\mathcal O_X^\ast(-\alpha)$.  This is a $\mathcal O_X^\ast$-torsor because $M_X^{\rm gp}$ is an $\mathcal O_X^\ast$-torsor over $\overnorm M_X^{\rm gp}$.  We write $\mathcal O_X(-\alpha)$ for the associated invertible sheaf, obtained by contracting $\mathcal O_X^\ast(-\alpha)$ with $\mathcal O_X$ using the action of $\mathcal O_X^\ast$.

We can think of the assignment $\alpha \mapsto \mathcal O_X(-\alpha)$ as a map $\overnorm M_X^{\rm gp} \to \BGm$.  We have canonical isomorphisms $\mathcal O_X(\alpha + \beta) \simeq \mathcal O_X(\alpha) \otimes \mathcal O_X(\beta)$ making the morphism $\overnorm M_X^{\rm gp} \to \BGm$ into a homomorphism of group stacks.

Moreover, if $\alpha \in \overnorm M_X$ then the restriction of $\varepsilon$ gives a $\mathcal O_X^\ast$-equivariant map $\mathcal O_X^\ast(-\alpha) \to \mathcal O_X$, hence a morphism of invertible sheaves $\mathcal O_X(-\alpha) \to \mathcal O_X$.  If $\beta \geq \alpha$ then $\alpha - \beta \leq 0$ and we obtain $\mathcal O_X(\alpha - \beta) \to \mathcal O_X$; twisting by $\mathcal O_X(\beta)$, we get $\mathcal O_X(\alpha) \to \mathcal O_X(\beta)$.

If we regard $\overnorm M_X^{\rm gp}$ as a sheaf of categories over $X$, with a unique morphism $\alpha \to \beta$ whenever $\alpha \leq \beta$, then the logarithmic structure induces a monoidal functor $\overnorm M_X^{\rm gp} \to \tropfont L_X$ where $\tropfont L_X$ is the stack of invertible sheaves on $X$.  It is clearly possible to recover the original logarithmic structure on $X$ from this monoidal functor, so we often think of logarithmic strutures in these terms.

\subsubsection{Coherent logarithmic structures}
\label{sec:coh-log-str}

Let $X$ be a scheme with a logarithmic structure $M_X$.  If $\overnorm N$ is a(n integral, saturated) monoid and $e : \overnorm N \to \Gamma(X, \overnorm M_X)$ is a homomorphism, there is an initial logarithmic structure $M'_X$ and morphism $M'_X \to M_X$ such that $e$ factors through $\Gamma(X, \overnorm M'_X) \to \Gamma(X, \overnorm M_X)$.  If $M'_X \to M_X$ is an isomorphism then $\overnorm N$ and $e$ are called a chart of $\overnorm M_X$.

\begin{definition} \label{def:log-sch}
A logarithmic scheme is a scheme equipped with a logarithmic structure that has \'etale-local charts by integral, saturated monoids.  It is said to be \emph{locally of finite type} if the underlying scheme is locally of finite type and the charts can be chosen to come from finitely generated monoids.
\end{definition}

A logarithmic scheme that is locally of finite type comes equipped with a stratification, defined as follows.  Assume that $M_X$ has a global chart by a finitely generated monoid $\overnorm N$.  For each of the finitely many generators $\alpha$ of $\overnorm N$, the image of the homomorphism $\mathcal O_X(-\alpha) \to \mathcal O_X$ is an ideal, which determines a closed subset of $X$.  All combinations of intersections and complements of these closed subsets stratify $X$.

To patch this construction into a global one, we must argue that the stratification defined above does not depend on the choice of chart.  To see this, it is sufficient to work locally, and therefore to assume $X$ is the spectrum of a henselian local ring with closed point $x$.  Then the strata correspond to the ideals of the characteristic monoid $\overnorm M_{X,x}$, and are therefore independent of the choice of chart.

On each stratum, the characteristic monoid of $X$ is locally constant.

\begin{definition} \label{def:atomic}
A logarithmic scheme $X$ of finite type is called \emph{atomic} if $\Gamma(X, \overnorm M_X) \to \overnorm M_{X,x}$ is a bijection for all geometric points of the closed stratum and the closed stratum is connected.
\end{definition}

\begin{example}
An affine toric variety with its canonical logarithmic structure is an atomic neighborhood for its unique closed torus orbit.
\end{example}

\begin{lemma}
The closed stratum of an atomic logarithmic scheme $X$ is connected and $\overnorm M_X$ is constant on it.
\end{lemma}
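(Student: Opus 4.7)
My plan is to treat the two assertions of the lemma separately. The connectedness of the closed stratum $Z$ is simply part of Definition~\ref{def:atomic}, so there is nothing to prove there. I would then reduce the remaining assertion to showing that $\overnorm M_X|_Z$ is constant.

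For this, I would rely on the observation recorded in the paragraph immediately preceding Definition~\ref{def:atomic}: on each stratum, the characteristic sheaf $\overnorm M_X$ is locally constant. Consequently $\overnorm M_X|_Z$ is a locally constant \'etale sheaf on the connected topological space $Z$.

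Next, I would invoke the atomic hypothesis that, for every geometric point $z$ of $Z$, the map $\Gamma(X,\overnorm M_X) \to \overnorm M_{X,z}$ is a bijection. The restriction of global sections produces a natural morphism from the constant sheaf on $Z$ with value $\Gamma(X,\overnorm M_X)$ to $\overnorm M_X|_Z$, and by the atomic condition this morphism is bijective on every geometric stalk. Since a stalkwise isomorphism of \'etale sheaves is an isomorphism, I conclude that $\overnorm M_X|_Z$ is constant with value $\Gamma(X,\overnorm M_X)$.

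There is essentially no obstacle here: the content is really just the combination of the stratification observation preceding Definition~\ref{def:atomic} with the bijection condition built into the definition of atomic. The main role of the lemma is to isolate this conclusion in a quotable form for later use.
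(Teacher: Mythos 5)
Your argument is correct and is essentially the paper's own: the paper simply declares the constancy immediate because the atomic condition furnishes a global isomorphism to a constant sheaf on the closed stratum, which is exactly the stalkwise-bijective map from the constant sheaf with value $\Gamma(X,\overnorm M_X)$ that you spell out. The connectedness is indeed just part of Definition~\ref{def:atomic}, so nothing further is needed.
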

\begin{proof}
Assume that $X$ is an atomic logarithmic scheme.  It is immediate that $\overnorm M_X$ is constant on the closed stratum, for we have a global isomorphism to a constant sheaf there, by definition.
\end{proof}

\begin{proposition} \label{prop:atomic}
Suppose that $X$ is a logarithmic scheme of finite type.  Then $X$ has an \'etale cover by atomic logarithmic schemes.
\end{proposition}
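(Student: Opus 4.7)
The strategy is to construct, at each geometric point $x$ of $X$, an atomic étale neighborhood of $x$; the collection of such neighborhoods, ranging over $x$, gives the required cover. Fix such an $x$ and set $P = \overline{M}_{X,x}$, a finitely generated monoid because $X$ is of finite type. A standard construction yields a chart $P \to M_U$ on some étale neighborhood $U$ of $x$ that induces the identity on stalks at $x$. Under this chart the strata of $U$ are indexed by prime ideals of $P$: the stratum through $y \in U$ corresponds to the prime $P \setminus F_y$, where $F_y$ is the face of elements whose image under $\varepsilon$ is a unit at $y$. In particular, the stratum $S_U$ through $x$ corresponds to the maximal ideal of $P$ and is cut out in $U$ by the vanishing of $\varepsilon(\alpha)$ as $\alpha$ runs over the generators of the maximal ideal; hence $S_U$ is closed.

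I would then shrink $U$ in two stages. First, ensure that $S_U$ is connected by replacing $U$ with an open subset containing $x$ that meets $S_U$ only in its connected component through $x$. After this shrinking, $\overline{M}_U$ restricted to $S_U$ is the constant sheaf with value $P$, since the stalk at every geometric point of $S_U$ is $P/\{0\} = P$ through the chart.

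Second, shrink $U$ further so that the chart map $P \to \Gamma(U, \overline{M}_U)$ is a bijection. This is automatic on the strict henselization $U^{\mathrm{sh}}$ of $U$ at $x$: every point of $U^{\mathrm{sh}}$ specializes from $x$, so any section of $\overline{M}$ is determined by its value at $x$ (through the sharp quotient specialization maps), yielding $\Gamma(U^{\mathrm{sh}}, \overline{M}) = \overline{M}_{U^{\mathrm{sh}},x} = P$. Because $U^{\mathrm{sh}}$ is a cofiltered limit of étale neighborhoods of $x$ and the constructible sheaf $\overline{M}$ commutes with such limits, the equality $\Gamma(V, \overline{M}_V) = P$ holds on some actual étale neighborhood $V \subseteq U$ of $x$. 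On such a $V$, the composition $\Gamma(V, \overline{M}_V) \to \overline{M}_{V,y}$ for any geometric point $y \in S_V$ factors as the chart identification $P \to P$, hence is a bijection, and $S_V$ is connected; thus $V$ is atomic.

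I expect the main obstacle to be this final descent from $U^{\mathrm{sh}}$ to a genuine étale neighborhood: one must rule out extra global sections of $\overline{M}$ supported on strata of $U$ whose closures in $U$ do not meet $S_U$. The resolution is a standard noetherian shrinking argument: $\overline{M}_U$ has only finitely many distinct stalk-types (indexed by primes of $P$), and any stratum whose closure misses $S_U$ sits in a closed subset of $U$ disjoint from $x$, which can be excised by passing to a smaller étale neighborhood. Once every remaining stratum has closure meeting the connected $S_V$, sections of $\overline{M}_V$ are determined by their restriction to $S_V$, giving the desired equality with $P$.
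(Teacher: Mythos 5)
Your proof is correct and follows essentially the same route as the paper: produce a chart $P\to M_U$ inducing the identity on the stalk at $x$ (the paper gets this from R\'edei's finite presentation theorem), shrink so that $\Gamma(U,\overnorm M_U)\to\overnorm M_{U,x}$ becomes a bijection, and excise the strata whose closures miss the closed stratum through $x$. The only difference is cosmetic: you justify the shrinking step via the strict henselization and a limit argument, whereas the paper uses finite generation of the kernel of the induced map on associated groups; both reduce to the same constructibility of $\overnorm M_X$ and the same final excision of extraneous strata.
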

\begin{proof}
For each point geometric $x$ of $X$, choose an \'etale neighborhood $U$ of $x$ such that $\Gamma(U, \overnorm M_X) \to \overnorm M_{X,x}$ admits a section.  This is possible because $\overnorm M_{X,x}$ is finitely generated (because of the existence of charts by finitely generated monoids), hence finitely presented by R\'edei's theorem~\cite[Proposition~9.2]{grillet2017semigroups}.  As $\Gamma(U, \overnorm M_X)$ is finitely generated, $\Gamma(U, \overnorm M_X^{\rm gp})$ is a finitely generated abelian group, and therefore the kernel of~\eqref{eqn:42}
\begin{equation} \label{eqn:42}
\Gamma(U, \overnorm M_X^{\rm gp}) \to \overnorm M_{X,x}^{\rm gp}
\end{equation}
is a finitely generated abelian group.  By shrinking $U$, we can therefore ensure it is an isomorphism.  Finally, we delete any closed strata of $U$ other than the one containing $x$.
\end{proof}

\subsubsection{Finite type and finite presentation}
\label{sec:ftfp}

Because we admit logarithmic structures whose underlying monoids are not locally finitely generated, we must adapt the definitions of finite type and finite presentation.

\begin{definition} \label{def:loc-fin}
A morphism of logarithmic schemes $f : X \to Y$ is said to be \emph{locally of finite type} if, locally in $X$ and $Y$, it is possible to construct $X$ relative to $Y$ by adjoining finitely many elements to $\mathcal O_Y$ and $M_Y$, imposing some relations, and then passing to the associated saturated logarithmic structure.  It is said to be \emph{locally of finite presentation} if the relations can also be taken to be finite in number.

We say $X$ is of \emph{finite type} over $Y$ if, in addition to being locally of finite type over $Y$, it is quasicompact over $Y$.  For \emph{finite presentation}, we require local finite presentation, quasicompactness, and quasiseparatedness.
\end{definition}

\begin{lemma}
\begin{enumerate}
\item A logarithmic scheme of finite type over a noetherian scheme (with trivial logarithmic structure) is of finite presentation.
\item A logarithmic scheme of finite type over a logarithmic scheme of finite type is itself of finite type.
\end{enumerate}
\end{lemma}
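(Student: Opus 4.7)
The plan is to reduce both statements to finiteness properties of finitely generated rings and finitely generated monoids. Two tools are central: the Hilbert basis theorem (every finitely generated $\mathbf Z$-algebra is Noetherian), and R\'edei's theorem (every finitely generated commutative monoid is finitely presented), the latter already invoked in the proof of Proposition~\ref{prop:atomic}. With these in hand, both assertions become essentially formal manipulations of local presentations.

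For (1), I would work on a sufficiently small \'etale chart, interpreting ``a logarithmic scheme of finite type'' as one of finite type over $\Spec \mathbf Z$ with trivial logarithmic structure. Such an $X$ is then presented by adjoining finitely many ring generators $t_1, \ldots, t_n$ to $\mathbf Z$ and finitely many monoid generators $\alpha_1, \ldots, \alpha_m$ to the trivial monoid, imposing some, a priori infinite, set of relations (which may include the equations fixing the values $\varepsilon(\alpha_i)$ in terms of the $t_j$), and then passing to the associated integral, saturated logarithmic structure. The ring-theoretic relations generate an ideal of the polynomial ring $\mathbf Z[t_1, \ldots, t_n]$, which is Noetherian by the Hilbert basis theorem, so a finite subset of relations already generates this ideal. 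The monoid-theoretic relations cut out a congruence on a finitely generated free monoid; the quotient monoid is finitely generated, hence finitely presented by R\'edei, so the congruence is generated by finitely many relations as well. Replacing the original relation set by these finite subsets yields the same logarithmic scheme after saturation, since integralization and saturation are left adjoints and so preserve the underlying coequalizer-type reductions. Quasicompactness is granted by hypothesis, and quasiseparatedness follows because the underlying scheme is locally Noetherian (hence quasiseparated) and finite generation of $\overnorm M_X$ on each chart is preserved under further shrinking.

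For (2), the argument is formal. Locally one writes $Y$ as constructed from its base by adjoining finitely many generators with some relations, and $X$ as constructed from $Y$ by adjoining finitely many further generators with relations. Concatenating the two lists of generators and relations produces a local presentation of $X$ over the base with finitely many generators (and possibly infinitely many relations), establishing that $X$ is locally of finite type. Quasicompactness is transitive under composition, so $X$ is also quasicompact, hence of finite type.

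The main technical obstacle I anticipate is in (1): verifying that the reductions of the relation sets commute with the integralization and saturation functors applied at the end of the construction. Formally this reduces to the statement that these are left adjoints between the appropriate categories of monoids and so preserve finite presentations, but one must spell out carefully that passing to a finite generating subset of the ring ideal and a finite generating subset of the monoid congruence before saturating yields a logarithmic scheme genuinely isomorphic to the original, rather than only related by a comparison morphism.
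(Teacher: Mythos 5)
The paper states this lemma without proof, so there is no argument of the authors' to compare yours against; judged on its own, your proposal is correct and supplies exactly the justification one would expect. For (1), the combination of the Hilbert basis theorem for the ideal of ring relations and R\'edei's theorem for the monoid congruence (already cited in the proof of Proposition~\ref{prop:atomic}) is the right mechanism: both closure operators are finitary, so a finitely generated ideal (resp.\ congruence) generated by an arbitrary relation set is already generated by a finite subset of that set, and since saturation is a reflector it sends the two presentations to isomorphic saturated logarithmic structures rather than merely comparable ones. Your point about quasiseparatedness is also fine: over $\mathbf Z$ the saturation of a finitely generated monoid is again finitely generated (Gordan), so the underlying scheme is locally Noetherian, and the pathology of the Remark following the lemma cannot occur absolutely. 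The only place I would ask you to be more explicit is in (2): when you concatenate presentations, the relations of $X$ over $Y$ are equations among the new generators and arbitrary elements of $M_Y$, and an element of $M_Y$ need not be a word in the generators of $Y$ over $\mathbf Z$ --- it is only a root of such a word, since $M_Y$ is obtained by saturating. One therefore has to rewrite each such relation in terms of the combined generators, which may require adjoining auxiliary relations (or invoking that the free-to-$M_Y$ map is an effective epimorphism in the saturated category so that congruences pull back); this is harmless for finite \emph{type}, where infinitely many relations are permitted, but it is the step your phrase ``concatenating the two lists'' quietly absorbs.
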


\begin{remark}
Because we insist on saturated monoids, some unexpected phenomena can occur when working over bases that are not finitely generated.  For example, let $Y$ be a punctual logarithmic scheme whose characteristic monoid is the submonoid of $\mathbf R^2_{\geq 0}$ consisting of all $(a,b)$ such that $a + b \in \mathbf Z$.  Let $X$ be the logarithmic scheme obtained from $Y$ by adjoining $(1,-1)$ to the characteristic monoid.  This can be effected by adjoining an element $\gamma$ to $M_Y$ and imposing the relation $\beta \gamma = \alpha$, where $\alpha$ and $\beta$ are elements of $M_Y$ whose images in $\overnorm M_Y$ are $(1,0)$ and $(0,1)$, respectively.  This requires adjoining $\varepsilon(\gamma)$ to $\mathcal O_Y$.  In the category of not-necessarily-saturated logarithmic schemes, this would suffice to construct $X$ with underlying scheme $\mathbf A^1 \times Y$.

The monoid $\overnorm M_Y[(1,-1)]$ is not saturated, and the saturation $\overnorm M_X$ involves the adjunction of infinitely many additional elements.  Each of these elements requires an image in $\mathcal O_X$, and therefore neither the characteristic monoid nor the underlying scheme of $X$ --- when working with saturated logarithmic schemes --- is finitely generated over $Y$.  However, as a \emph{saturated logarithmic scheme}, $X$ is finitely generated over $Y$ and therefore deserves to be characterized as of finite type.

For further evidence that $X$ should be considered of finite type over $Y$, consider that $Y$ admits a morphism to $Y'$ that is an isomorphism on underlying schemes and such that $\overnorm M_{Y'} = \mathbf N^2 \subset \mathbf R^2$.  Then $X$ is the base change of $X'$, which is representable by a logarithmic structure $\mathbf A^1 \times Y$ (the construction from the first paragraph produces a saturated logarithmic structure when executed over $Y'$).  The morphism $X' \to Y'$ must certainly be considered of finite type.  If finite type is to be a property stable under base change to logarithmic schemes whose logarithmic structures are not necessarily locally finitely generated then we must admit that $X \to Y$ be of finite type as well. 

The following characterization of morphisms locally of finite presentation (Lemma~\ref{lem:lfp-morphism}) gives further justification for our choice of definition (cf., the characterization of morphisms locally of finite presentation in \cite[IV.8.14.2]{EGA}).  It says, effectively, that to specify a morphism from an aribtrary logarithmic scheme $S$ into a logarithmic scheme $X$ of finite presentation requires only finitely many of the data used to construct $S$.
\end{remark}

\numberwithin{equation}{theorem}
\begin{lemma}
\label{lem:lfp-morphism}
A morphism of logarithmic schemes $f : X \to Y$ is locally of finite presentation if and only if, for every cofiltered system of affine logarithmic schemes $S_i$ over $Y$, the map~\eqref{eqn:113} is a bijection:
\begin{equation} \label{eqn:113}
\varinjlim \Hom_Y(S_i, X) \to \Hom_S(\varprojlim S_i, X)
\end{equation}
\end{lemma}
\begin{proof}
First we prove that local finite presentation guarantees that~\eqref{eqn:113} is a bijection.  We demonstrate only the surjectivity, with the injectivity being similar.  Let $S = \varprojlim S_i$ and let $f : S \to X$ be a $Y$-morphism.  Choose covers of $X$ and $Y$ by $U_k$ and $V_k$ such that $U_k$ can be presented with finitely many data and finitely many relations relative to $V_k$. Since $S$ is affine, it is quasicompact, so finitely many of the $U_k$ suffice to cover the image of $S$.  Let $\{ W_k \}$ be a cover $S$ by open affines such that $W_k \subset f^{-1} U_k$ (repeat some of the $U_k$ if $f^{-1} U_k$ is not affine).

The open sets $W_k$ are pulled back from open sets $W_{ik} \subset S_i$ for $i$ sufficiently large, and $S_i = \bigcup W_{ik}$ for $i$ potentially larger.  Since $U_k$ can be presented with finitely many data and finitely many relations, the $V_k$-map $W_k \to U_k$ descends to $W_{ik} \to U_k$ for $i$ sufficiently large.  The maps $W_{ik}$ and $W_{i\ell}$ may not agree on their common domain of definition, but we can cover it with finitely many affines (since $S_i$ is quasiseparated) and therefore arrange for agreement when $i$ is sufficiently large.  This descends $f$ to $S_i$.

Now we consider the converse.  That is, we assume~\eqref{eqn:113} is a bijection for all cofiltered affine systems $\{ S_i \}$ and deduce that $X$ can be defined by finitely many data and finitely many relations relaive to $Y$.  This assertion is local in $X$ and $Y$, so we may assume that $X$ and $Y$ are affine and have global charts.  We argue first that $\mathcal O_X$ and $M_X$ are generated, up to saturation, relative to $\mathcal O_Y$ and $M_Y$ by finitely many elements.  Indeed, we can write the pair $(\mathcal O_X, M_X)$ as a union of finitely generated sub-logarithmic structures $(\mathcal O_{S_i}, M_{S_i})$.  These correspond to maps $S_i \to Y$ and their limit is $X \to Y$.  By~\eqref{eqn:113}, $S_i \to Y$ lifts to $X$ for all sufficiently large $i$ and therefore $(\mathcal O_X, M_X) = (\mathcal O_{S_i}, M_{S_i})$ for all sufficiently large $i$.  This proves that $X$ is locally of finite type over $Y$.

Now we check that $X$ can be defined, relative to $Y$, with only finitely many relations.  Let $(\mathcal O_{S_0}, M_{S_0})$ be freely generated over $(\mathcal O_Y, M_Y)$ by a choice of finitely many generators for $(\mathcal O_X, M_X)$.  Every finite subset of the relations among those generators determines a quotient $(\mathcal O_{S_i}, M_{S_i})$ and a map $S_i \to Y$.  For all sufficiently large~$i$, we get a lift to $X$ by~\eqref{eqn:113}, which means that $S_i = X$ for all sufficiently large $i$.  This completes the proof.
\end{proof}

\subsubsection{Universal surjectivity}
\label{sec:univ-surj}

\begin{definition}
Let $S$ be a logarithmic scheme.  By a \emph{valuative geometric point} of $S$ we will mean a point of $S$ valued in a logarithmic scheme whose underlying scheme is the spectrum of an algebraically closed field and whose characteristic monoid is valuative.
\end{definition}

\begin{proposition}[Gillam]
A morphism of logarithmic schemes $f : X \to Y$ is universally surjective if and only if every valuative geometric point of $Y$ can be lifted to a valuative geometric point of $X$, possibly after enlargement of the residue field and logarithmic structure.  If $f$ is of finite type, no enlargement of the residue field is necessary.
\end{proposition}
\begin{proof}
Suppose first that $f$ is universally surjective.  Let $T \to Y$ be a valuative geometric point.  Replacing $Y$ by $T$ and $X$ by $X \mathop\times_Y T$, we may assume $Y$ is the spectrum of an algebraically closed field $k$ with a valuative logarithmic structure.  Since $f : X \to Y$ is surjective, $X$ is nonempty.  Therefore $X$ has a $K$-point for some extension $K$ of $k$.  Replacing $k$ by $K$, we may assume $X$ has a $k$-point, and then replacing $X$ by that point, we may assume $X$ and $Y$ have the same underlying scheme, $\Spec k$.  Finally, we use Lemma~\ref{lem:extend} to embed $\overnorm M_X$ in a valuative monoid and conclude .

Now suppose that $f : X \to Y$ is surjective on valuative geometric points.  Then this is also true universally, so it is sufficient to prove that $f$ is surjective and therefore to assume $Y$ is the spectrum of an algebraically closed field.  But any monoid can be embedded in a valuative monoid by Lemma~\ref{lem:extend}, so after embedding $\overnorm M_Y$ in a valuative monoid $\overnorm N$ we can construct a morphism $Y' \to Y$, with $\overnorm M_{Y'} = \overnorm N$ valuative, that is an isomorphism on the underlying schemes.  Then $X' = X \mathop\times_Y Y'$ surjects onto $Y'$ by assumption.  As $Y' \to Y$ is surjective, this implies that $X \to Y$ is surjective, as required.
\end{proof}

\subsubsection{Valuative criteria}

\begin{lemma}
Let $S$ be the spectrum of a valuation ring with generic point $\eta$ and assume that $M_\eta$ is a logarithmic structure on $\eta$.  Then there is a maximal logarithmic structure $M$ on $S$ extending $M_\eta$ such that $M^{\rm gp} = M_\eta^{\rm gp}$.  The map $\varrho : M \rightarrow M_\eta$ is relatively valuative.
\end{lemma}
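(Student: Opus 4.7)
The plan is to define $M$ as the submonoid of $M_\eta$ consisting of those $m$ with $\varepsilon_\eta(m) \in R$, where $R$ denotes the valuation ring and $K = \mathcal{O}_\eta$ its fraction field, equipped with the restricted structural map $\varepsilon : M \to R$. As a sheaf on $S$, this extends by declaring sections over an étale open $U \to S$ to be those $m \in M_\eta$ with $\varepsilon_\eta(m) \in \mathcal{O}(U)$. The stalk of $M$ at $\eta$ is then $M_\eta$ (the condition on $\varepsilon_\eta(m)$ becomes vacuous since $\mathcal{O}_\eta = K$), so $j^\ast M = M_\eta$ and $\varrho : M \to M_\eta$ is the canonical inclusion.

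First, I would verify that $(M, \varepsilon)$ is a sharp, integral, saturated logarithmic structure. Integrality is inherited from $M_\eta$. For sharpness, any $m \in M$ with $\varepsilon(m) \in R^\ast$ has $\varepsilon_\eta(m) \in K^\ast$, so $m \in M_\eta^\ast$ by sharpness of $\varepsilon_\eta$; then the bijection $\varepsilon_\eta : M_\eta^\ast \to K^\ast$ restricts to a bijection $\varepsilon^{-1}(R^\ast) \to R^\ast$. Saturation uses that the value group of $R$ is totally ordered: if $\alpha \in M_\eta^{\rm gp}$ with $n\alpha \in M$ and $n > 0$, then $\alpha \in M_\eta$ by saturation of $M_\eta$, and the relation $n \cdot v(\varepsilon_\eta(\alpha)) \geq 0$ forces $v(\varepsilon_\eta(\alpha)) \geq 0$, placing $\alpha \in M$.

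To see $M^{\rm gp} = M_\eta^{\rm gp}$, observe that any $m \in M_\eta$ with $\varepsilon_\eta(m) \notin R$ must lie in $M_\eta^\ast$ by sharpness of $\varepsilon_\eta$, so its inverse $-m \in M_\eta^\ast$ satisfies $\varepsilon_\eta(-m) = \varepsilon_\eta(m)^{-1} \in R$, giving $-m \in M$ and $m \in M^{\rm gp}$. For maximality, let $M'$ be another logarithmic structure on $S$ with $j^\ast M' = M_\eta$ and $(M')^{\rm gp} = M_\eta^{\rm gp}$. The adjunction $M' \to j_\ast j^\ast M' = j_\ast M_\eta$ is injective (since $M' \hookrightarrow (M')^{\rm gp}$ and restriction to the dense open $\eta$ is the identity on the level of associated groups), and the image of any $m \in M'$ satisfies $\varepsilon_\eta(m) = \varepsilon'(m) \in R$, so lies in $M$.

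Finally, for the relative valuativity of $\varrho$: given $\alpha \in M^{\rm gp} = M_\eta^{\rm gp}$ with $\varrho(\alpha) \in M_\eta$, either $\varepsilon_\eta(\alpha) = 0$, in which case $\alpha \in M$ trivially, or $\alpha \in M_\eta^\ast$ with $\varepsilon_\eta(\alpha) \in K^\ast$. In the latter case, because the value group of $R$ is totally ordered, either $v(\varepsilon_\eta(\alpha)) \geq 0$, giving $\alpha \in M$, or $v(\varepsilon_\eta(\alpha)) \leq 0$, giving $\varepsilon_\eta(-\alpha) \in R$ and hence $-\alpha \in M$. The main delicate point is the maximality clause: one must argue carefully that any other such extension sits inside ours via an injective comparison map, which reduces to tracking compatibility between the structural maps $\varepsilon'$ and $\varepsilon_\eta$ under the pullback identification $j^\ast M' = M_\eta$.
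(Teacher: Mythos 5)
Your construction $M = \varepsilon_\eta^{-1}(\mathcal O_S)$ is exactly the paper's (Kato's direct image logarithmic structure), and your verifications of maximality via the fiber-product universal property and of relative valuativity via the valuation-ring dichotomy match the paper's proof step for step. The proposal is correct and takes essentially the same approach, with some extra explicit checks (saturation, $M^{\rm gp} = M_\eta^{\rm gp}$) that the paper leaves implicit.
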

\begin{proof}
Let $\varepsilon : M_\eta \rightarrow \mathcal O_\eta$ be the structure morphism of $M_\eta$.  Define $M = \varepsilon^{-1} \mathcal O_S$. 

Note that $\varepsilon$ restricts to a bijection on $\varepsilon^{-1} \mathcal O_\eta^\ast$, so it also restricts to a bijection on $\varepsilon^{-1} \mathcal O_S^\ast$.  Therefore $\varepsilon : M \rightarrow \mathcal O_S$ is a logarithmic structure.  In fact, it is the direct image logarithmic structure defined more generally by Kato \cite[(1.4)]{kato1989logarithmic}.

The maximality of $M$ is the universal property of the direct image logarithmic structure, which we verify explicitly.  If $M'$ also extends $M_\eta$ then we have a commutative diagram
\begin{equation*} \xymatrix{
M' \ar[r] \ar[d] & M'_\eta \ar[d] \\
\mathcal O_S \ar[r] & \mathcal O_\eta
} \end{equation*}
from which we obtain $M' \rightarrow M$ by the universal property of the fiber product.

We argue $M \rightarrow M_\eta$ is relatively valuative.  Suppose $\alpha \in M^{\rm gp}$ and $\varrho(\alpha) \in M_\eta$.  As $\mathcal O_S$ is a valuation ring, either $\varepsilon(\varrho(\alpha)) \in \mathcal O_S$ or $\varepsilon(\varrho(\alpha)) \in \mathcal O_\eta^\ast$ and $\varepsilon(\varrho(\alpha))^{-1} \in \mathcal O_S$.  In the first case $\alpha \in \varepsilon^{-1} \mathcal O_S$ so $\alpha \in M$ and in the latter case, $\varrho(-\alpha) \in M$ and $-\alpha \in \varepsilon^{-1} \mathcal O_S$ so $-\alpha \in M$.
\end{proof}

\begin{theorem} \label{thm:val-prop}
The morphism of schemes underlying a morphism of logarithmic schemes $X \to Y$ satisfies the valuative criterion for properness if and only if it has the unique right lifting property with respect to inclusions $S \subset \overnorm S$ where $\overnorm S$ is the spectrum of a valuation ring, $S$ is its generic point, $S$ has a valuative logarithmic structure $M_S$, and the logarithmic structure of $\overnorm S$ is the maximal extension of $M_S$.
\end{theorem}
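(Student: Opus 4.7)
The plan is to prove both implications by translating between scheme-theoretic lifting problems and logarithmic ones, using the maximal extension construction from the preceding lemma together with Lemma~\ref{lem:extend} to produce valuative log structures where needed.

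For the forward direction, suppose the underlying scheme morphism satisfies the valuative criterion and let $S \subset \overnorm S$, together with $S \to X$ and $\overnorm S \to Y$, be a log lifting problem of the stated type; write $\overnorm T$ for the underlying scheme of $\overnorm S$ with generic point $T$. On the scheme level we obtain a unique lift $\ell : \overnorm T \to X$. To enhance $\ell$ to a log morphism we must produce a map $\ell^{-1} M_X \to M_{\overnorm S}$ over $\mathcal O_{\overnorm T}$ restricting generically to the given log morphism $S \to X$. For $\alpha$ a local section of $\ell^{-1} M_X$, its image in $M_S$ under $S \to X$ satisfies $\varepsilon(\alpha|_T) = \varepsilon(\alpha) \in \mathcal O_{\overnorm T} \subset \mathcal O_T$, so using the description $M_{\overnorm S} = \varepsilon^{-1}(\mathcal O_{\overnorm T})$ supplied by the preceding lemma, this image lies in $M_{\overnorm S}$. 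Uniqueness of the resulting log lift follows because any two such lifts agree on $T$, and $M_{\overnorm S}$ embeds in $j_\ast M_S$ (where $j : T \hookrightarrow \overnorm T$), so a map into $M_{\overnorm S}$ is determined by its restriction to $T$.

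For the converse, suppose the log lifting property holds and let $f : T \to X$, $g : \overnorm T \to Y$ be a scheme-theoretic lifting problem. Equip $\overnorm T$ with $M_{\overnorm T} := g^\ast M_Y$ and $T$ with $M_T := f^\ast M_X = M_{\overnorm T}|_T$. Applying Lemma~\ref{lem:extend} to extend the partial order on $M_T^{\rm gp}$ to a total one produces a valuative enlargement $M_S \supseteq M_T$ on $T$, and the inclusion upgrades $f$ to a log morphism $S := (T, M_S) \to X$. Take $M_{\overnorm S}$ to be the maximal extension of $M_S$ to $\overnorm T$. To upgrade $g$ to $\overnorm S \to Y$ we repeat the restriction trick of the forward direction: for $\alpha \in M_{\overnorm T}$, the section $\alpha|_T \in M_T \subseteq M_S$ satisfies $\varepsilon(\alpha|_T) \in \mathcal O_{\overnorm T}$, so $\alpha|_T \in M_{\overnorm S}$, producing the needed map $M_{\overnorm T} \to M_{\overnorm S}$. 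The hypothesis now yields a unique log lift $\overnorm S \to X$, whose underlying scheme morphism is the desired $\ell : \overnorm T \to X$. Uniqueness of $\ell$ is verified by applying the same restriction construction to $(\ell')^\ast M_X$ for any competing scheme lift $\ell'$, producing a second log lift that uniqueness in the log lifting property then forces to coincide with the first.

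The main obstacle will lie in the converse direction: ensuring that the hypothesis can genuinely be invoked requires arranging that $M_S$ is valuative (handled by Lemma~\ref{lem:extend}) and that the pullback log structure $M_{\overnorm T}$ from $Y$ lands in the maximal extension $M_{\overnorm S}$, which is no longer an instance of the maximal extension's universal property because $M_{\overnorm T}$ does not restrict to $M_S$ on $T$ but only to $M_T$. Both steps, however, reduce to essentially formal verifications built around the explicit description $M_{\overnorm S} = \varepsilon^{-1}(\mathcal O_{\overnorm T})$, which keeps the argument from requiring any input beyond the preceding lemma and Lemma~\ref{lem:extend}.
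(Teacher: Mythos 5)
Your proof is correct and follows essentially the same route as the paper's: both arguments turn on the fact that the maximal extension is the fiber product $M_{\overnorm S} = j_\ast M_S \times_{j_\ast \mathcal O_S} \mathcal O_{\overnorm S}$ (your $\varepsilon^{-1}(\mathcal O_{\overnorm T})$), which makes a log lift over the valuation ring a unique extension of the generic-fibre data, and both use Lemma~\ref{lem:extend} to manufacture the valuative log structure needed to invoke the hypothesis in the converse direction; the paper merely packages the two implications as a single bijection between lifts of two nested squares in one diagram. One small correction: $f^\ast M_X$ and $g^\ast M_Y\big|_T$ are not equal in general --- there is only the canonical map $g^\ast M_Y\big|_T \to f^\ast M_X$ induced by $X \to Y$ --- but your argument only ever uses that map, so nothing breaks.
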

\begin{proof} 
Let $S = \Spec K$ and $\overnorm S = \Spec R$, and let $j : S \to \overnorm S$ be the inclusion.  Let $M_K$ be a logarithmic structure on $S$ and let $M_R$ be the maximal logarithmic structure extending $M_K$ to $R$.  Let $M'_K$ be a valuative logarithmic structure on $K$ extending $M_K$ and contained in $M_K^{\rm gp}$ (whose existence is guaranteed by Lemma~\ref{lem:extend}), and let $M'_R$ be its maximal extension to $R$, which is valuative.  We consider a lifting problem~\eqref{eqn:32} with $M_S$ pulled back from $X$:
\begin{equation} \label{eqn:32} \vcenter{ \xymatrix{
S' \ar[r] \ar[d] & S \ar[r] \ar[d] & X \ar[d] \\
\overnorm S' \ar[r] \ar@{-->}[urr]^(.33)f & \overnorm S \ar[r]^(.55)h \ar@{-->}[ur]_(.6)g & Y
}} \end{equation}
Note that the valuative criterion for properness for the underlying schemes of $X$ over $Y$ is equivalent to the existence of a unique arrow lifting the square on the right and that the assertion of the theorem is therefore that lifts of the square on the right are in bijection with lifts of the outer rectangle.  Let us assume $f$ has been specified and show that there is a unique choice of $g$.

We draw the maps of monoids and rings implied by~\eqref{eqn:32}:
\begin{equation*} \xymatrix{
K \ar@{}[dr]|{\scriptstyle\mycirc{A}} & \ar@{}[dr]|{\scriptstyle\mycirc{B}}\ar[l] j_\ast M'_K & j_\ast M_K \ar[l] & f^\ast M_X \ar[l] \ar[dll] \ar@{-->}[dl] \\
R \ar[u] & \ar[l] M'_R \ar[u] & M_R \ar[u] \ar[l] & h^\ast M_Y \ar[u] \ar[l]
} \end{equation*}
By definition of the maximal extension of a logarithmic structure, the rectangles $A$ and $A \cup B$ are cartesian.  Therefore $B$ is cartesian and we get a unique dashed arrow by the universal property of fiber product.
\end{proof}

\subsubsection{Logarithmic modifications and root stacks}
\label{sec:log-mod}

\begin{definition} \label{def:log-mod}
Let $X$ be a logarithmic scheme.  A \emph{logarithmic modification} is a morphism $Y \to X$ that is, locally in $X$, the base change of a toric modification (proper, birational, toric morphism) of toric varieties.

More generally, we say that a morphism of presheaves $G \to F$ or fibered categories on the category of logarithmic schemes is a logarithmic modification if, for every logarithmic scheme $X$ and morphism $X \to F$, the base change $X \mathop\times_F G \to X$ is a logarithmic modification.
\end{definition}

Let $X$ be a logarithmic scheme and let $\gamma$ and $\delta$ be two sections of $\overnorm M_X^{\rm gp}$.  We say that $\gamma$ and $\delta$ are \emph{locally comparable} on $X$ if, for each geometric point $x$ of $X$, we have $\gamma \leq \delta$ or $\delta \leq \gamma$ at $x$ (in other words, $\delta - \gamma \in \overnorm M_{X,x}$ or $\gamma - \delta \in \overnorm M_{X,x}$).

Given $X$, $\gamma$, and $\delta$, as above, but not necessarily locally comparable, the property of local comparability defines a subfunctor of the one represented by $X$.  That is, we can make the following definition:
\begin{equation*}
Y(W) = \{ f : W \to X \: \big| \: \text{$f^\ast \gamma$ and $f^\ast \delta$ are locally comparable} \}
\end{equation*}
Then $Y$ is representable by a logarithmic modification of $X$.  Indeed, locally in $X$, we can find a morphism $X \to \mathbf A^2$, with the target given its standard logarithmic structure, such that $\gamma$ and $\delta$ are pulled back from the canonical generators of the characteristic monoid of $\mathbf A^2$.  Then $Y$ is the pullback of the blowup of $\mathbf A^2$ at the origin.

\begin{definition} \label{def:root-stack}
Let $X$ be a logarithmic scheme and let $\overnorm M_X^{\rm gp} \subset \overnorm N^{\rm gp} \subset \mathbf Q \overnorm M_X^{\rm gp}$ be a locally finitely generated extension of $\overnorm M_X^{\rm gp}$ (a \emph{Kummer extension}).  Define $Y$ to be the following subfunctor of the one represented by $X$:
\begin{equation*}
Y(U) = \{ f : W \to X \: \big| \: \text{$f^\ast \overnorm M_X^{\rm gp} \to \overnorm M_W^{\rm gp}$ factors through $f^\ast \overnorm N^{\rm gp}$} \}
\end{equation*}
An algebraic stack with a logarithmic structure that represents $Y$ is called the \emph{root stack} of $X$ along $\overnorm N^{\rm gp}$.
\end{definition}

We can give a concrete description of the root stack representing $Y$ as follows. Working locally, we may assume that $X$ has a global chart $X \rightarrow \Spec \ZZ[P]$ for a sharp, integral, saturated monoid $P$, with $\overnorm{M}_X(X)^{\rm gp} = P^{\rm gp}$. The Kummer extension $\overnorm N^{\rm gp}$ is then determined by a finitely generated extension of lattices $P^{\rm gp} \to Q^{\rm gp}$ with $Q^{\rm gp}/P^{\rm gp}$ finite. The homomorphism $P^{\rm gp} \to Q^{\rm gp}$ gives rise to a homomorphism of tori $\Spec \ZZ[Q^{\rm gp}] \to \Spec \ZZ[P^{\rm gp}]$ with finite kernel $K$. If we think of $P$ as the intersection of a cone $C$ in $P^{\rm gp} \otimes \mathbf R$ with the lattice $P^{\rm gp}$, then $P^{\rm gp} \to Q^{\rm gp}$ determines a monoid $Q: = C \cap Q^{\rm gp}$, and the root stack representing $Y$ is explicitly given by the quotient stack
\begin{equation*}
[X \times_{\Spec \ZZ[P]} \Spec \ZZ[Q]/K]
\end{equation*} 
with its logrithmic structure descended from $X \times_{\Spec \ZZ[P]} \Spec \ZZ[Q]$. 

\begin{remark}
We note that in the category of logarithmic schemes, the structure maps of both logarithmic modifications and root stacks of $X$ are \emph{monomorphisms}, despite the fact that the map from the underlying scheme or stack of the modification or root stack to the underlying scheme of $X$ is far from a monomorphism.  
\end{remark}

\subsubsection{The logarithmic multiplicative group}
\label{sec:logGm}

\begin{definition}
Define functors $\LogSch^{\rm op} \rightarrow \mathbf{Sets}$ by the following formulas:
\begin{align*}
\logGm(S) & = \Gamma(S, M_S^{\rm gp}) \\
\ologGm(S) & = \Gamma(S, \overnorm M_S^{\rm gp})
\end{align*}
We call the first of these the \emph{logarithmic multiplicative group}.
\end{definition}

\begin{proposition} \label{prop:nonrep}
Neither $\logGm$ nor $\ologGm$ is representable by an algebraic stack with a logarithmic structure.
\end{proposition}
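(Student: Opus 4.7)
The plan is to argue by contradiction. Assume $(X, M_X)$ represents $\ologGm$ with universal $\overnorm u \in \Gamma(X, \overnorm M_X^{\rm gp})$, and consider the log point $S = \Spec k$ (with $k$ algebraically closed) equipped with characteristic $\overnorm M_S = \mathbf N^2$, together with the section $\overnorm\alpha = (1,-1) \in \mathbf Z^2 = \overnorm M_S^{\rm gp}$. Representability supplies a unique $f \colon S \to X$ with $f^\ast \overnorm u = \overnorm\alpha$; set $x = f(s)$ and let $\phi \colon \overnorm M_{X,x} \to \mathbf N^2$ be the induced monoid homomorphism on stalks, so $\phi^{\rm gp}(\overnorm u_x) = (1,-1)$. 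I now do a case analysis on where $\overnorm u_x$ sits in $\overnorm M_{X,x}^{\rm gp}$: if $\overnorm u_x \in \overnorm M_{X,x}$ then $\phi(\overnorm u_x) \in \mathbf N^2$ but $(1,-1) \notin \mathbf N^2$, and the case $-\overnorm u_x \in \overnorm M_{X,x}$ is symmetric.

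In the remaining case $\overnorm u_x \notin \overnorm M_{X,x} \cup (-\overnorm M_{X,x})$, I produce a second homomorphism to contradict uniqueness. Since $\overnorm M_{X,x}$ is saturated, $\overnorm u_x$ also lies outside the rational cone $\overnorm M_{X,x} \otimes \mathbf Q_{\geq 0}$ and outside its negative inside $\overnorm M_{X,x}^{\rm gp} \otimes \mathbf Q$. Convex separation yields a nonzero rational linear functional $\pi$ that is nonnegative on the cone and vanishes at $\overnorm u_x$; clearing denominators gives an integer monoid homomorphism $\chi \colon \overnorm M_{X,x} \to \mathbf N$ with $\chi^{\rm gp}(\overnorm u_x) = 0$. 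Composing with the inclusion $\mathbf N \hookrightarrow \mathbf N^2$ into one coordinate produces a nonzero $\chi' \colon \overnorm M_{X,x} \to \mathbf N^2$ killing $\overnorm u_x$, so $\phi + \chi'$ is a second monoid homomorphism sending $\overnorm u_x$ to $(1,-1)$, contradicting the uniqueness of $f$.

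The main obstacle is handling the case when $\overnorm M_{X,x}$ is not finitely generated, as is permitted by the conventions of Section~\ref{sec:log-str}. Finite generation makes the separation argument routine; at the opposite extreme, when $\overnorm M_{X,x}$ is divisible (for example $\mathbf Q_{\geq 0}^r$), one instead finds $\Hom(\overnorm M_{X,x}, \mathbf N^2) = 0$, so $\phi$ already fails to exist and the existence part of representability breaks down. The argument for $\logGm$ is identical once one lifts $\overnorm\alpha$ to some $\alpha \in \Gamma(S, M_S^{\rm gp})$: the obstruction lives entirely at the level of characteristics, and the same dichotomy of non-existence or non-uniqueness of $\phi$ propagates to the morphism $f$ itself.
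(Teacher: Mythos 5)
Your strategy is genuinely different from the paper's, and its skeleton is viable, but the crucial step of your Case 2 has a hole that you flag and then do not close. The paper's conventions (Definition~\ref{def:log-str}) require only that $\overnorm M_{X,x}$ be integral and saturated, and a hypothetical representing stack is under no obligation to have finitely generated stalks; your convex-separation construction of $\chi$ is only ``routine'' for rational polyhedral cones, and your discussion of the general case treats only the divisible extreme --- where, moreover, the logic is inverted: if $\Hom(\overnorm M_{X,x}, \mathbf N^2) = 0$, then the sharp $\phi$ induced by the assumed $f$ forces $\overnorm M_{X,x} = 0$, which contradicts $\phi^{\rm gp}(\overnorm u_x) = (1,-1)$; it is not that ``$\phi$ fails to exist.'' Fortunately the separation machinery is unnecessary: writing $\phi = (\phi_1, \phi_2)$, the homomorphism $\chi = \phi_1 + \phi_2 \colon \overnorm M_{X,x} \to \mathbf N$ satisfies $\chi^{\rm gp}(\overnorm u_x) = 1 + (-1) = 0$ and is nonzero because $\phi$ is sharp and $\overnorm M_{X,x} \neq 0$; this works for an arbitrary stalk. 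A second point you should make explicit: a new homomorphism of \emph{characteristic} monoids is not yet a new morphism of log schemes $S \to X$. You must lift the perturbation to the log structures compatibly with $\varepsilon$; this does work --- choose a splitting $M_S \cong \mathcal O_S^\ast \oplus \mathbf N^2$ and send $m \mapsto f^\flat(m)\cdot(1, \chi'(\overnorm m))$, compatibility with $\varepsilon$ again following from sharpness of $\phi$ --- but it is a step, not a triviality, and it is where the hypothesis that $S$ is a log point is actually used.

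For comparison, the paper avoids inspecting the representing object's stalks altogether: it fixes the section $e_2$ over $S_0$ with $\overnorm M_{S_0} = \mathbf N e_1 + \mathbf N e_2$ and factors the classifying map through the family $S_t$ with $\overnorm M_{S_t} = \mathbf N e_1 + \mathbf N(e_2 + t e_1)$, all sharing the same groupification; the image of $\overnorm M_X$ must then land in $\bigcap_t \overnorm M_{S_t} = \mathbf N e_1$, and $e_2 \notin \mathbf Z e_1$. That argument derives its contradiction from existence alone, with no perturbation or uniqueness step, which is precisely why it sidesteps the finite-generation issue your route runs into. Once patched as above, your proof is a legitimate alternative.
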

\begin{proof}
We will treat $\logGm$.  The argument is essentially the same with $\ologGm$.

Suppose that there is an algebraic stack $X$ with a logarithmic structure representing $\logGm$.  Let $S_0$ be the spectrum of a field $k$, equipped with a logarithmic structure $k^\ast \times (\mathbf N e_1 + \mathbf N e_2)$.  The element $e_2$ gives a map $f : S_0 \rightarrow X$, hence $f^\ast \overnorm M_X \rightarrow \overnorm M_{S_0}$. 

Now, for each $t \in \mathbf Z$, let $S_t$ have the same underlying scheme as $S_0$, with the logarithmic structure $k^\ast \times (\mathbf N e_1 + \mathbf N (e_2 + t e_1))$.  Then $M_{S_t}^{\rm gp} = M_{S_0}^{\rm gp}$ for all $t$, so the map $S_0 \rightarrow X$ factors through $S_0 \subset S_t$ for all $t \geq 0$.  Therefore the map $f^\ast \overnorm M_X \rightarrow \overnorm M_{S_0}$ factors through $\overnorm M_{S_t}$ for all $t \geq 0$.  Thus $\overnorm M_X \rightarrow \overnorm M_{S_0}$ factors throught $\bigcap_{t} \overnorm M_{S_t} = \mathbf N e_1$.  But the element $e_2 \in \Gamma(S_0, M_{S_0}^{\rm gp})$ is clearly not induced from an element of $\mathbf Z e_1$.
\end{proof}

\begin{lemma} \label{lem:logGm-P1}
Let $\mathbf P$ be the subfunctor of $\logGm$ whose $S$ points consist of those $\alpha \in \logGm(S)$ that are locally (in $S$) comparable to $0$.
\begin{enumerate}
\item $\mathbf P$ is isomorphic to $\mathbf P^1$ with its toric logarithmic structure.
\item $\mathbf P$ is a logarithmic modification of $\logGm$.
\end{enumerate}
\end{lemma}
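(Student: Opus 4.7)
The plan is to establish (1) directly from the universal property of the toric $\mathbf P^1$, and then deduce (2) from the local blowup construction of comparability given just before Definition~\ref{def:root-stack}.

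For (1), I would cover the toric $\mathbf P^1$ by its two standard affine charts $U_+ \cong \mathbf A^1$ and $U_- \cong \mathbf A^1$ (each with the toric log structure coming from the origin), glued along $\Gm$. By the universal property of the toric $\mathbf A^1$, a morphism of logarithmic schemes $S \to U_+$ is the same as a section $\beta \in \Gamma(S, M_S)$, and its composition with $U_+ \hookrightarrow \mathbf P^1 \to \logGm$ lands at the image of $\beta$ in $M_S^{\rm gp}$, hence at a section which is $\geq 0$. Symmetrically, a morphism $S \to U_-$ corresponds to some $\gamma \in \Gamma(S, M_S)$ that maps to $-\gamma \leq 0$ in $\logGm(S)$, and the two descriptions agree on the overlap $\Gm$, where the relevant sections of $M_S$ are units. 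Therefore a morphism $S \to \mathbf P^1$ is exactly a section $\alpha \in \Gamma(S, M_S^{\rm gp})$ such that \'etale-locally on $S$ either $\alpha \in M_S$ or $-\alpha \in M_S$, which is precisely the definition of $\mathbf P(S)$.

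For (2), given a logarithmic scheme $T$ with a morphism $T \to \logGm$ corresponding to a section $\alpha \in \Gamma(T, M_T^{\rm gp})$, I would work \'etale-locally on $T$ and write $\alpha = \beta - \gamma$ with $\beta, \gamma \in \Gamma(T, M_T)$; this is possible because $M_T^{\rm gp}$ is the sheafified group completion of $M_T$. The pair $(\beta, \gamma)$ then defines a morphism $T \to \mathbf A^2$ (with the standard toric log structure), and the subfunctor of $T$ on which $\alpha$ is locally comparable to $0$ coincides with the subfunctor on which $\beta$ and $\gamma$ are locally comparable. By the construction preceding Definition~\ref{def:root-stack}, the latter is the pullback of the blowup of $\mathbf A^2$ at the origin along $T \to \mathbf A^2$, hence a logarithmic modification of $T$.

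The step I expect to require the most care is checking that the local blowups arising from different decompositions $\alpha = \beta - \gamma$ (and from different \'etale neighborhoods) glue to a well-defined logarithmic modification of $T$. By Yoneda this should be automatic, since each local construction represents the same subfunctor $\mathbf P \times_{\logGm} T$ of $T$, so any two are canonically and uniquely isomorphic on overlaps. This verifies Definition~\ref{def:log-mod} for $\mathbf P \to \logGm$.
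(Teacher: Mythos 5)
Your proposal is correct and follows essentially the same route as the paper: part (1) is the paper's identification of the two loci $\alpha \geq 0$ and $\alpha \leq 0$ with the standard affine charts of the toric $\mathbf P^1$, and part (2) is the paper's local reduction to a toric modification, since pulling back the blowup of $\mathbf A^2$ at the origin along $(\beta,\gamma)$ is exactly the subdivision of the local cone along the hyperplane where $\overnorm\alpha = \overnorm\beta - \overnorm\gamma$ vanishes. The Yoneda remark correctly disposes of the gluing issue, which the paper handles the same way (implicitly, via the functorial formulation of Definition~\ref{def:log-mod}).
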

\begin{proof}

Note that the logarithmic structure $M_{\mathbf A^2}$ has two tautological sections, $\alpha$ and $\beta$, coming from the two projections to $\mathbf A^1$.  The difference of these sections determines a map $\mathbf A^2 \to \Gm^{\log}$.  The open subset $\mathbf A^2 - \{ 0 \}$ may be presented as the union of the loci where $\alpha \geq 0$ and where $\beta \geq 0$, which coincide, respectively, with the loci where $\alpha - \beta \leq 0$ or $\alpha - \beta \geq 0$.  We note that adjusting $\alpha$ and $\beta$ simultaneously by the same unit leaves $\alpha - \beta$ unchanged, so that we have constructed a map $\mathbf P^1 \to \mathbf P$.

To see that this is an isomorphism, consider the open subfunctors of $\mathbf P$ where $\alpha \geq 0$ and where $\alpha \leq 0$.  These are each isomorphic to $\mathbf A^1$ and their preimages under $\mathbf P^1 \to \mathbf P$ are the two standard charts of $\mathbf P^1$.

Finally, we verify that $\mathbf P \to \logGm$ is a logarithmic modification.  We need to show that if $Z$ is a logarithmic scheme and $Z \to \Gm^{\log}$ is any morphism then $Z \mathop\times_{\logGm} \mathbf P \to Z$ is a logarithmic modification.  This is a local assertion in $Z$, and section in $M_Z^{\rm gp}$ is locally pulled back from a logarithmic map to an affine toric variety, so we can assume $Z$ is an affine toric variety with cone $\sigma$.  

Let $\overnorm\alpha$ be the image of $\alpha$ in $\overnorm M_Z^{\rm gp}$.  We can regard sections of $\overnorm M_Z^{\rm gp}$ as linear functions with integer slope on the ambient vector space of $\sigma$.  Then $Z \mathop\times_{\Gm^{\log}} \mathbf P$ is representable by the subdivision of $\sigma$ along the hyperplane where $\overnorm\alpha$ vanishes.
\end{proof}

\begin{corollary} \label{cor:log-covs}
Both $\logGm$ and $\ologGm$ have logarithmically smooth covers by logarithmic schemes.
\end{corollary}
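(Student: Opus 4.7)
The plan is to take $\mathbf P \to \logGm$ from Lemma~\ref{lem:logGm-P1} as the cover of $\logGm$. Since $\mathbf P \simeq \mathbf P^1$ as a log scheme, this gives a cover by a log scheme, and since $\mathbf P \to \logGm$ is a logarithmic modification, it is logarithmically étale, hence logarithmically smooth.

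The only nontrivial point to verify is universal surjectivity. I would invoke the criterion of Section~\ref{sec:univ-surj} and check surjectivity on valuative geometric points: given $S$ whose underlying scheme is the spectrum of an algebraically closed field and whose characteristic monoid $\overnorm M_S$ is valuative, and given $\alpha \in \logGm(S) = \Gamma(S, M_S^{\rm gp})$, the image $\overnorm\alpha \in \overnorm M_S^{\rm gp}$ lies in either $\overnorm M_S$ or $-\overnorm M_S$ by the valuative property. Thus $\overnorm\alpha$ is comparable to $0$, so $\alpha$ lifts to $\mathbf P(S)$.

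For $\ologGm$, the natural forgetful map $\logGm \to \ologGm$ realizes $\logGm$ as a $\Gm$-torsor over $\ologGm$ (its fibers are $\mathcal O_S^\ast$-torsors), which is strict and smooth, hence logarithmically smooth and surjective. The composition $\mathbf P \to \logGm \to \ologGm$ then furnishes a logarithmically smooth universally surjective cover of $\ologGm$ by a log scheme. The main point to secure — and essentially the only obstacle beyond routine bookkeeping — is the valuative observation above, which forces any section of a valuative group to be comparable to $0$ and thereby reduces the corollary to the content of Lemma~\ref{lem:logGm-P1}.
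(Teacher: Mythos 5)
Your proposal is correct and follows essentially the same route as the paper: the paper also takes the $\mathbf P^1$-cover of $\logGm$ from Lemma~\ref{lem:logGm-P1} and obtains the cover of $\ologGm = \logGm/\Gm$ by passing through the quotient (it writes the cover as $[\mathbf P^1/\Gm]$, which composed with $\mathbf P^1 \to [\mathbf P^1/\Gm]$ is exactly your composite $\mathbf P \to \logGm \to \ologGm$). Your explicit check of universal surjectivity via valuative geometric points is a correct, if slightly redundant, supplement, since a logarithmic modification is already universally surjective by construction.
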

\begin{proof}
We have just seen that $\logGm$ has a logarithmically \'etale cover by $\mathbf P^1$, and therefore $\ologGm = \logGm / \Gm$ has a logarithmically \'etale cover by $[ \mathbf P^1 / \Gm ]$.
\end{proof}

\begin{proposition} \label{prop:ologGm-zero}
The inclusion of the origin in $\ologGm$ is representable by affine logarithmic schemes of finite type.
\end{proposition}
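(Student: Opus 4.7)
The plan is to reduce to an \'etale-local computation on $S$ and then exhibit the fiber explicitly. The assertion that $\{0\} \to \ologGm$ is representable by affine logarithmic schemes of finite type is \'etale local on the target $S$ of a given morphism $S \to \ologGm$. So first I would shrink $S$ so that the morphism is classified by a section $\overnorm\alpha \in \Gamma(S, \overnorm M_S^{\rm gp})$ that lifts to a section $\alpha \in \Gamma(S, M_S^{\rm gp})$, and write $\alpha = \beta - \gamma$ for some $\beta, \gamma \in \Gamma(S, M_S)$; all of these reductions are valid \'etale-locally.

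The next step is to unpack the universal property of the fiber $F = S \times_{\ologGm} \{0\}$. For $T$ a log scheme over $S$, the composite $T \to \ologGm$ is zero if and only if $\overnorm\alpha\big|_T = 0$ in $\overnorm M_T^{\rm gp}$. Because $\overnorm M_T$ is sharp and integral, this is equivalent to $\overnorm\beta\big|_T = \overnorm\gamma\big|_T$ in $\overnorm M_T$; and by the integrality of $M_T$ this in turn is equivalent to the existence of a (necessarily unique) unit $u \in \Gamma(T, \mathcal O_T^\ast)$ with $\beta\big|_T = u \cdot \gamma\big|_T$ in $M_T$.

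I would then show that this functor is represented by the log scheme $F$ obtained from $S$ by adjoining a unit $u$ and imposing the relation $\beta = u\gamma$ in $M$. Explicitly, the underlying scheme is $\Spec_S\bigl( \mathcal O_S[u, u^{-1}] / (\varepsilon(\beta) - u \varepsilon(\gamma)) \bigr)$, and the logarithmic structure is obtained from the pullback of $M_S$ by adjoining $u$ as a new unit and identifying $\overnorm\beta$ with $\overnorm\gamma$ in $\overnorm M_F$. Because $F$ is cut out of $S \times \Gm$ by a single equation, it is visibly affine and of finite type over $S$. Any two choices of lifts $(\beta, \gamma)$ and $(v\beta, w\gamma)$ with $v, w \in \mathcal O_S^\ast$ give canonically isomorphic representing objects via the substitution $u \mapsto u v w^{-1}$, so the local constructions glue to a global $F$.

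The main technical point requiring care is the bookkeeping of the logarithmic structure on $F$. Concretely, one must verify that ``adjoining $u$ and imposing $\beta = u\gamma$'' yields a well-defined integral, saturated log structure whose characteristic $\overnorm M_F$ is precisely the saturated, integral quotient of $\overnorm M_S$ identifying $\overnorm\beta$ with $\overnorm\gamma$ (in this case no further saturation is needed since only a unit has been added), and that it correctly represents the functor described above. Once this is checked, the affineness and finite type assertions follow immediately from the explicit presentation.
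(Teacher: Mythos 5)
Your overall strategy is the same as the paper's: adjoin a unit $u$ (equivalently, pass to the total space of the torsor $\mathcal O_S^\ast(\alpha)$), so that $\alpha$ acquires a tautological lift $\beta - \gamma - u$ to $M^{\rm gp}$, and then cut out the closed locus where the structure map $\varepsilon$ descends to the quotient of $M_S$ killing this lift. The gap is in the key step: the single equation $\varepsilon(\beta) - u\,\varepsilon(\gamma) = 0$ does not in general cut out that locus. The condition for the quotient logarithmic structure to exist is that $\varepsilon(m') = \lambda u^n \varepsilon(m)$ for \emph{every} pair of local sections $m, m' \in M_S$ with $\overnorm{m'} = \overnorm{m} + n\overnorm\alpha$ (here $\lambda$ is the unique unit with $m'\gamma^n = \lambda m \beta^n$), and these relations generate a possibly strictly larger ideal than the one you impose. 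Concretely, take $\overnorm M_S = \mathbf N^4/(\overnorm\beta + \overnorm m = \overnorm\gamma + \overnorm{m'})$ charting $\mathcal O_S = k[x,y,z,w]/(xz - yw)$ with $\varepsilon(\beta) = x$, $\varepsilon(\gamma) = y$, $\varepsilon(m) = z$, $\varepsilon(m') = w$, and $\overnorm\alpha = \overnorm\beta - \overnorm\gamma = \overnorm{m'} - \overnorm{m}$. Any $T \to S$ with $\overnorm\alpha\big|_T = 0$ must satisfy both $x = uy$ and $w = uz$, but $w - uz$ does not lie in the ideal $(xz - yw,\ x - uy)$: after substituting $x = uy$ one gets $k[y,z,w,u,u^{-1}]/\bigl(y(uz - w)\bigr)$, in which $uz - w$ is nonzero. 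So your $F$ is too large: over the locus $y = 0$, $uz \neq w$ the advertised logarithmic structure cannot exist (the map $\varepsilon$ does not factor through the quotient), so $F$ as described is not even a logarithmic scheme representing the functor. The fix is exactly what the paper does: impose the entire ideal generated by all such differences. The finite-type conclusion survives because the paper's Definition~\ref{def:loc-fin} only requires finitely many \emph{generators}, not finitely many relations — but your justification ("cut out by a single equation") is not the right one.

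A second, smaller inaccuracy: the parenthetical claim that no saturation is needed is also false. The image of $\overnorm M_S$ in $\overnorm M_S^{\rm gp}/\mathbf Z\overnorm\alpha$ can fail to be saturated even though only a unit has been adjoined to $M_S$ — for instance $\overnorm M_S = \mathbf N^2$ with $\overnorm\alpha = (2,-2)$ produces a quotient in which $\overnorm e_1 - \overnorm e_2$ is $2$-torsion, hence lies in the saturation but not in the image. This is why the paper's proof ends by explicitly passing to the associated saturated logarithmic scheme, and your argument needs the same step.
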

\begin{proof} 
Suppose that $S$ is a logarithmic scheme and $S \to \ologGm$ is a morphism corresponding to a section $\alpha \in \Gamma(S, \overnorm M_S^{\rm gp})$.  Let $N$ be the submonoid of $M_S^{\rm gp}$ generated by $M_S$ and $\mathcal O_S(\alpha)$.  The assertion is local (in the Zariski topology, say) of $S$, so we may choose a local trivialization $\tilde\alpha$ of $\alpha$.  Then the pullback of the origin in $\ologGm$ to $S$ is represented by $\Spec \mathcal O_S[t, t^{-1}] / I$ where $t$ and $t^{-1}$ are indeterminates representing the images of $\tilde\alpha$ and $\tilde\alpha^{-1}$ and $I$ is the ideal of relations necessary to permit a monoid homomorphism $N \to \mathcal O_S[t,t^{-1}]$ that restricts to $\varepsilon : M_S \to \mathcal O_S$ on $M_S$ and sends $\tilde\alpha$ to  $t$.  This gives the universal (not necessarily saturated) logarithmic scheme over $S$ on which $\alpha$ restricts to $0$, and saturation completes the proof.
\end{proof}

\begin{example}
Consider the map $\AA^2 \rightarrow \ologGm$ given by twice the difference $2e_2-2e_1$ of the two generators $e_1,e_2 \in (\overline{M}_{\AA^2}(\AA^2))^{\rm gp}$. A map $T \rightarrow \AA^2 \times_{\ologGm} 0$ corresponds to two sections $x,y \in M_T(T)$ such that $y^2/x^2$ is a unit. Thus, the fiber product is representable by the closed subscheme $Z \subset \AA^2 \times \Gm$ defined by the ideal $x^2-ty^2$, with logarithmic structure induced from $\AA^2$ but with the relation that $x^2=ty^2$ required to hold in the logarithmic structure as well. Passing to the saturation gives the logarithmic scheme representing $\AA^2 \times_{\ologGm} 0$ in the category of saturated logarithmic schemes.   
\end{example}

\subsection{Tropical geometry}
\label{sec:trop}

\subsubsection{Tropical moduli problems}
\label{sec:trop-mod}

We summarize \cite{cavalieri2017moduli}.  For the purposes of this paper, a tropical moduli problem is a covariant functor on, or category covariantly fibered in groupoids over, the category of integral, saturated, sharp, commutative monoids.  Such a moduli problem extends automatically, in a canonical fashion, to one defined on all integral, saturated, sharp, commutative monoidal spaces, and even all such monoidal topoi.  In particular, it extends to logarithmic schemes, by regarding logarithmic schemes as monoidal topoi by way of the characteristic monoid.

There are two ways to produce this extension of the moduli problem.  The first, and perhaps simpler, of the two is to extend a moduli problem $F$ on commutative monoids to one defined on monoidal spaces (or topoi) by setting $F(S) = F(\Gamma(S, \overnorm M_S))$ and then sheafifying (or stackifying) the result.

An equivalent construction, when working over logarithmic schemes with coherent logarithmic structures, is to define $F(S)$ to be the set of systems of data $\xi_s \in F(\overnorm M_{S,s})$, one for each geometric point $s$ of $S$, such that $\xi_t \mapsto \xi_s$ under the morphism $F(\overnorm M_{S,t}) \to F(\overnorm M_{S,s})$ associated to a geometric specialization $s \leadsto t$.  This has the effect of building stackification into the definition, but either construction is adequate for our needs.

In practice, when formulating a tropical moduli problem, the difficult part seems to tend to lie in describing the functoriality with respect to monoid homomorphisms.  More specifically, any homomorphism of commutative monoids can be factored into a localization homomorphism followed by a sharp homomorphism.  Functoriality with respect to sharp homomorphisms is straightforward, but localizations tend to involve changes of topology that are more difficult to control.  For the tropical Picard group and tropical Jacobian, the notion that makes this work is called \emph{bounded monodromy}, and is first discussed in Section~\ref{sec:monodromy}.

The principal concern of \cite{cavalieri2017moduli} was the question of algebraicity of tropical moduli problems, meaning possession of a well-behaved cover by rational polyhedral cones.  None of the moduli problems we consider here is algebraic in this sense, although they often do have logarithmically smooth covers by logarithmic schemes.  This suggests the subject of algebraicity should be revisited with a more inclusive perspective.  To do so will require a less chaotic topology than the one introduced in \cite{cavalieri2017moduli}, such as the one that appears implicitly in Section~\ref{sec:troptop}, and a bit more explicitly in Section~\ref{sec:trojac-bdd-diag} of this paper.

\subsubsection{Tropical topology}
\label{sec:conetop}

We introduce a tropical topology that does not appear in~\cite{cavalieri2017moduli}.  This material will be needed in Section~\ref{sec:trojac-bdd-diag} and nowhere else, so we develop only the few facts we will need there.  A thorough treatment will be taken up elsewhere.

\begin{definition}
Let $\overnorm M$ be a sharp (integral, saturated) monoid.  A \emph{sharp valuation} of $\overnorm M$ is an isomorphism class of surjective homomorphisms from $\overnorm M^{\rm gp}$ to totally ordered abelian groups that preserve the \emph{strict} order of $\overnorm M^{\rm gp}$.  Here, an isomorphism between $v : \overnorm M^{\rm gp} \to \overnorm V^{\rm gp}$ and $w : \overnorm M^{\rm gp} \to \overnorm W^{\rm gp}$ is an isomorphism $f : \overnorm V^{\rm gp} \to \overnorm W^{\rm gp}$ such that $f v = w$.
\end{definition}

\begin{remark}
Equivalently, a sharp valuation of $\overnorm M$ is an isomorphism class of sharp homomorphisms $\overnorm M \to V$ where $\overnorm V$ is a (sharp) valuative monoid and $\overnorm M^{\rm gp} \to \overnorm V^{\rm gp}$ is surjective.  
\end{remark}

\begin{proposition}
\label{prop: ConeMquasicompact}
Let $\overnorm M$ be a sharp (integral, saturated) monoid and let $\ShpVal(\overnorm M)$ be its set of sharp valuations.  Give $\ShpVal(\overnorm M)$ the coarsest topology in which a subset defined by a finite set of strict inequalities among elements of $\overnorm M^{\rm gp}$ is open.  Then $\ShpVal(\overnorm M)$ is quasicompact.
\end{proposition}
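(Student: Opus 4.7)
The plan is to apply the Alexander subbase theorem to the given subbase, whose elements I will write $U_{a,b} = \{v : v(a) < v(b)\}$ for $a, b \in \overnorm M^{\rm gp}$. So suppose $\{U_{a_i, b_i}\}_{i \in I}$ is a subbasic cover admitting no finite subcover, and write $c_i = a_i - b_i$. The goal will be to construct a sharp valuation $v$ with $v(c_i) \geq 0$ for every $i \in I$; such a $v$ lies outside every $U_{a_i,b_i}$, contradicting the hypothesis that they cover $\ShpVal(\overnorm M)$.

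For each subset $F \subseteq I$ let $P_F \subseteq \overnorm M^{\rm gp}$ denote the saturated submonoid generated by $\overnorm M$ and $\{c_i : i \in F\}$, and write $P = P_I$. The central claim I will establish is that $P^\ast \cap \overnorm M = \{0\}$ inside $\overnorm M^{\rm gp}$. The absence of a finite subcover supplies, for every finite $F \subseteq I$, a sharp valuation $v_F : \overnorm M \to V_F$ with $v_F(c_i) \geq 0$ for all $i \in F$. Using that $V_F$ is saturated, the induced $v_F^{\rm gp}$ sends $P_F$ into $V_F$; since $V_F^\ast = \{0\}$ it therefore kills all of $P_F^\ast$, and then sharpness of $v_F$ on $\overnorm M$ yields $P_F^\ast \cap \overnorm M = \{0\}$. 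Because any element of $P^\ast$ is witnessed by finitely many of the $c_i$, it already lies in some $P_F^\ast$, which upgrades the finite claims to the global one.

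Given the claim, I pass to the sharpening $\overnorm P = P/P^\ast$, which is saturated and sharp with $\overnorm P^{\rm gp} = \overnorm M^{\rm gp}/P^\ast$. By Lemma~\ref{lem:extend} the partial order on $\overnorm P^{\rm gp}$ defined by $\overnorm P$ extends to a total order; let $V$ be the resulting nonnegative submonoid, which is valuative. The composite $\overnorm M \to \overnorm P \hookrightarrow V$ has groupification the quotient $\overnorm M^{\rm gp} \twoheadrightarrow V^{\rm gp} = \overnorm P^{\rm gp}$, which is surjective; it is strict on $\overnorm M \setminus \{0\}$ by the key claim; and it sends each $c_i$ into $V$ because $c_i \in P$. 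This provides the desired sharp valuation and closes the argument.

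The main difficulty is locating the correct condition on $P$. The naive hope that $P$ itself is sharp is already defeated by the toy $\overnorm M = \mathbf N^2$ with $c_1 = e_1 - e_2$ and $c_2 = -c_1$, where $P = \{(c,d) : c+d \geq 0\}$ has nontrivial unit group $\mathbf Z(e_1 - e_2)$ even though the valuation $(a,b) \mapsto a+b$ plainly works. The weaker condition $P^\ast \cap \overnorm M = \{0\}$ is exactly what the finite-satisfiability hypothesis on the cover can deliver, via the directed union $P = \bigcup_F P_F$, and it is also exactly what is needed to invoke Lemma~\ref{lem:extend} after sharpening.
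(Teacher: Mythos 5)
Your proof is correct and follows essentially the same route as the paper's: both arguments reduce quasicompactness to the finite character of the obstruction --- a nonzero element of $\overnorm M$ becoming invertible in the saturated submonoid generated by $\overnorm M$ and the constraint elements --- and both then produce the separating sharp valuation by sharpening and invoking Lemma~\ref{lem:extend}. Your write-up is if anything slightly more complete, since the explicit appeal to Alexander's subbase theorem handles covers of arbitrary cardinality, whereas the paper only spells out the case of a descending sequence of basic closed sets.
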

\begin{proof}
Consider a descending sequence of closed subsets $\ShpVal(\overnorm M) = Z_0 \supset Z_1 \supset Z_2 \supset \cdots$, with $Z_i$ defined relative to $Z_{i-1}$ by an inequality $\alpha_i \geq 0$, with $\alpha_i \in \overnorm M^{\rm gp}$.  Then $Z_i$ is represented by the submonoid $\overnorm M[\alpha_1, \ldots, \alpha_i] \subset \overnorm M^{\rm gp}$ in the sense that a valuation of $\overnorm M$ (sharp or not) with valuation monoid $\overnorm V$ lies in $Z_i$ if and only if the homomorphism $\overnorm M \to \overnorm V$ factors through $\overnorm M[\alpha_1, \ldots, \alpha_i]$.  By Lemma~\ref{lem:extend}, the condition that $\bigcap Z_i = \varnothing$ means that $\overnorm M[\alpha_1, \alpha_2, \ldots]$ contains the inverse $-\beta$ of some element $\beta \in \overnorm M$.  But then $-\beta$ is a finite combination of the $\alpha_i$ and elements of $\overnorm M$ and lies therefore in $\overnorm M[\alpha_1, \ldots, \alpha_i]$ for some $i$.  We conclude that $Z_i = \varnothing$.
\end{proof}

\begin{remark}
The basic open subsets of $\ShpVal(\overnorm M)$ are the subsets representable as $\ShpVal(\overnorm N)$ where $\overnorm N \subset \overnorm M^{\rm gp}$ is a finitely generated extension.
\end{remark}

\begin{remark}
Suppose that $\delta \in \mathbf Q \overnorm M$.  Then there is some positive integer $n$ such that $n \delta \in \overnorm M$ and the inequality $n \delta > 0$ determines an open subset of $\Cone^\circ(\overnorm M)$.  Since valuative monoids are always saturated, this open subset does not depend on the choice of $n$.  We can therefore construct open subsets of $\ShpVal(\overnorm M)$ from inequalities in $\mathbf Q \overnorm M$.
\end{remark}

\subsubsection{Tropical curves}
\label{sec:trop-curves}

The main example in \cite{cavalieri2017moduli} is the moduli space of tropical curves.  We recall the main definition here, with small modifications, one of which is significant:  first, we have no use for marked points here (which appear as unbounded legs in the graphs of tropical curves), so we omit them below; second, we allow unrooted edges that are not attached at any vertex.  This second modification is essential for the definition of the topology in Section~\ref{sec:troptop}.

\begin{definition} \label{def:trop-curve}
Let $\overnorm M$ be a commutative monoid.  A \emph{tropical curve} metrized by $\overnorm M$ is a tuple $\tropfont X = (G, r, i, \ell)$ where 
\begin{enumerate}
\item $G$ is a set, 
\item $r : G \to G$ is a partially defined idempotent function, 
\item $i : G \to G$ is an involution, and
\item $\ell : G \to \overnorm M$ is a function
\end{enumerate}
such that 
\begin{enumerate}[resume*]
\item $\ell(i(x)) = \ell(x)$ for all $x$, and 
\item $r(x) = x$ if and only if $i(x) = x$ if and only if $\ell(x) = 0$.
\end{enumerate}
We often abuse notation and write $x \in \tropfont X$ to mean that $x \in G$.

If $x \in \tropfont X$ then $\ell(x)$ is called its \emph{length}.  The elements of $\tropfont X$ of length~$0$ are called \emph{vertices}.  The remaining elements are called \emph{flags} or \emph{oriented edges}.  An unordered pair of flags exchanged by $i$ is called an \emph{edge}.  We call $\tropfont X$ \emph{compact} if $r$ is defined on all of $G$.
\end{definition}

We imagine the set $G$ as the disjoint union of a set of vertices and a set of flags (a vertex with an incident edge).  The function $r$ sends a flag to the vertex at which it is attached and restricts to the identity on the vertices, which are characterized by this property.  We think of an element of $G$ on which $r$ is not defined as an oriented edge that is not rooted at any vertex.

\begin{figure}

\def\centerarc[#1](#2)(#3:#4:#5)
    { \draw[#1] ($(#2)+({#5*cos(#3)},{#5*sin(#3)})$) arc (#3:#4:#5); }

\begin{minipage}{.45\textwidth}
\centering
\scalebox{1.5}{%
\begin{tikzpicture}
\node (V) at (0,0) {};
\node (W) at (-1,1) {};
\node (X) at (-1,-1) {};

\draw[fill] (V) circle (.5ex);
\draw (W) circle (.5ex);
\draw[fill] (X) circle (.5ex);
\draw[thick,->] ([shift={(1,1)}]V) -- ([shift={(1,1)}]W);
\draw[thick,->] ([shift={(-1,-1)}]W) -- ([shift={(-1,-1)}]V); 
\draw[thick,->] ([shift={(-1,1)}]V) -- ([shift={(-1,1)}]X); 
\draw[thick,->] ([shift={(1,-1)}]X) -- ([shift={(1,-1)}]V);

\centerarc[thick,->](.6,0)(165:-165:.6)
\centerarc[thick,->](.6,0)(-165:165:.5)
\end{tikzpicture}}
\end{minipage} 
\begin{minipage}{.45\textwidth}
\centering
\scalebox{1.5}{%
\begin{tikzpicture}

\node (V) at (0,1) {};
\node (W) at (0,0) {};

\draw (V) circle (.5ex);
\draw (W) circle (.5ex);

\draw[thick,->] (V) to[out=255,in=105] (W);
\draw[thick,->] (W) to[out=75,in=285] (V);

\end{tikzpicture}}

\end{minipage}

\caption{Graphical representations of tropical curves.  Filled circles are vertices while open circles are endpoints of edges with absent vertices.}
\label{fig:trop-curve}
\end{figure}

\begin{remark}
It is customary to include a weighting by non-negative integers on the vertices in the definition of a tropical curve, standing for the genus of a component of a stable curve.  Such a weighting could be added to Definition~\ref{def:trop-curve} with no significant change to the rest of the paper.  As the weighting has no effect on the definition of the tropical Picard group, we have omitted it to keep the notation as light as possible.

The work of Amini and Caporaso on the Riemann--Roch theorem for tropical curves with vertex weights~\cite{AMINI20131} suggests that a vertex with positive weight~$g$ can be imagined as a vertex of weight~$0$ with~$g$ phantom loops attached, all of length~$0$.  They prove Riemann--Roch by endowing these loops with positive length~$\epsilon$ and then allowing~$\epsilon$ to shrink to zero.  The most naive application of the same approach would yield a different tropical Picard group than the one we consider, and would not have the same relationship to the logarithmic Picard group.
\end{remark}

If $f : \overnorm M \to \overnorm N$ is a homomorphism of commutative monoids, and $\tropfont X$ is a tropical curve metrized by $\overnorm M$, and $r$ is defined on every flag $x$ of $\tropfont X$ such that $f(\ell(x)) = 0$, then $f$ induces an \emph{edge contraction} of $\tropfont X$, as follows.  Let $\tropfont Y$ be the quotient of $\tropfont X$ in which a flag $x$ is identified with $r(x)$ if $f(\ell(x)) = 0$.  Note that if $f(\ell(x)) = 0$, this identification also identifies $r(x) \sim r(i(x))$ since $i^2(r(x)) = x$.  Then $\ell$ descends to a well-defined function on $\tropfont Y$, valued in $\overnorm N$ and makes $\tropfont Y$ into a tropical curve.

Following the procedure outlined in Section~\ref{sec:trop-mod}, we can now think of tropical curves as a tropical moduli problem:  for any sharp monoid $\overnorm P$, we define $\mathcal M^{\rm trop}(\overnorm P)$ to be the groupoid of tropical curves metrized by $\overnorm P$.  Note, however, that Definition~\ref{def:trop-curve} is slightly different from the one considered in \cite{cavalieri2017moduli}.

\begin{definition} \label{def:trop-curve-fam}
Let $S$ be a logarithmic scheme.  A \emph{tropical curve} over $S$ is the choice of a tropical curve $\tropfont X_s$ for each geometric point $s$ of $S$ and an edge contraction $\tropfont X_s \to \tropfont X_t$ for each geometric specialization $t \leadsto s$ such that the edges of $\tropfont X_s$ contracted in $\tropfont X_t$ are precisely the ones whose lengths lie in the kernel of $\overnorm M_s^{\rm gp} \to \overnorm M_t^{\rm gp}$.
\end{definition}

\begin{definition} \label{def:linear}
Let $\tropfont X$ be tropical curve metrized by a monoid $\overnorm M$ with vertex set $V$.  We define $\PL(\tropfont X)$ to be the set of functions $\lambda = (\alpha, \mu) : G_{\tropfont X} \to \overnorm M^{\rm gp} \times \mathbf Z$ satisfying the following conditions:
\begin{enumerate}
\item if $x$ is a vertex then $\mu(x) = 0$,
\item we have $\alpha(r(x)) = \alpha(x)$ for all $x$ on which $r$ is defined, and
\item we have $\alpha(i(x)) = \alpha(x) + \mu(x) \ell(x)$.
\end{enumerate}
Note that the third condition implies that $\mu(x) = - \mu(i(x))$ since 
\begin{equation*}
\alpha(x) = \alpha(i^2(x)) = \alpha(i(x))) + \mu(i(x)) \ell(x)
\end{equation*}
and $\ell(x)$ is nonzero.  We define $\L(\tropfont X)$ to be the subset of $\PL(\tropfont X)$ where the following additional condition is satisfied:
\begin{enumerate}[resume*]
\item (\emph{balancing}) for each vertex $x$ of $\tropfont X$, we have $\sum_{r(y) = x} \mu(y) = 0$.
\end{enumerate}
Elements of $\PL(\tropfont X)$ are called \emph{piecewise linear functions} on $\tropfont X$ and elements of $\L(\tropfont X)$ are called \emph{linear functions}.
\end{definition}

\begin{remark}
The terms \emph{balanced} and \emph{harmonic} are often employed synonymously with linear.
\end{remark}

If $\tropfont X$ is a tropical curve metrized by $\overnorm M$ and $\overnorm M \to \overnorm N$ is a monoid homomorphism inducing a tropical curve $\tropfont Y$ metrized by $\overnorm N$ then there are natural homomorphisms $\PL(\tropfont X) \to \PL(\tropfont Y)$ and $\L(\tropfont X) \to \L(\tropfont Y)$.  Thus tropical curves equipped with piecewise linear functions are a tropical moduli problem.  See Proposition~\ref{prop:gen-P} for further details.

\subsubsection{Subdivision of tropical curves}
\label{sec:subdivision}

\begin{definition} \label{def:subdivision}
Let $\tropfont Y$ be a tropical curve metrized by a commutative monoid $\overnorm M$.  Let $y$ be a $2$-valent vertex of $\tropfont Y$.  We construct a new tropical curve $\tropfont X$ by removing $y$ from $\tropfont Y$ along with the two flags $e$ and $f$ incident to $y$ and defining
\begin{gather*}
i_{\tropfont X}(i_{\tropfont Y}(e))  = i_{\tropfont Y}(f) , \qquad i_{\tropfont X}(i_{\tropfont Y}(f)) = i_{\tropfont Y}(e) \\
\ell_{\tropfont X}(i_{\tropfont Y}(e)) = \ell_{\tropfont X}(i_{\tropfont Y}(f)) = \ell_{\tropfont Y}(e) + \ell_{\tropfont Y}(f) .
\end{gather*}
	We call $\tropfont Y$ a \emph{basic subdivision} of $\tropfont X$ at the edge $\{ i_{\tropfont Y}(e), i_{\tropfont Y}(f) \}$.  If $\tropfont X$ is obtained from $\tropfont Y$ by a sequence of basic subdivisions, we call $\tropfont Y$ a \emph{subdivision} of $\tropfont X$.
\end{definition}

If $\tropfont Y$ is a subdivision of $\tropfont X$ then $G_{\tropfont Y}$ contains a copy of $G_{\tropfont X}$.  An isomorphism of subdivisions is an isomorphism of tropical curves that respects this copy of the underlying set.

\begin{lemma}
If $\tropfont X'$ is a subdivision of a tropical curve $\tropfont X$ metrized by $\overnorm M$, and $\overnorm M \to \overnorm N$ is a localization homomorphism, then the edge contraction $\tropfont Y'$ of $\tropfont X'$ is naturally a subdivision of the edge contraction $\tropfont Y$ of $\tropfont X$.
\end{lemma}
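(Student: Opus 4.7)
The plan is to reduce, by induction on the number of basic subdivisions, to the case where $\frak X'$ is a basic subdivision of $\frak X$. Explicitly, $\frak X'$ is obtained by inserting a $2$-valent vertex $y$ into an edge of $\frak X$ of length $\delta \in \overnorm M$, producing two flags $e', f'$ of lengths $\delta_1, \delta_2 \in \overnorm M$ with $\delta_1 + \delta_2 = \delta$.

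First I would verify that the edge contraction of $\frak X'$ via the localization $f : \overnorm M \to \overnorm N$ is itself defined. The two flags $e', f'$ at $y$ already have $r$ defined (each retracts to $y$), while every other flag of $\frak X'$ inherits its retract from $\frak X$. So the hypothesis that the edge contraction $\frak Y$ of $\frak X$ exists transfers automatically to $\frak X'$, and $\frak Y'$ makes sense.

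Next I would analyze $\frak Y'$ by cases on which of $f(\delta), f(\delta_1), f(\delta_2) \in \overnorm N$ vanish. If $f(\delta) \neq 0$ and both $f(\delta_i) \neq 0$, the $2$-valent vertex $y$ survives in $\frak Y'$ and exhibits $\frak Y'$ as a basic subdivision of $\frak Y$ at the image of the original edge. If $f(\delta) \neq 0$ but one $f(\delta_i) = 0$, that shorter half contracts in $\frak Y'$, identifying $y$ with its neighbor, and $\frak Y' \cong \frak Y$. If $f(\delta) = 0$, then $f(\delta_1) + f(\delta_2) = 0$ with $f(\delta_i) \in \overnorm N$; sharpness of $\overnorm N$ (from the conventions of Section~\ref{sec:trop-mod}) forces both $f(\delta_i)$ to vanish, so both halves contract and $y, i(e'), i(f')$ collapse to the same single vertex they form in $\frak Y$. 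In every case, the embedding $G_{\frak X} \hookrightarrow G_{\frak X'}$ witnessing the subdivision descends under edge contraction to an embedding $G_{\frak Y} \hookrightarrow G_{\frak Y'}$, giving the canonical structure of a subdivision.

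The main obstacle is the third case: if $\overnorm N$ were not sharp, the $f(\delta_i)$ could be nonzero units summing to $0$, and then $\frak Y'$ would contain a $2$-valent vertex and two nonzero edges where $\frak Y$ contains only a single point, so $\frak Y'$ would fail to be a subdivision of $\frak Y$. The sharpness built into the tropical formalism of Section~\ref{sec:trop-mod} is what rescues the statement, and making this reliance explicit is the key subtlety in the proof.
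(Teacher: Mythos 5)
Your proof is correct and follows essentially the same route as the paper's: reduce to a basic subdivision and observe that if the whole edge is contracted then both halves must be, your sharpness argument being exactly the paper's observation that $0 \leq \ell(e') \leq \ell(e)$ forces $\ell(e')$ to map to $0$ when $\ell(e)$ does. Your extra case analysis (both halves survive, or exactly one contracts) is also correct; the paper leaves those cases implicit in its one-line proof.
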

\begin{proof}
It is sufficient to assume that $\tropfont X'$ is a basic subdivision of $\tropfont X$ at an edge $e$ into edges $e'$ and $e''$.  The main point is that if $\ell(e)$ maps to $0$ in $\overnorm N$ then $\ell(e')$ and $\ell(e'')$ do as well, since $0 \leq \ell(e') \leq \ell(e)$ and $0 \leq \ell(e'') \leq \ell(e)$, which implies that $e'$ and $e''$ are both contracted if $e$ is.
\end{proof}

\subsection{Logarithmic curves}
\label{sec:log-curves}

\subsubsection{Logarithmic structure}
\label{sec:local-str}

\begin{definition}
Let $S$ be a logarithmic scheme.  A \emph{logarithmic curve} over $S$ is an integral, saturated, logarithmically smooth morphism $\pi : X \to S$ of relative dimension~$1$.
\end{definition}

\begin{theorem}[F.\ Kato] \label{thm:FKato}
Let $X$ be a logarithmic curve over $S$.  Then the underlying scheme of $X$ is a flat family of nodal curve over $S$ and, for a geometric point $x$ of $X$ lying above the geometric point $s$ of $S$, one of the following applies:
	\begin{enumerate}[ref=(\arabic{*})]
\item $x$ is a smooth point of its fiber in the underlying schematic curve of $X$, and $\overnorm M_{S,s} \to \overnorm M_{X,x}$ is an isomorphism.
\item $x$ is a marked point, and there is an isomorphism $\overnorm M_{S,s} + \mathbf N \alpha \to \overnorm M_{X,x}$ where $\mathcal O_X(-\alpha)$ is the ideal of the marking.
\item \label{item:node} $x$ is a node of its fiber and there is an isomorphism $\overnorm M_{S,s} + \mathbf N \alpha + \mathbf N \beta / (\alpha + \beta = \delta) \to \overnorm M_{X,x}$, with $\delta \in \overnorm M_{S,s}$.  The invertible sheaf $\mathcal O_S(-\delta)$ is the pullback of the ideal sheaf of the boundary divisor corresponding to the node $x$ from the moduli space of curves, and $\mathcal O_X(-\alpha)$ and $\mathcal O_X(-\beta)$ are the pullbacks ideal sheaves of the two branches of the universal curve at $x$.
\end{enumerate}
	If $X$ is \emph{vertical} over $S$ then the second possibility does not occur.
\end{theorem}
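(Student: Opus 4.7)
The strategy is to deduce the local picture from Kato's structure theorem for logarithmically smooth morphisms. Étale locally near a geometric point $x \in X$ over $s \in S$, there is a chart $\overnorm M_{S,s} \to P$ of integral saturated monoids and a strict \'etale morphism $X \to S \mathop\times_{\Spec \mathbf Z[\overnorm M_{S,s}]} \Spec \mathbf Z[P]$. Because $\pi$ is integral and saturated, the induced map $\overnorm M_{S,s} \to P$ is integral and the quotient $P / \overnorm M_{S,s}$ can be taken to be a sharp, integral, saturated monoid; because $\pi$ has relative dimension~$1$, the $\mathbf Q$-rank of $P^{\mathrm{gp}}/\overnorm M_{S,s}^{\mathrm{gp}}$ is at most~$1$.

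Next I would enumerate the sharp, integral, saturated monoid extensions of $\overnorm M_{S,s}$ of relative rank $\leq 1$. There are exactly three, up to isomorphism: the trivial extension $P = \overnorm M_{S,s}$; the free extension $P = \overnorm M_{S,s} + \mathbf N \alpha$; and the ``node'' extension $P = \overnorm M_{S,s} + \mathbf N \alpha + \mathbf N \beta$ modulo the relation $\alpha + \beta = \delta$ for some $\delta \in \overnorm M_{S,s}$. The third case is precisely the pushout of $\mathbf N \to \mathbf N^2$ along $\mathbf N \to \overnorm M_{S,s}$ picking out $\delta$; its underlying scheme is $\Spec \mathcal O_S[x,y]/(xy - \varepsilon(\delta))$, which is flat over $S$ with a nodal fiber over the vanishing locus of $\varepsilon(\delta)$ — exactly case~\ref{item:node} of the theorem. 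In the first two cases the underlying scheme over $S$ is $\mathbf A^1_S$, smooth over $S$; if $P = \overnorm M_{S,s}$ one sees a smooth point (case~(1)), and if $P = \overnorm M_{S,s} + \mathbf N \alpha$ the ideal generated by the image of $\alpha$ cuts out the marked section (case~(2)).

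Having pinned down the local model, I would read off the characteristic monoids: strictness of the étale cover implies $\overnorm M_{X,x} = P$, and the identification of $\mathcal O_X(-\alpha)$, $\mathcal O_X(-\beta)$, and $\mathcal O_S(-\delta)$ with the stated invertible sheaves is built into the construction of $\mathcal O_X^\ast(-\alpha)$ in Section~\ref{sec:inv-sh} together with the explicit form of $P$: the image of $\alpha$ in $\mathcal O_X$ cuts out one branch of the node, the image of $\beta$ cuts out the other, and their product $\varepsilon(\alpha+\beta) = \varepsilon(\delta)$ is pulled back from $S$. Flatness of the underlying schematic curve follows in each case from flatness of the three local models above.

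Finally, for the vertical statement: a morphism is vertical when $\overnorm M_X / \overnorm M_S$ is, locally, a group rather than merely a monoid. The second local model above has $\overnorm M_{X,x}/\overnorm M_{S,s} \simeq \mathbf N\alpha$, which is not a group, so verticality rules it out and only cases (1) and (3) survive. The only real subtlety in this plan is case (3): one must verify that the pushout monoid is automatically saturated and that the resulting local chart is already integral and saturated — which holds because $\mathbf N^2$ is saturated and the pushout along a sharp map preserves saturation — so no further saturation is needed and the three cases above exhaust the possibilities.
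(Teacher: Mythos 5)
The paper does not prove this statement: it is quoted as F.~Kato's structure theorem for logarithmic curves and used as a black box, so there is no internal proof to compare your argument against. Judged on its own terms, your sketch follows the route one would expect (and the route Kato himself takes): reduce via the chart criterion for logarithmic smoothness to a toric local model, then classify the possible characteristic extensions $\overnorm M_{S,s} \to P$.

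There are, however, two genuine gaps. First, your opening claim --- that one can always find a chart with $X \to S \mathop\times_{\Spec \mathbf Z[\overnorm M_{S,s}]} \Spec \mathbf Z[P]$ strict \emph{\'etale} --- is false in the smooth-point case: there $P = \overnorm M_{S,s}$, the fiber product is $S$ itself, and an \'etale map to $S$ has relative dimension~$0$, contradicting relative dimension~$1$. The chart criterion only gives a strict \emph{smooth} morphism to the toric model; the relative dimension is then split between the rank of $P^{\rm gp}/\overnorm M_{S,s}^{\rm gp}$ and the dimension of the classically smooth part, and only when that rank equals~$1$ (the marking and node cases) can the map be taken \'etale. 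Your later assertion that the first local model has underlying scheme $\mathbf A^1_S$ is consistent with the smooth formulation but contradicts your own \'etale hypothesis. Second, the sentence ``There are exactly three, up to isomorphism'' is the entire mathematical content of the theorem, and you assert it rather than prove it. One must actually rule out the other candidates: Kummer-type extensions such as $\mathbf N \xrightarrow{\,n\,} \mathbf N$ (excluded by saturatedness of the morphism, not merely of the monoids), extensions with relations of the form $a\alpha + b\beta = \delta$ with $(a,b) \neq (1,1)$, and extensions whose associated toric fiber fails to be a reduced curve with at worst nodes. The closing remark that ``the pushout along a sharp map preserves saturation'' does not address any of this --- it only checks that the node model is well formed, not that the list is exhaustive. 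The verticality argument at the end is correct.
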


We write $\mathcal M_{g,n}^{\log}$ for the moduli space of logarithmic curves of genus~$g$ with $n$ (ordered) marked points.


The following theorem characterizes the logarithmic structures of logarithmic curves over valuative bases.  For a proof, see~\cite[Proposition~3.6.4]{aj}.

\begin{theorem} \label{thm:curve-val}
Let $S$ be the spectrum of a valuation ring with generic point $\eta$ and let $X$ be a family of nodal curves over $S$.  Assume that $X_\eta$ and $\eta$ have been given logarithmic structures $M_{X_\eta}$ and $M_\eta$ making $X_\eta$ into a logarithmic curve over $\eta$, with $M_\eta$ valuative.  Let $M_X$ and $M_S$ be the maximal extensions, respectively, of $M_{X_\eta}$ and $M_\eta$ to $X$ and to $S$.  Then $X$ is a logarithmic curve over $S$.
\end{theorem}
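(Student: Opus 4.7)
The approach is to verify the conclusion \'etale-locally on $X$ by producing at every geometric point $x$ of $X$, lying over $s \in S$, a chart for $M_X$ of the form described by F.~Kato (Theorem~\ref{thm:FKato}). Once such charts exist, integrality and saturation are visible on the chart and logarithmic smoothness of the prescribed relative dimension follows from Kato's criterion. Throughout I will use that $M_X = \varepsilon^{-1}(\mathcal{O}_X) \subset j_{X,\ast} M_{X_\eta}$ and $M_S = \varepsilon^{-1}(\mathcal{O}_S) \subset j_\ast M_\eta$, so any section of $j_{X,\ast} M_{X_\eta}$ (respectively $j_\ast M_\eta$) whose image under $\varepsilon$ lies in $\mathcal{O}_X$ (respectively $\mathcal{O}_S$) automatically belongs to $M_X$ (respectively $M_S$).

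If $x$ is a smooth point or marked point of $X_s$, the chart of $M_{X_\eta}$ at the generic specialization of $x$ is already pulled back from a chart of $M_\eta$ (plus, in the marked case, an additional generator whose $\varepsilon$-image is a local equation of the marking, which extends over $X$), and this pulled-back chart fits directly into $M_X$ and $M_S$. If $x$ is a node of $X_s$, I would work in an \'etale neighborhood in which $X$ is cut out by $uv = t$ for some $t \in \mathcal{O}_{S,s}$, and split on whether the node smooths generically. When $t = 0$ the node persists in $X_\eta$, and F.~Kato's theorem applied to $X_\eta \to \eta$ supplies $\alpha_\eta, \beta_\eta \in M_{X_\eta, x_\eta}$ and $\delta_\eta \in M_{\eta,\eta}$ with $\alpha_\eta + \beta_\eta = \pi_\eta^\ast \delta_\eta$ whose $\varepsilon$-images are $u, v, 0$; these targets lie in $\mathcal{O}_X$ or $\mathcal{O}_S$ so the sections themselves lie in $M_X$ or $M_S$, and the relation persists. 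When $t \neq 0$, the element $t$ is a unit of $\mathcal{O}_\eta$ (being a nonzero element of the fraction field of a valuation ring), so $X_\eta$ is smooth near $x_\eta$ and $u, v$ are units of $\mathcal{O}_{X_\eta}$. Sharpness produces canonical unit lifts $\tilde t \in M_\eta^\ast$ and $\tilde u, \tilde v \in M_{X_\eta}^\ast$, and the identity $\varepsilon(\tilde u + \tilde v) = uv = t = \varepsilon(\pi^\ast \tilde t)$ combined with sharpness forces $\tilde u + \tilde v = \pi^\ast \tilde t$; these three lifts lie in $M_X, M_X, M_S$ respectively because their $\varepsilon$-images lie in $\mathcal{O}_X, \mathcal{O}_X, \mathcal{O}_S$, and they are non-units there because $u, v, t$ all vanish at $x, x, s$.

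In each sub-case the resulting surjective homomorphism $\overnorm M_{S,s} + \mathbf{N}\bar\alpha + \mathbf{N}\bar\beta / (\bar\alpha + \bar\beta = \delta) \to \overnorm M_{X,x}$ is the candidate chart; to verify it is an isomorphism, I would compare associated groups, using that $(j_{X,\ast} M_{X_\eta}^{\mathrm{gp}})_x$ is generated over the pullback of $(j_\ast M_\eta^{\mathrm{gp}})_s$ by the classes of $\tilde u, \tilde v$ in the smoothing case and of $\alpha_\eta, \beta_\eta$ in the persisting case. With the chart in hand, $\pi : X \to S$ is a logarithmic curve by Kato's criterion for logarithmic smoothness. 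The principal obstacle I anticipate is the smoothing sub-case $t \neq 0$: the unit lift $\tilde t$ must be recognized as a non-unit of $M_S$ with nonzero image $\delta \in \overnorm M_{S,s}$. This is exactly where the valuation-ring hypothesis intervenes, since it forces $t$ into the maximal ideal of $\mathcal{O}_{S,s}$ (because $u, v$ both vanish at the node), whereupon the maximal-extension construction automatically places $\tilde t$ in $M_S \setminus M_S^\ast$.
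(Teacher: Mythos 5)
Your strategy is genuinely different from the paper's. The paper first proves the special case in which $M_\eta$ is trivial (Lemma~\ref{lem:gen-triv}), where the maximal extension is the logarithmic structure of Cartier divisors supported off the generic fiber and the local verification is a divisor computation; it then reduces the general case to this one by forming the valuation ring $B = A \mathop\times_{\mathcal O_\eta} \mathcal O_S$, with $A$ a valuation ring having residue field $\mathcal O_\eta$ and value monoid $\overnorm M_\eta$, deforming $X$ to a curve over $\Spec B$ with trivial generic logarithmic structure, and comparing maximal extensions via their universal properties. You instead propose to verify F.~Kato's local structure (Theorem~\ref{thm:FKato}) directly at each geometric point, keeping the nontrivial $M_\eta$ in play throughout. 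That is a legitimate alternative, and your identification of the chart elements --- the unit lifts $\tilde u, \tilde v, \tilde t$ in the smoothing case, the sections $\alpha_\eta, \beta_\eta, \delta_\eta$ in the persisting case, and the observation that their $\varepsilon$-images lie in $\mathcal O_X$ or $\mathcal O_S$ --- is correct.

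The gap is at the step you defer to ``comparing associated groups'': that cannot establish that the chart map $\overnorm M_{S,s} + \mathbf N\bar\alpha + \mathbf N\bar\beta/(\bar\alpha+\bar\beta=\delta) \to \overnorm M_{X,x}$ is surjective \emph{as a map of monoids}, and this is where essentially all the content lies. The maximal extension $M_X = \varepsilon^{-1}\mathcal O_X$ contains every section of the form $w\cdot\pi^\ast\mu$ with $w$ an arbitrary unit of $\mathcal O_{X_\eta}$ near $x$ and $\mu$ a non-unit of $M_\eta$: since $\mathcal O_\eta$ is a field, sharpness forces $\varepsilon(\mu)=0$, so $\varepsilon(w\pi^\ast\mu)=0\in\mathcal O_X$ automatically. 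Taking $w = u^{-n}$ produces classes $\bar\mu - n\bar\alpha \in \overnorm M_{X,x}$ for every $n$, and one must show these lie in $\overnorm M_{S,s} + \mathbf N\bar\alpha + \mathbf N\bar\beta$, which amounts to $\bar\mu \geq n\delta$ in $\overnorm M_{S,s}$ for all $n$. This happens to be true --- in the maximal extension, every class coming from a non-unit of $M_\eta$ dominates the entire value group of $\mathcal O_S$, since $\tilde t^{-n}\mu$ also has $\varepsilon$-image $0$ --- but it is an order-theoretic fact about $\varepsilon^{-1}\mathcal O_S$ that a comparison of associated groups does not see: the associated groups can agree while the submonoids of effective elements differ. (Compare the two-sided constraint from the two generizations of the node in case (3) of the proof of Lemma~\ref{lem:gen-triv}, which is the kind of argument needed here.) Separately, your generation claim for $(j_{X,\ast}M_{X_\eta}^{\rm gp})_x$ --- that units of the generic fiber near a node are generated by $\tilde u$, units of $\mathcal O_X$, and pullbacks from $M_\eta^{\rm gp}$ --- is precisely the technical heart of Lemma~\ref{lem:gen-triv} and is asserted rather than proved; over a general, non-noetherian valuation ring it requires the divisor analysis the paper carries out. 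So the route can be completed, but as written the decisive computation of $\overnorm M_{X,x}$ as a monoid is missing.
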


\subsubsection{Tropicalizing logarithmic curves}
\label{sec:tropicalizing-curves}

Theorem~\ref{thm:FKato} allows us to construct a family of tropical curves over $S$ from a family $X$ of logarithmic curves over $S$.  For each geometric point $s$ of $S$, let $\tropfont X_s$ be the dual graph of $X_s$, metrized by $\overnorm M_{S,s}$ with $\ell(e) = \delta$ when $e$ is the edge associated to the node $x$ in the notation of Theorem~\ref{thm:FKato}~\ref{item:node}.  

If $s \leadsto t$ is a geometric specialization, then $\tropfont X_s$ is obtained from $\tropfont X_t$ by contracting the edges of $\tropfont X_t$ that correspond to nodes of $X_t$ smoothed in $X_s$.  Therefore the association $X_s \mapsto \tropfont X_s$ commutes with the geometric generization maps and defines a morphism $\mathcal M_{g,n}^{\log} \to \mathcal M^{\trop}$ from the moduli space of logarithmic curves to the moduli space of tropical curves.  See \cite[Section~5]{cavalieri2017moduli} for further details.

The essence of the following lemma comes from Gross and Siebert~\cite[Section~1.4]{GS}.  It allows us to relate the characteristic monoid of a logarithmic curve to piecewise linear functions on the tropicalization.

\begin{lemma} \label{lem:GS}
Let $\overnorm M$ be a commutative monoid.  Then
\begin{equation*}
\overnorm M + \mathbf N \alpha + \mathbf N \beta / (\alpha + \beta = \delta) \xrightarrow{\sim} \{ (a,b) \in \overnorm M \times \overnorm M \: \big| \: a - b \in \mathbf Z \delta \}
\end{equation*}
where $\alpha \mapsto (0, \delta)$, $\beta \mapsto (\delta, 0)$, and $\gamma \mapsto (\gamma, \gamma)$ for all $\gamma \in \overnorm M$.
\end{lemma}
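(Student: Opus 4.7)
The approach I would take is direct: exhibit an explicit monoid homomorphism from left to right, verify it descends to the quotient, and then check bijectivity. Concretely, define
\[
\phi \colon \overnorm M \oplus \mathbf N \alpha \oplus \mathbf N \beta \longrightarrow \overnorm M \times \overnorm M, \qquad \phi(\gamma + m\alpha + n\beta) = (\gamma + n\delta,\, \gamma + m\delta).
\]
Since the two coordinates differ by $(n-m)\delta \in \mathbf Z\delta$, the image lies in the prescribed subset. The map is manifestly additive, and $\phi(\alpha + \beta) = (\delta,\delta) = \phi(\delta)$, so $\phi$ factors through the quotient by the relation $\alpha + \beta = \delta$. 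The three identities $\alpha \mapsto (0,\delta)$, $\beta \mapsto (\delta,0)$, and $\gamma \mapsto (\gamma,\gamma)$ are exactly the prescribed ones.

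For surjectivity, take $(a,b)$ with $a - b = k\delta$ for some $k \in \mathbf Z$. If $k \geq 0$ then $\phi(b + k\beta) = (b + k\delta, b) = (a,b)$, and if $k \leq 0$ then $\phi(a + (-k)\alpha) = (a, a + (-k)\delta) = (a,b)$. In either case a preimage is produced in the quotient of $\overnorm M \oplus \mathbf N\alpha \oplus \mathbf N\beta$.

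For injectivity, I would first put each element of the quotient in a normal form. Given $\gamma + m\alpha + n\beta$, setting $k = \min(m,n)$ and applying the relation $k$ times gives $\gamma + m\alpha + n\beta = (\gamma + k\delta) + (m-k)\alpha + (n-k)\beta$, so one of the two multiplicities can always be made zero. Thus every class is represented either as $\gamma + m\alpha$ or as $\gamma + n\beta$ with $\gamma \in \overnorm M$ and $m,n \in \mathbf N$. Under $\phi$ these go respectively to $(\gamma, \gamma + m\delta)$ and to $(\gamma + n\delta, \gamma)$. Reading off the smaller component recovers $\gamma$, and the difference of components recovers the multiplicity, so distinct normal forms have distinct images.

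The one genuine subtlety is the uniqueness half of the normal form argument: I need to know that $\overnorm M \oplus \mathbf N\alpha$ really injects into the saturated quotient (and similarly for $\beta$), so that $\gamma$ and $m$ can be read off unambiguously. The cleanest way to see this is to pass to associated groups: setting $\beta = \delta - \alpha$ identifies the associated group of the quotient with $\overnorm M^{\rm gp} \oplus \mathbf Z\alpha$, and the composite $\overnorm M \oplus \mathbf N\alpha \hookrightarrow \overnorm M^{\rm gp} \oplus \mathbf Z\alpha$ is injective because $\overnorm M$ is integral. Provided $\delta$ is not torsion in $\overnorm M^{\rm gp}$ --- which is automatic in all the applications, since characteristic monoids are sharp and saturated, hence torsion free --- the map $(\gamma, a) \mapsto (\gamma, \gamma + a\delta)$ is injective on this associated group, which upgrades the normal form argument to an honest injectivity statement. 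This identification of associated groups is really the core of the lemma; everything else is formal bookkeeping.
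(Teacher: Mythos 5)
Your proof is correct and is essentially the argument the paper gives: your surjectivity preimages $b + k\beta$ and $a + (-k)\alpha$ are exactly the paper's explicit inverse formula $(a,b) \mapsto a + \frac{b-a}{\delta}\alpha$ (resp.\ $b + \frac{a-b}{\delta}\beta$), and your normal-form injectivity check is just the verification, left implicit in the paper, that this formula is a two-sided inverse. The non-torsion caveat on $\delta$ that you flag is equally implicit in the paper's use of the symbol $\frac{b-a}{\delta}$, so nothing is lost.
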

\begin{proof}
	The map is well-defined by the universal property of the pushout.  The following formula gives the inverse:
	\begin{equation*}
		(a,b) \mapsto \begin{cases} a + \frac{b-a}{\delta} \alpha & b \geq a \\ b + \frac{a-b}{\delta} \beta & a \geq b \end{cases}
	\end{equation*}
\end{proof}

\begin{corollary} \label{cor:GS}
Let $S$ be a the spectrum of an algebraically closed field and let $X$ be a logarithmic curve over $S$ with tropicalization $\tropfont X$.  Then $\Gamma(X, \overnorm M_X^{\rm gp})$ and $\Gamma(\tropfont X, \PL)$ are naturally identified.
\end{corollary}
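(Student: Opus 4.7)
My plan is to compute both sides of the claimed identification explicitly by combining the local structure theorem (Theorem~\ref{thm:FKato}) with Lemma~\ref{lem:GS}, and then match the descriptions.

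First I would use Theorem~\ref{thm:FKato} and Lemma~\ref{lem:GS} to pin down $\overnorm M_X^{\rm gp}$ pointwise. At a smooth point of an irreducible component the stalk is $\overnorm M_S^{\rm gp}$; at a node $x$ whose dual edge has length $\delta \in \overnorm M_S^{\rm gp}$, passing Lemma~\ref{lem:GS} to group completions identifies $\overnorm M_{X,x}^{\rm gp}$ with the subgroup
\[
\{(a,b) \in \overnorm M_S^{\rm gp} \times \overnorm M_S^{\rm gp} : a - b \in \mathbf Z \delta\},
\]
whose two projections give the restrictions to the two branches at $x$. Consequently, on the smooth locus of each irreducible component of $X$, the sheaf $\overnorm M_X^{\rm gp}$ is constant equal to $\overnorm M_S^{\rm gp}$.

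Next I would assemble global sections. Because $S$ is the spectrum of an algebraically closed field, the smooth locus of each irreducible component $C_v$ (indexed by vertices $v$ of $\frak X$) is a smooth irreducible curve with finitely many points removed, hence connected, so a section of $\overnorm M_X^{\rm gp}$ on that locus is a single element $\alpha_v \in \overnorm M_S^{\rm gp}$. Gluing at nodes, a global section of $\overnorm M_X^{\rm gp}$ is exactly a tuple $(\alpha_v)_v \in (\overnorm M_S^{\rm gp})^V$ such that at every edge $e = \{v,w\}$ (with the possibility $v = w$) of length $\ell(e)$, the pair $(\alpha_v, \alpha_w)$ lies in the subgroup above, i.e.\ $\alpha_v - \alpha_w \in \mathbf Z \ell(e)$.

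On the tropical side, I would unpack Definition~\ref{def:linear}: an element of $\frak P(\frak X)$ is specified by its values $\alpha(v) \in \overnorm M_S^{\rm gp}$ at the vertices, since condition~(2) extends $\alpha$ uniquely to rooted flags, and condition~(3) determines the integer slope $\mu(x)$ from $(\alpha(i(x)) - \alpha(x))/\ell(x)$. The existence of such an integer is exactly the condition $\alpha_v - \alpha_w \in \mathbf Z \ell(e)$, and uniqueness holds because $\overnorm M_S^{\rm gp}$ is torsion-free (any sharp integral saturated monoid is torsion-free). Thus $\frak P(\frak X)$ is in bijection with the same set of tuples in $(\overnorm M_S^{\rm gp})^V$ subject to the same compatibility, and the bijection with $\Gamma(X, \overnorm M_X^{\rm gp})$ is canonical.

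The only mildly delicate point is the self-loop case $v = w$: the compatibility $\alpha_v - \alpha_v \in \mathbf Z\ell(e)$ holds automatically on both sides, for structurally distinct reasons — on the logarithmic side because both branches at a self-node belong to the same component, and on the tropical side because conditions~(2)--(3) of Definition~\ref{def:linear} together force $\mu(x) = 0$ on any flag with $r(x) = r(i(x))$. I don't foresee a genuine obstacle; the local computation is essentially delivered by Lemma~\ref{lem:GS} and the rest is bookkeeping.
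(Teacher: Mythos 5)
Your proposal is correct and follows essentially the same route as the paper: identify the stalk at each node via Lemma~\ref{lem:GS}, note that $\overnorm M_X^{\rm gp}$ is constant with value $\overnorm M_S^{\rm gp}$ on the (connected) smooth locus of each component, and match the gluing condition $\alpha_v - \alpha_w \in \mathbf Z\,\ell(e)$ with conditions (2)--(3) of Definition~\ref{def:linear}. Your treatment of the self-loop case and of the uniqueness of the slope via torsion-freeness of $\overnorm M_S^{\rm gp}$ just makes explicit details the paper's proof leaves implicit.
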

\begin{proof}
Lemma~\ref{lem:GS} identifies the stalk of $\overnorm M_X^{\rm gp}$ at a node of $X$ with the linear functions of integer slope on the corresponding edge of $\tropfont X$.  Generizing to one branch or the other of the node corresponds to evaluating the function at one endpoint or the other of the edge.  Therefore a global section of $\overnorm M_X^{\rm gp}$ amounts to a function on $\tropfont X$ taking values in $\overnorm M_S^{\rm gp}$ that is linear along the edges with integer slopes.
\end{proof}

We give a more local version of this corollary, using the tropical topology from Section~\ref{sec:troptop}.

Let $S$ be a logarithmic scheme whose underlying scheme is the spectrum of an algebraically closed field, let $X$ be a logarithmic curve over $S$, and let $\tropfont X$ be its tropicalization.  Suppose that $p : \tropfont U \to \tropfont X$ is a tropical local isomorphism.  Each $v$ vertex of $\tropfont X$ corresponds to a component $X_v$ of the normalization of $X$ and each edge $v$ of $\tropfont X$ corresponds to a node $X_v$ of $X$.  Let $U = \varinjlim_{u \in U} X_{p(u)}$.  Effectively, $U$ is the union of components of the normalization of $X$ indexed by the vertices of $\tropfont U$, joined along nodes indexed by the edges of $\tropfont U$, together with some disjoint nodes corresponding to unattached edges of $\tropfont U$.  

There is a canonical projection $U \to X$ that is \'etale except at the points corresponding to $0$- and $1$-sided edges.  We give $U$ the logarithmic structure pulled back from $X$.

\begin{remark}
This construction extends to families with locally constant dual graph, but no further.  Should $\tropfont U$ be a covering space of $\tropfont X$ then $U$ will be \'etale over $X$ and therefore this construction extends infinitesimally, but not necessarily any further than that.  If $\tropfont U$ is in addition \emph{finite} over $\tropfont X$ then the construction can be extended to an aribtrary base.
\end{remark}

The construction described above gives a functor $t^{-1}$ from the category of local isomorphisms $\tropfont U \to \tropfont X$ to the category of finite strict $X$-schemes.  We refer to this as an \emph{anticontinuous morphism} from $X$ to $\tropfont X$, but we make no attempt to develop a general theory of anticontinuous maps here.


\begin{lemma} \label{lem:log-pwl}
We have $t_\ast \overnorm M^{\rm gp}_X = \tropfont P_{\tropfont X}$.  That is, for any open subset $\tropfont U$ of $\tropfont X$, we have $\Gamma(\tropfont U, \tropfont P) = \Gamma(t^{-1} \tropfont U, \overnorm M_X^{\rm gp})$.
\end{lemma}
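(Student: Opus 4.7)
The plan is to upgrade Corollary~\ref{cor:GS} from a statement about global sections on a single fiber to a statement about sheaves on $\frak X$. Both $t_\ast \overnorm M_X^{\rm gp}$ and $\frak P_{\frak X}$ are sheaves, and a basis of opens of $\frak X$ is given by tropical opens $\frak U$ whose preimages $t^{-1}\frak U$ are unions of strata of $X$. It therefore suffices to construct a natural bijection $\Gamma(t^{-1}\frak U, \overnorm M_X^{\rm gp}) \to \Gamma(\frak U, \frak P)$ for each such $\frak U$, functorially in restrictions.

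For the local identification, I would argue as in the proof of Corollary~\ref{cor:GS}. A section of $\overnorm M_X^{\rm gp}$ on $t^{-1}\frak U$ is the same data as a compatible system of stalks along the strata of $X$ contained in $t^{-1}\frak U$. By Theorem~\ref{thm:FKato}, these stalks come in two types. At a smooth point corresponding to a vertex $v$ of $\frak X_s \cap \frak U$, the stalk is $\overnorm M_{S,s}^{\rm gp}$, giving a value $\alpha(v)$. At a nodal stratum corresponding to an edge $e$ of length $\delta$, the stalk is $\overnorm M_{S,s}^{\rm gp} + \mathbf Z\alpha + \mathbf Z\beta / (\alpha + \beta = \delta)$, which by Lemma~\ref{lem:GS} is identified with the set of pairs $(a,b) \in (\overnorm M_{S,s}^{\rm gp})^2$ satisfying $a - b \in \mathbf Z \delta$. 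Writing $a - b = \mu(e) \delta$ with $\mu(e) \in \mathbf Z$ produces the slope datum, and the compatibility with generization to the two branches of the node forces the values $a$ and $b$ to agree with the values already assigned at the two endpoints, verifying conditions~(1)--(3) of Definition~\ref{def:linear}. The inverse direction uses the explicit formula in the proof of Lemma~\ref{lem:GS} to manufacture compatible stalks from a piecewise linear function.

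The main obstacle, and what justifies the statement at the level of sheaves rather than individual fibers, is compatibility with geometric specializations $s \leadsto t$ in the base, particularly those inducing nontrivial edge contractions $\frak X_t \to \frak X_s$. When an edge $e$ of length $\delta$ in $\frak X_t$ is contracted to a vertex $v$ of $\frak X_s$, the image $\overline\delta$ of $\delta$ in $\overnorm M_{S,s}^{\rm gp}$ is zero, so the stalk at $v$ on the $\frak P$ side is simply $\overnorm M_{S,s}^{\rm gp}$; on the $\overnorm M_X^{\rm gp}$ side, the pushout description of Lemma~\ref{lem:GS} degenerates in the same way, so both specialization maps send $(a,b)$ to the common residue of $a$ and $b$ modulo $\mathbf Z \delta$. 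Given this, the fiberwise identifications assemble into a morphism of sheaves on $\frak X$, and it is a bijection because it is one stalkwise.
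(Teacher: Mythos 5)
Your argument is correct, and it is exactly what the paper intends: the lemma is stated without proof, being treated as the sheaf-theoretic upgrade of Corollary~\ref{cor:GS}, whose proof you reproduce stalk by stalk (Theorem~\ref{thm:FKato} plus Lemma~\ref{lem:GS} at the nodes, generization to the branches giving the endpoint values) and then extend across geometric specializations of the base via the edge-contraction functoriality of Proposition~\ref{prop:gen-P}. The only nitpick is cosmetic: the generization map on the base may kill more than $\mathbf Z\delta$, so one should say that $a$ and $b$ have a common image in $\overnorm M_{S,s}^{\rm gp}$ rather than a common residue modulo $\mathbf Z\delta$; this does not affect the argument.
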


\subsubsection{Subdivision of logarithmic curves}
\label{sec:subdiv-log}

Let $X$ be a logarithmic curve over $S$ and let $\tropfont X$ be its tropicalization.  Suppose that $\tropfont Y \to \tropfont X$ is a subdivision.  We construct an associated logarithmic modification $Y \to X$ such that the tropicalization of $Y$ is $\tropfont X$.

We may make this construction \'etale-locally on $S$, provided we do so in a manner compatible with further localization.  Every subdivision of tropical curves is locally an iterate of basic subdivisions, so we may assume that $\tropfont Y$ is a basic subdivision of $\tropfont X$.  We now describe $Y \to X$ locally in $X$.

Suppose that $e$ is the edge of $\tropfont X$ subdivided in $\tropfont Y$, and that $Z$ is the corresponding node of $X$.  Note that $Z$ is a closed subset of $X$, not necessarily a point unless $S$ is a point.  Over the complement of $Z$, we take the map $Y \to X$ to be an isomorphism.  It remains to describe $Y$ on an \'etale neighborhood of $Z$.

We may work \'etale-locally in $X$, again provided that our construction is compatible with further \'etale localization.  We can therefore work in an \'etale neighborhood $U$ of a geometric point $x \in Z$ and an \'etale neighborhood $T$ of its image in $S$, and we can assume that 
\begin{enumerate}
\item $\overnorm M_{X,x} = \overnorm M_{S,s} + \mathbf N \alpha + \mathbf N \beta / (\alpha + \beta = \delta)$ for some $\delta \in \overnorm M_{S,s}$,
\item $\alpha$ and $\beta$ come from global sections of $\overnorm M_X$ over $U$, and
\item $\delta$ comes from a global section of $\overnorm M_S$ over $T$.  
\end{enumerate}

Now, recall we may think of $\alpha$ and $\beta$ as barycentric coordinates on the edge $e$ of $\tropfont X$ that was subdivided in $\tropfont Y$.  Suppose that this edge was subdivided at the point where $\alpha = \gamma$ (and therefore $\beta = \delta - \gamma$) for some $\gamma \in \Gamma(T, \overnorm M_S)$.  We ask $V$ to represent the subfunctor of the functor represented by $U$ where $\alpha$ and $\gamma$ are locally comparable.  Then $V$ is a logarithmic modification of $U$.

To see that the construction is compatible with further localization, the main point is that the only ambiguity in the above construction is the choice of labelling of the generators of $\overnorm M_{X,x}$ as $\alpha$ and $\beta$.  This choice is in bijection with the choice of orientation of the edge $e$.  Reversing the labelling also reverses the orientation, and we impose the comparability of $\beta$ with $\delta - \gamma$.  But $\alpha = \delta - \beta$, so $\alpha$ is comparable to $\gamma$ if and only if $\beta$ is comparable to $\delta - \gamma$ and the resulting logarithmic modification is the same.  These local modifications therefore patch together to give a logarithmic modification $Y \to X$.  

\begin{remark}
It is possible to understand $Y \to X$ as the pullback of $\tropfont Y \to \tropfont X$ along the tropicalization map $t : X \to \tropfont X$.  This point of view will be developed in~\cite{GW}.
\end{remark}

\section{The tropical Picard group and the tropical Jacobian}
\label{sec:tropicjac}

\numberwithin{theorem}{subsection}
\subsection{The topology of a tropical curve}
\label{sec:troptop}

\begin{definition}
Let $\tropfont X$ be a tropical curve and let $x$ be a vertex of $\tropfont X$.  The \emph{star} of $x$ is the set of all $y \in \tropfont X$ such that $r(y) = x$.
\end{definition}

\begin{definition}
Let $\tropfont Y$ and $\tropfont X$ be tropical curves metrized by the same monoid $\overnorm M$.  A function $f : \tropfont Y \to \tropfont X$ is called a \emph{local isomorphism} if it commutes with all of the functions $r$, $\ell$, and $i$ and it restricts to a bijection on the star of each vertex.  

A local isomorphism is called an \emph{open embedding} if it is also injective.  The image of an open embedding of tropical curves is called an \emph{open subcurve}.
\end{definition}

In Figure~\ref{fig:trop-curve}, there are $6$ distinct local isomorphisms from the curve on the right to the curve on the left, assuming that all edges have the same length.

\begin{lemma}
An open subcurve of $\tropfont X$ is a subset of $\tropfont X$ that is stable under $i$ and $r^{-1}$.
\end{lemma}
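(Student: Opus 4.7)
My plan is to prove the two implications of the lemma separately, in both cases by direct manipulation of the defining structure maps of a tropical curve and a local isomorphism.

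For the forward direction, suppose $U$ is the image of an open embedding $f : \frak Y \hookrightarrow \frak X$. Closure of $U$ under $i_{\frak X}$ is immediate, since $f$ commutes with the involutions. For closure under $r_{\frak X}^{-1}$, I will start from a flag $y \in G_{\frak X}$ with $r_{\frak X}(y) \in U$; by idempotence of $r_{\frak X}$ the element $v := r_{\frak X}(y)$ is a vertex of $\frak X$. Writing $v = f(\tilde v)$ using injectivity, the preservation of $\ell$ forces $\tilde v$ to be a vertex of $\frak Y$. The star-bijection clause in the definition of a local isomorphism then produces a preimage $\tilde y$ of $y$ in the star of $\tilde v$, so that $y \in U$.

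For the reverse direction, given $U$ closed under $i_{\frak X}$ and $r_{\frak X}^{-1}$, I will endow $U$ itself with the structure of a tropical curve $\frak Y$ whose inclusion into $\frak X$ is an open embedding. The structure maps are the obvious restrictions $i_{\frak Y} = i_{\frak X}|_U$ and $\ell_{\frak Y} = \ell_{\frak X}|_U$, together with a possibly more partial $r_{\frak Y}$: set $r_{\frak Y}(x) = r_{\frak X}(x)$ whenever that makes sense (i.e.\ $r_{\frak X}(x)$ is defined and lies in $U$) and leave $r_{\frak Y}$ undefined otherwise. Checking the axioms of Definition~\ref{def:trop-curve} for $\frak Y$ is routine: $i$-closure makes $i_{\frak Y}$ an involution, idempotence of $r_{\frak Y}$ is inherited from $r_{\frak X}$, and the mutual characterization of vertices through $r$, $i$, and $\ell$ restricts without change.

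It remains to verify that the tautological inclusion $\frak Y \hookrightarrow \frak X$ is a local isomorphism, and the only nontrivial content is the bijection on stars at each vertex $v \in U$. One inclusion is automatic; the reverse inclusion $r_{\frak X}^{-1}(v) \subseteq U$ is exactly what $r^{-1}$-closure of $U$ gives. The one subtle point in the entire argument is settling the convention for how morphisms of tropical curves interact with the partially-defined $r$: the natural choice is that $r_{\frak X}(f(x))$ is defined and equal to $f(r_{\frak Y}(x))$ whenever $r_{\frak Y}(x)$ is defined, but without the converse. This convention is precisely what permits a subcurve to have ``open ends'', and it is exactly what the two closure conditions in the lemma encode; beyond fixing it, no real obstacle appears.
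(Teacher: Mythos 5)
Your proof is correct; the paper dismisses this lemma as immediate, and your argument is precisely the verification it has in mind --- closure under $i$ from commutation with the involutions, closure under $r^{-1}$ from the star bijection at the root vertex, and conversely the restriction of the structure maps to $U$, with $r$ left undefined where its value would escape $U$. The one point you rightly flag, that a local isomorphism need only preserve definedness of $r$ in the forward direction (defined on the source implies defined and compatible on the target, but not conversely), is left implicit in the paper but is forced by its own conventions: the minimal cover of Lemma~\ref{lem:min-cov} adjoins flags $i(x)$ with no root in $\frak Y$ even when their images are rooted in $\frak X$, so the lax convention is the only one under which that projection is a local isomorphism.
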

\begin{proof}
This is immediate.
\end{proof}

\begin{remark}
A tropical curve with real edge lengths has an evident realization as a metric space.  The open subcurves of $\tropfont X$ are the subcurves whose realizations are open subsets of the realization of $\tropfont X$.  Since the tropical topology depends only on the underlying graph of $\tropfont X$, and not on its metric, this remark characterizes the tropical topology of all tropical curves.
\end{remark}

\begin{figure}

\def\centerarc[#1](#2)(#3:#4:#5)
    { \draw[#1] ($(#2)+({#5*cos(#3)},{#5*sin(#3)})$) arc (#3:#4:#5); }

\begin{minipage}{.45\textwidth}
\centering
\scalebox{1.5}{%
\begin{tikzpicture}
\node (V) at (0,0) {};
\node (W) at (1,1) {};
\node (X) at (1,-1) {};

\draw[fill] (V) circle (.5ex);
\draw (W) circle (.5ex);
\draw (X) circle (.5ex);
\draw[thick,->] ([shift={(-1,1)}]V) -- (W);
\draw[thick,->] ([shift={(1,-1)}]W) -- (V); 
\draw[thick,->] ([shift={(1,1)}]V) -- (X); 
\draw[thick,->] ([shift={(-1,-1)}]X) -- (V);

\end{tikzpicture}}
\end{minipage} 
\begin{minipage}{.45\textwidth}
\centering
\scalebox{1.5}{%
\begin{tikzpicture}

\node (V) at (0,0) {};

\draw[fill] (V) circle (.5ex);
\centerarc[thick,->](.6,0)(165:-165:.6)
\centerarc[thick,->](.6,0)(-165:165:.5)

\end{tikzpicture}}
\end{minipage}

\caption{The curve on the left is locally isomorphic to the curve on the right.}
\label{fig:3}
\end{figure}

\begin{example}
Let $\tropfont X$ be a tropical curve with one vertex, $x$, and one edge $\{ e, i(e) \}$, of length $\delta$, connecting that vertex to itself.  Let $\tropfont Y$ by a tropical curve with one vertex, $y$, and two edges $\{ f, i(f) \}$ and $\{ g, i(g) \}$, both of length $\delta$, with $r(e) = r(f) = y$ and with $r(i(e))$ and $r(i(f))$ both undefined.  See Figure~\ref{fig:3} for a picture.  There is a local isomorphism $\tropfont Y \to \tropfont X$ sending $y$ to $x$, sending $f$ to $e$, and sending $g$ to $i(e)$.  This local isomorphism does not restrict to open embeddings on any open cover of $\tropfont Y$.
\end{example}

\begin{lemma} \label{lem:min-cov}
	Any logarithmic curve $\tropfont X$ has a minimal cover by a local isomorphism $\tropfont Y \to \tropfont X$.  That is, for any cover $\tropfont Z \to \tropfont X$, there is a (not necessarily unique) factorization $\tropfont Y \to \tropfont Z$ of the projection from $\tropfont Y$ to $\tropfont X$.
\end{lemma}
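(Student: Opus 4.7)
The plan is to construct $\frak Y$ explicitly as a disjoint union of elementary ``star'' pieces, one for each vertex of $\frak X$ together with one for each unrooted edge, and then verify universality by lifting each piece to an arbitrary cover.

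I would begin by defining, for each vertex $v$ of $\frak X$, an elementary star $U_v$ whose underlying set consists of a copy of $v$, a copy of each flag $e$ with $r_{\frak X}(e) = v$ and $e \ne v$, and a fresh involutive partner $e^\ast$ for each such $e$.  Equip $U_v$ with the structure $r(v) = v$, $r(e) = v$, $r(e^\ast)$ undefined, $i(e) = e^\ast$, and $\ell(e) = \ell(e^\ast) = \ell_{\frak X}(e)$.  For each edge of $\frak X$ on which $r_{\frak X}$ is nowhere defined, include a separate copy of that edge.  Let $\frak Y$ be the disjoint union of these pieces, and let $p : \frak Y \to \frak X$ be the obvious map sending $v \mapsto v$, $e \mapsto e$, and $e^\ast \mapsto i_{\frak X}(e)$, and the identity on unrooted-edge components.

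The next step is to verify that $p$ is a surjective local isomorphism.  Surjectivity is immediate, since every vertex, every rooted flag, and every unrooted edge of $\frak X$ lies in the image of some piece.  The map visibly commutes with $i$ and $\ell$, and commutes with $r$ wherever $r$ is defined on the source.  The restriction of $p$ to the star $r^{-1}_{\frak Y}(v) = \{v\} \cup \{e : r_{\frak X}(e) = v\}$ inside $U_v$ is by construction a bijection onto $r^{-1}_{\frak X}(v)$.

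For universality, given a cover $q : \frak Z \to \frak X$, I would build a factorization $\tilde p : \frak Y \to \frak Z$ one piece at a time.  For each vertex $v$ of $\frak X$, surjectivity of $q$ supplies a preimage $\tilde v \in \frak Z$, and the local isomorphism property of $q$ gives a bijection $r^{-1}_{\frak Z}(\tilde v) \to r^{-1}_{\frak X}(v)$.  Use its inverse to lift each flag $e \in r^{-1}_{\frak X}(v)$ to the unique $\tilde e \in r^{-1}_{\frak Z}(\tilde v)$ with $q(\tilde e) = e$, and send $e^\ast \mapsto i_{\frak Z}(\tilde e)$.  For each unrooted-edge component of $\frak Y$, pick any lift of one of its flags (which exists because $q$ is surjective) and extend by $i_{\frak Z}$.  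Taking the disjoint union gives $\tilde p : \frak Y \to \frak Z$ with $q \circ \tilde p = p$; compatibility with $i$, $\ell$, and $r$ (wherever defined on $\frak Y$) follows directly from the construction, and $\tilde p$ is itself a local isomorphism because each star in $\frak Y$ maps bijectively onto a star in $\frak Z$ via the star bijections of $q$.

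The only real subtlety, and precisely the source of the non-uniqueness indicated in the statement, is the freedom in choosing the preimages $\tilde v$ and the lifts of unrooted flags; once those choices are fixed, the rest of $\tilde p$ is determined.  Apart from this, the main thing to check carefully is that ``commutes with $r$'' for the partial function $r$ permits $r_{\frak Y}$ to be strictly less defined than $r_{\frak X} \circ p$, which is what allows the unrooted $e^\ast$ in $U_v$ to map to a rooted flag $i_{\frak X}(e)$ in $\frak X$ without violating the local isomorphism condition.
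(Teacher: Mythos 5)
Your construction is essentially the same as the paper's: the paper defines $\frak Y$ as the disjoint union of the stars of the vertices with a fresh involutive partner adjoined to each non-vertex flag, and leaves the verification of the universal property implicit. Your write-up is correct and even slightly more careful, since you explicitly include separate components for the unrooted edges (on which $r$ is nowhere defined) and spell out the lifting argument, including the choices responsible for the non-uniqueness.
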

\begin{proof}
Let $\tropfont X$ be a tropical curve and let $\tropfont Y_0$ be the disjoint union of the stars of the vertices.  Construct $\tropfont Y$ by adjoining a new flag $i(x)$ for each non-vertex flag $x$ of $\tropfont Y_0$.
\end{proof}

\begin{definition} \label{def:cover}
A collection of local isomorphisms $p_i : \tropfont U_i \to \tropfont X$ of a tropical curve $\tropfont X$ is called a \emph{cover} if $\tropfont X = \bigcup p_i(\tropfont U_i)$.  We call this the \emph{tropical topology} of $\tropfont X$.
\end{definition}

%

Let $\tropfont Y$ be a subdivision of $\tropfont X$.  We construct an associated morphism of sites $\rho : \tropfont Y \to \tropfont X$.  Let $\tau : \tropfont U \to \tropfont X$ be a local isomorphism.  For each edge $e$ of $\tropfont U$, the restriction of $\tau$ to $e$ is a bijection.  Form $\rho^{-1} \tropfont U$ by subdividing $e$ in precisely the same way $\tau(e)$ is subdivided in $\tropfont Y$.  Then we have an evident local isomorphism $\rho^{-1} \tropfont U \to \tropfont Y$.

\begin{proposition}
The construction outlined above determines a morphism of sites $\rho$ from that $\tropfont Y$ to that of $\tropfont X$.
\end{proposition}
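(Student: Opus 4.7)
The plan is to verify the three things one needs for $\rho^{-1}$ to define a morphism of sites: that it sends local isomorphisms to local isomorphisms, that it is functorial, and that it carries covers to covers. Since any subdivision is a finite iterate of basic subdivisions and the inverse image construction obviously composes, I would reduce to the case where $\frak Y$ is a basic subdivision of $\frak X$ at the edge $\{e, i(e)\}$ at some point with barycentric coordinate $\gamma$.

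First, for a local isomorphism $\tau : \frak U \to \frak X$, I would check that $\rho^{-1}\frak U$, constructed by subdividing each edge $f$ of $\frak U$ with $\tau(f) \in \{e, i(e)\}$ at the corresponding point $\gamma$ and leaving all other edges alone, really is a tropical curve and that the natural map $\rho^{-1}\frak U \to \frak Y$ is a local isomorphism. The only points to verify are that this map restricts to a bijection on the star of each vertex: on the star of an old vertex $x$ of $\frak U$ this factors as the bijection from the star of $x$ in $\frak U$ onto the star of $\tau(x)$ in $\frak X$ (given by hypothesis on $\tau$) followed by the tautological bijection onto the star of the corresponding vertex in $\frak Y$; on the star of a newly inserted $2$-valent vertex the claim is built into the construction.

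Next I would verify functoriality. A morphism of local isomorphisms $\sigma : \frak U \to \frak V$ over $\frak X$ is itself a local isomorphism, and since the subdivision of an edge $f$ of $\frak U$ is dictated solely by its image in $\frak X$, and this image equals $\tau_{\frak V}(\sigma(f))$, the map $\sigma$ extends uniquely to an edge-preserving local isomorphism $\rho^{-1}\sigma : \rho^{-1}\frak U \to \rho^{-1}\frak V$ over $\frak Y$. Compatibility with fiber products follows similarly, because a fiber product of local isomorphisms is computed on underlying sets by intersection and subdivision commutes with intersection edge by edge.

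Finally, for covers, suppose $\{\tau_j : \frak U_j \to \frak X\}$ is a cover. Given any $y \in \frak Y$, its image $x$ in $\frak X$ lies in some $\tau_j(\frak U_j)$; choosing a preimage $u \in \frak U_j$, the point $y$ lifts to the vertex, flag, or newly inserted $2$-valent vertex of $\rho^{-1}\frak U_j$ corresponding to where $y$ sits in the subdivided copy of $\tau_j(u)$, and this lift projects to $y$. Hence $\{\rho^{-1}\frak U_j \to \frak Y\}$ is a cover. The main obstacle, such as it is, is really bookkeeping: one must check that the local subdivision recipe is canonically determined by the local isomorphism $\tau$, so that no ambiguity arises when refining, composing, or iterating basic subdivisions. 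This is immediate once one observes that the subdivision point of an edge of $\frak X$ in $\frak Y$ is a datum of $\frak Y \to \frak X$ itself and hence pulls back uniquely along any $\tau$.
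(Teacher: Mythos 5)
Your proof is correct and follows the same route as the paper, which simply declares that the construction "respects covers and fiber products of local isomorphisms" and that both verifications are immediate. Your write-up just supplies the routine details (stars of vertices, functoriality, lifting of covers) that the paper leaves to the reader; the reduction to basic subdivisions is harmless but not needed, since the recipe already subdivides each edge of $\frak U$ exactly as its image is subdivided in $\frak Y$.
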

\begin{proof}
One must verify that the construction respects covers and fiber products of local isomorphisms.  Both are immediate.
\end{proof}

Suppose that $\tropfont X$ is a tropical curve over a logarithmic scheme $S$.  This construction makes it possible to organize the sites of the fibers of $\tropfont X$ over $S$ into a fibered site \cite[7.2.1]{sga4-VI} over $\et(S)^{\rm op}$, the \emph{opposite} of the \'etale site of $S$.

\subsection{The sheaves of linear and piecewise linear functions}
\label{sec:linear}


If $\tropfont U \to \tropfont X$ is a local isomorphism then we have maps $\PL(\tropfont X) \to \PL(\tropfont U)$ and $\L(\tropfont X) \to \L(\tropfont U)$ by restriction.  This makes $\PL$ and $\L$ into presheaves on the category of tropical curves with local isomorphisms to $\tropfont X$.

\begin{proposition}
The presheaves $\L$ and $\PL$ are sheaves in the tropical topology.
\end{proposition}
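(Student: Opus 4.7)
My plan is to verify the two sheaf axioms directly, exploiting the two defining features of a local isomorphism: compatibility with $r$, $i$, and $\ell$, and restriction to a bijection on the star of each vertex.

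First I would address separatedness. Given a cover $p_i : \frak U_i \to \frak X$ and a section $(\alpha,\mu) \in \frak P(\frak X)$ that pulls back to zero on each $\frak U_i$, I read off $(\alpha, \mu)$ pointwise: for any $x \in \frak X$, pick $y \in \frak U_i$ with $p_i(y) = x$, and conclude $(\alpha(x), \mu(x)) = 0$ from $(p_i^\ast \alpha(y), p_i^\ast \mu(y)) = 0$. The same remark applies to $\frak L$.

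For gluing, given compatible local sections $(\alpha_i, \mu_i) \in \frak P(\frak U_i)$, I would define $(\alpha, \mu)$ on $\frak X$ pointwise by choosing a lift: for each $x \in \frak X$, set $\alpha(x) := \alpha_i(y)$ and $\mu(x) := \mu_i(y)$ for some $y \in \frak U_i$ above $x$. Independence of the chosen lift is exactly the cocycle hypothesis applied on the fiber product $\frak U_i \mathop\times_{\frak X} \frak U_j$, which is itself a tropical curve carrying a local isomorphism to $\frak X$. To verify that the resulting $(\alpha,\mu)$ lies in $\frak P(\frak X)$, I would check conditions (1)--(3) of Definition~\ref{def:linear} pointwise: each is a constraint on a single vertex, a single flag, or along a single edge, and each such locus lifts into some $\frak U_i$ where the condition holds by hypothesis and is preserved by the local isomorphism $p_i$.

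The step that actually uses something nontrivial --- and is the main obstacle --- is the balancing condition for $\frak L$. Balancing at a vertex $x$ of $\frak X$ is a sum over the star of $x$, and one must evaluate this sum from a single lift $\tilde x$ in some $\frak U_i$ in order to invoke the balancing of $(\alpha_i, \mu_i)$. Here the ``bijection on stars'' clause in the definition of local isomorphism is essential: it guarantees that the star of $\tilde x$ in $\frak U_i$ maps bijectively onto the star of $x$ in $\frak X$, so that $\sum_{r(y) = x} \mu(y) = \sum_{r(\tilde y) = \tilde x} \mu_i(\tilde y) = 0$. Without this clause in the definition of a cover, $\frak L$ would fail to be a sheaf, which is precisely why the tropical topology is set up the way it is.
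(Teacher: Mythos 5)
Your proof is correct and follows essentially the same route as the paper's: $\frak P$ is a sheaf because its sections are functions on the underlying set subject to conditions checked on individual flags (and covers are set-theoretically surjective and compatible with $r$, $i$, $\ell$), while the balancing condition cutting out $\frak L$ is a condition on stars, which local isomorphisms preserve bijectively by definition. You have merely spelled out the separatedness and gluing axioms more explicitly than the paper does.
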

\begin{proof}
Since piecewise linear functions are functions defined on the underlying set of a tropical curve, and tropical covers are set-theoretic covers, it is immediate that $\PL$ forms a sheaf.  The subpresheaf $\L$ is defined by the balancing condition at each vertex of the underlying graph, which depends only on the star of that vertex.  By definition, a tropical cover induces a bijection on the star of each vertex, and therefore the balancing condition is visible locally in a tropical cover.
\end{proof}

\begin{proposition} \label{prop:trop-subdiv}
Let $\rho : \tropfont Y \to \tropfont X$ be a subdivision of tropical curves.  Then $\L_{\tropfont X} \to \mathrm R \rho_\ast \L_{\tropfont Y}$ is an isomorphism.
\end{proposition}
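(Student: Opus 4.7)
The plan is to reduce to a single basic subdivision, verify the underived isomorphism from the definitions, and then establish the vanishing of higher direct images by a small \v{C}ech computation. Since a subdivision is a finite composition of basic subdivisions, and the Leray spectral sequence for compositions reduces $\mathrm R \rho_\ast$ to successive applications, we may assume that $\rho : \frak Y \to \frak X$ is the basic subdivision at a single edge of $\frak X$, adding a new $2$-valent vertex $y$ to $\frak Y$ with incident flags $e$ and $f$ satisfying $r(e) = r(f) = y$.

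For the identification $\frak L_{\frak X} \xrightarrow{\sim} \rho_\ast \frak L_{\frak Y}$, I would work with a local isomorphism $\tau : \frak U \to \frak X$ and show that a linear function $(\alpha, \mu)$ on $\rho^{-1} \frak U$ descends uniquely to $\frak U$. The balancing condition at each newly introduced $2$-valent vertex forces $\mu(e) + \mu(f) = 0$, so the slope is constant across the two subedges into which the original edge was split; condition~(3) of Definition~\ref{def:linear} then interpolates $\alpha$ linearly across the unsubdivided edge of $\frak U$. This produces an inverse to the restriction map $\frak L_{\frak X}(\frak U) \to \frak L_{\frak Y}(\rho^{-1} \frak U)$, and it is immediate from the construction of $\rho^{-1}$ that the resulting identifications are compatible with the presheaf structure.

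For the vanishing of $\mathrm R^i \rho_\ast \frak L_{\frak Y}$ with $i \geq 1$, I would compute sheaf-locally on $\frak X$. Using the minimal cover of Lemma~\ref{lem:min-cov}, it suffices to handle open embeddings $\tau : \frak U \to \frak X$, and by further shrinking we may arrange that $\rho^{-1} \frak U$ is either isomorphic to $\frak U$ or is itself a basic subdivision of $\frak U$ at a single edge. In the latter case, cover $\rho^{-1} \frak U$ by two open subcurves $\frak V_1, \frak V_2$, each containing $y$ together with one of the two adjacent vertices, so that $\frak V_1 \cap \frak V_2$ is a star-shaped neighborhood of $y$; a direct calculation using the interpolation of condition~(3) shows that any linear function on $\frak V_1 \cap \frak V_2$ extends to both $\frak V_1$ and $\frak V_2$, giving $\check H^1 = 0$, and higher degrees vanish because the cover has only two elements. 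The main obstacle will be justifying that such two-element \v{C}ech covers genuinely compute $\mathrm R^i \rho_\ast$ rather than merely bound it, which requires cofinality of these covers in the tropical topology on $\rho^{-1} \frak U$ and a careful check that the combinatorial bookkeeping is not confused by local isomorphisms that fail to be injective before passing to the minimal cover.
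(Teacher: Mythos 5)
Your reduction to a single basic subdivision and your descent of linear functions for the degree-zero part of the statement are sound and match the paper's argument: balancing at the new $2$-valent vertex forces the two slopes there to be opposite, so a linear function on $\rho^{-1}\frak U$ has a single well-defined slope across the subdivision point and interpolates to a linear function on the unsubdivided edge. The gap is in the vanishing of $\mathrm R^i\rho_\ast\frak L_{\frak Y}$ for $i\geq 1$, and it is more serious than the cofinality issue you flag at the end. Your local model $\frak U$ contains \emph{both} endpoints of the subdivided edge as rooted vertices, and on such an open the computation you propose fails. If the endpoint $a$ is $1$-valent in $\frak U$ --- which is exactly what ``further shrinking'' produces --- then balancing at $a$ forces every element of $\frak L(\frak V_1)$ to have slope $0$ along the edge from $a$ to $y$, hence slope $0$ on all of $\frak V_1\cap\frak V_2$; the same holds for $\frak V_2$, so a section of $\frak L(\frak V_1\cap\frak V_2)$ with nonzero slope is not a \v{C}ech coboundary and $\check H^1\neq 0$. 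This is not an artifact of the cover: for $\frak U$ a compact edge with two $1$-valent endpoints, the degree sequence~\eqref{eqn:35} gives $H^1(\rho^{-1}\frak U,\frak L)\cong H_0(\rho^{-1}\frak U)=\mathbf Z$, so the presheaf $\frak U\mapsto H^1(\rho^{-1}\frak U,\frak L)$ genuinely does not vanish on these opens and no computation there can establish the result.

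Both defects are cured by localizing to opens on which the offending balancing conditions are absent; this is the paper's move. Definition~\ref{def:trop-curve} deliberately allows flags with no root, so a bare edge $\{e,i(e)\}$ with \emph{no} vertices is an open subcurve, and together with the stars of the vertices (Lemma~\ref{lem:min-cov}) the bare edges cover $\frak X$. Reducing to $\frak U$ a bare edge, the preimage $\rho^{-1}\frak U$ is a single vertex with two dangling edges (the right-hand curve of Figure~\ref{fig:trop-curve}); the unrooted ends impose no balancing, so $\frak L(\frak U)\to\frak L(\rho^{-1}\frak U)$ is visibly a bijection, both sides being $\overnorm M^{\rm gp}\times\mathbf Z$. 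For the higher direct images, on each such preimage $\frak Z$ (a bare edge subdivided, or a star with one edge subdivided) one avoids \v{C}ech covers altogether by using the two exact sequences in~\eqref{eqn:34}: $H^{\geq 1}(\frak Z,\overnorm M^{\rm gp})=0$ since $\frak Z$ is simply connected, $H^{\geq1}(\frak Z,\mathscr E)=H^{\geq1}(\frak Z,\mathscr V)=0$ by Lemma~\ref{lem:flasque}, hence $H^{\geq1}(\frak Z,\frak P)=0$; and $H^0(\frak Z,\frak P)\to H^0(\frak Z,\mathscr V)$ is surjective because the dangling flags can absorb any prescribed divisor, giving $H^{\geq1}(\frak Z,\frak L)=0$. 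Note that this surjectivity is exactly what fails on your compact-edge model, which is why the localization must go all the way down to stars and bare edges.
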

\begin{proof}
By induction, we can also assume that $\tropfont Y$ is a basic subdivision of $\tropfont X$.  The assertion is local on $\tropfont X$ so we can assume that $\tropfont X$ is a bare edge with no vertices.  In that case, $\tropfont Y$ is a vertex with two edges.  Thus, $\tropfont Y$ has no nontrivial covers, so $\mathrm R^p \rho_\ast \L_{\tropfont Y} = 0$ for all $p > 0$ and the isomorphism $\L_{\tropfont X} \simeq \rho_\ast \L_{\tropfont Y}$ is a straightforward calculation.
\end{proof}

Suppose that $\tropfont X$ is a tropical curve over $S$.  On each stratum $Z$ of $S$, the tropical curve $\tropfont X$ is locally constant, so the cohomology $H^\ast(\tropfont X_Z, \L)$ can be represented by a complex of locally constant abelian groups.  If $s \leadsto t$ is a geometric generization of $S$ there is a map $\L(\tropfont X_t) \to \L(\tropfont X_s)$, but there is no guarantee of a generization map $H^1(\tropfont X_t, \L) \to H^1(\tropfont X_s, \L)$ if $s$ and $t$ are in different strata.  To get the generization map, we will need to impose the bounded monodromy condition in Section~\ref{sec:monodromy}.

\begin{proposition} \label{prop:trojac-tropen}
Let $\tropfont X$ be a logarithmic curve metrized by $\overnorm M$, let $\overnorm M \to \overnorm N$ be a homomorphism such that $\overnorm M^{\rm gp} \to \overnorm N^{\rm gp}$ is an isomorphism, and let $\tropfont Y$ be the induced tropical curve metrized by $\overnorm N$.  Then $\mathrm B\L_{\tropfont X} \to \mathrm B\L_{\tropfont Y}$ is an isomorphism of stacks on $\tropfont X$.
\end{proposition}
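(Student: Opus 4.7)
The plan is to observe that the hypothesis $\overnorm M^{\mathrm{gp}} = \overnorm N^{\mathrm{gp}}$ forces the edge contraction $\frak X \to \frak Y$ to be the identity on the underlying combinatorial data, and then to check that the sheaves of linear functions literally coincide under this identification.

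First I would note that the kernel of $f : \overnorm M \to \overnorm N$ is the intersection of $\overnorm M$ with the kernel of $f^{\mathrm{gp}} : \overnorm M^{\mathrm{gp}} \to \overnorm N^{\mathrm{gp}}$. Since $f^{\mathrm{gp}}$ is an isomorphism by assumption, this kernel is trivial. Consequently, a flag $x$ of $\frak X$ satisfies $f(\ell(x)) = 0$ if and only if $\ell(x) = 0$, which, by the axioms of Definition~\ref{def:trop-curve}, holds exactly when $x$ is a vertex. Thus no non-vertex flags are collapsed in passing from $\frak X$ to $\frak Y$, and the underlying set $G$ together with the structure maps $r$ and $i$ agree for $\frak X$ and $\frak Y$; only the length function is changed, from $\ell$ to $f \circ \ell$.

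Next I would use the fact that the tropical topology of Definition~\ref{def:cover} depends only on the combinatorial data $(G, r, i)$ of a tropical curve, through the notion of local isomorphism on stars of vertices. The identification of $(G_{\frak X}, r_{\frak X}, i_{\frak X})$ with $(G_{\frak Y}, r_{\frak Y}, i_{\frak Y})$ therefore induces an equivalence of sites $\frak X \simeq \frak Y$. Under this equivalence, the definitions of $\frak L_{\frak X}$ and $\frak L_{\frak Y}$ from Definition~\ref{def:linear} produce the same sheaf: a section on an open subcurve is a pair $(\alpha, \mu) : G \to \overnorm M^{\mathrm{gp}} \times \mathbf Z = \overnorm N^{\mathrm{gp}} \times \mathbf Z$ satisfying the balancing condition (which involves only $\mu$) and the slope condition $\alpha(i(x)) - \alpha(x) = \mu(x)\ell(x)$. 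The slope condition is an equation in the common group $\overnorm M^{\mathrm{gp}} = \overnorm N^{\mathrm{gp}}$, and within this group the length $\ell(x) \in \overnorm M$ and its image $f(\ell(x)) \in \overnorm N$ are the same element. Hence the slope conditions for $\frak L_{\frak X}$ and $\frak L_{\frak Y}$ agree, and the two sheaves of abelian groups are canonically isomorphic.

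Taking classifying stacks then yields the desired isomorphism $\mathrm B\frak L_{\frak X} \xrightarrow{\sim} \mathrm B\frak L_{\frak Y}$. There is no substantive obstacle here; the entire argument is a verification that the hypothesis $\overnorm M^{\mathrm{gp}} = \overnorm N^{\mathrm{gp}}$ is precisely the condition needed so that neither the combinatorial topology nor the defining equations of a linear function are affected by the change of metric monoid.
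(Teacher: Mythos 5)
Your argument is correct and is exactly the paper's (one-line) proof, spelled out in more detail: the hypothesis $\overnorm M^{\rm gp} = \overnorm N^{\rm gp}$ means no edges are contracted, the underlying graphs and hence the sites coincide, and the defining conditions for $\frak L_{\frak X}$ and $\frak L_{\frak Y}$ are literally the same equations in the common group. Nothing further is needed.
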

\begin{proof}
Since $\overnorm M^{\rm gp} = \overnorm N^{\rm gp}$, the sheaves $\L_{\tropfont X}$ and $\L_{\tropfont Y}$ are the same when we identify the underlying graphs of $\tropfont X$ and $\tropfont Y$.
\end{proof}

\subsection{The intersection pairing on a tropical curve}
\label{sec:intersection}

The following definition seems to be due originally to Grothendieck~\cite[eq.\ (12.4.5)]{sga7-IX}:

\begin{definition} \label{def:intersection}
Let $\tropfont X$ be a tropical curve metrized by a monoid $\overnorm M$, and let $\ell(e) \in \overnorm M$ denote the length of an edge $e$ of $\tropfont X$.  If $e$ and $f$ are oriented edges of $\tropfont X$, we define
\begin{equation*}
e . f = \begin{cases} 
\ell(e) & f = e \\
-\ell(e) & f = e' \\
0 & \text{else} \end{cases}
\end{equation*}
and extend by linearity to an \emph{intersection pairing} on the free abelian group generated by the oriented edges of $\tropfont X$.  By restriction it also gives a pairing on the first homology of $\tropfont X$.
\end{definition}

\begin{lemma} \label{lem:pair-local}
Suppose that $\tropfont X$ is a tropical curve metrized by a monoid $\overnorm M$ and $u : \overnorm M \to \overnorm N$ is a homomorphism inducing an edge contraction $\tropfont Y$ of $\tropfont X$.  Then the intersection pairing is compatible with $u$, in the sense that diagram~\eqref{eqn:39} commutes:
\begin{equation} \label{eqn:39} \vcenter{ \xymatrix{
H_1(\tropfont X) \times H_1(\tropfont Y) \ar@<-3.65em>[d] \subset \mathbf Z^{E(\tropfont X)} \times \mathbf Z^{E(\tropfont X)} \ar[r] \ar@<4em>[d] & \overnorm M^{\rm gp} \ar[d] \\
H_1(\tropfont Y) \times H_1(\tropfont Y) \subset \mathbf Z^{E(\tropfont Y)} \times \mathbf Z^{E(\tropfont Y)} \ar[r] & \overnorm N^{\rm gp}
}} \end{equation}
\end{lemma}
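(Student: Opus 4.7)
The plan is to reduce the commutativity of the square to a direct calculation on pairs of generating oriented edges, exploiting the fact that the edge contraction $\frak X \to \frak Y$ admits an especially transparent description on chains.

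First I would fix the combinatorial bookkeeping. Let $E_0 \subset E(\frak X)$ be the set of edges contracted by $u$, i.e.\ those with $u(\ell(e)) = 0$, and let $E(\frak Y) \subset E(\frak X)$ be the complementary set of surviving edges. This gives a splitting $\mathbf Z^{E(\frak X)} = \mathbf Z^{E(\frak Y)} \oplus \mathbf Z^{E_0}$. The right-hand vertical map in~\eqref{eqn:39} is the projection onto the first summand (in the first factor) and the inclusion $H_1(\frak Y) \subset \mathbf Z^{E(\frak Y)} \subset \mathbf Z^{E(\frak X)}$ already encodes the second factor. The key identification I would check is that the left-hand vertical map, namely the pushforward $H_1(\frak X) \to H_1(\frak Y)$ induced by the edge contraction, is exactly the restriction of this projection to $H_1(\frak X) \subset \mathbf Z^{E(\frak X)}$. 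This is standard: a cycle is a sum of oriented edges whose boundary vanishes at every vertex, and collapsing the edges in $E_0$ merges endpoints so that the vertex-balance condition for the image is just the restriction of the original balance condition to the remaining edges.

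Next, by bilinearity it suffices to verify the commutativity on pairs $(e, f)$ where $e$ is an oriented edge of $\frak X$ and $f$ is an oriented edge of $\frak Y$ (viewed inside $\mathbf Z^{E(\frak X)}$). By Definition~\ref{def:intersection}, $e \cdot f$ is $\ell(e)$ if $e = f$, $-\ell(e)$ if $f = i(e)$, and $0$ otherwise; the same formula computes the pairing in $\frak Y$ with $\ell$ replaced by $u \circ \ell$. Two cases then settle the claim:
\begin{enumerate}
\item If $e \in E_0$, then the image of $e$ under the right-hand projection is $0$, so the pairing on the $\frak Y$-side vanishes; on the $\frak X$-side, $e \cdot f \in \{0, \pm \ell(e)\}$, and $u(\ell(e)) = 0$ by choice of $E_0$.
\item If $e \in E(\frak Y)$, then $e$ maps to itself under the projection, so both routes around the diagram produce the same element $\pm u(\ell(e))$ (or $0$) in $\overnorm N^{\rm gp}$, depending on whether $f$ equals $e$, $i(e)$, or neither.
\end{enumerate}

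There is no real obstacle here; the mildly delicate point is just the identification of the pushforward on $H_1$ with the projection of chains onto surviving edges, and once that is in hand the rest is a three-line case analysis. I would present the proof in roughly this order: set up the splitting and identify the vertical maps, then dispatch the two edge cases.
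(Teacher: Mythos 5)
Your proof is correct and takes the only natural route; the paper itself disposes of this lemma with the single sentence ``The proof is immediate,'' so your write-up simply supplies the verification the authors omitted. The two essential observations you isolate --- that the pushforward on $H_1$ is the restriction of the projection of $\mathbf Z^{E(\frak X)}$ onto the surviving edges, and that contracted edges contribute $\pm\ell(e)$ with $u(\ell(e))=0$ --- are exactly what makes the diagram commute.
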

\begin{proof}
The proof is immediate.
\end{proof}

\subsection{The tropical degree}
\label{sec:trop-deg}

Let $\mathsf V$ denote the quotient $\PL / \L$.  Then $\mathsf V(\tropfont U)$ is the free abelian group generated by the vertices of $\tropfont U$.

Let $\tropfont X$ be a tropical curve metrized by $\overnorm M$.  There is an embedding of the constant sheaf $\overnorm M^{\rm gp}$ inside $\L_{\tropfont X}$ as the constant functions.  We write $\mathsf H$ for the quotient of $\L$ by $\overnorm M^{\rm gp}$ and $\mathsf E$ for the quotient of $\PL$ by $\overnorm M^{\rm gp}$.  This yields a commutative diagram~\eqref{eqn:34} with exact rows and columns:
\numberwithin{equation}{subsection}
\begin{equation}\label{eqn:34} \vcenter{ \xymatrix{
& & 0 \ar[d] & 0\ar[d]  \\
0 \ar[r] & \overnorm M^{\rm gp} \ar[r] \ar@{=}[d] & \L \ar[r] \ar[d] & \mathsf H \ar[r] \ar[d] & 0 \\
0 \ar[r] & \overnorm M^{\rm gp} \ar[r] & \PL \ar[r] \ar[d] & \mathsf E \ar[r] \ar[d] & 0 \\
& & \mathsf V \ar@{=}[r] \ar[d] & \mathsf V \ar[d] \\
& & 0 & 0
}} \end{equation}
We note that $\mathsf E$ is the sheaf freely generated by the edges and that $\mathsf E \to \mathsf V$ is the coboundary map in homology.  Therefore $\mathsf H$ is the sheaf whose value on $\tropfont U$ is the first Borel--Moore homology of the topological realization of $\tropfont U$.  Note that because $\tropfont X$ can have $1$-sided or even $0$-sided edges, the Borel--Moore homology is not locally trivial.

\setcounter{theorem}{\value{equation}}
\begin{remark}
Diagram~\eqref{eqn:34} gives several ways of producing a tropical line bundle.  First, a class in $H^1(\tropfont X, \overnorm M^{\rm gp})$ (that is, a local system with transition functions in $\overnorm M^{\rm gp}$) gives a tropical line bundle by extending the structure group.  Second, an integer linear combination of vertices of $\tropfont X$ (a section of $\mathsf V$) yields a tropical line bundle by the coboundary map $H^0(\tropfont X, \mathsf V) \to H^1(\tropfont X, \L)$.

These are related by a third construction.  Beginning with a section of $\mathsf E$ --- that is, with an integer linear combination of \emph{edges} of $\tropfont X$ --- we get a class in $H^1(\tropfont X, \L)$ by the diagonal of the following commutative square:
\begin{equation*} \xymatrix{
H^0(\tropfont X, \mathsf E ) \ar[r] \ar[d] & H^1( \tropfont X, \overnorm M^{\rm gp}) \ar[d] \\
H^0(\tropfont X, \mathsf V) \ar[r] & H^1( \tropfont X, \L ) 
} \end{equation*}

The referee pointed out to us that the class in $H^1(\tropfont X, \overnorm M^{\rm gp}) = \Hom(H_1(\tropfont X), \overnorm M^{\rm gp})$ is precisely the obstruction to lifting a section of $\mathsf E$ to a piecewise linear function.  Thus edge weightings with trivial monodromy lift to piecewise linear functions and those with nontrivial monodromy produce tropical line bundles.

These edge weightings will be used to decorate the example in Figure~\ref{fig:genus-2}.
\end{remark}

\numberwithin{equation}{theorem}
\begin{lemma} \label{lem:flasque}
Let $\tropfont X$ be a tropical curve.  Then $H^p(\tropfont X, \mathsf E) = 0$ for all $p > 0$ and $H^p(\tropfont X, \mathsf V) = 0$ for all $p > 0$.
\end{lemma}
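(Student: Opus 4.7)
The plan is to decompose $\mathscr V$ and $\mathscr E$ as direct sums of skyscraper-like sheaves supported at individual vertices or edges of $\frak X$, and then argue that each summand is acyclic.

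First, because every vertex (respectively edge) of a local isomorphism $\frak U \to \frak X$ lies over a unique vertex (respectively edge) of $\frak X$, there are canonical decompositions
\[
\mathscr V = \bigoplus_{v \in V(\frak X)} \mathscr V_v, \qquad \mathscr E = \bigoplus_{e \in E(\frak X)} \mathscr E_e,
\]
where $\mathscr V_v(\frak U)$ is the free abelian group on vertices of $\frak U$ lying over $v$, and $\mathscr E_e(\frak U)$ is the free abelian group on edges lying over $e$. Each summand is already a sheaf because the assignment $\frak U \mapsto \pi^{-1}(v)$ is a sheaf of sets in the tropical topology. Once this reduction is made, it suffices to verify that each $\mathscr V_v$ and each $\mathscr E_e$ is acyclic.

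Next, I would identify $\mathscr V_v$ with the direct image $(i_v)_\ast \underline{\mathbf Z}$, where $i_v : \{v\} \hookrightarrow \frak X$ is the inclusion of the one-vertex tropical subcurve; similarly $\mathscr E_e = (i_e)_\ast \underline{\mathbf Z}$ for the inclusion of the two-flag subcurve $\{e, i(e)\}$, with $r$ forgotten on both flags so that this subcurve has no vertices. The key observation is that $(i_v)_\ast$ and $(i_e)_\ast$ are exact functors from abelian groups to sheaves on $\frak X$: evaluating $(i_v)_\ast A$ on a local isomorphism $\pi : \frak U \to \frak X$ yields the product $A^{\pi^{-1}(v)}$ indexed by preimages of $v$, and a short exact sequence $0 \to A' \to A \to A'' \to 0$ of abelian groups remains exact after this pointwise construction at every $\frak U$.

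Exactness of $(i_v)_\ast$ and $(i_e)_\ast$ forces their higher derived functors to vanish, so the Leray spectral sequence degenerates and yields
\[
H^p(\frak X, \mathscr V_v) = H^p(\{v\}, \mathbf Z), \qquad H^p(\frak X, \mathscr E_e) = H^p(\{e, i(e)\}, \mathbf Z).
\]
Both tropical sites $\{v\}$ and $\{e, i(e)\}$ admit only trivial covers (up to refinement by disjoint copies of themselves), so the cohomology of the constant sheaf $\mathbf Z$ is concentrated in degree zero, and both right-hand sides vanish for $p > 0$. Summing over vertices and edges then yields the lemma. The main obstacle I anticipate is verifying exactness of $(i_v)_\ast$ and $(i_e)_\ast$ at the site-theoretic level, which amounts to checking carefully that vertex and edge preimages glue along covers by local isomorphisms — not just along inclusions, but also along non-inclusion covers of the kind appearing in the example preceding Lemma~\ref{lem:min-cov}.
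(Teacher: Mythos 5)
Your argument is correct. The $\mathscr V$ half coincides with the paper's proof (pushforward of the constant sheaf from the discrete set of vertices), but the $\mathscr E$ half takes a genuinely different and in fact cleaner route. The paper decomposes $\mathscr E$ over the edges and pushes each summand forward from the closed subcurve spanned by $e$ \emph{together with its endpoints}, which is an interval or --- when $e$ closes up into a loop --- a circle; the interval case is handled by flasqueness, but the circle has nontrivial covers, so the paper must pass to the universal cover and check that the coinvariants of the shift action on a countable product of copies of $\mathbf Z$ vanish. You instead push forward from the bare edge $\{e, i(e)\}$ with $r$ forgotten, i.e.\ with the vertices deleted. The payoff is that the bare-edge site, like the one-vertex site, admits only covers refinable by the identity, so its global sections functor is exact and every sheaf on it is acyclic; moreover $i_e^{-1}\frak U$ is always a disjoint union of bare edges, so $R^q(i_e)_\ast = 0$ for $q>0$ and the Leray argument closes immediately --- the interval/circle dichotomy and the group-cohomology computation disappear entirely. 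One small caveat: on a local isomorphism $\frak U \to \frak X$ with infinitely many edges over $e$ (e.g.\ the universal cover of a loop), the sections of $\mathscr E_e$ are the full product $\mathbf Z^{E_e(\frak U)}$ rather than the free abelian group, since the direct-sum presheaf is not separated and sheafifies to the product; this is exactly what makes the identification $\mathscr E_e = (i_e)_\ast\underline{\mathbf Z}$ correct, and it does not affect your argument, because the vanishing comes from the triviality of covers of the bare edge rather than from any finiteness of the coefficients.
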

\begin{proof}
Note first that $\mathsf V$ is the pushforward along the closed embedding of the vertices of $\tropfont X$ of the constant sheaf $\mathbf Z$.  Therefore, writing $V(\tropfont X)$ for the set of vertices of $\tropfont X$, we have $H^p(\tropfont X, \mathsf V) = H^p(V(\tropfont X), \mathbf Z) = 0$ for all $p > 0$.

Next, note that $\mathsf E$ is the direct sum of sheaves $\mathsf E_i$ supported on each of the edges of $\tropfont X$.  Then $\mathsf E_i$ is the pushforward along the closed embedding of either an interval or a circle.  We can therefore assume that $\tropfont X$ is either an interval or a circle.

If $\tropfont X$ is an interval then its topology is generated by open subsets and $\mathsf E$ is flasque, hence has no higher cohomology.  If $\tropfont X$ is a circle then its universal cover $\tropfont Y$ has no self-loops, so $\mathsf E$ is flasque on $\tropfont Y$.  Therefore $H^p(\tropfont X, \mathsf E)$ can be identified with the group cohomology $H^p(\mathbf Z, \mathsf E(\tropfont Y))$.  The group cohomology of $\mathbf Z$ vanishes for $p > 1$ and for $p = 1$ it coincides with the coinvariants of $\mathsf E(\tropfont Y)$.  We identify $\mathsf E(\tropfont Y)$ with $\prod_{n=-\infty}^\infty \mathbf Z$ with $\mathbf Z$ acting by shift.  The coinvariants are therefore zero and the lemma is proved.
\end{proof}


\begin{corollary} \label{cor:Q}
If $\tropfont X$ is a compact tropical curve then $H^0(\tropfont X, \mathsf H) = H_1(\tropfont X)$ and $H^1(\tropfont X, \mathsf H) = H_0(\tropfont X)$.
\end{corollary}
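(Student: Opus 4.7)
The plan is to apply the long exact sequence of cohomology to the right-hand column of diagram~\eqref{eqn:34}, namely the short exact sequence
\begin{equation*}
0 \to \mathscr H \to \mathscr E \to \mathscr V \to 0,
\end{equation*}
and then exploit the vanishing of higher cohomology of $\mathscr E$ and $\mathscr V$ that was just established in Lemma~\ref{lem:flasque}. Since $H^p(\frak X, \mathscr E) = H^p(\frak X, \mathscr V) = 0$ for $p > 0$, the long exact sequence collapses to the four-term exact sequence
\begin{equation*}
0 \to H^0(\frak X, \mathscr H) \to H^0(\frak X, \mathscr E) \xrightarrow{\partial} H^0(\frak X, \mathscr V) \to H^1(\frak X, \mathscr H) \to 0,
\end{equation*}
so the entire corollary is reduced to identifying the middle map $\partial$ with the simplicial boundary operator of the graph underlying $\frak X$.

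Next, the plan is to invoke the description recorded just before Lemma~\ref{lem:flasque}: $\mathscr E$ is the sheaf freely generated by the edges of $\frak X$ and $\mathscr V$ is the sheaf freely generated by its vertices, with the induced map $\mathscr E \to \mathscr V$ being the simplicial boundary. Because $\frak X$ is compact, $r$ is defined everywhere and every edge is rooted at two vertices; taking global sections therefore gives $H^0(\frak X, \mathscr E) = \mathbf Z^{E(\frak X)}$ and $H^0(\frak X, \mathscr V) = \mathbf Z^{V(\frak X)}$, with $\partial$ sending an oriented edge $e$ to $r(e) - r(i(e))$.

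Since the CW-complex associated to $\frak X$ has only cells of dimension $\leq 1$, its simplicial chain complex is precisely $\mathbf Z^{E(\frak X)} \xrightarrow{\partial} \mathbf Z^{V(\frak X)}$. The kernel is therefore $Z_1(\frak X) = H_1(\frak X)$, as the absence of $2$-chains makes every $1$-cycle a homology class, and the cokernel is $H_0(\frak X)$ by definition. Feeding these identifications into the four-term exact sequence yields $H^0(\frak X, \mathscr H) \cong H_1(\frak X)$ and $H^1(\frak X, \mathscr H) \cong H_0(\frak X)$.

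The main point requiring care is the identification of $\partial$ with the simplicial boundary; the plan is to verify this by unwinding Definition~\ref{def:linear}. A generator of $\mathscr E$ corresponds to a piecewise linear function of slope $+1$ along a single oriented edge $e$ (and slope $0$ elsewhere) modulo constants, and its image in $\mathscr V = \frak P / \frak L$ is the imbalance $\sum_{r(y)=v} \mu(y)$ at each vertex $v$, which contributes $+1$ at $r(e)$ and $-1$ at $r(i(e))$. Everything else is formal manipulation of the long exact sequence.
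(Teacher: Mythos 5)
Your proof is correct and follows essentially the same route as the paper: the long exact sequence of $0 \to \mathscr H \to \mathscr E \to \mathscr V \to 0$, the vanishing from Lemma~\ref{lem:flasque}, and the identification of $\mathscr E(\frak X) \to \mathscr V(\frak X)$ with the boundary map of the graph. The paper simply compresses all of this into one sentence, whereas you spell out the identification of $\partial$ via Definition~\ref{def:linear}, which is the right verification to make.
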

\begin{proof}
This is immediate, as $\mathsf E(\tropfont X)$ is the free abelian group generated by the edges of $\tropfont X$ and $\mathsf V(\tropfont X)$ is the free abelian group generated by the vertices, and the map between them is the boundary map.
\end{proof}

\begin{lemma} \label{lem:harmonic}
If $\tropfont X$ is a compact tropical curve then $H^0(\tropfont X, \overnorm M^{\rm gp}) \to H^0(\tropfont X, \L)$ is an isomorphism.
\end{lemma}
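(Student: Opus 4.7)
The map $H^0(\frak X, \overnorm M^{\rm gp}) \to H^0(\frak X, \frak L)$ is injective because it is just the global section functor applied to the inclusion of constants into $\frak L$. For surjectivity, my plan is to read the top row of~\eqref{eqn:34} as a short exact sequence $0 \to \overnorm M^{\rm gp} \to \frak L \to \mathscr H \to 0$ and use the identification $H^0(\frak X, \mathscr H) = H_1(\frak X, \mathbf Z)$ from Corollary~\ref{cor:Q}. It then suffices to show that the connecting map $H^0(\frak X, \frak L) \to H_1(\frak X, \mathbf Z)$, which sends a linear function $\lambda = (\alpha, \mu)$ to the $1$-cycle $\mu$ (well-defined by the balancing condition in Definition~\ref{def:linear}), is zero.

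The key step will be to use the intersection pairing of Section~\ref{sec:intersection}. Given $\lambda = (\alpha, \mu) \in H^0(\frak X, \frak L)$, the relation $\alpha(i(x)) = \alpha(x) + \mu(x)\ell(x)$ propagated around any closed walk $c$ in $\frak X$ must make the total displacement of $\alpha$ vanish in $\overnorm M^{\rm gp}$, because $\alpha$ is well-defined on flags. Choosing a set $E^+$ of oriented representatives for the edges and writing $c = \sum_{e \in E^+} n_e e$, this says $\sum_{e \in E^+} n_e \mu(e) \ell(e) = 0$ for every $1$-cycle $c \in H_1(\frak X, \mathbf Z)$. Applying this to the $1$-cycle $c = \mu$ itself yields the self-intersection identity
\begin{equation*}
\mu . \mu \;=\; \sum_{e \in E^+} \mu(e)^2 \ell(e) \;=\; 0 \quad \text{in } \overnorm M^{\rm gp}.
\end{equation*}

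From here the argument is purely monoid-theoretic and rests on the positivity of the intersection pairing. Each summand $\mu(e)^2 \ell(e)$ lies in $\overnorm M$, so the sum is in $\overnorm M$ and vanishes in $\overnorm M^{\rm gp}$; integrality of $\overnorm M$ forces the sum to vanish in $\overnorm M$, and sharpness then forces each individual term to vanish. Since $\overnorm M$ is sharp, integral, and saturated, $\overnorm M^{\rm gp}$ is torsion-free (the argument from the corollary following Lemma~\ref{lem:shpvalinj}), so $\mu(e)^2 \ell(e) = 0$ with $\ell(e) \neq 0$ (a defining condition for an edge in Definition~\ref{def:trop-curve}) forces $\mu(e) = 0$. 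Thus $\mu \equiv 0$, and the linearity relation collapses to $\alpha(i(x)) = \alpha(x)$ on every flag, whence $\alpha$ is constant on each connected component of the compact curve $\frak X$. The conceptual obstacle is recognizing that substituting $c = \mu$ in the displacement identity produces precisely the self-intersection pairing, which by sharpness is a sum of non-negative elements of $\overnorm M$ and therefore forces each $\mu(e) \ell(e)$ to vanish; the rest of the proof is then bookkeeping.
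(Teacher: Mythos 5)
Your proof is correct, but it is a genuinely different argument from the one in the paper. The paper runs a combinatorial minimum principle: take a maximal connected subgraph $\frak Z$ on which $f$ takes a minimal value; every flag exiting $\frak Z$ must then have positive slope, and balancing forces the set of exiting flags to be empty, so $\frak Z = \frak X$. You instead run an energy argument: the requirement that $\alpha$ return to its starting value around every closed walk gives $\sum n_e \mu(e)\ell(e) = 0$ for every $1$-cycle, and since balancing makes $\mu$ itself a cycle (this is where compactness enters for you — on a non-compact curve $\mu$ is only a Borel--Moore cycle and the argument correctly fails), substituting $c = \mu$ yields $\mu.\mu = \sum \mu(e)^2\ell(e) = 0$, which sharpness, integrality, and torsion-freeness of $\overnorm M^{\rm gp}$ kill term by term. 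The paper's route is shorter and needs no homological bookkeeping; yours has the advantage of never invoking a minimum in the merely partially ordered group $\overnorm M^{\rm gp}$ (the delicate point in the paper's phrasing) and of deriving the statement directly from the positive-definiteness of the intersection pairing of Section~\ref{sec:intersection}, the same positivity that resurfaces in Definition~\ref{def:pos-def} and in the polarization condition of Theorem~\ref{thm:log-ab-var}. Two cosmetic remarks: the map $H^0(\frak X, \frak L) \to H^0(\frak X, \mathscr H)$ you want to kill is not the connecting homomorphism, just the next map in the long exact sequence (the conclusion you draw from its vanishing is nevertheless the right one); and both proofs tacitly use that $\frak X$ has finitely many edges, which is harmless here.
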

\begin{proof}
We want to show that on a compact tropical curve, every globally defined linear function is locally constant.  Replacing $\overnorm M$ with a valuative submonoid of $\overnorm M^{\rm gp}$ that contains $\overnorm M$ does not change $\overnorm M^{\rm gp}$ or $\L$.  We may therefore assume that $\overnorm M$ is valuative.  Let $\tropfont Z$ be a maximal connected subgraph where $f$ takes its minimum value.  Then if $e$ is a flag of $\tropfont X$ exiting $\tropfont Z$, the slope of $f$ along $e$ must be positive.  But by the balancing condition, $\sum_e \mu(e) = 0$, when the sum is taken over all edges exiting $\tropfont Z$.  The only way a sum of positive numbers can be zero is if it is empty, so we conclude that $\tropfont Z$ is a connected component of $\tropfont X$ and that $f$ is locally constant.
\end{proof}

Using Corollary~\ref{cor:Q} and Lemma~\ref{lem:harmonic}, we write down the long exact sequence in cohomology associated to the short exact sequence in the first two rows of~\eqref{eqn:34}:
\numberwithin{equation}{subsection}
\setcounter{equation}{\value{theorem}}
\begin{gather} \label{eqn:35}
0 \to H_1(\tropfont X) \to H^1(\tropfont X, \overnorm M^{\rm gp}) \to H^1(\tropfont X, \L) \xrightarrow{\deg} H_0(\tropfont X) \to 0 \\
\mathsf E(\tropfont X) \to H^1(\tropfont X, \overnorm M^{\rm gp}) \to H^1(\tropfont X, \PL) \to 0 \notag
\end{gather}
The homomorphism $H^1(\tropfont X, \L) \to H_0(\tropfont X)$ is called the \emph{degree}.  We can also identify $H^1(\tropfont X, \overnorm M^{\rm gp}) = \Hom(H_1(\tropfont X), \overnorm M^{\rm gp})$.

\setcounter{theorem}{\value{equation}}
\numberwithin{equation}{theorem}
\begin{lemma} \label{lem:pairing}
The homomorphisms
\begin{align} \label{eqn:84}
H_1(\tropfont X) = H^0(\tropfont X, \mathsf H) \to H^1(\tropfont X, \overnorm M^{\rm gp}) & = \Hom(H_1(\tropfont X), \overnorm M^{\rm gp}) \\
\mathsf E(\tropfont X) = H^0(\tropfont X, \mathsf E) \to H^1(\tropfont X, \overnorm M^{\rm gp}) & = \Hom(H_1(\tropfont X), \overnorm M^{\rm gp}) \notag
\end{align}
in the exact sequences~\eqref{eqn:35} are the intersection pairing on $\tropfont X$.
\end{lemma}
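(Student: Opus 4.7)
The plan is to compute the two connecting homomorphisms in~\eqref{eqn:35} explicitly by \v{C}ech cohomology on an appropriate cover of $\frak X$. By Proposition~\ref{prop:trop-subdiv} together with the additivity of the intersection pairing under subdivision (Lemma~\ref{lem:pair-local}), I may first reduce to the case where no edge of $\frak X$ is a loop. Then I cover $\frak X$ by the star $\frak U_v$ of each vertex $v$ (the local isomorphism domain of Lemma~\ref{lem:min-cov}) together with the bare edge $\frak U_e = \{e, i(e)\}$ of each edge. Under the loop-free assumption the only nonempty intersections are single flags $\frak U_v \cap \frak U_e = \{e'\}$, where $e'$ is the unique flag of $e$ rooted at $v$, and there are no triple intersections.

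Given $\mu \in H^0(\frak X, \mathscr H) = H_1(\frak X)$, I choose an orientation for each edge and lift $\mu$ locally to sections of $\frak L$: on $\frak U_v$ I take $\tilde\mu_v$ with $\alpha_v(v) = 0$ and with prescribed slopes $\mu(e)$ on the emanating flags, where the balancing condition holds automatically because $\mu$ is a cycle; on $\frak U_e$ I take $\tilde\mu_e$ with $\alpha_e = 0$ at the chosen oriented end, forcing $\alpha_e = \mu(e)\ell(e)$ at the opposite end. Restricting the two local lifts to the overlap $\frak U_v \cap \frak U_e = \{e'\}$ and subtracting yields a \v{C}ech $1$-cocycle valued in $\overnorm M^{\rm gp}$: it vanishes when $e'$ is the chosen orientation of $e$, and equals $\pm\mu(e)\ell(e)$ when $e'$ is the opposite orientation. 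This cocycle represents $\partial\mu \in H^1(\frak X, \overnorm M^{\rm gp}) = \Hom(H_1(\frak X), \overnorm M^{\rm gp})$.

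To evaluate $\partial\mu$ on a second cycle $\gamma' \in H_1(\frak X)$, I present $\gamma'$ as a closed edge-path $v_1 \xrightarrow{e_1} v_2 \xrightarrow{e_2} \cdots \xrightarrow{e_k} v_1$; this traces out a cycle in the nerve of the cover, alternating between stars and bare edges. Summing the \v{C}ech cocycle around this nerve-cycle, and verifying that the two sign choices (chosen versus traversed orientation of each $e_j$) always cancel to give a single common contribution, I obtain $\sum_j \mu(e_j)\ell(e_j)$, which is exactly $\mu \cdot \gamma'$ from Definition~\ref{def:intersection}. The second homomorphism in~\eqref{eqn:84}, arising from $\frak P$ in place of $\frak L$, is handled by the same calculation with balancing dropped from the local lifts, producing the same intersection formula on all of $\mathscr E(\frak X)$. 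The main technical obstacle is the orientation-and-sign bookkeeping in the nerve computation; once that is done carefully, the identification of the \v{C}ech output with the intersection pairing follows by direct inspection.
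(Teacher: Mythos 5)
Your proof is correct and follows essentially the same route as the paper's: both identify the connecting homomorphism as sending a system of slopes to the torsor of piecewise linear functions with those slopes, and then read off its class in $\Hom(H_1(\frak X), \overnorm M^{\rm gp})$ as the monodromy around loops, which is exactly the intersection pairing; your \v{C}ech bookkeeping is just a more explicit rendering of that computation. Two small remarks: the preliminary reduction to the loop-free case is unnecessary (the monodromy description works directly on $\frak X$), and Lemma~\ref{lem:pair-local} concerns edge contractions rather than subdivisions, so it is not quite the right citation for invariance of the pairing under subdivision (though that invariance is immediate from Definition~\ref{def:intersection}).
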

\begin{proof}
The first homomorphism is induced from the second by restriction to $H_1(\tropfont X) \subset \mathsf E(\tropfont X)$, so it suffices to consider the second.  Suppose that $\alpha \in H^0(\tropfont X, \mathsf E)$.  We can regard $\alpha$ as an integer-valued function on the edges of $\tropfont X$.   The class of its coboundary in $H^1(\tropfont X, \overnorm M^{\rm gp})$ is the $\overnorm M^{\rm gp}$-torsor on $\tropfont X$ of piecewise linear functions having slopes $\alpha$ along the edges.

Such a torsor is classified by its failure to be representable by a well-defined, piecewise linear function, in the form of its monodromy around the loops of $\tropfont X$.  In other words, we may make the following identification:
\begin{equation} \label{eqn:47}
H^1(\tropfont X, \overnorm M^{\rm gp}) = \Hom(H_1(\tropfont X), \overnorm M^{\rm gp})
\end{equation}
Given $\alpha \in H^0(\tropfont X, \mathsf E)$ and a $\gamma \in H_1(\tropfont X)$, represented as a sum of oriented edges of $\tropfont X$, the monodromy of $\alpha$ around $\gamma$ is
\begin{equation*}
\sum_{e \in \gamma} \alpha(e) 
\end{equation*}
which is exactly the same as $\alpha . \gamma$.
\end{proof}

We summarize our results in the following corollary, which may be viewed as a tropical Abel theorem:

\begin{corollary} \label{cor:tropic}
Let $\tropfont X$ be a compact tropical curve metrized by $\overnorm M$.  Then there are exact sequences~\eqref{eqn:48}, where $\partial$ is the intersection pairing:
\begin{gather} \label{eqn:48}
0 \to H_1(\tropfont X) \xrightarrow{\partial} \Hom(H_1(\tropfont X), \overnorm M^{\rm gp}) \to H^1(\tropfont X, \L) \xrightarrow{\deg} H_0(\tropfont X) \to 0 \\
\mathsf E(\tropfont X) \xrightarrow{\partial} \Hom(H_1(\tropfont X), \overnorm M^{\rm gp}) \to H^1(\tropfont X, \PL) \to 0 \notag
\end{gather}
\end{corollary}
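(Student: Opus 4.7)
The plan is to obtain Corollary~\ref{cor:tropic} by rewriting the two exact sequences of~\eqref{eqn:35} using two inputs that are already in place, so that no new cohomological calculation is required.

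The first input is the identification
\[
H^1(\frak X, \overnorm M^{\rm gp}) = \Hom(H_1(\frak X), \overnorm M^{\rm gp})
\]
recorded immediately after~\eqref{eqn:35}. This is available because $\overnorm M^{\rm gp}$ is a constant sheaf on $\frak X$, and because $\frak X$ is compact its first homology $H_1(\frak X)$ is a finitely generated free abelian group; concretely, an $\overnorm M^{\rm gp}$-torsor on $\frak X$ is classified by its monodromy representation on $H_1(\frak X)$, as already used in the proof of Lemma~\ref{lem:pairing}. The second input is Lemma~\ref{lem:pairing} itself, which asserts that under this identification the two arrows
\[
H_1(\frak X) \to H^1(\frak X, \overnorm M^{\rm gp}), \qquad \mathscr E(\frak X) \to H^1(\frak X, \overnorm M^{\rm gp})
\]
appearing in~\eqref{eqn:35} are precisely the intersection pairing $\partial$ of Definition~\ref{def:intersection}, paired into $\Hom(H_1(\frak X), \overnorm M^{\rm gp})$.

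Given these two inputs, the argument is pure bookkeeping: first I would substitute $\Hom(H_1(\frak X), \overnorm M^{\rm gp})$ for $H^1(\frak X, \overnorm M^{\rm gp})$ in each of the two exact sequences of~\eqref{eqn:35}, and then I would rename the initial arrow of each sequence as $\partial$ by invoking Lemma~\ref{lem:pairing}. The resulting exact sequences are exactly~\eqref{eqn:48}.

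I do not anticipate any real obstacle; the only care required is to check that the sign and orientation conventions in Definition~\ref{def:intersection} agree with the conventions implicit in identifying a section $\alpha \in \mathscr E(\frak X)$ with the slopes along edges of a multivalued piecewise linear function, which is the mechanism by which Lemma~\ref{lem:pairing} computes the coboundary map. Granting the prior results of the section, the corollary is a direct repackaging of~\eqref{eqn:35} and Lemma~\ref{lem:pairing}.
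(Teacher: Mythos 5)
Your proposal is correct and matches the paper exactly: the paper offers no separate proof of Corollary~\ref{cor:tropic}, presenting it explicitly as a summary obtained by rewriting the exact sequences~\eqref{eqn:35} via the identification $H^1(\frak X, \overnorm M^{\rm gp}) = \Hom(H_1(\frak X), \overnorm M^{\rm gp})$ and then applying Lemma~\ref{lem:pairing} to recognize the initial maps as the intersection pairing $\partial$. Your remark about checking sign and orientation conventions is the only point of care, and it is handled inside the proof of Lemma~\ref{lem:pairing} itself.
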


\subsection{Monodromy}
\label{sec:monodromy}

Let $\tropfont X$ be a tropical curve metrized by $\overnorm M$.  Let $Q$ be a $\PL$-torsor on $\tropfont X$.  By Corollary~\ref{cor:tropic}, there is an $\alpha \in \Hom(H_1(\tropfont X), \overnorm M^{\rm gp})$ inducing $Q$, uniquely determined by $Q$ up to addition of $\partial(e)$, for edges $e$ of $\tropfont X$.  We refer to $\alpha$ as a \emph{monodromy representative} of~$Q$.

\begin{proposition} \label{prop:bounded}
Let $\tropfont X$ be a compact, connected tropical curve metrized by a \emph{valuative} monoid $\overnorm M$.  The following conditions are equivalent of $Q \in H^1(\tropfont X, \PL)$:
\begin{enumerate}
	\item \label{it:subdiv} There exists a subdivision $\tropfont Y$ of $\tropfont X$ such that the restriction of $Q$ to $\tropfont Y$ is trivial.
	\item \label{it:monodromy} For any monodromy representative $\alpha$ of $Q$ and any $\gamma \in H_1(\tropfont X)$, the monodromy of $\alpha$ around $\gamma$ is bounded by the length of $\gamma$ (in the sense of Definition~\ref{def:bounded}).
\end{enumerate}
\end{proposition}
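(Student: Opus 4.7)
The plan is to prove the two implications by different methods. For \ref{it:subdiv} $\Rightarrow$ \ref{it:monodromy}, suppose $Q$ is trivialized on a subdivision $\frak Y$. By the second exact sequence of Corollary~\ref{cor:tropic} applied to $\frak Y$, any monodromy representative $\alpha$ of $Q$ arises as the intersection pairing with some $\eta \in \mathscr E(\frak Y)$. The integer values of $\eta$ are bounded in absolute value by some $N$, so expanding any $\gamma \in H_1(\frak X) = H_1(\frak Y)$ as a sum of oriented edges of $\frak Y$ gives $-N\ell(\gamma) \leq \alpha(\gamma) \leq N\ell(\gamma)$, which is the required $\alpha(\gamma) \prec \ell(\gamma)$. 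This is invariant under the choice of $\alpha$ because any two monodromy representatives of $Q$ differ by an element already in the image of $\mathscr E(\frak X)$.

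For \ref{it:monodromy} $\Rightarrow$ \ref{it:subdiv}, the key idea is to exploit that $\overnorm M^{\rm gp}$ is totally ordered (since $\overnorm M$ is valuative), so its positive elements carry a total archimedean preorder $\preceq$, where $\delta_1 \preceq \delta_2$ means $\delta_1 \leq n\delta_2$ for some integer $n$. I would choose a spanning tree $T$ of $\frak X$ that is minimal for this preorder via a Kruskal-style algorithm on any refinement of $\preceq$ to a total order. The standard minimum spanning tree property then guarantees that for every non-tree edge $e$ with fundamental cycle $\gamma_e$, each tree edge inside $\gamma_e$ has $\preceq$-class at most $\ell(e)$; in particular $\ell(\gamma_e)$ lies in the same archimedean class as $\ell(e)$, so the hypothesis $\alpha(\gamma_e) \prec \ell(\gamma_e)$ upgrades to the stronger bound $\alpha(\gamma_e) \prec \ell(e)$.

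Given this upgrade, for each non-tree edge $e_j$ with fundamental cycle $\gamma_j$, I would choose the largest integer $k_j$ with $k_j\ell(e_j) \leq \alpha(\gamma_j)$ and set $r_j = \alpha(\gamma_j) - k_j\ell(e_j)$, which by totality of the order satisfies $0 \leq r_j < \ell(e_j)$. Subdividing $e_j$ into subedges of lengths $r_j$ and $\ell(e_j) - r_j$ (collapsing the subdivision when $r_j = 0$), assigning them coefficients $k_j+1$ and $k_j$, and assigning every tree edge coefficient $0$ produces $\eta \in \mathscr E(\frak Y)$ whose pairing with $\gamma_j$ equals $(k_j+1)r_j + k_j(\ell(e_j)-r_j) = r_j + k_j\ell(e_j) = \alpha(\gamma_j)$. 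Since the $\gamma_j$ form a basis of $H_1(\frak X)$, $\eta$ realizes $\alpha$ on all of $H_1(\frak X)$ by linearity, and so $Q$ becomes trivial on $\frak Y$.

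The main obstacle will be the second implication: a naive attempt to absorb $\alpha(\gamma_j)$ into a subdivision of the non-tree edge $e_j$ alone would require the strictly stronger bound $\alpha(\gamma_j) \prec \ell(e_j)$, which is not what the hypothesis directly gives. The minimum spanning tree construction in the archimedean preorder is precisely the device that converts the hypothesis $\alpha(\gamma) \prec \ell(\gamma)$ on all cycles into the local bound $\alpha(\gamma_j) \prec \ell(e_j)$ needed for the explicit subdivision construction to work.
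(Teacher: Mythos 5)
Your proof is correct, and while the implication \ref{it:subdiv}~$\Rightarrow$~\ref{it:monodromy} matches the paper's argument (which routes the same computation through Lemmas~\ref{lem:bdd-ind} and~\ref{lem:subdiv-bounded}), your proof of \ref{it:monodromy}~$\Rightarrow$~\ref{it:subdiv} is genuinely different. The paper inducts on the rank of the image of $\mu$ using the filtration of $\overnorm M^{\rm gp}$ by $\prec$-closed subgroups from Proposition~\ref{prop:arch-filt}: at each stage it embeds the top archimedean quotient into $\mathbf R$ via H\"older's theorem, collapses the edges of real length zero, picks a spanning tree of the collapsed graph, absorbs the real part of the monodromy into subdivisions of the non-tree edges, and then recurses into the next filtration step. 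Your minimum-spanning-tree device replaces all of that with a single pass: the Kruskal cycle property (every tree edge on the fundamental cycle $\gamma_e$ precedes $e$ in any total order refining $\preceq$) gives $\ell(\gamma_{e_j}) \prec \ell(e_j)$, upgrading the hypothesis to $\alpha(\gamma_j)\prec\ell(e_j)$, after which totality of the order on $\overnorm M^{\rm gp}$ produces the maximal $k_j$ and the remainder $r_j\in[0,\ell(e_j))$, and the explicit cocycle $\eta$ with coefficients $k_j+1,k_j$ on the two halves of $e_j$ realizes $\alpha$ exactly because the fundamental cycles form a basis and do not share non-tree edges. This buys you a shorter, more elementary argument that avoids both the filtration induction and the embedding into $\mathbf R$; the paper's version is more mechanical but its ``reduce modulo the archimedean filtration'' structure is reused elsewhere (e.g.\ in the proof of Theorem~\ref{thm:trop-bdd}). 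Two small points you should make explicit in a final write-up: the subdivision lengths $r_j$ and $\ell(e_j)-r_j$ are nonnegative elements of $\overnorm M^{\rm gp}$ and hence lie in $\overnorm M$ precisely because $\overnorm M$ is valuative, so the subdivision is legitimate; and the transitivity step $\alpha(\gamma_j)\prec\ell(\gamma_j)\prec\ell(e_j)\Rightarrow\alpha(\gamma_j)\prec\ell(e_j)$ uses that $\ell(\gamma_j)$ and $\ell(e_j)$ bound each other, which holds since $\gamma_j$ contains $e_j$.
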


Before we begin the proof, we note that to verify the monodromy condition, it is sufficient to consider a single monodromy representative:

\begin{lemma} \label{lem:bdd-ind}
Suppose that $\alpha$ and $\beta$ are monodromy representatives of the same $\PL$-torsor on a tropical curve $\tropfont X$, and let $\gamma \in H_1(\tropfont X)$.  The monodromy of $\alpha$ around $\gamma$ is bounded by the length of $\gamma$ if and only if the monodromy of $\beta$ around $\gamma$ is bounded by the length of $\gamma$.
\end{lemma}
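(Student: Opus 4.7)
The plan is to reduce to showing that the difference $\alpha(\gamma)-\beta(\gamma)$ is itself bounded by $\ell(\gamma)$, and then to invoke the fact (Lemma~\ref{lem:bounded}) that the set of elements of $\overnorm M^{\rm gp}$ bounded by a given $\delta$ is a subgroup, so that boundedness of $\alpha(\gamma)$ is equivalent to that of $\alpha(\gamma)+\eta$ for any $\eta\prec\ell(\gamma)$. In particular, boundedness of $\alpha(\gamma)$ will transfer to boundedness of $\beta(\gamma)$ automatically once we know the difference is bounded.

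To bound the difference, I would use the second exact sequence of Corollary~\ref{cor:tropic}: any two monodromy representatives of the same $\frak P$-torsor differ by a class coming from $\mathscr E(\frak X)$, so we may write $\alpha-\beta=\partial(e)$ for some integer combination $e=\sum n_i e_i$ of oriented edges of $\frak X$. By Lemma~\ref{lem:pairing} this difference, evaluated at $\gamma$, is computed by the intersection pairing:
\begin{equation*}
\alpha(\gamma)-\beta(\gamma) \;=\; e\,.\,\gamma \;=\; \sum_i n_i\,(e_i\,.\,\gamma).
\end{equation*}
By Definition~\ref{def:intersection}, each term $e_i\,.\,\gamma$ is either $0$ or $\pm\ell(e_i)$, and in the nonzero case $e_i$ occurs in $\gamma$, so $\ell(e_i)\leq \ell(\gamma)$ in $\overnorm M$ (the difference $\ell(\gamma)-\ell(e_i)$ is a sum of edge lengths, hence nonnegative).

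Each $\ell(e_i)$ in the sum therefore satisfies $0\leq \ell(e_i)\leq \ell(\gamma)$, so it is bounded by $\ell(\gamma)$. Since the bounded elements form a subgroup (Lemma~\ref{lem:bounded}), any finite $\mathbf{Z}$-linear combination of them is also bounded by $\ell(\gamma)$, and hence $\alpha(\gamma)-\beta(\gamma)\prec\ell(\gamma)$. Adding or subtracting this difference preserves boundedness by $\ell(\gamma)$, which yields the desired equivalence. The one subtlety to watch is that $\gamma\in H_1(\frak X)$ may involve infinitely many edges a priori, but since $\frak X$ is implicitly finite in this context (or at least $\gamma$ is a finite cycle) and $e$ is a finite combination, only finitely many terms contribute to $e.\gamma$, so no convergence issue arises. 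This is the whole argument; there is no real obstacle beyond correctly identifying $\alpha-\beta$ with an intersection-pairing class.
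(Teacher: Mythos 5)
Your proof is correct and follows essentially the same route as the paper's: write $\alpha-\beta$ as $\partial$ of an integral combination of edges, evaluate on $\gamma$ via the intersection pairing so that each contributing term is a multiple of $\ell(e_i)$ with $e_i$ occurring in $\gamma$, and conclude using Lemma~\ref{lem:bounded} that the bounded elements form a subgroup. The only cosmetic difference is that the paper phrases the reduction as checking $\partial(e)$ has bounded monodromy for a single edge $e$, while you carry the full linear combination through; the substance is identical.
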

\begin{proof}
Since $\alpha$ and $\beta$ differ by a linear combination of $\partial(e)$, for $e$ among the edges of $\tropfont X$, it is sufficient by Lemma~\ref{lem:bounded} to show that $\partial(e)$ has bounded monodromy around each $\gamma \in H_1(\tropfont X)$.  But the monodromy of $\partial(e)$ around $\gamma$ is $e.\gamma$.  If $e$ is not contained in $\gamma$ then $e.\gamma = 0$, which is obviously bounded by $\ell(\gamma)$.  If $e$ is contained in $\gamma$ then $e.\gamma = \pm \ell(e)$, and $\ell(e)$ is bounded by $\ell(\gamma)$ because $e$ is contained in $\gamma$.
\end{proof}

\begin{lemma} \label{lem:subdiv-bounded}
Suppose that $\tau : \tropfont Y \to \tropfont X$ is a subdivision of tropical curves.   Let $\alpha$ be an element of $H^1(\tropfont X, \overnorm M^{\rm gp})$.  The monodromy of $\alpha$ around the loops of $\tropfont X$ is bounded by their lengths if and only if same holds of the monodromy of $\tau^\ast \alpha$ around the loops of $\tropfont Y$.
\end{lemma}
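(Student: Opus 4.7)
The plan is to reduce to the case of a basic subdivision and observe that the relevant data all match up under the resulting homotopy equivalence. Since every subdivision is by definition a composition of basic subdivisions, induction reduces us to the case where $\frak Y$ is a basic subdivision of $\frak X$, replacing an edge $e$ of length $\delta$ by two edges $e_1, e_2$ meeting at a new two-valent vertex, with $\ell(e_1) + \ell(e_2) = \delta$.

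First I would establish that the collapse map $\tau_\ast : H_1(\frak Y) \to H_1(\frak X)$ is an isomorphism and preserves the length of any loop. The isomorphism on $H_1$ is clear because inserting a two-valent vertex is a homotopy equivalence; concretely, any cycle $\gamma' = \sum n_{e'} e'$ in $\frak Y$ must assign the same coefficient to $e_1$ and $e_2$ (a closed walk enters and exits the new two-valent vertex with equal multiplicity), and under $\tau_\ast$ the pair $(e_1,e_2)$ projects to $e$. Length compatibility then follows from $|n|(\ell(e_1)+\ell(e_2)) = |n|\delta$, which matches the contribution of $e$ to $\ell(\tau_\ast \gamma')$; all other edges are unaffected.

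Next I would check functoriality of the monodromy pairing. Under the identification $H^1(\frak X, \overnorm M^{\rm gp}) = \Hom(H_1(\frak X), \overnorm M^{\rm gp})$ from \eqref{eqn:47} and its analogue for $\frak Y$, the pullback $\tau^\ast$ is simply precomposition with $\tau_\ast$, so the monodromy of $\tau^\ast \alpha$ around $\gamma'$ equals the monodromy of $\alpha$ around $\tau_\ast \gamma'$. Combined with Lemma~\ref{lem:bdd-ind}, which frees us from choosing a specific monodromy representative, this shows that the bijection $\tau_\ast$ identifies the boundedness condition for $\tau^\ast \alpha$ on $\frak Y$ with the boundedness condition for $\alpha$ on $\frak X$.

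The only slightly delicate point is the length compatibility in the basic subdivision; the rest of the argument is formal functoriality once that is in hand.
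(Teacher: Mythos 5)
Your proof is correct and is essentially the paper's argument spelled out in full: the paper's proof simply asserts that loops of $\frak Y$ correspond to loops of $\frak X$ with the same lengths and the same monodromy, which is exactly what your reduction to basic subdivisions establishes. (The appeal to Lemma~\ref{lem:bdd-ind} is harmless but unnecessary here, since $\alpha$ is a fixed class in $H^1(\frak X, \overnorm M^{\rm gp})$ and its monodromy is already well defined.)
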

\begin{proof}
The lengths of the loops of $\tropfont Y$ is the same as the length of the loops in $\tropfont X$ and the monodromy around them is the same as the monodromy around the loops in $\tropfont X$.
\end{proof}

\begin{proof}[Proof of Proposition~\ref{prop:bounded}]
Suppose first that $Q$ can be trivialized on a subdivision $\tau : \tropfont Y \to \tropfont X$.  Let $\mu : H_1(\tropfont X) \to \overnorm M^{\rm gp}$ be a monodromy representative of $Q$.  Then $\mu$ lies in the image of $\partial : \mathsf E(\tropfont Y) \to \Hom(H_1(\tropfont Y), \overnorm M^{\rm gp}) = \Hom(H_1(\tropfont X), \overnorm M^{\rm gp})$, so by Lemma~\ref{lem:bdd-ind}, its monodromy around the loops of $\tropfont Y$ is certainly bounded by the lengths of the loops.  But by Lemma~\ref{lem:subdiv-bounded}, this implies that $\mu$ has the same property.

Now assume that the monodromy of $\mu$ around the loops of $\tropfont X$ is bounded by their lengths.  We construct a subdivision $\tau : \tropfont Y \to \tropfont X$ such that $\tau^\ast \mu$ is in the image of $\partial : \mathsf E(\tropfont Y) \to \Hom(H_1(\tropfont Y), \overnorm M_S^{\rm gp})$.

The proof will be by induction on the rank of the image of the monodromy homomorphism~\eqref{eqn:49}:
\begin{equation} \label{eqn:49}
\mu : H_1(\tropfont X) \rightarrow \overnorm M^{\rm gp} 
\end{equation}
Our strategy will be to subdivide $\tropfont X$ so that $\mathsf E(\tropfont Y)$ enlarges and adjust $\mu$ by the addition of elements in its image so that the rank of the image of $\mu$ decreases.  We therefore permit ourselves to adjust $\mu$ as necessary by elements of the image of $\partial$.

By Proposition~\ref{prop:arch-filt}, there is a filtration
\begin{equation*}
0 = V_0 \subset V_1 \subset \cdots \subset  \overnorm M_S^{\rm gp}
\end{equation*}
of ordered subgroups such that each $V_n / V_{n-1}$ may be embedded in $\mathbf R$, preserving the ordering.  Let $n$ be the largest index such that the image of $\mu$ is contained in $V_n$.  If $n = 0$ we are done.  Otherwise, choose an embedding of $V_n / V_{n-1}$ in $\mathbf R$.

This induces a metric on $\tropfont X$ with lengths in $\mathbf R$.  We write $\overnorm{\tropfont X}$ for the tropical curve obtained by collapsing those edges in $\tropfont X$ whose lengths in $\mathbf R$ are zero.  Note that there is a well-defined monodromy function
\begin{equation*}
\overnorm\mu : H_1(\overnorm{\tropfont X}) \rightarrow \mathbf R
\end{equation*}
precisely because the monodromy around $\gamma \in H_1(\overnorm{\tropfont X})$ is bounded by the length of $\gamma$.  Indeed, $\gamma \in H_1(\tropfont X)$ has length $\delta$, and the image of $\delta$ in $\mathbf R$ is zero then the boundedness of the mondromy around $\gamma$ implies that $\overnorm\mu(\gamma) = 0$ as well.

Choose a spanning tree of $\overnorm{\tropfont X}$ and let $E$ be the set of edges of $\overnorm{\tropfont X}$ not in the spanning tree.  Each of these edges corresponds uniquely to an edge of $\tropfont X$, so we will also think of $E$ as a set of edges of $\tropfont X$.

For each $e \in E$, let $\gamma_e$ be the corresponding basis element of $H_1(\overnorm{\tropfont X})$.  Let $\delta_e$ be the length of $\gamma_e$ and let $\mu_e = \overnorm\mu(\gamma_e)$ be the monodromy around it, both valued in $\mathbf R$.  Since $\delta_e \neq 0$, there is some integer $k$ such that $k \delta_e \leq \mu_e \leq (k+1)\delta_e$.  We replace $\mu$ by $\mu - k \partial(e)$ so that we may assume that $0 \leq \mu_e < \delta_e$.  Note that doing so does not change $\mu_f$ for any $f \neq e$.  Let $\tau : \tropfont Y \to \tropfont X$ be a subdivision of $\tropfont X$ that divides the edge $e$ into edges $e'$ and $e''$ of lengths $\mu_e$ and $\delta_e - \mu_e$, respectively.  Then $\tau^\ast \mu - \partial(e')$ has no monodromy around $\gamma_e$, and the monodromy around all $f \neq e$ in $E$ remains unchanged.

Repeating this procedure for all $e$ in $E$, we arrive at a representative for the monodromy of $Q$ such that the image of $\mu$ in $V_n / V_{n-1}$ is zero.  Now we repeat the process with $n$ replaced by $n-1$ until we have replaced $\mu$ by $0$.
\end{proof}

\begin{corollary} \label{cor:bdd-mono}
Let $\tropfont X$ be a compact, connected tropical curve metrized by a monoid $\overnorm M$.  Then $Q \in H^1(\tropfont X, \PL)$ satisfies Condition~\ref{it:monodromy} of Proposition~\ref{prop:bounded} if and only if satisfies Condition~\ref{it:subdiv} over every valuative extension of $\overnorm M$.  
\end{corollary}
\begin{proof}
Let $\alpha$ be a monodromy representative of $Q$.  The corollary reduces to the assertion that $\alpha \prec \delta$ in $\overnorm M$ if and only $\alpha \prec \delta$ in every valuative extension of $\overnorm M$.  For each pair $n, m$, let $U_{n,m} \subset \Cone^\circ(\overnorm M)$ be the set of valuations in which $n \delta < \alpha < m \delta$.  By assumption, the $U_{n,m}$ cover $\Cone^\circ(\overnorm M)$.  But $\Cone^\circ(\overnorm M)$ is quasicompact, so a finite collection $U_{n_i,m_i}$ suffices to cover it.  Taking $n = \min \{ n_i \}$ and $m = \max \{ m_i \}$ we find that all $U_{n_i,m_i}$ are contained in $U_{n,m}$, so $U_{n,m} = \Cone^\circ(\overnorm M)$.  Thus we have $n \delta \leq \alpha \leq m \delta$ in every sharp valuative extension of $\overnorm M$.  But $\overnorm M$ is saturated, so an inequality holds in $\overnorm M$ if and only if it holds in every valuative extension of $\overnorm M$.  Therefore $n \delta \leq \alpha \leq m \delta$ and $\alpha \prec \delta$, as required.
\end{proof}

\begin{definition} \label{def:bdd-mono}
We say that a homomorphism $H_1(\tropfont X) \to \overnorm M^{\rm gp}$ on $\tropfont X$ has \emph{bounded monodromy} if it satisfies the equivalent conditions of Corollary~\ref{cor:bdd-mono}.  We indicate a subgroup of bounded monodromy by decoration with a dagger ($\dagger$).
\end{definition}

\subsection{The tropical Jacobian}
\label{sec:trojac}

Let $\tropfont X$ be a tropical curve metrized by a monoid $\overnorm M$.  We construct the tropical Jacobian of $\tropfont X$ in a manner covariantly functorial in $\overnorm M$.  This effectively constructs the tropical Jacobian relative to the moduli space of tropical curves. 

\begin{definition} \label{def:trojac}
We define the tropical Jacobian by~\eqref{eqn:85}, where the dagger ($\dagger$) indicates the subgroup of elements with bounded monodromy (Definition~\ref{def:bdd-mono}):
\begin{equation} \label{eqn:85}
\TroJac(\tropfont X) = \Hom\bigl(H_1(\tropfont X), \overnorm M^{\rm gp}\bigr)^\dagger / H_1(\tropfont X)
\end{equation}
\end{definition}

\begin{example} \label{ex:trojac}
When $\overnorm M = \mathbf R_{\geq 0}$, the tropical Jacobian is a real torus.  Over a general monoid (that is integral, saturated, and finitely generated), we imagine a family of real tori, parameterized by homomorphisms $\overnorm M \to \mathbf R_{\geq 0}$, and $\TroJac(\tropfont X)$ represents the space of sections of this family respecting appropriately defined integral structure.
\end{example}

Now suppose that we have a monoid homomorphism $\overnorm M \to \overnorm N$.  This induces an edge contraction $\tropfont Y$ of $\tropfont X$.  We wish to produce a morphism:
\numberwithin{equation}{subsection}
\setcounter{equation}{\value{theorem}}
\begin{equation} \label{eqn:44}
\TroJac(\tropfont X) \to \TroJac(\tropfont Y)
\end{equation}
The edge contraction $\tropfont X \to \tropfont Y$ induces a homomorphism $H_1(\tropfont X) \to H_1(\tropfont Y)$.  Note that if $\mu \in \TroJac(\tropfont X)$ has bounded monodromy then, by definition, the composition
\begin{equation} \label{eqn:86}
H_1(\tropfont X) \to \overnorm M^{\rm gp} \to \overnorm N^{\rm gp}
\end{equation}
takes the value zero on all loops of $\tropfont X$ contracted in $\tropfont Y$.  Therefore the homomorphism factors through $H_1(\tropfont Y)$, and does so uniquely because $H_1(\tropfont X) \to H_1(\tropfont Y)$ is surjective.  The factorization still has bounded monodromy, since if $\alpha$ is bounded by $\delta$ in $\overnorm M^{\rm gp}$ then its image in $\overnorm N^{\rm gp}$ is bounded by the image of $\delta$.  We obtain the desired morphism~\eqref{eqn:44}.

It is clear from the construction that it respects compositions of monoid homomorphisms.  Following the procedure described in Section~\ref{sec:trop-mod}, we may extend the definition of $\TroJac(\tropfont X)$ to families.  That is, given a family of tropical curves $\tropfont X$ over a logarithmic scheme $S$ we obtain an \'etale sheaf on the category of logarithmic schemes over $S$ by either of the following equivalent procedures:
\begin{enumerate}
\item If $T$ is an atomic neighborhood of a geometric point $t$, then we define $\TroJac(\tropfont X/S)(T) = \TroJac(\tropfont X_t)$ and sheafify the resulting presheaf.
\item If $T$ is a logarithmic scheme over $S$ of finite type then an object of $\TroJac(\tropfont X/S)(T)$ consists of objects of $\TroJac(\tropfont X_t)$ for each geometric point $t$ that are compatible along geometric generizations.  We extend from logarithmic schemes that are of finite type to all logarithmic schemes by the approximation procedure of~\cite[IV.8]{EGA}.
\end{enumerate}
If $X$ is a proper, vertical logarithmic curve over $S$ with tropicalization $\tropfont X$ then we pose $\TroJac(X/S) = \TroJac(\tropfont X/S)$.

\begin{example} \label{ex:trojac2}
If $S$ is an atomic logarithmic scheme and $\tropfont X$ is a family of tropical curves over $S$ then $\TroJac(\tropfont X/S) = \Hom(H_1(\tropfont X), \ologGm)^\dagger / H_1(\tropfont X)$.  In particular, $\TroJac(\tropfont X/S)$ is typically not representable by an algebraic stack with a logarithmic structure (or a cone stack, in the language of \cite{cavalieri2017moduli}):  if it were then the tropical torus $\Hom(H_1(\tropfont X), \ologGm)^\dagger$, which is a $H_1(\tropfont X)$-torsor over $\TroJac(\tropfont X/S)$, would be representable.  The proof Proposition~\ref{prop:nonrep} shows that it is not.
\end{example}

\begin{example}

We illustrate the necessity of the bounded monodromy condition from a tropical perspective; in Section~\ref{sec:unint} we will see an algebraic version of the same idea.  Our discussion will be somewhat informal, as we simply wish to convey some intuition.

Let $C$ be the nodal curve of genus $2$ consisting of two rational curves joined at $3$ nodes. Let $S$ be a smooth surface with a normal crossings divisor $D=D_1 \cup D_2$ consisting of two smooth components meeting at a single point $s$, and let $X \to S$ be a family of curves smoothing $C \to s$ to a smooth family of genus $2$ curves over $S-D$. We choose $X$ to have smooth total space, and assume that the first and third node smooth out together over $D_2$, while the second smooths out indepenently over $D_1$. Such a family can be constructed for example by restricting to a sufficiently small two parameter sub-family of a versal family of $C$.

We give $X \to S$ its minimal logarithmic structure, that is, the log structure pulled back from the moduli space of curves and its universal family $\overline{\mathcal{C}}_2 \to \overline{\mathcal{M}}_2$. The logarithmic scheme $S$ has four strata, given by $s,D_1-s,D_2-s$, and $S-D$. Restricting to a sufficiently small neighborhood of $s$ if necessary (an atomic neighborhood in the terminology above) the logarithmic structure is globally generated, with 
$$
\Gamma(S,\overline{M}_S) = \overline{M}_{S,s} = \mathbf{N}^2
$$   

The precise construction of the geometric family $X \to S$ is not important: we chose it because it leads to a simple yet interesting tropicalization, which we could have taken as our original input. The tropicalization $\tropfont X \to S$ is a family of polyhedral complexes over $S$, constant on the strata of $S$, and metrized by the various sheaves of monoids $\overnorm{M}_{S,t}$ for $t \in S$. Its most interesting fiber is the fiber over the deepest stratum $s$, where it consists of the dual graph of $C$, with two vertices $v_1,v_2$ joined by three edges $e_1,e_2,e_3$. Each edge has a length $\delta_i$ in the monoid $\overnorm{M}_{S,s} = \mathbf{N}^2$. A picture is shown in Figure~\ref{fig:genus-2} for an arbitrary monoid $\overline{M}$. Our assumption that the total space of $X$ is smooth and two of the nodes smooth together, while the other one smooths independently mean that the lengths $\delta_1$ and $\delta_3$ of $e_1$ and $e_3$ are equal to each other and one of the generators of $\mathbf{N}^2$, while the length $\delta_2$ of $e_2$ is the other generator of $\mathbf{N}^2$. We can think of this tropical curve equivalently\footnote{To get equivalence, we also need to remember the integral structure $\Hom(\overnorm{M}_{S,s}, \mathbf{N}) \subset \sigma_s$.} as a family of traditional tropical curves, with edge lengths in $\mathbf{R}_{\ge 0}$, varying over the dual cone $\sigma_s: = \Hom(\overnorm{M}_{S,s}, \mathbf{R}_{\ge 0}) \cong \mathbf{R}_{\ge 0}^2$. The total space of this family thus has three real dimensions. The length of the edge $e_i$ in the fiber over $h = (a,b) \in \sigma$ is simply the evaluation $h(\delta_i)$.  With our choices these lengths are $a$ for $e_1,e_3$ and $b$ for $e_2$.    

As the stratum $s$ generizes to $D_1$, the edge $e_2$ and its length $\delta_2$ contract, and we obtain the tropical curve metrized by $\overline{M}_{S,t_1} \cong \mathbf{N}$ (with $t_1 \in D_1-s$) consisting of a single vertex $v$ with two edges $e_1,e_3$ joining $v$ to itself, both of length the generator of $\mathbf{N}$. Again, we can think of this curve as the total space of a family over $\sigma_{t_1} = \Hom(\overnorm{M}_{S,t},\mathbf{R}_{\ge 0}) = \mathbf{R}_{\ge 0}$. These data are redundant: $\sigma_{t_1}$ appears as a face of $\sigma_s$, and compatibility with generization means that the restriction of $\tropfont X_s$ to the face $\sigma_{t_1}$ agrees with $\tropfont X_{t_1}$. Similarily, generizing $s$ to $D_2$ contracts $e_1,e_3$, and we have the tropical curve metrized over $\mathbf{N}$ consisting of a single vertex $v$ with one edge $e_2$, of length the generator again. This can be thought of as the fiber of $\tropfont X_s$ over the other one dimensional face of $\sigma_s$. Generizing to $S-D$, the fiber of $\tropfont X \to S$ reduces to a point, which we can think of as a point over the $0$ face of $\sigma_s$. A picture of the tropicalization is shown in Figure~\ref{fig:4}.  

\begin{figure}
  \begin{tikzpicture}
\tikzset{insert |/.style={decoration={markings,
  mark=at position #1 with {%
   \draw[line cap=round,mark segment] 
    (0,-\pgfkeysvalueof{/tikz/mark
    segment length}/2) -- (0,\pgfkeysvalueof{/tikz/mark
    segment length}/2);}
  }},
  | mark/.style={postaction=decorate,insert |=#1},
  insert ||/.style={decoration={markings,
  mark=at position #1 with {%
   \draw[line cap=round,mark segment] 
    (-\pgfkeysvalueof{/tikz/mark segment distance}/2,-\pgfkeysvalueof{/tikz/mark
    segment length}/2) -- (-\pgfkeysvalueof{/tikz/mark segment distance}/2,\pgfkeysvalueof{/tikz/mark
    segment length}/2);
   \draw[line cap=round,mark segment] (\pgfkeysvalueof{/tikz/mark segment distance}/2,-\pgfkeysvalueof{/tikz/mark
    segment length}/2) -- (\pgfkeysvalueof{/tikz/mark segment distance}/2,\pgfkeysvalueof{/tikz/mark
    segment length}/2);}
  }},
  || mark/.style={postaction=decorate,insert ||=#1},
 insert |||/.style={decoration={markings,
  mark=at position #1 with {%
   \draw[line cap=round,mark segment] 
    (-\pgfkeysvalueof{/tikz/mark segment distance},-\pgfkeysvalueof{/tikz/mark
    segment length}/2) -- (-\pgfkeysvalueof{/tikz/mark segment distance},\pgfkeysvalueof{/tikz/mark
    segment length}/2);
   \draw[line cap=round,mark segment] 
    (0,-\pgfkeysvalueof{/tikz/mark
    segment length}/2) -- (0,\pgfkeysvalueof{/tikz/mark
    segment length}/2); 
   \draw[line cap=round,mark segment] 
   (\pgfkeysvalueof{/tikz/mark segment distance},-\pgfkeysvalueof{/tikz/mark
    segment length}/2) -- (\pgfkeysvalueof{/tikz/mark segment distance},\pgfkeysvalueof{/tikz/mark
    segment length}/2);}
  }},
  ||| mark/.style={postaction=decorate,insert |||=#1},
 mark segment/.style={thick},
 mark segment options/.code=\tikzset{mark segment/.style={#1}},
 mark segment distance/.initial=2pt,
 mark segment length/.initial=4pt,
 angle deco |/.style={insert |=0.5,
      pic actions/.append code=\tikzset{postaction=decorate}},
 angle deco ||/.style={insert ||=0.5,
      pic actions/.append code=\tikzset{postaction=decorate}},
 angle deco |||/.style={insert |||=0.5,
      pic actions/.append code=\tikzset{postaction=decorate}}}
    
\coordinate (v1) at (4,0,0){};
    \node[right] at (v1){$(0,0)$};
    \node[draw,circle,inner sep=1pt,fill] at (v1){};
    \coordinate (v2) at (4,2,-1){};
    \node[above] at (v2){$(a+b,b)$};
    \node[draw,circle,inner sep = 1pt,fill] at (v2){};
    \coordinate (v3) at (4,-1,2){};
    \node[left] at (v3){$(b,a+b)$}; 
    \node[draw,circle,inner sep = 1pt, fill] at (v3){};
    \coordinate (v4) at (4,1,1){};
    \node[left] at (v4){$(a+2b,a+2b)$}; 
    \node[draw,circle,inner sep=1pt,fill] at (v4){};

\draw[-, red, thick] (v1) -- (v2) -- (v4) -- (v3) -- (v1) -- cycle; 
\path[|| mark =0.5] (v1) -- (v2);
\path[|| mark =0.5] (v3)-- (v4);
\path[| mark =0.5] (v1) -- (v3);
\path[| mark =0.5] (v2) -- (v4);
\fill[red!20,nearly transparent] (v1) -- (v2) -- (v4) -- (v3) -- (v1) -- cycle;

\coordinate (w1) at (0,0,0){};
    \node[right] at (w1){$(0,0)$};
    \node[draw,circle,inner sep=1pt,fill] at (w1){};
    \coordinate (w2) at (0,2,0){};
    \node[above] at (w2){$(a+b,0)$};
    \node[draw,circle,inner sep = 1pt,fill] at (w2){};
    \coordinate (w3) at (0,0,2){};
    \node[left] at (w3){$(0,a+b)$}; 
    \node[draw,circle,inner sep = 1pt, fill] at (w3){};
    \coordinate (w4) at (0,2,2){};
    \node[left] at (w4){$(a+b,a+b)$}; 
    \node[draw,circle,inner sep=1pt,fill] at (w4){};

\draw[-, blue, thick] (w1) -- (w2) -- (w4) -- (w3) -- (w1) -- cycle; 
\path[|| mark =0.5] (w1) -- (w2);
\path[|| mark =0.5] (w3)-- (w4);
\path[| mark =0.5] (w1) -- (w3);
\path[| mark =0.5] (w2) -- (w4);
\fill[blue!20,nearly transparent] (w1) -- (w2) -- (w4) -- (w3) -- (w1) -- cycle;

\coordinate (u1) at (8,0,0){};
    \node[right] at (u1){$(0,0)$};
    \node[draw,circle,inner sep=1pt,fill] at (u1){};
    \coordinate (u2) at (8,2,-2){};
    \node[below] at (u2){};
    \node[draw,circle,inner sep = 1pt,fill] at (u2){};
    \coordinate (u3) at (8,-2,2){};
    \node[right] at (u3){}; 
    \node[draw,circle,inner sep = 1pt, fill] at (u3){};
    \coordinate (u4) at (8,0,0){};
    \node[above] at (u4){}; 
    \node[draw,circle,inner sep=1pt,fill] at (u4){};

\draw[-, green, thick] (u1) -- (u2) -- (u4) -- (u3) -- (u1) -- cycle; 
\path[|| mark =0.5] (u1) -- (u2);
\path[|| mark =0.5] (u3)-- (u4);

\draw[-,dashed] (w1) -- (v1) -- (u1);
\draw[-,dashed] (w2) -- (v2) -- (u2);
\draw[-,dashed] (w3) -- (v3) -- (u3); 
\draw[-,dashed] (w4) -- (v4) -- (u4);

\coordinate (t1) at (4,-5,2){}; 
\node [draw, circle,fill=red,inner sep=1pt] at (t1){};
\coordinate (t2) at (4,-5,-2){};
\node [draw, circle,fill=red,inner sep=1pt] at (t2){};
\node[above] at (4,-5,0){$b$};
\node[above] at (4,-4.2,0){$a$};
\node[below] at (4,-5.8,0){$a$};

\draw [red] (t1) to[out=135,in=135] (t2);
\draw[-, red] (t1) -- (t2);
\draw [red] (t1) to[out=-45,in=-45] (t2);  

\coordinate (s1) at (0,-5,0){};
\node[draw, circle, fill=blue, inner sep =1pt] at (s1){};
\coordinate (s2) at (0,-5,3){};
\node[left] at (s2){$a+b$};
\coordinate (s3) at (0,-5,-3){}; 
\node[right] at (s3){$a+b$};

\draw[blue] (s1) to[out=135,in=135] (s2) to[out=-45, in=-45] (s1);
\draw[blue] (s1) to[out=135,in=135] (s3) to[out=-45, in=-45] (s1);

\coordinate (r1) at (8,-5,0){};
\node[draw, circle, fill=green, inner sep =1pt] at (r1){};
\coordinate (r2) at (8,-5,3){};
\node[left] at (r2){$a+b$};

\draw[green] (r1) to[out=135,in=135] (r2) to[out=-45, in=-45] (r1);

\draw[-,dashed](s1) -- (t1) -- (r1); 
\draw[-,dashed](s1) -- (t2) -- (r1);

\coordinate(b1) at (0,-10,0){};
\node[above] at (b1){$(a+b,0)$};
\node[draw, blue, circle,inner sep=1pt,fill] at (b1){};
\coordinate(b2) at (8,-10,0){}; 
\node[above] at (b2){$(0,a+b)$};
\node[draw, green, circle, inner sep=1pt,fill] at (b2){};
\coordinate(b3) at (4,-10,0){};
\node[above] at (b3) at (b3){$(a,b)$}; 
\node[draw,red,circle,inner sep=1pt,fill] at (b3){}; 

\draw[-,thick](b1)--(b2)--(b3);

\end{tikzpicture}
\caption{The Tropical Jacobian $\TroJac(X/S)$ (top) and the tropicalization $\tropfont{X}$ (middle), over the cross-section $x+y = a+b$ of $\sigma_s = \mathbf{R}_{\ge 0}^2$ (bottom). }
\label{fig:4}
\end{figure}

We write $J = \TroJac(\tropfont X/S)$.  In order to motivate the bounded monodromy condition, let us accept that for each point $(a,b) \in \sigma_s$, corresponding dually to a homomorphism $\mathbf{N}^2 \to \mathbf R$, that the fiber of $J$ over $(a,b)$ should be a real torus, constructed as follows:
\begin{equation*}
J_{(a,b)} = \Hom\bigl(H_1(\tropfont{X}_{(a,b)}), \mathbf{R}_{\ge 0}\bigr)/ \partial H_1(\tropfont X_{(a,b)})
\end{equation*}
More specifically, if $a \neq 0$ then we have the presentation~\eqref{eqn:127}, whereas if $a = 0$ and $b \neq 0$, we have~\eqref{eqn:128} (if $a = b= 0$ then we have $J_{(0,0)} = 0$, of course):
\begin{align} 
J_{(a,b)} & = \Hom( \mathbf Z^2, \mathbf R_{\geq 0} ) / \mathbf Z (a + b, -b) + \mathbf Z (-b, a+b)\label{eqn:127} \\
J_{(0,b)} & = \Hom( \mathbf Z, \mathbf R_{\geq 0} ) / \mathbf Z b \label{eqn:128}
\end{align}

Whatever the definition of the $S$-points of $J$, it should have natural specialization maps to $J_{(a,b)}$ for all $(a,b) \in \sigma_s$.  The bounded monodromy condition is imposed to guarantee this.  In its absence, the ``tropical Jacobian''
\begin{equation*}
\Hom( H_1(\tropfont X_s), \Gamma(S, \overnorm M_S^{\rm gp}) ) / \partial H_1(\tropfont X_s)
\end{equation*}
contains a homomorphism $\phi$ with $\phi(e_1 - e_2) = 0$ and $\phi(e_2 - e_3) = \delta_2$.  Specializing to $a=0$ (i.e., setting $\delta_1 = \delta_3 = 0$), the loop $e_1 - e_3$ is collapsed, yet $\phi(e_1 - e_3)$ is not zero modulo the subgroup generated by $\delta_1 = \delta_3$.  In other words, $\phi$ does not descend to be well-defined on $H_1(\tropfont X_t)$ when $t$ is a generization of $s$ in $D_2$.

To correct for this, we demand that when the homomorphism $\phi : H_1(\tropfont X_s) \to \overnorm M_S^{\rm gp}$ is evaluated on a loop $\gamma$ the value lies in the subgroup of elements of $\overnorm M_S^{\rm gp}$ that are mapped to $0$ when $\gamma$ is smoothed.  This is precisely the bounded monodromy condition.  We observe that, in the example above, the monodromy of $\phi$ around the loop $\gamma = e_1 - e_3$ is not bounded by the length, $\delta_1 + \delta_3 = 2 \delta_1$, of $\gamma$ hence remains nonzero when $\gamma$ is smoothed.
\end{example}

\subsection{The tropical Picard group}
\label{sec:tropic-sheaf}

\begin{definition} \label{def:tropic}
Let $\tropfont X$ be a tropical curve metrized by a monoid $\overnorm M$.  We say that an element of $H^1(\tropfont X, \L)$ has bounded monodromy if its image in $H^1(\tropfont X, \PL)$ has bounded monodromy (which means that it is the image of a class of bounded monodromy in $H^1(\tropfont X, \overnorm M^{\rm gp}) = \Hom(H_1(\tropfont X), \overnorm M^{\rm gp})$).  For each $d \in H_0(\tropfont X)$, we write $\TroPic^d(\tropfont X)$ for the preimage of $d$ under the degree homomorphism $\TroPic(\tropfont X) \subset H^1(\tropfont X, \L) \xrightarrow{\deg} H_0(\tropfont X)$ from Corollary~\ref{cor:tropic}.

We define $\bTroPic(\tropfont X)$ to be category of $\L$-torsors whose classes in $H^1(\tropfont X, \L)$ have bounded monodromy, and we define $\TroPic(\tropfont X)$ to be the set of isomorphism classes of $\TroPic(\tropfont X)$.  Objects of $\bTroPic(\tropfont X)$ are called \emph{tropical line bundles} on $\tropfont X$.
\end{definition}

The main task of this section is to describe the functoriality of $\TroPic(\tropfont X)$ with respect to the monoid~$\overnorm M$.

\begin{lemma} \label{lem:decomp}
A class in $H^1(\tropfont X, \L)$ has bounded monodromy if and only if it is the sum of a class in the image of $H^0(\tropfont X, \mathsf V)$ and a class of bounded monodromy in $H^1(\tropfont X, \overnorm M^{\rm gp}) = \Hom(H_1(\tropfont X), \overnorm M^{\rm gp})$ (under the maps induced from diagram~\eqref{eqn:34}).
\end{lemma}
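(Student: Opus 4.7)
The plan is to invoke the short exact sequence $0 \to \frak L \to \frak P \to \mathscr V \to 0$ that is visible in diagram~\eqref{eqn:34} (since $\mathscr V = \frak P / \frak L$ by construction). Taking cohomology and using the vanishing $H^1(\frak X, \mathscr V) = 0$ from Lemma~\ref{lem:flasque}, I get an exact sequence
\begin{equation*}
H^0(\frak X, \mathscr V) \xrightarrow{\partial} H^1(\frak X, \frak L) \to H^1(\frak X, \frak P) \to 0.
\end{equation*}
So the map $H^1(\frak X, \frak L) \to H^1(\frak X, \frak P)$ is surjective with kernel equal to the image of $H^0(\frak X, \mathscr V)$.

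For the forward direction, suppose $\lambda \in H^1(\frak X, \frak L)$ has bounded monodromy. By Definition~\ref{def:tropic}, its image $\lambda' \in H^1(\frak X, \frak P)$ has bounded monodromy, which by unwinding the definition via the second row of~\eqref{eqn:35} means $\lambda'$ lifts to some $\mu \in H^1(\frak X, \overnorm M^{\rm gp}) = \Hom(H_1(\frak X), \overnorm M^{\rm gp})$ of bounded monodromy. Let $\tilde\mu$ denote the image of $\mu$ under $H^1(\frak X, \overnorm M^{\rm gp}) \to H^1(\frak X, \frak L)$ (induced by the inclusion $\overnorm M^{\rm gp} \hookrightarrow \frak L$ as constants, from the top row of~\eqref{eqn:34}). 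By commutativity of~\eqref{eqn:34}, both $\lambda$ and $\tilde\mu$ map to $\lambda'$ in $H^1(\frak X, \frak P)$, so $\lambda - \tilde\mu$ lies in the kernel of $H^1(\frak X, \frak L) \to H^1(\frak X, \frak P)$, hence in the image of $H^0(\frak X, \mathscr V)$. This gives the desired decomposition.

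For the reverse direction, suppose $\lambda = \tilde\mu + v$ where $\tilde\mu$ is the image of $\mu \in H^1(\frak X, \overnorm M^{\rm gp})$ of bounded monodromy and $v$ is the image of some element of $H^0(\frak X, \mathscr V)$. Then $v$ maps to $0$ in $H^1(\frak X, \frak P)$ by exactness, so the image of $\lambda$ in $H^1(\frak X, \frak P)$ coincides with the image of $\mu$ there. By assumption this class has bounded monodromy, hence so does $\lambda$ by Definition~\ref{def:tropic}.

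There is no real obstacle here beyond bookkeeping: the argument is a diagram chase in~\eqref{eqn:34}, with all the substance already packaged in Lemma~\ref{lem:flasque} (providing the vanishing needed to extract the relevant four-term sequence) and in the Definition~\ref{def:tropic} of bounded monodromy in $H^1(\frak X, \frak L)$ in terms of $H^1(\frak X, \frak P)$. The only small point to verify carefully is that the notion of bounded monodromy in $H^1(\frak X, \frak P)$ is indeed well-defined independently of the choice of lift to $\Hom(H_1(\frak X), \overnorm M^{\rm gp})$, which is exactly Lemma~\ref{lem:bdd-ind}.
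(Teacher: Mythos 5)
Your proof is correct and follows essentially the same route as the paper: a diagram chase using the exactness of $H^0(\frak X, \mathscr V) \to H^1(\frak X, \frak L) \to H^1(\frak X, \frak P)$ (from the middle column of~\eqref{eqn:34}) together with the commutativity of the square relating $H^1(\frak X, \overnorm M^{\rm gp})$ to $H^1(\frak X, \frak L)$ and $H^1(\frak X, \frak P)$. The appeal to Lemma~\ref{lem:flasque} for surjectivity is harmless but not needed; the paper gets by with the long exact sequence alone.
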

\begin{proof}
This follows from the commutativity of the diagram~\eqref{eqn:50}, below, and its exactness in the second row (which is the long exact sequence associated to the middle column of~\eqref{eqn:34}):
\begin{equation} \vcenter{\xymatrix{
& H^1(\tropfont X, \overnorm M^{\rm gp}) \ar@{=}[r] \ar[d] & H^1(\tropfont X, \overnorm M^{\rm gp}) \ar[d] \\
H^0(\tropfont X, \mathsf V) \ar[r] & H^1(\tropfont X, \L) \ar[r] & H^1(\tropfont X, \PL)
}} \end{equation}
\end{proof}

We will obtain the functoriality of $\TroPic(\tropfont X)$ from naturally defined functorial operations on $\PL$ and $\mathsf V$.  We begin by summarizing these. 

\begin{proposition} \label{prop:gen-P}
Let $\tropfont X$ be a tropical curve metrized by $\overnorm M$, let $u : \overnorm M \to \overnorm N$ be a homomorphism of monoids, and let $\sigma : \tropfont X \to \tropfont Y$ be the induced edge contraction.  Let $\mathsf D_{\tropfont X}$ and $\mathsf D_{\tropfont Y}$ be the diagrams~\ref{eqn:34} on $\mathscr X$ and on $\mathscr Y$, respectively.
\begin{enumerate}[label=(\roman{*})]
\item There is a unique homomorphism $\PL(\tropfont X) \to \PL(\tropfont Y)$, sending $f \in \PL(\tropfont X)$ to $\overnorm f \in \PL(\tropfont Y)$ such that $\overnorm f(\sigma( x)) = f(x)$ whenever $x$ is not contracted in $\tropfont Y$.
\item There is a unique homomorphism $\mathsf V(\tropfont X) \to \mathsf V(\tropfont Y)$ by sending the basis vector $[x]$ to $[\sigma(x)]$.
\item The homomorphisms above commute with the quotient map $\PL \to \mathsf V$.
\end{enumerate}
\end{proposition}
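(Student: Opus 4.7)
The plan is to construct $\overnorm f$ explicitly on $\frak Y$, verify well-definedness, and check compatibility with the quotient.

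For (i), I would define $\overnorm f = (\overnorm\alpha, \overnorm\mu)$ on $\frak Y$ by decree: on a non-contracted flag $y$ of $\frak Y$, let $x \in \frak X$ be its unique preimage and set $\overnorm\alpha(y) = u(\alpha(x))$, $\overnorm\mu(y) = \mu(x)$; on a vertex $w$ of $\frak Y$, choose any $x \in \sigma^{-1}(w)$ and set $\overnorm\alpha(w) = u(\alpha(x))$, $\overnorm\mu(w) = 0$. The key well-definedness check is that $u(\alpha(x))$ is constant on $\sigma^{-1}(w)$: any two preimages are connected by a chain of contracted flags, and along a contracted flag $z$ (where $u(\ell(z)) = 0$), we have $\alpha(r(z)) = \alpha(z)$ and $u(\alpha(i(z))) = u(\alpha(z)) + \mu(z) \, u(\ell(z)) = u(\alpha(z))$. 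Given well-definedness, the three conditions of Definition~\ref{def:linear} for $\overnorm f$ pull back directly from the corresponding conditions for $f$ by applying $u$. Uniqueness is immediate: the defining formula is forced on non-contracted flags, and the well-definedness argument then forces the values at vertices.

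For (ii), the morphism is defined on generators by $[v] \mapsto [\sigma(v)]$ — this is well-defined because the image of a vertex of $\frak X$ under $\sigma$ is a vertex of $\frak Y$ — and extended by linearity.

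For (iii), I would unwind the quotient map $\frak P \to \mathscr V$: it sends $(\alpha, \mu)$ to the $0$-chain $\sum_v d_f(v)\,[v]$ where $d_f(v) = \sum_{r(x) = v} \mu(x)$. Commutativity of the square at $f$ reduces to the identity
\begin{equation*}
d_{\overnorm f}(w) = \sum_{\sigma(v) = w} d_f(v)
\end{equation*}
for each vertex $w$ of $\frak Y$. I would partition the flags $x$ of $\frak X$ with $r(x) \in \sigma^{-1}(w)$ into those with $u(\ell(x)) \neq 0$ and those with $u(\ell(x)) = 0$. The first group biject, via $\sigma$, with flags $y$ of $\frak Y$ satisfying $r_{\frak Y}(y) = w$, and their $\mu$-values match under $\sigma$, so their contribution is exactly $d_{\overnorm f}(w)$. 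The second group consists of contracted flags, which come in pairs $\{z, i(z)\}$ lying entirely in the fiber (since a contracted flag and its involute have the same length and their roots are identified in $\frak Y$). The third clause of Definition~\ref{def:linear} gives $\mu(z) + \mu(i(z)) = 0$, so the contribution from contracted flags vanishes. This is the only nontrivial computational step, and it is where the algebraic structure of piecewise linear functions is genuinely used; the rest of the proof is formal bookkeeping.
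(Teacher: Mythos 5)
Your proof is correct and follows essentially the same route as the paper's: part (i) rests on the observation that $u\circ\alpha$ is constant across contracted flags because $u(\ell(z))=0$ kills the slope term, and part (iii) rests on the cancellation $\mu(z)+\mu(i(z))=0$ for the pairs of contracted flags lying over a vertex of $\frak Y$, with the non-contracted flags matching up bijectively. The extra explicitness in your write-up (the chain-of-contracted-flags argument and the partition of flags) only spells out what the paper's proof states more tersely.
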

\numberwithin{equation}{theorem}
\begin{proof}\ 
\begin{enumerate}[label=(\roman{*})]
\item The uniqueness is evident, since every $y \in \tropfont Y$ is the image of some $x \in \tropfont X$ that is not contracted.  To check the existence, assume that $x$ is a flag of $\tropfont X$ that is contracted in $\tropfont Y$ and $f(x) = (\alpha, \mu)$.  Then $\alpha(i(x)) - \alpha(x) \in \mathbf Z \ell(x)$ and $\ell(x)$ lies in the kernel of $u$ (because $x$ is contracted), so $u(\alpha(i(x)) = u(\alpha(x))$.  Therefore $u \circ f$ is constant on the regions contracted by $\sigma$ and descends to $\tropfont Y$.
\item Immediate.
\item We argue that the diagram~\eqref{eqn:40} commutes:
\begin{equation} \label{eqn:40} \vcenter{\xymatrix{
\PL(\tropfont X) \ar[r] \ar[d] & \mathsf V(\tropfont X) \ar[d] \\
\PL(\tropfont Y) \ar[r] & \mathsf V(\tropfont Y)
}} \end{equation}
Let $f = (\alpha, \mu)$ be a piecewise linear function on $\tropfont X$.  The coefficient of $v$ in the image of $f$ in $\mathsf V(\tropfont X)$ is $\sum_{r(e) = v} \mu(e)$.  Therefore the image of $f$ in $\mathsf V(\tropfont Y)$, going around the top and right of~\eqref{eqn:40}, is $\sum_{f(v) = w} \sum_{r(e) = v} \mu(e)$.  In this sum, each edge of the contracted locus appears twice, with opposite orientations, and each edge exiting the contracted locus appears once, oriented out.  The sum therefore reduces to $\sum_{r(e) = w} \mu(e)$, which is what we get from following $f$ around the bottom and left of the diagram.
\end{enumerate}
\end{proof}

\begin{corollary} \label{cor:natural}
Diagram~\ref{eqn:34} is natural with respect to weighted edge contractions.
\end{corollary}

\numberwithin{equation}{theorem}
\begin{proposition} \label{prop:generization}
Let $\overnorm M \to \overnorm N$ be a homomorphism of commutative monoids inducing an edge contraction $\tropfont X \to \tropfont Y$ of tropical curves.  Then the maps
\begin{align}
H^0(\tropfont X, \mathsf V) & \to H^0(\tropfont Y, \mathsf V) \to H^1(\tropfont Y, \overnorm N)^\dagger \label{eqn:111} \\
H^1(\tropfont X, \overnorm M)^\dagger & \to H^1(\tropfont Y, \overnorm N)^\dagger \notag
\end{align}
agree on their common domain of definition and combine to define a map:
\begin{equation} \label{eqn:110}
H^1(\tropfont X, \L)^\dagger \to H^1(\tropfont Y, \L)^\dagger
\end{equation}
\end{proposition}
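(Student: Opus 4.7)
The plan is to exploit the surjection
\[
\tau_\frak X : H^0(\frak X, \mathscr V) \oplus H^1(\frak X, \overnorm M^{\rm gp})^\dagger \twoheadrightarrow H^1(\frak X, \frak L)^\dagger
\]
supplied by Lemma~\ref{lem:decomp}, which sends $(v,\alpha)$ to $\iota_\frak X(v)+j_\frak X(\alpha)$, where $\iota$ is the connecting homomorphism for $0\to\frak L\to\frak P\to\mathscr V\to 0$ and $j$ is induced by $\overnorm M^{\rm gp}\hookrightarrow\frak L$. On the first summand, Proposition~\ref{prop:gen-P}(ii) defines $\sigma_*:H^0(\frak X,\mathscr V)\to H^0(\frak Y,\mathscr V)$. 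On the second, composition with $u:\overnorm M^{\rm gp}\to\overnorm N^{\rm gp}$ descends a bounded-monodromy homomorphism $H_1(\frak X)\to\overnorm M^{\rm gp}$ to a bounded-monodromy homomorphism $H_1(\frak Y)\to\overnorm N^{\rm gp}$, exactly as in the construction of $\TroJac$ following \eqref{eqn:44}; call this descent $u_*$. Composing with $\iota_\frak Y$ and $j_\frak Y$ gives a map of direct sums into $H^1(\frak Y,\frak L)^\dagger$, and the proof reduces to showing that it factors through $\tau_\frak X$, i.e.\ that $\ker\tau_\frak X$ is sent to zero.

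The cornerstone is the identity $\iota_\frak X(\pi(\epsilon))=j_\frak X(\partial_\frak X\epsilon)$ for $\epsilon\in H^0(\frak X,\mathscr E)$, where $\pi:\mathscr E\to\mathscr V$ is the boundary and $\partial_\frak X$ is the intersection pairing (Lemma~\ref{lem:pairing}). This follows from the commutativity of the connecting maps attached to the morphism of short exact sequences $[0\to\overnorm M^{\rm gp}\to\frak P\to\mathscr E\to 0]\to[0\to\frak L\to\frak P\to\mathscr V\to 0]$ induced by $\overnorm M^{\rm gp}\hookrightarrow\frak L$ and $\mathscr E\twoheadrightarrow\mathscr V$. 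I would then unpack $\ker\tau_\frak X$: if $\iota_\frak X(v)+j_\frak X(\alpha)=0$ then $v$ has degree zero (since $j_\frak X(\alpha)$ does, coming from $\overnorm M^{\rm gp}$), so exactness of $0\to\mathscr H\to\mathscr E\to\mathscr V\to 0$ at $\mathscr V$ lets us write $v=\pi(\beta)$ with $\beta\in H^0(\frak X,\mathscr E)$; the identity then yields $j_\frak X(\alpha+\partial_\frak X\beta)=0$, and since $\ker j_\frak X=\partial_\frak X H_1(\frak X)$ by \eqref{eqn:35}, we have $\alpha=\partial_\frak X(\gamma-\beta)$ for some $\gamma\in H_1(\frak X)\subset H^0(\frak X,\mathscr E)$.

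The conclusion is then a diagram chase. By Proposition~\ref{prop:gen-P}(iii), $\sigma_*(\pi(\beta))=\pi(\sigma_*\beta)$; by Lemma~\ref{lem:pair-local}, the intersection pairing is natural under edge contraction, so $u_*\partial_\frak X=\partial_\frak Y\sigma_*$ on $\mathscr E(\frak X)$. The image of $(v,\alpha)$ in $H^1(\frak Y,\frak L)$ therefore reads
\[
\iota_\frak Y(\pi(\sigma_*\beta))+j_\frak Y(\partial_\frak Y\sigma_*\gamma)-j_\frak Y(\partial_\frak Y\sigma_*\beta),
\]
and applying the cornerstone identity on $\frak Y$ cancels the $\beta$-terms, leaving $j_\frak Y(\partial_\frak Y\sigma_*\gamma)$; this vanishes because $\sigma_*\gamma\in H_1(\frak Y)$ and $\partial_\frak Y H_1(\frak Y)\subset\ker j_\frak Y$ by the $\frak Y$-analogue of \eqref{eqn:35}. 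The main obstacle I anticipate is pinning down Lemma~\ref{lem:pair-local} in precisely the form $u_*\partial_\frak X=\partial_\frak Y\sigma_*$ on all of $\mathscr E(\frak X)$ and not merely on the cycle subgroup $H_1(\frak X)$ on which it is explicitly stated; once that naturality and the preservation of bounded monodromy under $u_*$ are in hand, everything else is formal.
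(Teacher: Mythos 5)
Your argument is correct and is essentially the paper's: both reduce, by a diagram chase in~\eqref{eqn:34}, to checking that the edge-contraction map on $H^0(\frak X,\mathscr E)$ (non-contracted edges sent to themselves, contracted edges to zero) commutes with the boundary to $\mathscr V$ and with the intersection pairing to $\Hom(H_1(\frak X),\overnorm M^{\rm gp})$, which is exactly the content of the map~\eqref{eqn:87} in the paper's proof. The obstacle you anticipate is not one: diagram~\eqref{eqn:39} in Lemma~\ref{lem:pair-local} asserts the compatibility on all of $\mathbf Z^{E(\frak X)}\times\mathbf Z^{E(\frak X)}$ rather than only on $H_1(\frak X)$, and the computation proving Proposition~\ref{prop:gen-P}(iii) applies verbatim to arbitrary slope assignments in $H^0(\frak X,\mathscr E)$, so both naturality statements you need are already available.
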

\begin{proof}
Diagram~\eqref{eqn:34} induces a commutative square~\eqref{eqn:50}, below:
\begin{equation} \label{eqn:50} \vcenter{\xymatrix{
H^0(\tropfont X, \PL) \ar[r] \ar@{=}[d] & H^0(\tropfont X, \mathsf E) \ar[r] \ar[d] & H^1(\tropfont X, \overnorm M^{\rm gp})^\dagger \ar[d] \ar[r] & H^1(\tropfont X, \PL)^\dagger \ar@{=}[d] \\
H^0(\tropfont X, \PL) \ar[r] & H^0(\tropfont X, \mathsf V) \ar[r] & H^1(\tropfont X, \L)^\dagger \ar[r] & H^1(\tropfont X, \PL)^\dagger
}} \end{equation}  
Suppose that $u \in H^1(\tropfont X, \L)$ is the image of some $v \in H^1(\tropfont X, \overnorm M^{\rm gp})^\dagger$ and $w \in H^0(\tropfont X, \mathsf V)$.  Then the image of $u$ in $H^1(\tropfont X, \PL)$ must vanish.  This is also the image of $v$, so that $v$ is the image of some $x \in H^0(\tropfont X, \mathsf E)$.  The difference between $w$ and the image of $x$ maps to $0$ in $H^1(\tropfont X, \L)$, hence is the image of some $y \in H^0(\tropfont X, \PL)$.  Replacing $w$ by $w - y$ we discover that we must show the two maps in question agree on $H^0(\tropfont X, \mathsf E)$.

We can define a map~\eqref{eqn:87} sending an edge $x$ to itself if it is not contracted in $\tropfont Y$ and to $0$ if it is contracted.
\begin{equation} \label{eqn:87}
H^0(\tropfont X, \mathsf E) \to H^0(\tropfont Y, \mathsf E)
\end{equation}
This commutes with the maps to $H^0(\tropfont X, \mathsf V)$ and $H^1(\tropfont X, \overnorm M^{\rm gp}) = \Hom(H_1(\tropfont X), \overnorm M^{\rm gp})$.
\end{proof}

\subsection{The tropical Picard stack}
\label{sec:tropic-stack}

The construction in Proposition~\ref{prop:generization} can be categorified to operate on $\L$-torsors with bounded monodromy, and not merely their isomorphism classes.  Given an edge contraction $\sigma : \tropfont X \to \tropfont Y$ associated to a homomorphism of monoids $\overnorm M \to \overnorm N$ and an $\L$-torsor $Q$ on $\tropfont X$ with bounded monodromy, we wish to produce an $\L$-torsor on $\tropfont Y$ in a canonical way.  

Using the following lemma, we may promote $\sigma$ to be a morphism of sites.

\begin{lemma}
Let $\sigma : \tropfont X \to \tropfont Y$ be an edge contraction induced from a homomorphism $\overnorm M \to \overnorm N$.  Let $\tropfont V \to \tropfont Y$ be a local isomorphism.  Then the set-theoretic fiber product $\tropfont U = \tropfont V \mathop\times_{\tropfont Y} \tropfont X$ is naturally equipped with the structure of a tropical curve and the projection $\tropfont U \to \tropfont X$ is a local isomorphism.
\end{lemma}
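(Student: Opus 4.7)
\emph{Proof proposal.} The plan is to equip $\frak U = \frak V \mathop\times_{\frak Y} \frak X$ with the evident componentwise tropical structure, metrized by $\overnorm M$. Write $p : \frak V \to \frak Y$ for the local isomorphism, so points of $\frak U$ are pairs $(v,x)$ with $p(v) = \sigma(x)$. Define
\begin{align*}
\ell_{\frak U}(v,x) &= \ell_{\frak X}(x), \\
i_{\frak U}(v,x) &= (i_{\frak V}(v), i_{\frak X}(x)), \\
r_{\frak U}(v,x) &= (r_{\frak V}(v), r_{\frak X}(x)),
\end{align*}
with $r_{\frak U}$ taken to be defined precisely when $r_{\frak V}(v)$ is.

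Next I would check that these definitions land in $\frak U$. For $i_{\frak U}$, this is immediate from $p \circ i_{\frak V} = i_{\frak Y} \circ p$ and $\sigma \circ i_{\frak X} = i_{\frak Y} \circ \sigma$. For $r_{\frak U}$, the compatibilities give $p(r_{\frak V}(v)) = r_{\frak Y}(p(v)) = r_{\frak Y}(\sigma(x))$, and this equals $\sigma(r_{\frak X}(x))$: if $x$ is not contracted by $\sigma$ this is the compatibility of $\sigma$ with $r$, while if $x$ is contracted then $\sigma(x)$ is already a vertex and both expressions reduce to $\sigma(x) = \sigma(r_{\frak X}(x))$. One must also check that $r_{\frak X}(x)$ is defined whenever $r_{\frak V}(v)$ is, which holds because $r_{\frak V}(v)$ defined forces $r_{\frak Y}(\sigma(x))$ defined, and in an edge contraction this is equivalent to $r_{\frak X}(x)$ being defined. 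The tropical curve axioms are then routine: $\ell_{\frak U}(v,x) = 0$ forces $\ell_{\frak V}(v) = u(\ell_{\frak X}(x)) = 0$, so that $(v,x)$ is simultaneously fixed by $i_{\frak U}$, by $r_{\frak U}$, and has length zero.

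The second projection commutes with $\ell$, $i$, and (where defined) $r$ by construction, so what remains is the bijection on the star of each vertex. Given a vertex $(v^*, x^*)$ of $\frak U$, its star consists of all $(v,x)$ with $v \in \mathrm{star}(v^*)$, $x \in \mathrm{star}(x^*)$, and $p(v) = \sigma(x)$. The edge contraction $\sigma$ carries $\mathrm{star}(x^*)$ into $\mathrm{star}(\sigma(x^*))$ (contracted flags at $x^*$ go to the vertex $\sigma(x^*)$ itself, non-contracted flags go to honest flags in that star), and the local isomorphism property of $p$ gives a bijection $\mathrm{star}(v^*) \xrightarrow{\sim} \mathrm{star}(p(v^*)) = \mathrm{star}(\sigma(x^*))$. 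Composing, each $x \in \mathrm{star}(x^*)$ determines $\sigma(x) \in \mathrm{star}(\sigma(x^*))$ and hence a unique $v \in \mathrm{star}(v^*)$ with $p(v) = \sigma(x)$, exhibiting the star of $(v^*,x^*)$ as bijective with $\mathrm{star}(x^*)$ via the projection.

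The main obstacle I expect is precisely the verification at the star: one must be careful that flags at $x^*$ which are collapsed together in $\frak Y$ (e.g.\ two loops at $x^*$ whose lengths are killed by $u$) are correctly pulled apart by the fiber product. The point is that $p$ restricted to $\mathrm{star}(v^*)$ is a \emph{bijection}, not merely a surjection, so even though several distinct elements of $\mathrm{star}(x^*)$ may map to the same element of $\mathrm{star}(\sigma(x^*))$, each such element nevertheless has a unique lift into $\mathrm{star}(v^*)$, and the fibered pair $(v,x)$ is recovered uniquely.
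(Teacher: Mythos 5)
Your proposal is correct and follows essentially the same route as the paper: define $i$, $r$, $\ell$ on $\frak U$ componentwise (with $\ell$ pulled back from $\frak X$), reduce the axioms to the corresponding conditions on the $\frak X$-component, and obtain the star bijection by using that $p$ restricts to a bijection $\frak S_{v^*}\to\frak S_{\sigma(x^*)}$ so that each $x\in\frak S_{x^*}$ has a unique lift $v\in\frak S_{v^*}$ with $p(v)=\sigma(x)$. Your extra care about the domain of definition of $r_{\frak U}$ and about flags identified by $\sigma$ is a sound elaboration of details the paper leaves implicit.
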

\begin{proof}
The involution $i$ and the partially defined function $r$ on $\tropfont U$ are induced from those on $\tropfont X$, $\tropfont Y$, and $\tropfont V$ and their compatibility.  The metric $\ell$ is induced from the projection to $\tropfont X$.  We must verify that $r(u) = u$ if and only if $i(u) = u$ if and only if $\ell(u) = 0$ for all $u \in \tropfont U$.  Indeed, $r(u) = u$ if and only if $i(u) = u$ because this property holds in $\tropfont V$ and in $\tropfont X$.  If $\ell(u) = 0$ in $\mathscr U$ then by definition, $\ell(u) = 0$ in $\mathscr X$ and therefore in $\mathscr Y$ as well;  since $\mathscr V \to \mathscr Y$ is a local isomorphism, this implies $\ell(u) = 0$ in $\mathscr V$, so $r(u) = u$ in both $\mathscr X$ and in $\mathscr V$;  by definition, this implies $r(u) = u$ in $\mathscr V$.

To see that $\tropfont U \to \tropfont X$ is a local isomorphism, let $u$ be a vertex of $\tropfont U$ and denote by $x$, $y$, and $v$ its images in $\tropfont X$, $\tropfont Y$, and $\tropfont V$.  Let $\tropfont S_x$, $\tropfont S_y$, $\tropfont S_u$, and $\tropfont S_v$ denote their stars.  Then $\tropfont S_v$ bijects onto $\tropfont S_y$, by the assumption that $\tropfont V \to \tropfont Y$ is a local isomorphism.  Therefore $\sigma^{-1} \tropfont S_v$ maps isomoprhically onto $\sigma^{-1} \tropfont S_y$.  But $\tropfont S_u \subset \sigma^{-1} \tropfont S_v$ and $\tropfont S_x \subset \sigma^{-1} \tropfont S_y$, so that $\tropfont S_u$ maps isomorphically onto $\tropfont S_x$, as required.
\end{proof}

The lemma shows that if $\tropfont V \to \tropfont Y$ is a local isomorphism, then $\sigma^{-1} \tropfont V \to \tropfont X$ is also a local isomorphism.  It is immediate that $\sigma^{-1}$ respects fiber products and covers so we obtain a morphism of sites $\sigma : \tropfont X \to \tropfont Y$.

We construct the desired functor $\bTroPic(\tropfont X) \to \bTroPic(\tropfont Y)$ by working locally.  We write $\sigma_\ast \mathrm B \L_{\tropfont X}^\dagger$ for the substack of those $Q \in \sigma_\ast \mathrm B \L_{\tropfont X}(\tropfont V)$ such that $Q$ has bounded monodromy on $\sigma^{-1} \tropfont U$ for each local isomorphism $\tropfont U \to \tropfont Y$.  Note, however, that bounded monodromy is not a local condition in general.

\begin{proposition} \label{prop:tropic-func}
There is a morphism
\begin{equation} \label{eqn:51}
\sigma_\ast \mathrm B \L_{\tropfont X}^\dagger \to \mathrm B\L_{\tropfont Y}
\end{equation}
inducing the morphisms in Proposition~\ref{prop:generization}.
\end{proposition}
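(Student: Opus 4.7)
The plan is to construct the functor in two stages. First, I would produce a morphism of sheaves of abelian groups $\varphi : \sigma_\ast \frak L_{\frak X} \to \frak L_{\frak Y}$ on the tropical site of $\frak Y$. Second, I would show that for any bounded-monodromy $\frak L_{\frak X}$-torsor $Q$ on $\sigma^{-1}\frak V$, the pushforward $\sigma_\ast Q$ is a $\sigma_\ast \frak L_{\frak X}$-torsor on $\frak V$. Composing pushforward with extension of structure group along $\varphi$ then produces the desired functor $\sigma_\ast \mathrm B \frak L_{\frak X}^\dagger \to \mathrm B \frak L_{\frak Y}^\dagger$.

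For the first stage, recall that if $\frak V \to \frak Y$ is a local isomorphism then the set-theoretic pullback $\sigma^{-1}\frak V \to \frak V$ is an edge contraction induced by the same homomorphism $u : \overnorm M \to \overnorm N$. Proposition~\ref{prop:gen-P} therefore yields morphisms of presheaves $\sigma_\ast \frak P_{\frak X} \to \frak P_{\frak Y}$ and $\sigma_\ast \mathscr V_{\frak X} \to \mathscr V_{\frak Y}$. These are morphisms of sheaves because the construction commutes with restriction along local isomorphisms; passing to kernels of the quotient maps $\frak P \to \mathscr V$ and invoking the compatibility in Proposition~\ref{prop:gen-P}(iii) produces $\varphi$.

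The second stage is the crux. Given $Q$ on $\sigma^{-1}\frak V$ with bounded monodromy, I need to exhibit a tropical cover $\{\frak V_i \to \frak V\}$ such that $Q$ is trivial on each $\sigma^{-1}\frak V_i$. Proposition~\ref{prop:bounded} gives a subdivision $\frak U' \to \sigma^{-1}\frak V$ on which $Q$ trivializes, and the inductive construction in its proof only introduces new subdivision points at positions $\mu_e \in [0,\ell(e)]$ coming from the monodromy around loops. The observation I would exploit is that an edge $e$ of $\sigma^{-1}\frak V$ contracted by $\sigma$ has length $\ell(e)$ mapping to $0$ in $\overnorm N^{\rm gp}$, so its contribution to any monodromy element of $\Hom(H_1(\sigma^{-1}\frak V), \overnorm M^{\rm gp})$ is itself dominated by contracted quantities and can be absorbed into $\sigma_\ast \frak L_{\frak X}$ without passing to a subdivision. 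Consequently the inductive procedure can be run ignoring contracted edges entirely, so the resulting subdivision of $\sigma^{-1}\frak V$ is the pullback of a subdivision $\frak V' \to \frak V$, which supplies the required tropical cover. Finally, compatibility with Proposition~\ref{prop:generization} on $\pi_0$ is a diagram chase through Lemma~\ref{lem:decomp}, since $\varphi$ is compatible with both the inclusion of $\overnorm M^{\rm gp}$ and the projection to $\mathscr V$.

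The main obstacle will be the descent step in the previous paragraph: making precise the claim that contracted edges can be ignored in the inductive construction of Proposition~\ref{prop:bounded}. This requires reorganizing that proof so that the filtration $V_0 \subset V_1 \subset \cdots$ is chosen to place $\ker(u^{\rm gp}) \cap \overnorm M^{\rm gp}$ at the bottom, and then arguing that the subdivision points produced while $\mu$ still takes values in this kernel do not affect $\sigma_\ast Q$ as a $\sigma_\ast \frak L_{\frak X}$-torsor. The remaining verifications—functoriality in $u$, sheaf property of the assignment $Q \mapsto \sigma_\ast Q$, and that extension of structure group along $\varphi$ lands in $\mathrm B \frak L_{\frak Y}^\dagger$—should be routine once this compatibility is secured.
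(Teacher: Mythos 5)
There is a genuine gap in your stage 2: the claim that $\sigma_\ast Q$ is a $\sigma_\ast \frak L_{\frak X}$-torsor is false in general, because $Q$ need not be locally trivial relative to the tropical topology of $\frak Y$. Take $\frak Y$ a single vertex and $\frak X$ a loop made of two edges of length $(1,0)$ in $\overnorm M = \mathbf N^2$, with $u : \mathbf N^2 \to \mathbf N$ the second projection, and let $Q$ have monodromy $(1,0)$ around the loop $\gamma$ of length $(2,0)$. This monodromy is bounded by $\ell(\gamma)$ but is not in $\partial H_1(\frak X) = \mathbf Z(2,0)$, so $Q$ is a nontrivial bounded-monodromy torsor; since $\frak Y$ is a point it has no nontrivial covers, $\sigma^{-1}$ of any cover is all of $\frak X$, and $Q(\frak X) = \varnothing$. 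Hence $\sigma_\ast Q$ has no local sections and is not a torsor under anything. Your proposed repair does not close this: subdivisions are not covers in the tropical topology of Definition~\ref{def:cover}, so trivializing $Q$ on a subdivision pulled back from $\frak V$ does not make $\sigma_\ast Q$ locally trivial on $\frak V$; and Proposition~\ref{prop:bounded}, which you invoke to produce the subdivision, is only available for \emph{valuative} $\overnorm M$, whereas here $\overnorm M$ is an arbitrary sharp monoid (in the example above the required subdivision point of an edge of length $(1,0)$ at ``position $(1,0)/2$'' does not exist in $\overnorm M$).

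The order of operations must be reversed, and this is where bounded monodromy actually enters. The paper first changes coefficients upstairs along $\frak L_{\frak X} \to \frak L' := \sigma^\ast \frak L_{\frak Y}$ (the sheaf of $\overnorm N$-valued linear functions on $\frak X$), and only then descends. After this pushout, every loop of $\frak X$ lying over a single vertex of $\frak Y$ has length $0$ in $\overnorm N$, so bounded monodromy forces the $\overnorm N$-valued monodromy to vanish there; hence $H^1(\sigma^{-1}(\mathrm{star}), \frak L')^\dagger = 0$, the induced $\frak L'$-torsor admits sections over the preimage of the minimal cover of $\frak Y$, and it descends to an $\frak L_{\frak Y}$-torsor via $\Gamma(\sigma^{-1}\frak V, \frak L') = \Gamma(\frak V, \frak L_{\frak Y})$ --- no subdivision is needed at any point. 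In your example the coefficient change sends the monodromy $(1,0)$ to $0$, which is exactly the obstruction your pushforward-first plan cannot see. Your stage 1 (the map $\varphi$ on sheaves of linear functions via Proposition~\ref{prop:gen-P}) is fine, but it should be applied before descent, not after.
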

\begin{proof}
Provided we do so compatibly with restriction, it is sufficient to work locally in $\tropfont Y$.  We can therefore assume that $\tropfont Y$ is either a single edge or has a single vertex with a number of edges attached to it at only one side.  In the former case, $\tropfont X$ is also a single edge and $\sigma : \tropfont X \to \tropfont Y$ is an isomorphism, because $\sigma$ is an edge contraction.  We therefore assume that $\tropfont Y$ is a single vertex with edges radiating from it.  We note that in this case, $\tropfont Y$ has no nontrivial covers, so that we only need to construct~\eqref{eqn:51} on global sections:
\begin{equation} \label{eqn:89}
\bTroPic(\tropfont X) \to \bTroPic(\tropfont Y)
\end{equation}

Let us write $\L'$ for the sheaf of $\overnorm N$-valued linear functions on $\tropfont X$ and $\bTroPic(\tropfont X)'$ for the category of $\L'$-torsors on $\tropfont X$ of bounded monodromy.  Since every edge of $\tropfont X$ that is contracted by $\sigma$ has length~$0$ in $\overnorm N$ (by definition), $\L' = \sigma^\ast \L_{\tropfont Y}$.  In particular, $\L'$ is constant with value $\overnorm N^{\rm gp}$ on the preimage of the vertex of $\tropfont Y$.  The quotient $\L' / \overnorm N^{\rm gp}$ is a constant $\mathbf Z$ on the edges of $\tropfont X$ not contracted by $\sigma$ and therefore has vanishing $H^1$.  We may therefore make the identifications~\eqref{eqn:88}:
\begin{equation} \label{eqn:88}
H^1(\tropfont X, \L') = H^1(\tropfont X, \overnorm N^{\rm gp}) = \Hom(H_1(\tropfont X), \overnorm N^{\rm gp})
\end{equation}
But every loop of $\tropfont X$ has length~$0$ when measured in $\overnorm N$, so that a homomorphism $H_1(\tropfont X) \to \overnorm N^{\rm gp}$ of bounded monodromy must be~$0$.  Therefore $H^1(\tropfont X, \L')^\dagger = 0$ and $\bTroPic(\tropfont X)' = \mathrm B \Gamma(\tropfont X, \L')$.  Observing now that $\L'(\tropfont X) = \L(\tropfont Y)$, we conclude that $\bTroPic(\tropfont X)' = \bTroPic(\tropfont Y)$.  The sought after morphism~\eqref{eqn:89} now arises as the composition~\eqref{eqn:90}:
\begin{equation} \label{eqn:90} \vcenter{ \xymatrix@R=10pt{
\bTroPic(\tropfont X) \ar@{=}[d] \ar[r] & \bTroPic(\tropfont X)' \ar@{=}[d] & \bTroPic(\tropfont Y) \ar[l]_-{\sim} \ar@{=}[d] \\
\Gamma(\tropfont X, \mathrm B \L)^\dagger \ar[r] & \Gamma(\tropfont X, \mathrm B \sigma^\ast \L)^\dagger & \ar[l] \Gamma(\tropfont Y, \mathrm B \L)^\dagger
}} \end{equation}
\end{proof}

We leave it to the reader to verify that the morphism of Proposition~\ref{prop:tropic-func} is compatible with composition of homomorphisms of monoids.  We can now define the tropical Picard group in families, using the process described in Section~\ref{sec:trop-mod}.  If $\tropfont X$ is a family of tropical curves over a logarithmic scheme $S$, we obtain a stack $\bTroPic(\tropfont X/S)$ on the large \'etale site of $S$ characterized by either of the following two descriptions:
\begin{enumerate}
\item If $T$ is an atomic neighborhood of a geometric point $t$, then $\bTroPic(\tropfont X/S)(T) = \bTroPic(\tropfont X_t)$, and in general $\bTroPic(\mathscr X)$ is the stackification of the category fibered in groupoids arising from this definition.
\item If $T$ is a logarithmic scheme over $S$ and $T$ is of finite type then the objects of $\bTroPic(\tropfont X/S)(T)$ consists of a tropical line bundle $Q_t$ on $\tropfont X_t$ for each geometric point $t$ such that, for any geometric specialization $t \leadsto t'$, the line bundle $Q_{t'}$ induces $Q_t$ by way of the edge contraction $\tropfont X_{t'} \to \tropfont X_t$ and Proposition~\ref{prop:tropic-func}.  One extends to general logarithmic schemes using finite type approximations.
\end{enumerate}

\subsection{Prorepresentability and subdivisions}
\label{sec:trojac-subdiv}

Let $\tropfont X$ be a tropical curve metrized by a monoid $\overnorm M$.  We saw in Section~\ref{sec:trojac} that the tropical Jacobian can be regarded as a functor of pairs $(\overnorm N, u)$ where $u : \overnorm M \to \overnorm N$ is a homomorphism of monoids.  This functor is not representable, as we saw in Example~\ref{ex:trojac2}.  However, it is not that far from being representable:  it is the quotient of a prorepresentable functor by a discrete group.

\begin{proposition} \label{prop:prorep}
	The functor $\Hom(H_1(\tropfont X), \PL)^\dagger$ is prorepresentable on $\overnorm M / \Mon$ by the system of all submonoids $\overnorm P$ of $\overnorm M^{\rm gp} + H_1(\tropfont X)$ (direct sum) with the following properties:
	\begin{enumerate}
		\item $\overnorm P$ is finitely generated over $\overnorm M$;
		\item for each $\gamma \in H_1(\tropfont X)$ we have $\gamma \prec \ell(\gamma)$ in $\overnorm P$.
	\end{enumerate}
\end{proposition}
\begin{proof}
	Note that the second property implies that $\overnorm P$ generates $\overnorm M^{\rm gp} + H_1(\tropfont X)$ as a group.  Indeed, if $\gamma \prec \ell(\gamma)$ then $\gamma - n \ell(\gamma) \in \overnorm P$ for some integer $n$; as $\ell(\gamma) \in \overnorm M \subset \overnorm P$, this implies $\gamma \in \overnorm P$.

	Let $I$ be the diagram of all $\overnorm P$ with the indicated properties.  Let $F = \varinjlim_{\overnorm P \in I} \Hom(\overnorm P, -)$ be the pro-object they represent.  Certainly, if $\overnorm P \in I$ then a homomorphism $\overnorm P \to \overnorm N$ commuting with the morphisms from $\overnorm M$ induces an object of $\Hom(H_1(\tropfont X), \overnorm N)^\dagger$ by passing to the associated group.  This gives us a morphism $F \to \Hom(H_1(\tropfont X), \PL)^\dagger$ that we would like to show is an isomorphism.

	Suppose that $\mu : H_1(\tropfont X) \to \overnorm N^{\rm gp}$ is a homomorphism with bounded monodromy.  Combining this with the structural homomorphism $\overnorm M \to \overnorm N$ we get a homomorphism of monoids $\nu : \overnorm M + H_1(\tropfont X) \to \overnorm N^{\rm gp}$.  Choose a basis $e_1, \ldots, e_g$ of $H_1(\tropfont X)$.  For each $i$, there are integers $n$ and $m$ such that $n \nu(\ell(e_i)) \leq \nu(e_i) \leq m \nu(\ell(e_i))$ in $\overnorm N^{\rm gp}$.  That is $e_i - n \ell(e_i)$ and $m \ell(e_i) - e_i$ both lie in the preimage of $\overnorm N$ under $\nu$.  We take $\overnorm P$ to be the submonoid of $\overnorm M + H_1(\tropfont X)$ generated by $\overnorm M$ and the $e_i - n \ell(e_i)$ and $m \ell(e_i) - e_i$.  Then, by construction, $\overnorm P$ is finitely generated over $\overnorm M$, generates $\overnorm M + H_1(\tropfont X)$ as a group, has bounded monodromy, and induces $\mu$ via $\nu$. 

	This shows that $F \to \Hom(H_1(\tropfont X), \PL)^\dagger$ is surjective.  To see that it is also injective, consider a second map $\overnorm Q \to \overnorm N$ inducing $\mu$ as above, with $\overnorm Q \in I$.  Then $\overnorm Q \cap \overnorm P$ is also in $I$ and the map $\overnorm Q \cap \overnorm P \to \overnorm N$ induced from either $\overnorm Q \to \overnorm N$ or $\overnorm P \to \overnorm N$ --- they must be the same because the induced maps on associated groups is the same --- represents the same object of $F(\overnorm N)$.  This proves the injectivity and completes the proof.
\end{proof}

Let us now assume that $\overnorm M$ is finitely generated.  There is no loss of generality in doing so, since we only care about the set of lengths of the edges of $\tropfont X$, which is in any case a finitely generated submonoid.

It is then dual to a rational polyhedral cone $\sigma$, and the category of monoids that are finitely generated relative to $\overnorm M$ is contravariantly equivalent to the category $\RPC / \sigma$ of rational polyhedral cones over $\sigma$.  These observations permit us to reinterpret Proposition~\ref{prop:prorep} dually, to the effect that $\Hom(H_1(\tropfont X), \PL)^\dagger$ is representable by an ind-object of $\RPC / \sigma$.  

Rational polyhedral cones are finitely generated, saturated, convex regions in lattices, so we can interpret ind-rational polyhedral cones as not-necessarily-finitely generated, saturated, convex regions in torsion-free abelian groups.  Actually, Proposition~\ref{prop:prorep} gives a pro-object of $\overnorm M / \Mon$ whose associated group is constant, so that it is represented dually by a saturated, convex region in the lattice $\Hom(\overnorm M^{\rm gp}, \mathbf Z) \times H^1(\tropfont X)$.  The following corollary specifies which:

\begin{corollary} \label{cor:indrep}
The functor $\Hom(H_1(\tropfont X), \PL)^\dagger$ is ind-representable by the collection $\tau$ of pairs $(u,v) \in \Hom(\overnorm M^{\rm gp}, \mathbf Z) \times \Hom(H_1(\tropfont X), \mathbf Z)$ such that $u(\overnorm M) \geq 0$ and whenever $u(\ell(\gamma)) = 0$ for some $\gamma \in H_1(\tropfont X)$, we also have $v(\gamma) = 0$.
\end{corollary}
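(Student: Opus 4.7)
The plan is to dualize the pro-representation provided by Lemma~\ref{lem:prorep}.  Since only the lengths of the finitely many edges of $\frak X$ play any role in the functor, I may assume without loss of generality that $\overnorm M$ is finitely generated, so that $L = \overnorm M^{\rm gp} + H_1(\frak X)$ is a free abelian group of finite rank.  Each submonoid $\overnorm P$ in the indexing system $I$ of Lemma~\ref{lem:prorep} is a finitely generated, saturated submonoid of $L$ with $\overnorm P^{\rm gp} = L$.  By the standard monoid--cone duality, such a $\overnorm P$ corresponds to the rational polyhedral cone $\overnorm P^\vee \subset L^\vee = \Hom(\overnorm M^{\rm gp}, \mathbf Z) \times \Hom(H_1(\frak X), \mathbf Z)$ consisting of all $(u,v)$ with $u(\alpha) + v(\gamma) \geq 0$ for every $\alpha + \gamma \in \overnorm P$.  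Because inclusions of monoids reverse under duality, the pro-system of Lemma~\ref{lem:prorep} becomes an ind-system of rational polyhedral cones in $L^\vee$, and the task reduces to identifying their union with $\tau$.

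Next I would translate each of the two defining conditions on $\overnorm P$ into a condition on $\overnorm P^\vee$.  The inclusion $\overnorm M \subset \overnorm P$ is equivalent to demanding $u(\overnorm M) \geq 0$ for every $(u,v) \in \overnorm P^\vee$, which is the first condition defining $\tau$.  The bounded monodromy condition $\gamma \prec \ell(\gamma)$ in $\overnorm P$, unpacked via Lemma~\ref{lem:bounded}, asserts the existence of integers $n, m$ with $n\ell(\gamma) \leq \gamma \leq m\ell(\gamma)$ in $\overnorm P$; evaluating on any $(u,v) \in \overnorm P^\vee$ yields $n u(\ell(\gamma)) \leq v(\gamma) \leq m u(\ell(\gamma))$, which forces $v(\gamma) = 0$ whenever $u(\ell(\gamma)) = 0$.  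This is the second condition defining $\tau$, so each $\overnorm P^\vee$ sits inside $\tau$.

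For the reverse inclusion I would construct an explicit $\overnorm P \in I$ whose dual cone contains a prescribed $(u,v) \in \tau$.  Fix a basis $e_1, \dots, e_g$ of $H_1(\frak X)$.  For each index $i$ with $u(\ell(e_i)) > 0$, choose integers $n_i \leq m_i$ satisfying $n_i u(\ell(e_i)) \leq v(e_i) \leq m_i u(\ell(e_i))$; for each index with $u(\ell(e_i)) = 0$, the defining property of $\tau$ forces $v(e_i) = 0$, and I may take $n_i = m_i = 0$.  Let $\overnorm P$ be the saturation of the submonoid of $L$ generated by $\overnorm M$ together with the elements $e_i - n_i \ell(e_i)$ and $m_i \ell(e_i) - e_i$ for all $i$.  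Then $\overnorm P$ belongs to $I$ by construction, and $(u,v)$ evidently pairs non-negatively with each chosen generator, so $(u,v) \in \overnorm P^\vee$.  Hence $\tau = \bigcup_{\overnorm P \in I} \overnorm P^\vee$, as required.

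The argument is essentially a mechanical unpacking of definitions once Lemma~\ref{lem:prorep} is at hand, and I do not expect a genuine obstacle.  The only mildly delicate bookkeeping is at the end of the last step: one must confirm that the $\overnorm P$ constructed above lies in $I$, i.e., that it is finitely generated over $\overnorm M$ (evident), saturated (by construction), and has $\overnorm P^{\rm gp} = L$, which holds because $\ell(e_i) \in \overnorm M \subset \overnorm P$ implies $e_i = (e_i - n_i \ell(e_i)) + n_i \ell(e_i)$ belongs to $\overnorm P^{\rm gp}$ for every $i$.
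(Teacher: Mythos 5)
Your proof is correct and follows essentially the same route as the paper: both arguments dualize Lemma~\ref{lem:prorep}, translating the two defining conditions on the monoids $\overnorm P$ into the two conditions defining $\tau$. The only organizational difference is that the paper reduces the duality check to individual rays $(u,v)$ and their dual half-spaces (using that the pro-system is closed under finite intersections and $\tau$ under finite unions), whereas you verify the two inclusions directly, with your explicit construction of $\overnorm P$ in the reverse direction being precisely the one already used in the paper's proof of Lemma~\ref{lem:prorep}.
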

\begin{proof}
Let $I$ denote the pro-$\overnorm M$-monoid consisting of all $\overnorm P \subset \overnorm M^{\rm gp} \times H_1(\tropfont X)$ such that $\overnorm P$ is finitely generated over $\overnorm M$ and $\gamma \prec \ell(\gamma)$ in $\overnorm P$ for all $\gamma \in H_1(\tropfont X)$ (as in Proposition~\ref{prop:prorep}).  Let $J$ denote the ind-rational polyhedral cone consisting of all $(u,v)$ such that $u(\overnorm M) \geq 0$ and $u(\ell(\gamma)) = 0$ implies $v(\gamma) = 0$.  We wish to show that $I$ and $J$ are dual.

Since $I$ is closed under finite intersections and $J$ is closed under finite unions, it is sufficient to demonstrate the duality on the level of rays in $\Hom(\overnorm M^{\rm gp} \times H_1(\tropfont X), \mathbf Z)$ and the corresponding half-spaces in $\overnorm M^{\rm gp} \times H_1(\tropfont X)$.  That is, we need to show that $(u,v) \in \Hom(\overnorm M^{\rm gp} \times H_1(\tropfont X), \mathbf Z)$ has the properties $u(\overnorm M) \geq 0$ and $u(\ell(\gamma)) = 0$ implies $v(\gamma) = 0$ if and only if $\overnorm M \subset (u,v)^\vee$ and $\gamma \prec \ell(\gamma)$ in $(u,v)^\vee$.  But this is immediate:  $u(\overnorm M) \geq 0$ means precisely that $\overnorm M \subset (u,v)^\vee$; likewise, $\gamma \prec \ell(\gamma)$ in the half-space $(u,v)^\vee$ means either that $u(\ell(\gamma)) = v(\gamma) = 0$ or that $u(\ell(\gamma)) > 0$, which is equivalent to the property that $u(\ell(\gamma)) = 0$ implies $v(\gamma) = 0$.
\end{proof}

\begin{example}
Consider a loop $\tropfont X$ of circumference $\delta$, metrized by the monoid $\mathbf N \delta$.  Figure~\ref{fig:trojac} gives a visual representation of the ind-rational polyhedral cone representing $\Hom(H_1(\tropfont X), \ologGm)^\dagger$ on the left side, and of the pro-monoid representing it on the right.  On the left, the cone is the union of the origin and the strict right half plane; on the right, it is an infinitesimal thickening of the positive horizontal axis.

The real points of the tropical Jacobian can be seen by dividing the picture on the left by $H_1(\tropfont X) \simeq \mathbf Z$, which acts by vertical translation:  $(x,y) \mapsto (x,y+x)$.
\end{example}

\begin{figure}
\begin{tikzpicture}
\foreach \y in {-2,...,2} 
  \foreach \x in { -2,...,2 }
    \filldraw[color=black] (\x,\y) circle (2pt);
\begin{scope}
\clip (0,-2) rectangle (2,2);
\shade[inner color=red,outer color=white,opacity=.8] (0,0) circle (1.8);
\end{scope}
\filldraw[color=red] (0,0) circle (2pt);
\draw[thick,dashed,color=red] (0,-2.2) -- (0,2.2);
\end{tikzpicture}
\hskip.2\textwidth
\begin{tikzpicture}
\clip (-2.2,-2.2) rectangle (2.2,2.2);
\node at (1,.4) {$\delta$};
\foreach \y in {-2,...,2} 
  \foreach \x in { -2,...,2 }
    \filldraw[color=black] (\x,\y) circle (2pt);
\filldraw[color=red] (0,0) circle (2pt);
\begin{scope}
\clip (0,0) -- (2.5,.1) -- (2.5,-.1) -- cycle;
\shade[inner color=red,outer color=white,opacity=.8] (0,0) circle (2.2);
\end{scope}
\end{tikzpicture}
\caption{The universal cover of the tropical Jacobian of a loop of circumference $\delta \in \mathbf R_{\geq 0}$ (left) and the pro-monoid that represents it (right).}
\label{fig:trojac}
\end{figure}

The advantage of working with cones instead of monoids is that we can see subdivisions rather explicitly.

\begin{corollary} \label{cor:hom-subdiv}
Subdivisions of $\Hom(H_1(\tropfont X), \PL)^\dagger$ by representable functors are in bijection with subdivisions of the cone $\tau$ defined in Corollary~\ref{cor:indrep}.
\end{corollary}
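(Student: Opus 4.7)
The plan is to deduce the bijection directly from the ind-representation of $\Hom(H_1(\frak X), \frak P)^\dagger$ provided by Corollary~\ref{cor:indrep}. Write $F = \Hom(H_1(\frak X), \frak P)^\dagger$. By Yoneda, every subfunctor $F_i \subset F$ representable by a rational polyhedral cone corresponds to a finitely generated sharp extension $\overnorm Q_i$ of $\overnorm M$ inside $\overnorm M^{\rm gp} + H_1(\frak X)$ satisfying the two properties of Lemma~\ref{lem:prorep}, or equivalently to a rational polyhedral subcone $\tau_i \subset \tau$ in the lattice $\Hom(\overnorm M^{\rm gp}, \mathbf Z) \times \Hom(H_1(\frak X), \mathbf Z)$. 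Moreover, inclusions $F_j \subset F_i$ of representable subfunctors correspond dually to inclusions $\tau_j \subset \tau_i$ of rational polyhedral cones.

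First, I would show that a subdivision $\{\tau_i\}$ of $\tau$ in the toric sense --- a locally finite, face-closed cover by rational polyhedral subcones meeting along common faces --- induces a subdivision of $F$ by the corresponding representable subfunctors $F_i$. Covering of $F$ by the $F_i$ follows from the observation that every valuative point $\Spec V \to F$ corresponds to a monotone homomorphism $\overnorm M + H_1(\frak X) \to V^{\rm gp}$ whose associated ray in $\tau$ must lie in some $\tau_i$ by the covering property of the subdivision. The face-intersection condition translates dually, via the equivalence between finitely generated saturated monoids under $\overnorm M$ and rational polyhedral cones over $\sigma$, to the statement that $F_i \mathop\times_F F_j$ is representable by the pushout $\overnorm Q_i + \overnorm Q_j$, which is precisely the defining monoid of the common face $\tau_i \cap \tau_j$.

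Conversely, for any subdivision of $F$ by representable subfunctors I would recover a subdivision of $\tau$: each piece determines a rational polyhedral cone in $\tau$ by the first paragraph, and the covering property together with the compatibility on overlaps forces these cones to form a subdivision of $\tau$ in the toric sense by reversing the arguments above. The naturality of both constructions in refinement yields the desired bijection (in fact, an equivalence of categories of subdivisions).

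The main obstacle is the bookkeeping around the ind-nature of $\tau$, which is generally not itself a rational polyhedral cone. One must verify that the notion of \emph{covering} in the definition of a subdivision of $F$ --- most naturally tested on valuative points via Lemma~\ref{lem:extend} --- is equivalent to set-theoretic covering of $\tau$ by the pieces $\tau_i$, and that the compatibility with further ind-refinements of $\tau$ corresponding to enlargement of $\overnorm M$ is automatic from the pro-system description of Lemma~\ref{lem:prorep}.
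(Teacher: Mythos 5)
Your proposal is correct in substance and rests on the same foundation as the paper's proof, namely the ind-representation of $\Hom(H_1(\frak X), \frak P)^\dagger$ by the cone $\tau$ from Corollary~\ref{cor:indrep}, but the route you take through it is considerably more laborious than the one the paper uses. The paper's proof is a pure unwinding of definitions: a subdivision of a presheaf $F$ is, following the pattern of Definition~\ref{def:log-mod}, a morphism $h_T \to F$ whose base change along \emph{every} map from a representable test object is a subdivision; since every map from a finitely generated cone $\sigma$ to $h_\tau = \Hom(H_1(\frak X), \frak P)^\dagger$ factors through a finitely generated subcone of $\tau$ (quasicompactness of $\sigma$ together with the filtered description in Lemma~\ref{lem:prorep}), the condition that $h_T \mathop\times_{h_\tau} h_\sigma \to h_\sigma$ be a subdivision for all test cones $\sigma$ is \emph{literally} the definition of a subdivision of the ind-cone $\tau$, and nothing about valuative points, face conditions, or pushouts needs to be checked. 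Your version instead takes a subdivision of $F$ to mean a face-compatible cover by representable subfunctors and then verifies covering on valuative points and overlap conditions via pushouts of monoids; this is workable, but the verifications you flag as ``the main obstacle'' (equivalence of valuative-point covering with set-theoretic covering of $\tau$, compatibility with ind-refinement) are precisely what the base-change definition dissolves, and if you insist on your formulation you additionally owe the reader a proof that it agrees with the base-change one. One inaccuracy to correct in either case: a representable subfunctor of $\Hom(H_1(\frak X), \frak P)^\dagger$ corresponds to an arbitrary finitely generated subcone of $\tau$, i.e.\ to a finitely generated extension $\overnorm Q$ merely \emph{containing} some $\overnorm P$ of Lemma~\ref{lem:prorep}; such a $\overnorm Q$ inherits the bounded-monodromy property but need not be sharp (for instance when the subcone is not full-dimensional), so the two conditions of Lemma~\ref{lem:prorep} do not characterize these subfunctors exactly.
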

\begin{proof}
This is entirely a matter of unwinding definitions.  Suppose first that $T$ is a subdivision of $\tau$ by rational polyhedral cones.  Then for every rational polyhedral cone $\sigma$ and morphism $\sigma \to \tau$, the fiber product $T \mathop\times_{\tau} \sigma$ is a subdivision of $\sigma$.  But $\tau$ represents $\Hom(H_1(\tropfont X), \PL)^\dagger$, so that if $h_T$ is the functor represented by $T$ then $h_T \to \Hom(H_1(\tropfont X), \PL)^\dagger$ is representable by subdivisions.

Suppose conversely that $h_T \to \Hom(H_1(\tropfont X), \PL)^\dagger$ is representable by subdivisions, where $T$ is a cone complex.  For any finitely generated subcone $\sigma$ of $\tau$, the fiber product $h_\sigma \mathop\times_{h_\tau} h_T$ is representable by a subdivision of $\sigma$.  It is immediate from this that $T$ is a subdivision of $\tau$.
\end{proof}

\begin{corollary} \label{cor:trojac-subdiv}
Subdivisions of $\TroJac(\tropfont X)$ by cone spaces \cite{cavalieri2017moduli} correspond to $H_1(\tropfont X)$-equivariant subdivisions of the cone $\tau$ of Corollary~\ref{cor:indrep}.
\end{corollary}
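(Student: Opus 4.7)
The plan is to reduce this to Corollary~\ref{cor:hom-subdiv} via the quotient description of the tropical Jacobian. Recall from Definition~\ref{def:trojac} that $\TroJac(\frak X)$ is presented as the quotient of $\Hom(H_1(\frak X), \frak P)^\dagger$ by the action of $H_1(\frak X)$, where $H_1(\frak X)$ acts by translation via the intersection pairing $\partial : H_1(\frak X) \to \Hom(H_1(\frak X), \overnorm M^{\rm gp})$ of Section~\ref{sec:intersection}. The essential point is that the covering map $\Hom(H_1(\frak X), \frak P)^\dagger \to \TroJac(\frak X)$ is a torsor under the discrete group $H_1(\frak X)$, so in any reasonable geometric category subdivisions of the quotient pull back to $H_1(\frak X)$-equivariant subdivisions of the total space, and conversely.

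First I would verify that the intersection pairing descends to an action of $H_1(\frak X)$ on the ind-cone $\tau$ of Corollary~\ref{cor:indrep}. Unwinding the duality, translation by $\gamma \in H_1(\frak X)$ on $\Hom(H_1(\frak X), \frak P)^\dagger$ corresponds, under the ind-representation by $\tau \subset \Hom(\overnorm M^{\rm gp}, \mathbf Z) \times \Hom(H_1(\frak X), \mathbf Z)$, to the linear automorphism $(u,v) \mapsto (u, v + u \circ \partial(\gamma))$. One checks that this preserves $\tau$: the defining conditions $u(\overnorm M) \geq 0$ and ``$u(\ell(\gamma')) = 0 \Rightarrow v(\gamma') = 0$'' are preserved because $\partial(\gamma)$ is a linear combination of edges and $u$ annihilates the lengths of edges of contracted loops.

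Next, I would combine Corollary~\ref{cor:hom-subdiv} with the quotient presentation. Given an $H_1(\frak X)$-equivariant subdivision $T$ of $\tau$, the associated $h_T$ is a subdivision of $\Hom(H_1(\frak X), \frak P)^\dagger$, and $H_1(\frak X)$-equivariance lets us form the quotient $h_T / H_1(\frak X)$ as a cone space (in the sense of \cite{cavalieri2017moduli}); this gives a subdivision of $\TroJac(\frak X)$ since the cover $\Hom(H_1(\frak X), \frak P)^\dagger \to \TroJac(\frak X)$ is a surjective étale $H_1(\frak X)$-torsor and subdivisions are étale-local. Conversely, a subdivision $S \to \TroJac(\frak X)$ by cone spaces pulls back to a subdivision $\widetilde S = S \mathop\times_{\TroJac(\frak X)} \Hom(H_1(\frak X), \frak P)^\dagger$ of the covering functor, which is tautologically $H_1(\frak X)$-equivariant and, by Corollary~\ref{cor:hom-subdiv}, corresponds to an $H_1(\frak X)$-equivariant subdivision of $\tau$. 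The two constructions are inverse to each other because $H_1(\frak X)$ acts freely (the cover is a torsor), so descent is effective for subdivisions.

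The main obstacle I anticipate is checking the freeness/effectiveness of descent carefully: one must ensure that the action of $H_1(\frak X)$ on $\tau$ is free on the interior of $\tau$ in a way that makes the quotient well-defined as a cone space rather than merely as a sheaf. This amounts to showing the intersection pairing $\partial : H_1(\frak X) \to \Hom(H_1(\frak X), \overnorm M^{\rm gp})$ is injective after restricting to the bounded-monodromy locus, which follows from the non-degeneracy of the intersection pairing on $\frak X$ (already implicit in the proof of Proposition~\ref{prop:bounded} and Lemma~\ref{lem:pairing}). Once this is in place, the rest of the argument is a formal manipulation of the equivalences established in Corollaries~\ref{cor:indrep} and~\ref{cor:hom-subdiv}.
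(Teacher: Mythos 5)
Your proposal follows the paper's route exactly: the paper's entire proof is the observation that, by the quotient presentation of Definition~\ref{def:trojac}, subdivisions of $\TroJac(\frak X)$ are the same thing as $H_1(\frak X)$-equivariant subdivisions of $\Hom(H_1(\frak X), \frak P)^\dagger$, which Corollary~\ref{cor:hom-subdiv} then converts into $H_1(\frak X)$-equivariant subdivisions of $\tau$. One caveat: the ``main obstacle'' you anticipate is illusory --- the $H_1(\frak X)$-action on $\tau$ is \emph{not} free (it fixes the apex of $\tau$, and more generally every face on which the relevant edge lengths are annihilated by $u$), but the correspondence does not require freeness, only that equivariant subdivisions of the cover are the same data as subdivisions of the quotient functor.
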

\begin{proof}
This is immediate, since subdivisions of $\TroJac(\tropfont X)$ are the same as $H_1(\tropfont X)$-equivariant subdivisions of $\Hom(H_1(\tropfont X), \PL)^\dagger$.
\end{proof}

\subsection{Boundedness of moduli}
\label{sec:trop-qcpt}

Our definition of boundedness is a natural adaptation to logarithmic schemes of the schematic notion~\cite[D\'efinition~1.1]{FGA4}.

\begin{definition} \label{def:bdd}
A moduli problem $F$ over logarithmic schemes over $S$ is said to be \emph{bounded} if, locally in $S$, there is a logarithmic scheme $T$ of finite type over $S$ and a morphism $T \to F$ that is surjective on valuative geometric points.
\end{definition}

In this section and the next, we will work with a bit more generally than necessary for the application to the tropical Jacobian.  The boundedness results we obtain apply to tropical abelian varieties as well.

\begin{definition} \label{def:pos-def}
Let $\partial : H \to \Hom(H, \overnorm M^{\rm gp})$ be a pairing on a finitely generated free abelian group $H$, valued in a partially ordered abelian group $\overnorm M^{\rm gp}$.  We say that $\partial$ is \emph{positive semidefinite} if, $\partial(e).e \geq 0$ for all $e \in H$ and $\partial(e).f \prec \partial(e).e$ for all $e, f \in H$.  We will call it \emph{positive definite} if $\partial(e).e = 0$ only for $e = 0$.  Accordingly, we refer to the quadratic function $\ell(f) = \partial(f).f$ as a \emph{positive semidefinite quadratic form} or a \emph{positive definite quadratic form}.
\end{definition}

\begin{remark} \label{rem:induced-posdef}
If $\partial$ is a positive semidefinite pairing on $H$, valued in $\overnorm M^{\rm gp}$, and $\psi : \overnorm M \to \overnorm N$ is a monoid homomorphism, then $\psi\partial$ is a positive semidefinite pairing valued in $\overnorm N^{\rm gp}$.  Moreover, if $\partial$ is positive semidefinite, then $\partial$ descends to a positive definite pairing on $H / \{ e \in H \:\big|\: \ell(e) = 0 \}$.  Combining these observations, suppose that $\partial$ is positive definite and let $H_\psi = \{ \gamma \in H \: \big| \: \psi(\ell(\gamma)) = 0 \}$.  Then $\partial$ descends to a positive definite pairing on $H/H_\psi$, valued in $\overnorm N^{\rm gp}$.
\end{remark}

\begin{theorem} \label{thm:posdef-bdd}
Let $\partial : H \to \Hom(H, \ologGm)$ be a positive definite bilinear pairing over a logarithmic scheme $S$ of finite type.  Then $\Hom(H, \ologGm)^\dagger / \partial H$ is bounded.
\end{theorem}
\numberwithin{equation}{theorem}
\begin{proof}
We write $\ell$ for the quadratic form $\ell(f) = \partial(f).f$.  The assertion of the theorem is local to the constructible and \'etale topologies on $S$, so we may assume $S$ has constant characteristic monoid.

\begin{sublemma} \label{lem:finite-A}
Let $\ell : H \to \overnorm M$ be a positive definite quadratic form, where $H$ is a lattice of finite rank and $\overnorm M$ is finitely generated.  There is a finite set $C \subset H$ such that $\mu \in \Hom(H, \ologGm)$ lies in $\Hom(H, \ologGm)^\dagger$ if and only if $\mu(f) \prec \ell(f)$ for all $f \in C$.  The set $C$ may be chosen to include a basis of $H_u$ (notation as in Remark~\ref{rem:induced-posdef}) for every monoid homomorphism $u : \overnorm M \to \overnorm N$.
\end{sublemma}
\begin{proof}
The subgroup $H_u$ depends only on the minimal localization homomorphism through which $u$ factors.  Since $\overnorm M$ is finitely generated, there ar only finitely many distinct localization homomorphisms $\psi : \overnorm M \to \overnorm N$.  For each of these localizations, let $H_\psi = \{ x \in H \: | \: \psi(\ell(x)) = 0 \}$.  Let $C_\psi$ be a finite set of generators of $H_\psi$, as an abelian group, and let $C = \bigcup C_\psi$.  Since every $H_u$ appears in this list, $C_u$ contains a basis of every $H_u$.

We will demonstrate that if $u : \overnorm M \to \overnorm N$ is a monoid homomorphism, and $\mu : H \to \overnorm N^{\rm gp}$ is a group homomorphism such that $\mu(x) \prec u(\ell(x))$ for all $x \in C$ then $\mu(x) \prec u(\ell(x))$ for all $x \in H$.  The condition $\mu(x) \prec u(\ell(x))$ is equivalent to the condition that $\mu(x)$ maps to $0$ when $\overnorm N$ is localized by $u(\ell(x))$.  We have a commutative square:
\begin{equation*} \xymatrix{
\overnorm M \ar[r]^u \ar[d]_\psi & \overnorm N \ar[d]^\varphi \\
\overnorm M[-\ell(x)]^\sharp \ar[r] & \overnorm N[-u(\ell(x))]^\sharp
} \end{equation*}
By definition of $H_\psi$, we have $\psi(\ell(H_\psi)) = 0$.  By assumption, we have $\mu(y) \prec \ell(y)$ for all $y \in C_\psi$, so $\varphi(\mu(H_\psi)) = 0$.  We certainly have $x \in H_\psi$, so we conclude $\varphi(\mu(x)) = 0$, as required.
\end{proof}

Choose $C$ as in Lemma~\ref{lem:finite-A} and enlarge it if necessary so that it generates $H$ as an abelian group.  For each pair of integers $m$ and $n$, let $Z_{m,n}$ be the set of all $\mu : H \to \overnorm M^{\rm gp}$ such that $m \ell(f) \leq \mu(f) \leq n \ell(f)$ for all $f \in C$.

\begin{sublemma} \label{lem:Z-bounded}
For all $m$ and $n$, the functor $Z_{m,n}$ is bounded.
\end{sublemma}
\begin{proof}
Let $\mathcal A = [\mathbf A^1/\mathbf G_m]$, with its toric logarithmic structure.  Note that $\mathcal A$ is the locus of $t \in \ologGm$ such that $t \geq 0$.  For each $f \in C$, we obtain a pair of maps $Z_{m,n} \to \mathcal A$:
\begin{gather*}
\alpha_f(\mu) = \mu(f) - m \ell(f) \\
\beta_f(\mu) = n \ell(f) - \mu(f)
\end{gather*}
Since $C$ generates $H$ as an abelian group, the tuple $(\alpha_f, \beta_f)_{f \in C} : Z_{m,n} \to (\mathcal A \times \mathcal A)^{|C|}$ is a monomorphism.  The image is cut out by the relations $\alpha_f(\mu) + \beta_f(\mu) = (n-m) \ell(f)$ and finitely many other equalities induced from the relations among the $f \in C$ determined by the group structure of $H$.  There are finitely many such relations, and each one imposes an open condition on $(\mathcal A \times \mathcal A)^{|C|}$, so $Z_{m,n}$ is representable by an open substack of $(\mathcal A \times \mathcal A)^{|C|}$, and in particular is bounded.
\end{proof}

\begin{subremark}
The proof of the lemma actually shows that $Z_{m,n}$ is representable by an Artin cone.
\end{subremark}

\numberwithin{equation}{theorem}

With $C$ as above, we choose $b \in \mathbf Z$ such that $-b \ell(f) \leq \partial(e) . f \leq b \ell(f)$ for all $e, f \in C$ (such a $b$ exists by the finiteness of $C$ and the definition of a positive definite pairing).  We then take $Z$ to be the set of $\mu \in \Hom(H, \ologGm)$ such that
\begin{equation} \label{eqn:46}
-r (bg + 1) \ell(f) \leq \mu(f) \leq r(bg + 1) \ell(f)
\end{equation}
is satisfied for all $f \in C$, with $r$ being the rank of the abelian group $\overnorm M^{\rm gp}$.  In other words, $Z = Z_{m,n}$ with $m = n = r(bg+1) \ell(f)$, so $Z$ is bounded by Lemma~\ref{lem:Z-bounded}.

To complete the proof of Theorem~\ref{thm:posdef-bdd}, we need to show that the valuative geometric points of some $Z$ surject onto those of $\Hom(H,\ologGm)^\dagger / \partial H$ under the projection $Z \subset \Hom(H, \ologGm)^\dagger \to \Hom(H,\ologGm)^\dagger / \partial H$.  We shall therefore assume that $\overnorm M_S$ is valuative.  Note that passing to a valuation of $\overnorm M_S$ does not change $\overnorm M_S^{\rm gp}$, so $\overnorm M_S^{\rm gp}$ is still finitely generated.  Let
\begin{equation*}
0 = \overnorm N_0 \subset \overnorm N_1 \subset \cdots \subset \overnorm N_k = \overnorm M_S
\end{equation*}
be the filtration guaranteed by Proposition~\ref{prop:arch-filt}.  It is finite because Lemma~\ref{lem:val-gp} implies that each $\overnorm N_i$ is determined by its associated subgroup of $\overnorm M_S^{\rm gp}$, and $\overnorm M_S^{\rm gp}$ is a finitely generated abelian group, hence noetherian.  In fact, the length,~$k$, of this filtration is bounded by the rank,~$r$, of $\overnorm M_S^{\rm gp}$.  For each $i$, let $H_i$ be the subgroup of $\gamma \in H$ such that $\ell(\gamma) \in \overnorm N_i$.

We now proceed by induction on the length of this filtration.  We argue that \emph{if $\mu$ is an element of $\Hom(H, \overnorm N_i^{\rm gp})$ with bounded monodromy then there is some $\gamma \in H$ and some $\zeta$ such that $-(g+1) \ell(f) \leq \zeta(f) \leq (g+1)\ell(f)$ for all $f \in C$, and $\mu - \zeta - \partial(\gamma)$ takes values in $\overnorm N_{i-1}$}.

By composition with the homomorphism $q : \overnorm N_i \to \overnorm N_i / \overnorm N_{i-1}$, we obtain a map
\begin{equation} \label{eqn:45}
\Hom(H, \overnorm N_i^{\rm gp})^\dagger \to \Hom(H, \overnorm N_i^{\rm gp} / \overnorm N_{i-1}^{\rm gp})^\dagger .
\end{equation}
The important point here is that if $\mu \in \Hom(H, \overnorm N_i^{\rm gp})$ has bounded monodromy then $q  \mu$ also has bounded monodromy, in the sense that $q \mu(\alpha) \prec q \ell(\alpha)$ where $\ell(\alpha) \in \overnorm M_S$ denotes the length of $\alpha$.  Let us write $\overnorm\mu$ for the image of $\mu$ in $\overnorm N_i^{\rm gp} / \overnorm N_{i-1}^{\rm gp}$, as well as $\overnorm\ell$ for the length function taking values in $\overnorm N_i^{\rm gp} / \overnorm N_{i-1}^{\rm gp}$, and $\overnorm\partial$ for the reduction of the pairing $\partial$ modulo $\overnorm N_{i-1}^{\rm gp}$.  Finally, let $\overnorm H = H_i / H_{i-1}$ and note that $\overnorm\partial$ and $\overnorm\ell$ are well-defined and positive definite on $\overnorm H$.

By Proposition~\ref{prop:arch-filt}, the totally ordered abelian group $\overnorm N_i^{\rm gp} / \overnorm N_{i-1}^{\rm gp}$ is archimedean, hence admits an order preserving inclusion in $\mathbf R$ by Theorem~\ref{thm:archimedean}.  Since $\overnorm\mu \in \Hom(\overnorm H, \mathbf R)$, and $\overnorm\partial(\overnorm H) \subset \Hom(H, \mathbf R)$ is a lattice (because $\partial$ is positive definite), and $C$ contains a set of generators of $\overnorm H$, it is possible to write $\overnorm\mu = \alpha + \partial(\gamma)$ for some $\gamma \in H$ and some $\alpha = \sum a_i \partial(e_i)$ with $0 \leq a_i \leq 1$ for all $i$, and with the $e_i \in C$.  Now, evaluating $\alpha$ on $f \in C$, we get
\begin{equation}
\alpha ( f ) = \sum_{i=1}^g a_i \partial(e_i) . f \label{eqn:91} 
\end{equation}
But we have $- b \ell(f) \leq \partial(e_i) . f \leq b \ell(f)$ for all $f \in C$, so we obtain
\begin{equation} \label{eqn:20}
- b g \overnorm\ell(f) \leq \alpha ( f ) \leq b g \overnorm\ell(f) \\
\end{equation}

Note now that $\alpha = \overnorm \mu - \partial(\gamma)$, which is in $\Hom(\overnorm H, \mathbf R)$ by construction, is actually in the image of $\Hom(\overnorm H, \overnorm N_i^{\rm gp} / \overnorm N_{i-1}^{\rm gp})$.  Using the fact that $\overnorm H$ is free, we can lift and extend $\alpha$ to some $\zeta \in \Hom(H, \overnorm N^{\rm gp}) = \Hom(H, \overnorm M_S^{\rm gp})$ such that $\zeta(H) \subset \overnorm N_i$ and $\zeta(H_{i-1}) = 0$.  This ensures that $\zeta$ lies in the bounded monodromy subgroup $\Hom(H, \overnorm N_i^{\rm gp})^\dagger \subset \Hom(H, \overnorm N_i^{\rm gp})$: 

\begin{sublemma}
The lift $\zeta$ chosen above lies in $\Hom(H, \overnorm N_i^{\rm gp})^\dagger$.
\end{sublemma}
\begin{proof}
If $\ell(f) \in \overnorm N_{i-1}$ we have $\zeta(f) = 0$ so certainly $\zeta(f)$ is bounded by \emph{all} of $\overnorm N$ and in particular by $\ell(f)$.  If $\ell(f) \not\in \overnorm N_{i-1}$ then Proposition~\ref{prop:arch-filt} implies that \emph{all} of $\overnorm N_i^{\rm gp}$, and in particular $\zeta(f)$, is bounded by $\ell(f)$.
\end{proof}

Suppose that $f \in C$.  The inequality~\eqref{eqn:20} lifts to
\begin{equation} \label{eqn:21}
- b g \ell(f) - u \leq \zeta(f) \leq b g \ell(f) + v
\end{equation}
for some $u, v \in \overnorm N_{i-1}$.  If $\zeta(f) \neq 0$ then $\overnorm\ell(f)$ is a positive element of $\overnorm N_i^{\rm gp} / \overnorm N_{i-1}^{\rm gp}$.  Both $u$ and $v$ lie in $\overnorm N_{i-1}$, so $u$ and $v$ are both dominated by $\ell(f)$ by Proposition~\ref{prop:arch-filt}.  In particular, $u \leq \ell(f)$ and $v \leq \ell(f)$.  Substituting this into~\eqref{eqn:21}, we obtain~\eqref{eqn:52},
\begin{equation} \label{eqn:52}
- (bg+1) \ell(f) \leq \zeta(f) \leq (bg+1) \ell(f)
\end{equation}
as desired.

We have now shown that $\mu - \partial(\gamma) - \zeta$ takes values in $\overnorm N_{i-1}^{\rm gp}$.  Repeating this process once for each of the~$k$ steps of the filtration~\ref{eqn:45}, we obtain $\mu - \sum \partial(\gamma_i) - \sum_{i = 1}^k \zeta_i = 0$.  Thus $\zeta = \sum \zeta_i$ represents $\mu$ in $\TroJac(\tropfont X/S)$ and, as each $\zeta_i$ satisfies~\eqref{eqn:52} and $k \leq r$, their sum satisfies~\eqref{eqn:46}, so $\zeta \in Z(S)$.
\end{proof}

\begin{corollary} \label{cor:trop-bdd}
If $\tropfont X$ is a compact tropical curve over $S$ then $\TroJac(\tropfont X/S)$ is bounded over~$S$.
\end{corollary}
\begin{proof}
  Let $\tropfont X$ be the tropicalization of $X$.  The assertion is local to the constructible topology and to the \'etale topology on $S$, so we can assume that the logarithmic structure on $S$ has constant characteristic monoid and that the dual graph of $X$ is also constant.  After these restrictions, we have the exact sequence~\eqref{eqn:19} by definition of the tropical Jacobian:
\begin{equation} \label{eqn:19}
0 \to H_1(\tropfont X) \xrightarrow{\partial} \Hom(H_1(\tropfont X), \ologGm)^\dagger \to \TroJac(\tropfont X/S) \to 0
\end{equation} 
Since $\partial$ requires only a finite number of elements of $\overnorm M_S$ to describe, we may assume that $\overnorm M_S^{\rm gp}$ is a finitely generated abelian group.  Since the intersection pairing of a tropical curve is positive definite, we may now apply Theorem~\ref{thm:posdef-bdd} to conclude.
\end{proof}

\begin{corollary} \label{cor:trojac-qcpt}
Let $\tropfont X$ be a compact tropical curve over $S$.  Then $\TroPic^d(\tropfont X/S)$ is quasicompact over~$S$ for all $d \in H_0(\tropfont X)$.
\end{corollary}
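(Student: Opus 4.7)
The plan is to identify $\TroPic^d(\frak X/S)$ as a torsor (locally on $S$) under the tropical Jacobian and then deduce quasicompactness from the boundedness of the Jacobian proved in Theorem~\ref{thm:trop-bdd}. The starting point is the observation that, when restricted to bounded monodromy classes, the degree sequence from Corollary~\ref{cor:tropic} refines to an exact sequence of sheaves
\begin{equation*}
0 \to \TroJac(\frak X/S) \to \TroPic(\frak X/S) \xrightarrow{\deg} H_0(\frak X/S) \to 0.
\end{equation*}
Surjectivity of $\deg$ on the $\dagger$-part follows because any $D \in \mathscr V(\frak X)$ projecting to $d$ yields, through the connecting map $H^0(\frak X, \mathscr V) \to H^1(\frak X, \frak L)$ appearing in diagram~\eqref{eqn:50}, a class whose image in $H^1(\frak X, \frak P)$ vanishes and which therefore has bounded monodromy tautologically.

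Quasicompactness over $S$ is local in $S$, so after passing to a stratum we may assume that $\overnorm M_S$ has constant characteristic, that the dual graph of $\frak X$ is therefore constant, and that $H_0(\frak X/S)$ is the constant sheaf $\mathbf Z^{\pi_0(\frak X)}$. Choosing one vertex $v_i$ in each connected component gives a global section of $\mathscr V(\frak X/S)$, and the divisor $\sum_i d_i v_i$ then furnishes, via the previous paragraph, a section of $\TroPic^d(\frak X/S) \to S$. This section trivializes the torsor and yields an isomorphism $\TroPic^d(\frak X/S) \cong \TroJac(\frak X/S)$ of sheaves over $S$.

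It then remains to invoke Theorem~\ref{thm:trop-bdd}, which provides, locally on $S$, a logarithmic scheme $Z$ of finite type together with a morphism $Z \to \TroJac(\frak X/S)$ surjective on valuative geometric points, hence universally surjective. Transporting through the trivialization, $\TroPic^d(\frak X/S)$ inherits a surjection from a finite-type logarithmic scheme and is therefore quasicompact over $S$. No serious obstacle arises; the only step requiring care is producing the local section that trivializes the torsor, which reduces to the combinatorics of vertices as soon as one is working within a single stratum of $S$.
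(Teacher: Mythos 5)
Your proof is correct and follows essentially the same route as the paper: reduce to $d=0$ by exhibiting $\TroPic^d(\frak X/S)$ as a (locally trivial) torsor under $\TroPic^0(\frak X/S) = \TroJac(\frak X/S)$, then invoke Theorem~\ref{thm:trop-bdd}. You additionally spell out the local section coming from a vertex divisor $\sum d_i v_i$, a detail the paper leaves implicit when it asserts the torsor structure via Corollary~\ref{cor:tropic}.
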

\begin{proof}
As $\TroPic^d(\tropfont X/S)$ is a torsor under $\TroPic^0(\tropfont X/S)$ by Corollary~\ref{cor:tropic}, it is sufficient to assume $d = 0$.  But $\TroPic^0(\tropfont X/S) = \TroJac(\tropfont X/S)$ by Corollary~\ref{cor:trojac-qcpt}, so the conclusion follows from Corollary~\ref{cor:trop-bdd}.
\end{proof}

\subsection{Boundedness of the diagonal}
\label{sec:trojac-bdd-diag}

The main point of this section is to demonstrate that the lattice defined by a positive definite matrix of real numbers is discrete and that this is also valid as the lattice varies in a tropical family.  We make use of the tropical topology defined in Section~\ref{sec:conetop}.

These results are also demonstrated by a different method as part of the proof of \cite[Proposition~4.5]{kajiwara_kato_nakayama_2008b}.  The proof appears in \cite[Lemma~5.2.7]{kajiwara_kato_nakayama_2008a} and \cite[Section~9.4]{kajiwara_kato_nakayama_2008b}.  Unlike the present proof, that proof does not rely on the tropical topology, but it ultimately comes down to a compactness argument, as this one does.

\begin{theorem} \label{thm:qcpt-diag}
Let $H$ be a finitely generated free abelian group, let $\overnorm M^{\rm gp}$ be a finitely generated, partially ordered abelian group, and let $\partial : H \to \Hom(H, \overnorm M^{\rm gp})$ be a positive definite pairing.  For each $\phi : H \to \overnorm M^{\rm gp}$ that is bounded by $\partial$, there are at most finitely many $\gamma \in H$ for which there exists a sharp homomorphism $\overnorm M \to \overnorm N$ with $\partial_{\overnorm N}(\gamma) = \phi_{\overnorm N}$.
\end{theorem}
\begin{proof}
As usual, we write $\ell : H \to \overnorm M^{\rm gp}$ for the quadratic form associated with $\partial$.  Since every monoid has a sharp homomorphism to a valuative monoid, we can assume that $\overnorm N$ is valuative.

Since $\ShpVal(\overnorm M)$ is quasicompact, it is sufficient to show that every $V \in \ShpVal(\overnorm M)$ has an open neighborhood $U$ such that there are only finitely elements in $H$ that represent $\phi_W$ for any valuation $\overnorm M \to W$ lying in $U$.  We fix one $V \in \ShpVal(\overnorm M)$.  Beginning with $U = \ShpVal(\overnorm M)$ we will replace $U$ by a smaller open neighborhood of $V$ finitely many times until we arrive at a neighborhood where we can be sure $\phi$ has only finitely many representatives.

Since the underlying abelian group of $V$ is finitely generated, $V$ has a finite filtration by totally ordered subgroups $V_p$ such that $V_p / V_{p-1}$ are all archimedean; we choose embeddings $V_p / V_{p-1} \subset \mathbf R$.  These, together with $\partial$, induce a filtration of $H$ and positive definite pairings $\partial_p$ on $H_p / H_{p-1}$, valued in $V_p / V_{p-1}$.

The proof will be by induction on the index $p$ of the subgroup $V_p$ in which $\partial_V$ and $\phi_V$ take their values.  Assume that $\phi_V$ and $\partial_V$ both take values in $V_p \subset V$.  Since $\phi_V$ is bounded by $\partial$, it descends to a map $\phi_p : H / H_{p-1} \to V_p / V_{p-1} \subset \mathbf R$.  

\begin{lemma}
There is an open neighborhood of $V$ over which the representatives of $\phi$ in $H$ lie in at most finitely many distinct cosets of $H_{p-1}$.
\end{lemma}
\begin{proof} 
Choose a subdivision of $\mathbf R H_p / \mathbf R H_{p-1}$ into a finite number of rational polyhedral cones $\sigma$ such that $\partial_p(\bar\beta).\bar\gamma > 0$ whenever $\bar\beta$ and $\bar\gamma$ are elements of $H_p / H_{p-1}$ lying in the same cone $\sigma$.  This means that if $\beta$ and $\gamma$ are lifts of $\bar\beta$ and $\bar\gamma$ to $H_p$ then $\partial_V(\beta).\gamma \succ V_{p-1}$.  In particular, $\partial_V(\beta).\gamma > 0$ for all $\beta, \gamma \in H_p - H_{p-1}$ and, for each $\gamma \in H_p - H_{p-1}$, there is a positive $n$ (depending on $\gamma$) such that $n \ell_V(\gamma) > \phi_V(\gamma)$.

Choose a finite set of generators $B_\sigma$ in $H_p$ for each $\sigma$.  There is an open neighborhood $U_\sigma \subset U$ of $V$ such that $\partial_{U_\sigma}(\beta).\gamma > 0$ and $n \ell_{U_\sigma}(\gamma) > \phi_{U_\sigma}(\gamma)$ for all $\beta, \gamma \in B_\sigma$.  Since there are only finitely many cones $\sigma$ and finitely many generators in each $B_\sigma$, the positive integer $n$ can be chosen independent of $\sigma$, $\beta$, and $\gamma$.  Replacing $U$ by the intersection of the (finitely many) $U_\sigma$, we can assume that these inequalities hold on $U$.

Now suppose $\gamma \in H$ is a putative representative of $\gamma$.  The reduction of $\gamma$ modulo $H_{p-1}$ lies in some cone $\sigma$.  We can therefore write $\gamma \equiv \sum_{\beta \in B_\sigma} a_\beta \beta \pmod{H_{p-1}}$ with all $a_\beta \geq 0$.  Evaluating on $\beta \in B_\sigma$, we have the following inequality over $U$:
\begin{equation*}
\partial_U(\gamma).\beta = \sum_{\beta' \in B_\sigma} a_{\beta'} \partial_U(\beta).\beta' > a_\beta \ell_U(\beta)
\end{equation*}
Since the $a_\beta$ are all positive integers, and $n \ell_U(\beta) > \phi_U(\beta)$, we will have $\partial_U(\gamma).\beta > \phi_U(\beta)$ if $a_\beta > n$.  In particular, we deduce that, for each $\sigma$, there are at most finitely many possibilities for the $a_\beta$ if $\partial(\gamma)$ is to have a chance of representing $\phi$ anywhere in $U$.  Since there are only finitely many cones $\sigma$, we conclude.
\end{proof}

Suppose $\gamma$ represents one of the cosets invoked in the lemma.  Then the representatives of $\phi$ in $\gamma + H_{p-1}$ correspond bijectively to the representatives of $\phi - \partial(\gamma)$ in $H_{p-1}$.  Replacing $\phi$ successively by $\phi - \partial(\gamma)$ for representatives $\gamma$ of each of the finitely many cosets guaranteed by the lemma, it therefore suffices to show that $\phi$ has at most finitely many representatives in $H_{p-1}$ in a neighborhood of $V$.  

We can now replace $H$ with $H_{p-1}$ and $\phi$ and $\partial$ with their restrictions to $H_{p-1}$.  If there are finitely many potential representatives of $\phi \big|_{H_{p-1}}$ in $H_{p-1}$ in a neighborhood of $V$, then of course there will be finitely many potential representatives of $\phi$ in $H_{p-1}$ as well.  With this reduction, both $\phi_V$ and $\partial_V$ take values in $V_{p-1}$ and we can induct.
\end{proof}

\begin{corollary} \label{cor:pairing-bdd}
Let $H$ be a finitely generated abelian group, let $\overnorm M$ be a sharp monoid, and let $\partial : H \to \Hom(H, \overnorm M^{\rm gp})$ be a positive definite pairing.  Then $H \to \Hom(H, \ologGm)$ is of finite type and affine.
\end{corollary}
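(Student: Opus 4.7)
The plan is to verify that $\partial : A \to \Hom(A, \ologGm)$ is of finite type by examining, for an arbitrary logarithmic scheme $S$ equipped with a morphism $S \to \Hom(A, \ologGm)$ corresponding to $\phi : A \to \Gamma(S, \overnorm M_S^{\rm gp})$, the base change of $\partial$ to $S$. Since $A$ is a constant discrete sheaf, this base change decomposes as $\coprod_{\gamma \in A} S_\gamma$, where $S_\gamma$ is the subfunctor of $S$ cut out by the finitely many equations $\phi(a_i) = \partial(\gamma)(a_i)$ as $\{a_i\}$ runs over a finite set of generators of $A$. Each $S_\gamma$ is representable by a closed finite-type logarithmic subscheme of $S$ by the same type of construction used in Proposition~\ref{prop:ologGm-zero}. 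What remains is to prove that $S_\gamma$ is empty for all but finitely many $\gamma \in A$.

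This is an \'etale-local question on $S$, so I may assume $S$ admits a chart by a finitely generated sharp monoid $\overnorm N$ and that $\phi$ factors through a homomorphism $A \to \overnorm N^{\rm gp}$; a further reduction, dividing out the torsion of $A$ (which lies in the kernel of $\partial$ since $\overnorm M^{\rm gp}$ is torsion-free), allows me to assume $A$ is free. By the universal surjectivity criterion of Section~\ref{sec:univ-surj} together with Lemma~\ref{lem:extend}, $S_\gamma$ is non-empty if and only if there is some sharp valuation $V$ of $\overnorm N$ for which $\phi_V = \partial_V(\gamma)$ in $V^{\rm gp}$. It therefore suffices to bound the number of $\gamma \in A$ that admit some $V \in \ShpVal(\overnorm N)$ satisfying $\phi_V = \partial_V(\gamma)$.

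To achieve this bound, I would apply the dichotomy furnished by the preceding two corollaries at each $V \in \ShpVal(\overnorm N)$. Either $\phi_V$ is not in the image of $\partial_V$, in which case Corollary~\ref{cor:empty-nbhd} produces an open neighborhood $U_V$ of $V$ on which no $\gamma$ solves $\phi_W = \partial_W(\gamma)$; or $\phi_V = \partial_V(\gamma_V)$ for some $\gamma_V \in A$, and applying Corollary~\ref{cor:sing-nbhd} to the translated homomorphism $\phi - \partial(\gamma_V)$, which vanishes at $V$, yields a neighborhood $U_V$ on which the only $\gamma$ solving $\phi_W = \partial_W(\gamma)$ is $\gamma = \gamma_V$ (in particular any other $\gamma'_V$ solving at $V$ must coincide with $\gamma_V$). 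By the quasicompactness of $\ShpVal(\overnorm N)$ established in Section~\ref{sec:conetop}, a finite subcollection $U_{V_1}, \ldots, U_{V_k}$ covers the whole space, so only the finitely many $\gamma_{V_i}$ (for those $V_i$ falling in the second case) can yield a non-empty $S_\gamma$.

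The main technical obstacle is the interface between the scheme-theoretic finiteness statement and the topological finiteness on $\ShpVal(\overnorm N)$: one must carefully verify that a finite cover of $\ShpVal(\overnorm N)$ by open subsets each admitting at most one solution $\gamma$ translates into a global bound on the set of $\gamma$ with $S_\gamma \neq \varnothing$. This hinges on the valuative-geometric-point criterion for universal surjectivity together with the existence of a sharp valuation extending each stalk, after which everything else is bookkeeping, relying on the two preceding corollaries for the local dichotomy and on the quasicompactness of $\ShpVal(\overnorm N)$ to harvest finiteness from the local picture.
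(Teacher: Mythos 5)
Your proposal is correct and follows essentially the same route as the paper: after localizing to constant characteristic monoid, both arguments rest on the dichotomy furnished by Corollaries~\ref{cor:empty-nbhd} and~\ref{cor:sing-nbhd}, the quasicompactness of $\ShpVal(\overnorm M)$, and Proposition~\ref{prop:ologGm-zero} for the finite-type representability of each condition $\phi = \partial(\gamma)$. The only difference is bookkeeping — you decompose the fiber product as $\coprod_\gamma S_\gamma$ and bound the number of non-empty pieces via valuative points, while the paper pulls the finite open cover of $\ShpVal(\overnorm M)$ back to a finite cover of $S$ by logarithmic schemes $T_i$ and treats each piece separately — which amounts to the same argument.
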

\begin{proof}
The assertion is, in other words, that for any logarithmic scheme of finite type and any morphism $\phi : S \to \Hom(H, \ologGm)$, the fiber product $H \mathop\times_{\Hom(H, \ologGm)} S$ is quasicompact.  This assertion is local in the constructible and \'etale topologies on $S$, so we can assume $S$ is connected and has a constant logarithmic structure, the stalk of whose characteristic monoid is $\overnorm M$.  We therefore regard $\phi$ as a homomorphism $H \to \overnorm M^{\rm gp}$.

By Theorem~\ref{thm:qcpt-diag}, there are only finitely many $\gamma \in H$ such that there is a nonempty subfunctor of that represented by $S$ in which $\partial(\gamma)$ represents $\phi$.  These are mutually disjoint, and treating these one at a time, it is sufficient to show that the subfunctor of $S$ in which a fixed $\partial(\gamma)$ represents $\phi$ is of finite type and affine.

This functor is described by the finitely many equations $\partial(\gamma).e_i = \phi(e_i)$, for $e_i$ lying in a basis of $H$.  By Proposition~\ref{prop:ologGm-zero}, this locus is representable by an affine scheme of finite type.
\end{proof}

\begin{corollary} \label{cor:trop-diag-qcpt}
Let $\tropfont X$ be a tropical curve metrized by $\overnorm M$.  Then the intersection pairing
\begin{equation} \label{eqn:68}
\partial : H_1(\tropfont X) \to \Hom(H_1(\tropfont X), \ologGm) 
\end{equation}
is bounded.
\end{corollary}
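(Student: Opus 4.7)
The plan is to invoke Corollary~\ref{cor:pairing-bdd} with $A = H_1(\frak X)$. Two hypotheses require verification: that $A$ is finitely generated abelian, and that the intersection pairing $\partial$ is positive definite in the sense of Definition~\ref{def:pos-def}. The first is immediate since $\frak X$ has finitely many edges, so $H_1(\frak X)$ is a subgroup of the finitely generated free abelian group on oriented edges.

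For positive definiteness, I would choose an orientation of each edge and write elements $\gamma, \gamma' \in H_1(\frak X)$ as $\gamma = \sum_e n_e \, e$ and $\gamma' = \sum_e n'_e \, e$, with $n_e, n'_e \in \mathbf Z$. Definition~\ref{def:intersection} then gives
\begin{equation*}
\partial(\gamma).\gamma = \sum_e n_e^2 \, \ell(e), \qquad \partial(\gamma).\gamma' = \sum_e n_e n'_e \, \ell(e),
\end{equation*}
where only edges with $n_e \neq 0$ contribute to the second sum. Since every nonzero integer satisfies $|n_e| \leq n_e^2$, setting $N = \max_{e : n_e \neq 0} |n'_e|$ gives the termwise bound $|n_e n'_e| \leq N n_e^2$, and hence $-N \partial(\gamma).\gamma \leq \partial(\gamma).\gamma' \leq N \partial(\gamma).\gamma$ in $\overnorm M^{\rm gp}$. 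This is precisely the relation $\partial(\gamma).\gamma' \prec \partial(\gamma).\gamma$ needed in Definition~\ref{def:pos-def}.

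With both hypotheses verified, Corollary~\ref{cor:pairing-bdd} applies directly and yields that $\partial : H_1(\frak X) \to \Hom(H_1(\frak X), \ologGm)$ is of finite type, which in particular gives the desired boundedness. All of the genuine work---the combinatorial analysis in Lemma~\ref{lem:val-nbhd}, the open neighborhood constructions in Corollaries~\ref{cor:sing-nbhd} and~\ref{cor:empty-nbhd}, and the quasicompactness of $\ShpVal(\overnorm M)$ used therein---has already been carried out in the preceding subsection. Consequently I do not anticipate any obstacle; the proof reduces to the elementary check of positive definiteness sketched above.
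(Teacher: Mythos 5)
Your proof is correct and takes exactly the same route as the paper, which simply says ``the intersection pairing is positive definite'' and invokes Corollary~\ref{cor:pairing-bdd}; you have merely written out the elementary verification of positive definiteness that the paper leaves implicit, and that verification (termwise comparison of $n_e n'_e\,\ell(e)$ with $N n_e^2\,\ell(e)$) is sound.
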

\begin{proof}
The intersection pairing is positive definite.
\end{proof}

\numberwithin{theorem}{subsection}
\section{The logarithmic Picard group}
\label{sec:logpic}

Suppose that $X$ is a proper, vertical logarithmic curve over $S$ where the underlying scheme of $S$ is the spectrum of an algebraically closed field, and let $\tropfont X$ be the tropicalization of $X$.  Then $H^1(X, \ologGm) = H^1(X, \overnorm M_X^{\rm gp}) = H^1(\tropfont X, \PL)$ because $\overnorm M_X^{\rm gp}$ is a sheaf of torsion-free abelian groups.  If $\overnorm Q$ is an $\overnorm M_X^{\rm gp}$-torsor on $X$ then we say $\overnorm Q$ has \emph{bounded monodromy} if the corresponding $\PL$-torsor on $\tropfont X$ does.  If $Q$ is an $\logGm$-torsor on $X$ then we say $Q$ has bounded monodromy if its induced $\overnorm M_X^{\rm gp}$-torsor has bounded monodromy.

\numberwithin{theorem}{section}
\begin{definition} \label{def:logpic}
Let $X$ be a proper, vertical logarithmic curve over $S$.  A \emph{logarithmic line bundle} on $X$ is a $\logGm$-torsor on $X$ in the strict \'etale topology whose fibers over $S$ have bounded monodromy.  Let $\bLogPic(X/S)$ be the category fibered in groupoids on logarithmic schemes over $S$ whose $T$-points are the logarithmic line bundles on $X_T$.  We write $\LogPic(X/S)$ for its associated sheaf of isomorphism classes.
\end{definition}

\setcounter{subsection}{\value{theorem}}
\numberwithin{theorem}{subsection}

\subsection{Local finite presentation}
\label{sec:lfp}

\begin{definition} \label{def:lfp}
A category fibered in groupoids $F$ on the category of logarithmic schemes over $S$ is said to be \emph{locally of finite presentation} if, for any cofiltered system of affine logarithmic $S$-schemes $S_i$, the map
\begin{equation*}
\varinjlim F(S_i) \to F(\varprojlim S_i)
\end{equation*}
is an equvialence of categories.
\end{definition}

The definition of local finite presentation should be compared with Lemma \ref{lem:lfp-morphism}. Local finite presentation is important because it allows us to limit our attention to logarithmic schemes of finite type.

\begin{proposition} \label{prop:lfp}
Suppose $X$ is a proper, vertical logarithmic curve over $S$.  Then $\bLogPic(X/S)$ is locally of finite presentation over $S$.
\end{proposition}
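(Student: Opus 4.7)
The strategy is to reduce the statement to the local-finite-presentation properties of two simpler objects: classical $\mathcal O_X^\ast$-torsors (the usual Picard stack) and $\overnorm M_X^{\rm gp}$-torsors with bounded monodromy (the tropical Picard stack), organized by the short exact sequence
\begin{equation*}
0 \to \mathcal O_X^\ast \to M_X^{\rm gp} \to \overnorm M_X^{\rm gp} \to 0.
\end{equation*}
Fix a cofiltered system $\{S_i\}_{i \in I}$ of affine logarithmic schemes over $S$ with limit $T$, and write $X_i = X \mathop\times_S S_i$, $X_T = X \mathop\times_S T$. Because $\pi : X \to S$ is of finite presentation (Theorem~\ref{thm:FKato} gives a local-finite-presentation chart on both sides), $X_T$ is the inverse limit of the $X_i$ as logarithmic schemes, and EGA IV §8-style descent applies to finitely presented étale covers, sections of $\mathcal O_X^\ast$, and (by the local chart of F.~Kato) sections of $\overnorm M_X^{\rm gp}$.

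For essential surjectivity, I would begin with a logarithmic line bundle $L$ on $X_T$. Since $X_T \to T$ is proper and $T$ is affine (hence quasicompact), a finite étale cover $U_T \to X_T$ trivializes $L$, and the transition cocycle lives in $M_{U_T \times_{X_T} U_T}^{\rm gp}$. The étale cover descends to some $U_i \to X_i$. The transition sections consist of finitely many elements of $\mathcal O^\ast$ together with finitely many elements of $\overnorm M^{\rm gp}$; since both sheaves (after restriction to a quasicompact étale open) commute with filtered limits of logarithmic schemes, these descend to $X_i$ for $i$ sufficiently large, and the finitely many cocycle identities hold after possibly increasing $i$ again. This produces a $\logGm$-torsor $L_i$ on $X_i$ with $L_i|_{X_T} \simeq L$.

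To conclude, I must verify that bounded monodromy descends. At each geometric point $t$ of $T$, bounded monodromy of $L_t$ means that for each loop $\gamma$ of the tropicalization $\frak X_t$, the monodromy $\mu(\gamma) \in \overnorm M_{T,t}^{\rm gp}$ satisfies $\mu(\gamma) \prec \ell(\gamma)$ in the sense of Section~\ref{sec:bounded} (Proposition~\ref{prop:bounded}~\ref{it:monodromy}). The tropicalization $\frak X_t$ is induced from $\frak X_{s_i}$ for $s_i$ the image of $t$ in $S_i$, and $\overnorm M_{T,t}^{\rm gp} = \varinjlim \overnorm M_{S_i, s_i}^{\rm gp}$. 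Each inequality $m \ell(\gamma) \leq \mu(\gamma) \leq n \ell(\gamma)$ is witnessed by the existence of finitely many elements of $\overnorm M^{\rm gp}$, hence descends to some $S_i$, and only finitely many loops $\gamma$ need to be checked (the genus of $\frak X$ is bounded fiberwise). A constructibility argument in $S_i$ then produces a single $i$ for which $L_i$ has bounded monodromy everywhere on $S_i$. Full faithfulness follows by an identical descent applied to isomorphisms, which are finite collections of sections of $\logGm$ subject to finitely many equations.

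The main obstacle I anticipate is bookkeeping around the bounded monodromy condition: while the inequality $\alpha \prec \delta$ descends once one witness is found, one must verify that the witnesses can be chosen uniformly across the (finitely many) strata of the base and across the (finitely many, up to isomorphism) combinatorial types of loops in the fibers. This is handled by shrinking $S_i$ to a quasicompact constructible piece and using Lemma~\ref{lem:bounded}, which characterizes boundedness by invertibility in a localization, a property that descends tautologically under filtered colimits of saturated monoids.
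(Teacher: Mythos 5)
Your proposal is correct and follows essentially the same route as the paper: an EGA~IV.8-style limit argument descends the trivializing \'etale cover, the transition sections in $M^{\rm gp}$, and the cocycle identities to a finite stage, and then the bounded monodromy condition is checked separately by stratifying the base so that $\overnorm M_S$ and the dual graph are constant and verifying the finitely many inequalities $\mu(\gamma) \prec \ell(\gamma)$ on generators of $H_1(\frak X)$, each of which is witnessed at a finite stage of the colimit $\varinjlim \Gamma(S_i, \overnorm M_{S_i}^{\rm gp})$. The only cosmetic difference is that the paper works directly with $M_X^{\rm gp}$-torsors and lifts the class to $\Hom(H_1(\frak X), \overnorm M_S^{\rm gp})$ via the vanishing of $R^1\pi_\ast \overnorm M_{X/S}^{\rm gp}$, rather than splitting the data into an $\mathcal O_X^\ast$-part and an $\overnorm M_X^{\rm gp}$-part.
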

\begin{proof} 

We prove the essential surjectivity part of Definition~\ref{def:lfp} for the functor $\pi_\ast \mathrm B\logGm$.  The full faithfulness is similar but easier, and we omit it.  Then we prove that the bounded monodromy condition defining $\bLogPic(X/S)$ inside $\pi_\ast \mathrm B\logGm$ is locally of finite presentation.

The assertion is local in $S$, so we assume $S$ is quasicompact and quasiseparated.  Consider a cofiltered inverse system of affine logarithmic schemes $S_i$ over $S$.  Let $X_i$ be the base change of $X$ to $S_i$.  Let $L$ be a logarithmic line bundle over $Y = \varprojlim X_i$.  Then there is an \'etale cover $U_j$ of $Y$ over which $L$ can be trivialized.  We can assume that the $U_j$ are all quasicompact and quasiseparated.  We note that $Y$ is quasicompact and quasiseparted because all the $X_i$ were.  In particular, we can arrange for the $U_j$ to be finite in number.  By \cite[Th\'eor\`eme~IV.8.8.2]{EGA}, they are induced by maps $U_{ij} \to X_i$ for some index $i$.  These maps can be assumed \'etale by \cite[Proposition~IV.17.7.8]{EGA} and surjective by \cite[Th\'eor\`eme~IV.8.10.5]{EGA}.

The transition functions defining $L$ come from $\Gamma(U_{jk}, M_Y^{\rm gp}) = \varinjlim_i \Gamma(U_{ijk}, M_{X_i}^{\rm gp})$, so are induced from transition functions over $U_{ijk}$ for some sufficiently large $i$.  Likewise, the cocycle condition is checked in $\Gamma(U_{jk\ell}, M_Y^{\rm gp}) = \varinjlim_i \Gamma(U_{ijk\ell}, M_{X_i}^{\rm gp})$ and is therefore valid for a sufficiently large $i$.  Then $L$ is induced from $X_i$. 

It remains to verify that the bounded monodromy condition is locally of finite presentation.  That is, we assume that we have a cofiltered inverse system of affine logarithmic schemes $S_i$ over $S$, as before, and that $\alpha_i \in H^1(X_i, \overnorm M_{X_i}^{\rm gp})$.  We assume that their limit $\beta \in H^1(Y, \overnorm M_Y^{\rm gp})$ has bounded monodromy and we prove the same for a sufficiently large $\alpha_i$.

There is a finite stratification of $S$ into locally closed subschemes such that $\overnorm M_S$ is locally constant on each stratum.  Since the bounded monodromy condition is checked on geometric points, we can replace $S$ with one of its strata and assume $\overnorm M_S$ is constant.  Now replacing $S$ by an \'etale cover, we can assume $\overnorm M_S$ is constant and that the dual graph $\tropfont X$ of $X$ is constant as well.

Using the exact sequence~\eqref{eqn:57}
\begin{equation} \label{eqn:57}
R^1 \pi_\ast \pi^\ast \overnorm M_S^{\rm gp} \to R^1 \pi_\ast \overnorm M_X \to R^1 \pi_\ast \overnorm M_{X/S} = 0
\end{equation}
we can lift $\alpha$ to $\tilde \alpha \in H^1(X, \pi^\ast \overnorm M_S^{\rm gp}) = \Hom(H_1(\tropfont X), \overnorm M_S^{\rm gp})$.  The bounded monodromy condition for $\tilde\alpha$ can be checked by evaluating it on each of the finitely many generators of $H_1(\tropfont X)$, and for any one $\gamma$ in $H_1(\tropfont X)$, we can see that $\tilde\alpha(\gamma)$ is bounded by $\ell(\gamma)$ in $\varinjlim \Gamma(S_i, \overnorm M_{S_i}^{\rm gp})$ if and only if it is bounded in $\Gamma(S_i, \overnorm M_{S_i}^{\rm gp})$ at some finite stage.  This completes the proof.
\end{proof}

\begin{corollary} \label{cor:lfp}
Suppose $X$ is a proper, vertical logarithmic curve over $S$.  Then the sheaf $\LogPic(X/S)$ is locally of finite presentation over $S$.
\end{corollary}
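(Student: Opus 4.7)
The plan is to deduce this from Proposition~\ref{prop:lfp} in the standard two-step way: pass from the stack $\bLogPic(X/S)$ to its presheaf of isomorphism classes, then sheafify. Both operations preserve local finite presentation under mild hypotheses, and the geometrically substantive work—descending the bounded monodromy condition along a cofiltered limit—has already been done in Proposition~\ref{prop:lfp}.

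For the first step, I would check that the presheaf $T \mapsto \pi_0\,\bLogPic(X_T/T)$ is locally of finite presentation. Given a cofiltered inverse system $\{S_i\}$ of affine logarithmic schemes over $S$ with limit $S_\infty$, essential surjectivity in Proposition~\ref{prop:lfp} immediately gives surjectivity of
\begin{equation*}
\varinjlim \pi_0\,\bLogPic(X_{S_i}/S_i) \to \pi_0\,\bLogPic(X_{S_\infty}/S_\infty).
\end{equation*}
For injectivity, if $L, L' \in \bLogPic(X_{S_i}/S_i)$ become isomorphic after pullback to $X_{S_\infty}$, I would apply the full faithfulness half of Proposition~\ref{prop:lfp} (equivalently, the same cocycle-descent argument, applied now to the $\pi_\ast\logGm$-torsor $\underline{\mathrm{Isom}}(L,L')$): a trivializing section over $X_{S_\infty}$ descends to some $X_{S_j}$, so $L$ and $L'$ are already isomorphic at a finite stage.

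For the second step, I would check that sheafification in the strict \'etale topology preserves local finite presentation in this setting. A section of $\LogPic(X/S)(S_\infty)$ is represented, after a strict \'etale cover $U_\infty \to S_\infty$, by a section of the presheaf of isomorphism classes equipped with agreement on $U_\infty \times_{S_\infty} U_\infty$. By the same approximation arguments used in Proposition~\ref{prop:lfp} (quasicompact, quasiseparated $U_\infty$'s descend, \'etaleness and surjectivity pass to finite level), I can take $U_\infty$ of the form $U_i \times_{S_i} S_\infty$ for some $i$, with $U_i \to S_i$ strictly \'etale. Applying Step~1 to $U_i$ and to $U_i \times_{S_i} U_i$ descends both the section and its agreement datum to finite level. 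Injectivity is analogous: two presheaf sections over $S_i$ and $S_j$ whose images agree in $\LogPic(X/S)(S_\infty)$ must already agree over some strict \'etale cover of $S_\infty$, which descends to finite level.

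The only obstacle is bookkeeping with the \'etale covers under the cofiltered limit, which is routine given the preceding result. I do not expect any genuinely new geometric input beyond Proposition~\ref{prop:lfp}.
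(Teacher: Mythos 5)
Your proof is correct, but it takes a different route from the paper's. The paper's proof is a one-line reduction using the gerbe structure: since $X$ may be assumed to have connected fibers, $\bLogPic(X/S)$ is a gerbe over $\LogPic(X/S)$ banded by $\logGm$; a gerbe admits sections locally in $S$, and such a section exhibits $\LogPic(X/S)$ as a $\logGm$-torsor over $\bLogPic(X/S)$, whence local finite presentation follows from Proposition~\ref{prop:lfp} together with the (easy) local finite presentation of $\logGm$. You instead run the generic two-step argument that passing to $\pi_0$ and then sheafifying both preserve local finite presentation, using full faithfulness in Proposition~\ref{prop:lfp} for injectivity on isomorphism classes and the descent of quasicompact strict \'etale covers to finite stages of the cofiltered system for the sheafification step. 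Both work. The paper's argument is shorter but leans on the structural fact that $\bLogPic \to \LogPic$ is a banded gerbe with local sections; yours is more elementary and would apply verbatim to any stack whose $\pi_0$-presheaf you can control, at the cost of the limit bookkeeping. One small imprecision worth fixing in your write-up: the presheaf of isomorphism classes is \emph{not} separated (two logarithmic line bundles can be \'etale-locally isomorphic on $T$ without being isomorphic, since $\underline{\mathrm{Isom}}(L,L')$ is a possibly nontrivial $\pi_\ast\logGm$-torsor), so the ``agreement datum'' on $U_\infty \times_{S_\infty} U_\infty$ only holds after a further refinement; this changes nothing in substance, since that refinement is again quasicompact and descends to finite level by the same citations.
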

\begin{proof}
We can assume without loss of generality that $X$ has connected fibers over $S$.  Then $\bLogPic(X/S)$ is a gerbe over $\LogPic(X/S)$ banded by $\logGm$.  Locally in $S$, this gerbe admits a section, making $\LogPic(X/S)$ into a $\logGm$-torsor over $\bLogPic(X/S)$.  But $\logGm$ is certainly locally of finite presentation and $\bLogPic(X/S)$ is locally of finite presentation over $S$ by Proposition~\ref{prop:lfp}, so $\LogPic(X/S)$ is locally of finite presentation over $S$, as required.
\end{proof}

\subsection{Line bundles on subdivisions}
\label{sec:linesub}

The following statement is a corollary of Proposition~\ref{prop:bounded}.  It says, effectively, that logarithmic line bundles can be represented by line bundles locally in the logarithmic \'etale topology.

\begin{corollary} \label{cor:bdd-nbhd}
Let $X$ be a proper, vertical logarithmic curve over a logarithmic scheme $S$.  A class $\alpha \in H^1(X, \overnorm M_X^{\rm gp})$ has bounded monodromy in the geometric fibers if and only if, \'etale-locally in $S$, we can find a logarithmic modification $\tilde S \to S$ and a model $\tilde X$ of $X$ over $\tilde S$ such that $\alpha \big|_{\tilde X} = 0$.
\end{corollary}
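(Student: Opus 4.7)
My plan is to dispatch the backward implication as trivial (zero manifestly has bounded monodromy on every fiber) and concentrate on the forward direction, which I will deduce by combining Proposition~\ref{prop:bounded} (bounded monodromy over a valuative base is equivalent to triviality after a subdivision of the tropical curve) with Section~\ref{sec:subdiv-log} (tropical subdivisions are realized by logarithmic modifications of the logarithmic curve).

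First I would localize \'etale on $S$ to assume $\overnorm M_S$ is constant with value a finitely generated monoid $\overnorm M$ (using Corollary~\ref{cor:lfp} to reduce beforehand to finite-type bases) and that the dual graph $\frak X$ of $X$ is constant. The sequence
\[
R^1\pi_\ast (\pi^\ast \overnorm M_S^{\rm gp}) \to R^1\pi_\ast \overnorm M_X^{\rm gp} \to R^1\pi_\ast \overnorm M_{X/S}^{\rm gp} = 0
\]
that appears in the proof of Proposition~\ref{prop:lfp} then lets me lift $\alpha$ to a global monodromy representative $\mu \in \Hom(H_1(\frak X), \overnorm M^{\rm gp})$, defined only up to the image of $\partial : \mathscr E(\frak X) \to \Hom(H_1(\frak X), \overnorm M^{\rm gp})$. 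The bounded monodromy hypothesis translates into $\mu(\gamma) \prec \ell(\gamma)$ in $\overnorm M_{S,s}^{\rm gp}$ at every geometric point $s$ of $S$ and every $\gamma \in H_1(\frak X)$.

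The heart of the argument will be to mimic the inductive proof of Proposition~\ref{prop:bounded} relative to $S$: for each loop $\gamma_i$ in a chosen basis of $H_1(\frak X)$, extract an ``integer part'' $k_i$ of $\mu(\gamma_i)$ with respect to $\ell(\gamma_i)$, replace $\mu$ by $\mu - k_i \partial(e_i)$ for an edge $e_i$ of $\gamma_i$, and then subdivide $e_i$ at the now-nonnegative residue $\mu(\gamma_i) - k_i \ell(\gamma_i)$; by Section~\ref{sec:subdiv-log} such a subdivision lifts to a logarithmic modification $\tilde X \to X$ as soon as the residue is a well-defined section of $\overnorm M_S$ on $\tilde S$.

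The main obstacle is that the extraction of $k_i$ in Proposition~\ref{prop:bounded} proceeds via an order-preserving embedding of an archimedean quotient of $\overnorm M_{S,s}$ into $\mathbf R$, which is available only after passing to a valuation, whereas the statement asks for a logarithmic modification of $S$ itself. To get around this I plan to perform a preliminary logarithmic modification of $S$ that forces $\mu(\gamma_i) - k\ell(\gamma_i)$ to be locally comparable to $0$ for each of the finitely many integers $k$ lying between the bounds supplied by the bounded monodromy hypothesis; this is the blow-up construction of Section~\ref{sec:log-mod} applied to finitely many pairs of sections. After this modification the integer $k_i$ is locally constant on $\tilde S$, becomes constant after a further \'etale refinement, the residue becomes a bona fide section of $\overnorm M_{\tilde S}$, and the subdivision of Section~\ref{sec:subdiv-log} applies. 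Iterating this procedure along the finite archimedean filtration of $\overnorm M$ provided by Proposition~\ref{prop:arch-filt} (finite because $\overnorm M$ is finitely generated) reduces $\mu$ to zero on the resulting $\tilde X$, killing $\alpha$ there as required.
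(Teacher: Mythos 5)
Your reduction to an atomic (not literally constant) base and the backward direction are fine, but the forward direction has a genuine gap at the step where you claim that a \emph{single, a priori determined} preliminary logarithmic modification --- imposing local comparability of $\mu(\gamma_i) - k\ell(\gamma_i)$ with $0$ for the finitely many integers $k$ between the bounded-monodromy constants --- suffices to run the induction of Proposition~\ref{prop:bounded} uniformly over $S$. The integers actually needed in that induction are not these: the twist $\mu \mapsto \mu - k\,\partial(e)$ changes $\mu(\gamma_i)$ by $k\,\ell(e)$ for a single edge $e$ of $\gamma_i$ (not by $k\,\ell(\gamma_i)$), and the relevant ``integer part'' is that of $\mu(\gamma_i)/\ell(e)$ computed in an archimedean quotient attached to a chosen valuation. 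That ratio is not controlled by the hypothesis $m\ell(\gamma_i)\le\mu(\gamma_i)\le n\ell(\gamma_i)$ --- it is unbounded as $\ell(e)$ becomes small relative to the other edges of the loop --- and it varies with the valuative point. Likewise, ``the finite archimedean filtration of $\overnorm M$'' you propose to induct along does not exist over $S$: Proposition~\ref{prop:arch-filt} filters a \emph{valuative} monoid, and different valuative points of $S$ give different filtrations. (Two further cracks: twisting by $\partial$ of edges of $\gamma_i$ perturbs $\mu(\gamma_j)$ for $j\ne i$ unless you use only edges outside a spanning tree, which is exactly where the unbounded ratio bites; and even after imposing comparability, the locally defined integer part can jump along specializations within a connected \'etale neighborhood, so ``constant after a further \'etale refinement'' is not automatic.)

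The paper's proof extracts the required finiteness from a different source. It passes to $S^{\mathrm{val}}$, the inverse limit of all logarithmic modifications of $S$, applies Proposition~\ref{prop:bounded} at each point of $S^{\mathrm{val}}$ (where the characteristic monoid \emph{is} valuative) to get a subdivision killing $\alpha$ there, observes that this subdivision involves only finitely many new monoid elements and therefore spreads out to a logarithmic modification over which $\alpha$ vanishes on an open neighborhood, and then invokes the \emph{quasicompactness of $S^{\mathrm{val}}$} to cover it by finitely many such neighborhoods and pass to a common refinement. To salvage your approach you would need a uniform bound on the combinatorial data of the subdivisions produced by Proposition~\ref{prop:bounded} as the valuation varies --- which is essentially a compactness statement about $\ShpVal(\overnorm M)$ in disguise --- or else to adopt the limit-and-spread-out argument directly.
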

\begin{proof}
Replacing $S$ by an \'etale cover, we can assume $S$ is affine.  We can then assume $S$ is of finite type because the moduli space of logarithmic curves is locally of finite presentation and $\bLogPic(X/S)$ is locally of finite presentation (Proposition~\ref{prop:lfp}).  Passing to a finer \'etale cover if necessary, we can arrange for $S$ to be atomic (Proposition~\ref{prop:atomic}) and for the dual graph of $X$ to be constant over the closed stratum.  In particular, $S$ is quasicompact.

Let $S^{\rm val}$ be the limit over all logarithmic modifications of $S$.  This is a locally ringed space and its logarithmic structure is valuative.  By Proposition~\ref{prop:bounded}, for each point $s$ of $S^{\rm val}$, we can find a subdivision $\tropfont Y_s$ of $\tropfont X_s$ to which the restriction of $\alpha$ is zero.  If $Y_s$ denotes the corresponding logarithmic modification of $X_s$ then $\alpha$ restricts to $0$ on $Y_s$.

The subdivision $\tropfont Y_s$ only requires a finite number of elements of $\overnorm M_{S^{\rm val},s}$ that are not already in $\overnorm M_{S,s}$, so it is possible to recover $\tropfont Y_s$ and $Y_s$ as pulled back from a logarithmic modification $Y_1$ of $X$ over a logarithmic modification $S_1$ of $S$.  Moreover, there is an open neighborhood $U_1$ of $s$ in $S_1$ where $\alpha \big|_{Y_1 \mathop\times_{S_1} U_1} = 0$.

Since $S^{\rm val}$ is quasicompact, the preimages of finitely many of these open neighborhoods $U_i$ suffice to cover $S^{\rm val}$.  Let $T$ be the fiber product of the finitely many logarithmic modifications $S_i$ of $S$.  Let $\tropfont Z$ and $Z$ be the common subdivision of the $\tropfont Y_i \big|_T$ and the corresponding logarithmic modification of $X$ over $T$, respectively.  Then the $U_i$ pull back to an open cover of $T$, from which it follows that $\alpha \big|_Z = 0$.
\end{proof}

\begin{proposition} \label{prop:logpic-bdd-mono}
Let $X$ be a logarithmic curve over $S$.  The following conditions are equivalent for a $\logGm$-torsor $P$ on $X$:
\begin{enumerate}
\item $P$ has bounded monodromy.
\item In each valuative geometric fiber of $S$, there is a model $Y$ of $X$ where $P$ is induced from a $\mathcal O_Y^\ast$-torsor.
\item \'Etale-locally in $S$ there is a logarithmic modification $\tilde S \to S$ and a model $\tilde X$ of $X$ over $\tilde S$ such that the restriction of $P$ to $\tilde X$ is representable by a $\mathcal O_X^\ast$-torsor.
\end{enumerate}
\end{proposition}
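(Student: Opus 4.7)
The strategy is to translate all three conditions into statements about the class $\overnorm P \in H^1(X, \overnorm M_X^{\rm gp})$ induced from $P$ by the short exact sequence
\begin{equation*}
1 \to \mathcal O_X^\ast \to M_X^{\rm gp} \to \overnorm M_X^{\rm gp} \to 0
\end{equation*}
on the strict \'etale site of $X$.  Using the long exact sequence in cohomology, for any logarithmic scheme $Y$ over $X$, the restriction $P|_Y$ is induced from a $\mathcal O_Y^\ast$-torsor if and only if $\overnorm P|_Y = 0$ in $H^1(Y, \overnorm M_Y^{\rm gp})$.  Moreover, by the discussion immediately preceding Definition~\ref{def:logpic}, $P$ has bounded monodromy if and only if $\overnorm P$ does.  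This reduces all three conditions to statements about $\overnorm P$.

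I would first establish (1) $\Leftrightarrow$ (3) by a direct appeal to Corollary~\ref{cor:bdd-nbhd} applied to $\alpha = \overnorm P$:  the corollary says that $\overnorm P$ has bounded monodromy on geometric fibers if and only if, \'etale-locally on $S$, there is a logarithmic modification $\tilde S \to S$ and a model $\tilde X$ of $X$ over $\tilde S$ with $\overnorm P|_{\tilde X} = 0$.  By the translation above, this vanishing is precisely the condition in (3).

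The implication (3) $\Rightarrow$ (2) is obtained by restricting $\tilde X \to \tilde S$ to a valuative geometric fiber.  Conversely, for (2) $\Rightarrow$ (1) I would apply Proposition~\ref{prop:bounded} fiberwise:  on each valuative geometric fiber $s$, the existence of a model $Y_s$ over which $P$ descends to an $\mathcal O_{Y_s}^\ast$-torsor amounts to the existence of a subdivision of the tropicalization $\frak X_s$ on which $\overnorm P_s$ is trivialized, which by Proposition~\ref{prop:bounded} is equivalent to $\overnorm P_s$ having bounded monodromy.  Since bounded monodromy of $P$ is tested via these valuative geometric fibers, and passage to a valuative generization (constructed via Lemma~\ref{lem:extend}) does not alter the associated group $\overnorm M_{S,s}^{\rm gp}$ in which the relevant integer inequalities live, this suffices.

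The heavy technical work is already carried by Corollary~\ref{cor:bdd-nbhd} and Proposition~\ref{prop:bounded}; the only real content of this proof is the translation across the exact sequence displayed above, so no serious obstacle is anticipated.  The one step requiring mild care is verifying that the three conditions really do correspond under this translation, particularly for (3), where one must check that the vanishing of $\overnorm P|_{\tilde X}$ and the representability of $P|_{\tilde X}$ by an $\mathcal O_{\tilde X}^\ast$-torsor are equivalent even after restriction to the logarithmic modification $\tilde X$; this however is immediate from the naturality of the long exact sequence.
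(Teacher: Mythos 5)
Your proposal is correct and follows essentially the same route as the paper: translate the question about $P$ into one about its image $\overnorm P$ in $H^1(X, \overnorm M_X^{\rm gp})$ via the long exact sequence, then invoke Proposition~\ref{prop:bounded} for the equivalence with the fiberwise condition and Corollary~\ref{cor:bdd-nbhd} for the equivalence with the \'etale-local modification condition. No issues.
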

\numberwithin{equation}{theorem}
\begin{proof}
From the exact sequence~\eqref{eqn:27},
\begin{equation} \label{eqn:27}
H^1(X, \mathcal O_X^\ast) \to H^1(X, M_X^{\rm gp}) \to H^1(X, \overnorm M_X^{\rm gp})
\end{equation}
to find a $Y \to X$ where $P$ is representable by a $\mathcal O_Y^\ast$-torsor is equivalent to finding a cover where the class of $P$ in $H^1(X, \overnorm M_X^{\rm gp})$ is trivial.  With this observation, the equivalence of the first two conditions is Proposition~\ref{prop:bounded} and the equivalence of the first and last conditions is Corollary~\ref{cor:bdd-nbhd}.
\end{proof}
\counterwithout{equation}{theorem}

\subsection{Logarithmic \'etale descent}

By definition, $\bLogPic(X/S)$ is a stack in the \'etale topology.  We show here that, in certain situations, it is in fact a stack in the \emph{logarithmic} \'etale topology.  As the logarithmic \'etale topology is generated by \'etale covers, logarithmic modifications, and root stack constructions~\cite[Lemma~3.11]{nakayama2017logarithmic} (prime to the characteristic) we still need to check descent along logarithmic modifications and root stacks.  Descent along logarithmic modifications (not necessarily prime to the characteristic) was proved by K.\ Kato~\cite{kato2019logarithmic}, as we summarize below, so the main topic of this section will be descent along logarithmic modifications.

\begin{theorem} \label{thm:log-mod-inv}
The fibered category of \'etale $\logGm$-torsors is a stack in the Kummer logarithmic flat topology on logarithmic schemes.  It is a stack in the full logarithmic \'etale topology on logarithmic schemes whose structure sheaves are sheaves in the logarithmic \'etale topology.
\end{theorem}

\begin{remark}
We note that under either logarithmic modifications or root stacks, the cohomology groups of $H^i(X,\overnorm M_X^{\rm gp})$ of the characteristic monoid and the Picard group of $X$ change. Theorem \ref{thm:log-mod-inv} asserts that nevertheless, for a sufficiently well behaved $X$, these groups change in the same way, and thus the cohomology groups $H^i(X,M_X^{\rm gp})$ of the logarithmic structure itself remain constant. 
\end{remark}

\begin{remark}
	It is not true in general, as was claimed in an earlier draft of this paper, that $\bLogPic(X/S)$ is a stack in the large full logarithmic \'etale topology on $S$, nor that it is a stack on the small logarithmic \'etale topology for all bases $S$.  Nakayama has given an example of a logarithimic \'etale cover of a logarithmic scheme $S$ with respect to which $\Gm$ does not form a separated presheaf \cite[Remark following Proposition~2.6]{Nakayama}.  Since one can certainly find logarithmic curves $X$ over $S$ whose logarithmic Jacobians contain $\Gm$ as a subgroup, it follows that $\LogPic(X/S)$ cannot be a sheaf in the (small) full logarithmic \'etale topology on this $S$.  Since $\bLogPic(X/S)$ is, \'etale-locally in $S$, a product of $\LogPic(X/S)$ with $\mathrm B \logGm$, it follows as well that $\bLogPic(X/S)$ is not a stack in the (small) full logaithmic \'etale topology on this $S$.
\end{remark}

We will give the full proof of Theorem~\ref{thm:log-mod-inv} elsewhere.  For right now, we will only demonstrate the few cases that we need right now.  We require the following definition to formulate the hypotheses of those cases:

\begin{definition} \label{def:saturated}
	Let $\overnorm M$ be a finitely generated, saturated, sharp monoid.  An element $\alpha \in \overnorm M^{\rm gp}$ will be called \emph{saturated} (with respect to $\overnorm M$) if $\alpha$ generates the associated group of every rank~$1$ localization of $\overnorm M$ in which it is nonzero.  If $X$ is a logarithmic scheme, then a section of $\overnorm M_X^{\rm gp}$ is saturated if its image in $\overnorm M_{X,x}^{\rm gp}$ is saturated with respect to $\overnorm M_{X,x}$ for every geometric point $x$ of $X$.
\end{definition}

In other words, $\alpha$ is saturated if $\overnorm N / \mathbf Z \alpha$ is torsion-free for every rank~$1$ localization $\overnorm N$ of $\overnorm M$.

Here are the versions of Theorem~\ref{thm:log-mod-inv} that we will prove here:

\begin{proposition} \label{prop:fppf}
	Let $X$ be a logarithmic scheme.  Every $M_X^{\rm gp}$-torsor in the strict fppf topology descends uniquely to an \'etale $M_X^{\rm gp}$-torsor.  In particular, $M_X^{\rm gp}$ is an fppf sheaf.
\end{proposition}

\begin{proposition} \label{prop:root}
Let $X$ be a logarithmic scheme and let $\tau : Y \to X$ be a root stack.  Then every \'etale $M_Y^{\rm gp}$-torsor descends uniquely to an $M_X^{\rm gp}$-torsor.  In particular, $\tau_\ast M_Y^{\rm gp} = M_X^{\rm gp}$.
\end{proposition}

\begin{proposition} \label{prop:sat-hyp}
Let $X$ be a logarithmic scheme and let $\tau : Y \to X$ be a logarithmic modification.  Assume $\tau$ is, \'etale-locally in $X$, the subdivision associated with a saturated section of $\overnorm M_X^{\rm gp}$.  Then every \'etale $M_Y^{\rm gp}$-torsor on $Y$ is \'etale-locally trivial on $X$.
\end{proposition}

Proposition~\ref{prop:sat-hyp} will suffice for the applications in this paper.  It is also the main technical piece of the full proof of Theorem~\ref{thm:log-mod-inv}.  The following proposition is not used in this paper, but will be needed for some applications and can be proved more quickly from Proposition~\ref{prop:sat-hyp} than Theorem~\ref{thm:log-mod-inv} can. 

\begin{proposition} \label{prop:log-flat}
	Let $X$ be a logarithmic scheme and let $\tau : Y \to X$ be a logarithmic modification.  Assume that $X$ is logarithmically flat over a valuative logarithmic scheme.  Then every \'etale $M_Y^{\rm gp}$-torsor on $Y$ descends to an $M_X^{\rm gp}$-torsor on $X$.  In particular, $\tau_\ast M_Y^{\rm gp} = M_X^{\rm gp}$.
\end{proposition}

\setcounter{subsubsection}{\value{theorem}}
\subsubsection{Proof of Proposition~\ref{prop:fppf}.}

An $M_X^{\rm gp}$-torsor is a $\mathcal O_X^\ast$-torsor over an $\overnorm M_X^{\rm gp}$-torsor.  We can view $\overnorm M_X^{\rm gp}$-torsors on the strict fppf site as algebraic spaces via the espace \'etal\'e, so these satisfy strict fppf descent, and $\mathcal O_X^\ast$-torsors satisfy fppf descent by Hilbert's Theorem 90.

\subsubsection{Proof of Proposition~\ref{prop:root}.}

This is really a theorem of K.\ Kato.  We merely summarize how to deduce it from various statements in \cite{kato2019logarithmic}.

It is shown in \cite[Theorem~3.2]{kato2019logarithmic} that $\logGm$ is a sheaf in the Kummer logarithmic flat topology and in \cite[Corollary~5.2]{kato2019logarithmic} that $\logGm$-torsors in the Kummer logarithmic flat topology are \'etale-locally trivial.  Thus any descent datum for a $\logGm$-torsor in the Kummer logarithmic flat topology descends to a descent datum in the strict \'etale topology, in which it is effective by definition.  Therefore $\mathrm B \logGm$ is a stack in the Kummer logarithmic flat topology.

\numberwithin{theorem}{subsubsection}
\subsubsection{Observations about saturated sections of the characteristic monoid}

The idea of Definition~\ref{def:saturated} is that if $X$ is a logarithmic scheme and $\alpha \in \Gamma(X, \overnorm M_X^{\rm gp})$ then $\alpha$ determines a morphism $X \to \tropGm$, and we should think of this as a saturated morphism if $\alpha$ satisfies the condition of the definition.

\begin{lemma} \label{lem:saturated}
Let $\overnorm M$ be a fine and saturated monoid.  Suppose $\alpha \in \overnorm M^{\rm gp}$ is saturated.  Then then $\overnorm M + \mathbf N\alpha \subset \overnorm M^{\rm gp}$ is saturated.
\end{lemma}
\begin{proof}
Let $\sigma$ be the rational polyhedral cone dual to $\overnorm M$.  Let $\tau \subset \sigma$ be the subcone where $\alpha \geq 0$.  Let $\overnorm N \subset \overnorm M^{\rm gp}$ be the set of $\gamma \in \overnorm M^{\rm gp}$ such that $\gamma(\tau) \geq 0$.  Then $\overnorm N$ is the saturation of $\overnorm M + \mathbf N\alpha$.  We wish to show that $\overnorm N = \overnorm M + \mathbf N\alpha$.

	Suppose that $\gamma \in \overnorm N$.  We can write $\gamma$ as $c \alpha + \beta$ where $\beta \in \mathbf Q_{\geq 0} \overnorm M$ and $c \in \mathbf Q_{\geq 0}$.  We argue that $c$ must be an integer and $\beta$ must be in $\overnorm M$.  If $c \geq 1$ then $(c-1) \alpha + \beta$ is also $\geq 0$ on $\xi$ and is in $\overnorm M^{\rm gp}$.  We therefore assume that $0 \leq c < 1$ and will show that $\gamma \in \overnorm M$.

	Suppose that $\xi$ is any ray of $\sigma$.  Since $\beta \in \overnorm M$, the slope of $\beta$ on $\xi$ must be $\geq 0$.  The slope of $\alpha$ on $\xi$ is $\geq -1$ because $\alpha$ is saturated, and $c < 1$, so the slope of $\gamma$ on $\xi$ must be $> -1$.  On the other hand, $\gamma$ must have integral slope on $\xi$, so this slope must be $\geq 0$.  This holds for all rays $\xi$ of $\sigma$, so we deduce that $\gamma(\sigma) \geq 0$.  That is $\gamma \in \overnorm M$.
\end{proof}

\begin{corollary} \label{cor:A1-fiber}
Suppose that $X$ is a logarithmic scheme whose underlying scheme is the spectrum of a field.  Let $\alpha$ be a section of $\overnorm M_X^{\rm gp}$.  Assume that neither $\alpha$ nor $-\alpha$ lies in $\overnorm M_X$.  Then there is a universal scheme with a fine logarithmic structure, $Y$, over $X$ such that the restriction of $\alpha$ to $Y$ lies in $\overnorm M_Y$ and the underlying scheme of $Y$ is isomorphic to $\mathbf A^1_X$.  If $\alpha$ is saturated then $Y$ is saturated.
\end{corollary}
\begin{proof}
Let $Y$ have underlying scheme $\Spec \Sym \mathcal O_X(-\alpha)$.  Write $\tau : Y \to X$ for the projection.  Let $M_Y$ be the submonoid of $\tau^\ast(M_X)^{\rm gp}$ (the associated group of the pullback logarithmic structure) generated by $\tau^\ast M_X$ and $\mathcal O_Y^\ast(-\alpha) \subset \tau^\ast(M_X)^{\rm gp}$.  We have a homomorphism $\varepsilon_Y : M_Y \to \mathcal O_Y$ by the following formula, for all local choices of $a \in \mathcal O_Y^\ast(-\alpha)$ and all $b \in \tau^\ast M_X$:
\begin{equation} \label{eqn:132}
\varepsilon_Y(a^n b) = a^n \varepsilon_X(b)
\end{equation}
Note that if $a^n b \in M_Y$ has a second local representation as $c^m d$ for $c \in \mathcal O_Y^\ast(-\alpha)$ and $d \in \tau^\ast M_X$ then $b$ and $d$ are either both units of $\tau^\ast M_X$ or are both nonunits.%
\footnote{%
	Indeed, write $\beta$ and $\delta$ for the images of $b$ and $d$ in $\overnorm M_X$.  Then $a^n b = c^m d$ entails $n \alpha + \beta = m \alpha + \delta$.  But $\mathbf Z \alpha \cap \overnorm M_X = 0$ since $\overnorm M_X$ is saturated and neither $\alpha$ nor $-\alpha$ is in $\overnorm M_X$.  Thus if one of $\beta$ or $\delta$ is zero --- say $\delta$ --- then $(m-n)\alpha = \beta$, so $n=m$ and $\beta = 0$ as well.  So either $\beta$ and $\delta$ are both zero, or are both nonzero.  If $\beta$ and $\delta$ are both zero then $b$ and $d$ are units, and if they are both nonzero then $\varepsilon_X(b) = \varepsilon_X(d) = 0$.%
}
If $b$ and $d$ are both units then $a^n b = c^m d$ lies in $\bigcup_{n \geq 0} \mathcal O_Y^\ast(-n \alpha)$, and $\varepsilon_Y$ restricts to the canonical injection of this submonoid into $\Sym \mathcal O_X(-\alpha)$, so it is certainly well-defined; if $b$ and $d$ are both nonunits then $\varepsilon_X(b) = \varepsilon_X(d) = 0$, since $X$ is the spectrum of a field, hence $\varepsilon_Y(a^n b) = \varepsilon_Y(c^n d)$.  

Thus $\varepsilon_Y$ is well-defined and makes $M_Y$ into a logarithmic structure on $Y$.  This logarithmic structure is fine, since \'etale-locally $X$ has a global chart by $\Gamma(X, \overnorm M_X)$ and then $Y$ has a global chart by $\overnorm M_X + \mathbf N \alpha$.  The characteristic monoid of $Y$ is $\overnorm M_X + \mathbf N \alpha$ at the closed point where $\mathcal O_Y(-\alpha) \to \mathcal O_Y$ vanishes, and it is $\overnorm M_X + \mathbf Z\alpha  / \mathbf Z \alpha$ elsewhere.  In particular, it is integral and therefore fine.  By Lemma~\ref{lem:saturated}, it is also saturated if $\alpha$ is saturated.

To conclude, we verify that $Y$ has the required universal property.  If $f : Z \to X$ is any morphism such that $f^\ast \alpha$ lies in $\overnorm M_Z$ then we obtain a morphism $\mathcal O_Z(-\alpha) \to \mathcal O_Z$.  This extends to a ring homomorphism $\Sym \mathcal O_Z(-\alpha) \to \mathcal O_Z$ and therefore induces a morphism of schemes $g : Z \to Y$.  Since $M_Y$ is the submonoid of $\tau^\ast M_X^{\rm gp}$ generated by $\mathcal O_Y^\ast(-\alpha)$ and $\tau^\ast M_X$, the map $g^\ast M_Y^{\rm gp} \to M_Z^{\rm gp}$ induced from $f$ carries $g^\ast M_Y$ into $M_Z$.  Finally, $g$ was chosen so that $g^\ast \mathcal O_Y(-\alpha) \to \mathcal O_Z$ coincides with $\mathcal O_Z(-\alpha) \to \mathcal O_Z$ so $g$ is actually a morphism of logarithmic schemes.
\end{proof}

\begin{corollary} \label{cor:P1-fiber}
Suppose $X$ is a logarithmic scheme whose underlying scheme is the spectrum of a field.  Suppose $\alpha$ is a saturated section of $\overnorm M_X^{\rm gp}$.  Let $Y$ be the universal logarithmic scheme over $X$ where $\alpha$ is locally comparable to $0$ in the partial order on $\overnorm M_X^{\rm gp}$ induced from $\overnorm M_X$.  Then the underlying scheme of $Y$ is isomorphic to $\mathbf P^1_X$.
\end{corollary}
\begin{proof}
The charts where $\alpha \geq 0$ and where $\alpha \leq 0$ are each isomorphic to $\mathbf A^1_X$ by Corollary~\ref{cor:A1-fiber}.  These are glued along the locus where $\alpha = 0$, which is isomorphic to $\Spec \sum_{n \in \mathbf Z} \mathcal O_X(n \alpha)$.
\end{proof}

\begin{corollary} \label{cor:saturated}
	Suppose $X$ is a logarithmic scheme and $\alpha \in \Gamma(X, \overnorm M_X^{\rm gp})$.  Let $Y$ be the universal scheme with a fine logarithmic structure where $\alpha$ is locally comparable to $0$ in the partial order of $\overnorm M_Y^{\rm gp}$ induced from $\overnorm M_Y$.  Then $Y$ is saturated and the underlying scheme of the fiber of $Y$ over a each point $x$ of $X$ is either empty, isomorphic to $x$, or isomorphic to $\mathbf P^1_x$.
\end{corollary}
\begin{proof}
It is immediate from Lemma~\ref{lem:saturated} that $Y$ is saturated.

Since the construction of $Y$ relative to $X$ is compatible with strict base change, it is sufficient to prove the corollary after base change to a point.  We therefore assume that $X$ is the spectrum of a field.  There are now $3$ possibilities:  if $\alpha \in \overnorm M_X$ then $Y = X$ and we are done; if $-\alpha \in \overnorm M_X$ then $Y = \varnothing$ and we are done; if neither $\alpha$ nor $-\alpha$ is in $\overnorm M_X$ then $Y \simeq \mathbf P^1_X$ by Corollary~\ref{cor:P1-fiber}.
\end{proof}

\begin{lemma} \label{lem:root-sat}
Suppose that $X$ is a quasicompact logarithmic scheme and $\alpha \in \Gamma(X, \overnorm M_X^{\rm gp})$.  Then there is a root stack $X'$ of $X$ and an integer $n > 0$ such that $n^{-1} \alpha$ is saturated on $X'$.
\end{lemma}
\begin{proof}
Since $X$ is quasicompact, there is a finite collection of rank~$1$ localizations of the characteristic monoids $\overnorm M_{X,x}$ at geometric points $x$ of $X$.  Each of these localizations is (uniquely) isomorphic to $\mathbf N$.  Let $n_x$ be the image of $\alpha$ in $\overnorm M_{X,x} \simeq \mathbf N$.  Let $n$ be the least common multiple of the nonzero $n_x$ and let $X'$ be the extension of $\overnorm M_X$ in $\mathbf Q \overnorm M_X$ generated by $\overnorm M_X$ and $n^{-1} \alpha$.  Then $n^{-1} \alpha$ generates $\overnorm M_{X,x}$ at every geometric point $x$ of $X$ where $\alpha$ is nonzero.
\end{proof}

\setcounter{secnumdepth}{5}
\subsubsection{Proof of Proposition~\ref{prop:sat-hyp}}

The hypotheses are preserved by \'etale localization in $X$, and the conclusion satisfies \'etale descent in $X$.  We can therefore assume that $X$ is quasicompact and quasiseparated and has a global chart by a sharp monoid $P$, and that there is a saturated $\alpha \in P$ of such that $Y$ is the universal logarithmic modification of $X$ where the image of $\alpha$ in $\overnorm M_Y^{\rm gp}$ is locally comparable to $0$.  

In this situation, we prove two slightly more precise statements that imply Proposition~\ref{prop:sat-hyp}:

\begin{lemma}
We have $\mathrm R^1 \tau_\ast \overnorm M_Y^{\rm gp} = 0$.
\end{lemma}
\begin{proof}
By proper base change for \'etale cohomology,  which implies that the base change map is injective for $H^1$ \cite[Th\'eor\`eme~5.1~(ii)]{sga4-XII}, it is sufficient to prove that 
\begin{equation*}
H^1(\tau^{-1} x, \overnorm M_Y^{\rm gp}) = 0
\end{equation*}
for all geometric points $x$ of $X$.  By Corollary~\ref{cor:saturated}, the underlying scheme of the fiber $Z = \tau^{-1} x$ is either empty, isomorphic to $x$, or isomorphic to $\mathbf P^1_x$.  In the first two cases, the conclusion is trivial.  If $Z \simeq \mathbf P^1_x$, we have an exact sequence:
\begin{equation*}
0 \rightarrow \pi^{-1} \overnorm M_x^{\rm gp} \rightarrow \overnorm M_Z^{\rm gp} \rightarrow \overnorm M_{Z/x}^{\rm gp} \rightarrow 0
\end{equation*}
We have $H^1(Z, \pi^{-1} \overnorm M_x^{\rm gp}) = 0$ since $\pi^{-1} \overnorm M_x^{\rm gp}$ is constant and $Z$ is simply connected.  We have $H^1(Z, \overnorm M_{Z/x}^{\rm gp}) = 0$ since $\overnorm M_{Z/x}$ is concentrated in dimension~$0$.  Combined with proper base change, this gives $\mathrm R^1 \tau_\ast \overnorm M_Y^{\rm gp} = 0$.
\end{proof}

\begin{lemma}
The map $\tau_\ast (\overnorm M_Y^{\rm gp}) / \overnorm M_X^{\rm gp} \to \mathrm R^1 \tau_\ast \mathcal O_Y^\ast$ is an isomorphism.
\end{lemma}
\begin{proof}
We make some preliminary reductions.  It is sufficient to show that the map of \'etale stalks $\tau_\ast(\overnorm M_Y^{\rm gp}) / \overnorm M_{X,x}^{\rm gp} \to \mathrm R^1 \tau_\ast (\mathcal O_Y^\ast)_x$ is an isomorphism for every geometric point $x$ of $X$.  Since $\tau$ is of finite presentation, the formation of $\mathrm R^1 \tau_\ast \mathcal O_Y^\ast$ commutes with localization, as does the formation of $\tau_\ast(\overnorm M_Y^{\rm gp}) / \overnorm M_{X}^{\rm gp}$.  It therefore suffices to assume that $X$ is the spectrum of a henselian local ring.  We write $x$ for the closed point of $X$.

Under these assumptions, we need to show that the following map is an isomorphism:
\begin{equation} \label{eqn:133} \tag{$\ast$}
H^0(Y, \overnorm M_Y^{\rm gp}) / H^0(X, \overnorm M_X^{\rm gp}) \to H^1(Y, \mathcal O_Y^\ast)
\end{equation}

Since $X$ has no nontrivial \'etale covers, we can find $\gamma$ and $\delta$ in $\overnorm M_X$ such that $\gamma - \delta = \alpha$.  These lift to sections $\tilde\gamma$ and $\tilde\delta$ of $M_X$.  Then $\alpha$ factors through $\varphi = (\tilde\gamma, \tilde\delta) : X \to \mathbf A^2$ (where $\mathbf A^2$ has the toric logarithmic structure), and $Y$ is the pullback of the blowup of $\mathbf A^2$ at the origin.  We write $X' = \mathbf A^2$ and $Y'$ for the blowup of $X'$ at the origin.

We will now prove the proposition by considering successively more general examples of~$X$.

\renewcommand{\theparagraph}{\textsc{Step \Roman{paragraph}}}
\vskip1ex
	\paragraph{$X$ is the spectrum of a field.} \label{step:1}
Then $Y$ is either empty, isomorphic to $X$, or isomorphic to $\mathbf P^1_X$.  The conclusion is trivial in the first two cases, so assume $Y \simeq \mathbf P^1_X$.  Then $H^0(Y, \overnorm M_Y^{\rm gp}) = H^0(X, \overnorm M_X^{\rm gp}) + \mathbf Z \beta$ where $\beta$ is the section $\max \{ 0, \alpha \}$.  Direct calculation shows that $\mathcal O_Y(\beta) \simeq \mathcal O_{\mathbf P^1_X}(1)$ under the isomorphism $Y \simeq \mathbf P^1_X$.  Since $\Pic(\mathbf P^1_X)$ is generated by $\mathcal O_{\mathbf P^1_X}(1)$, this shows that~\eqref{eqn:133} is an isomorphism.

\vskip1ex
	\paragraph{$X$ is an artinian local ring and $\varphi : X \to X'$ factors \emph{schematically} through the origin.}
	Let $x$ be the closed point of $X$.  We will show that $H^0(Y, \mathcal O_Y^\ast) = H^0(\tau^{-1} x, \mathcal O_{\tau^{-1} x}^\ast)$.  Since the same identity holds for $H^0(Y, \overnorm M_Y^{\rm gp}) / H^0(X, \overnorm M_X^{\rm gp})$ (since $\overnorm M_Y^{\rm gp}$ and $\overnorm M_X^{\rm gp}$ are \'etale sheaves and the inclusions $x \to X$ and $\tau^{-1} x \to Y$ induce equivalence of \'etale sites), this will be enough to complete this case.

Every artinian local ring is an iterated extension of the residue field $k = k(x)$ by square-zero ideals isomorphic to $k$.  By induction, we may therefore assume that $X$ such an extension of some $X_0$, and the conclusion of the proposition is already known for the restriction $Y_0$ of $Y$ to $X_0$.  The inductive step is to show that the restriction map $H^1(Y, \mathcal O_Y^\ast) \to H^1(Y_0, \mathcal O_{Y_0}^\ast)$ is an isomorphism.

Let $J$ be the ideal of $Y_0$ in $Y$.  This is also the kernel of $\mathcal O_Y^\ast \to \mathcal O_{Y_0}^\ast$.  We have an exact sequence:
\begin{equation*}
H^1(\tau^{-1} x, J) \to H^1(Y, \mathcal O_Y^\ast) \to H^1(Y_0, \mathcal O_{Y_0}^\ast) \to H^2(\tau^{-1} x, J)
\end{equation*}
Since $J$ is supported on $\tau^{-1} x \simeq \mathbf P^1_{\tau^{-1} x}$, we have $H^2(\tau^{-1} x, J) = 0$ for dimension reasons.  We show next that $H^1(\tau^{-1} x, J)$ also vanishes.

Let $Z$ be the fiber product $X \mathop\times_{X'} Y'$ as a scheme with a logarithmic structure that is not necessarily integral.  Since $\varphi(X) = 0$, we have $Z \simeq \mathbf P^1_X$.  Then $Y$ is the universal scheme over $Z$ over which the pullback of $M_Z$ is integral (it is automatically saturated by Corollary~\ref{cor:saturated}).  Since $Z \simeq \mathbf P^1_X$ is flat over $X$, the ideal of $Z_0$ in $Z$ is $\mathcal O_{\tau^{-1} x}$.  Therefore $J$, which is the ideal of $Y_0 = Y \cap Z_0$ in $Y$ is a quotient of $\mathcal O_{\tau^{-1} x}$.  Let $K$ be the kernel of $\mathcal O_{\tau^{-1} x} \to J$.  Then we have an exact sequence:
\begin{equation*}
H^1(\tau^{-1} x, \mathcal O_{\tau^{-1} x}) \to H^1(\tau^{-1} x, J) \to H^2(\tau^{-1} x, K)
\end{equation*}
	Since $\tau^{-1} x \simeq \mathbf P^1_x$, both $H^1(\tau^{-1} x, \mathcal O_{\tau^{-1} x})$ and $H^2(\tau^{-1} x, K)$ vanish.  This completes the proof that $H^1(Y, \mathcal O_Y^\ast) = H^1(\tau^{-1} x, \mathcal O_{\tau^{-1} x}^\ast)$, and implies that~\eqref{eqn:133} is an isomorphism in this case.

\vskip1ex
\paragraph{$X$ is the spectrum of an artinian local ring.}
	For each $n$, let $X'_n$ be the $n$-th infinitesimal neighborhood of the origin in $X'$ and let $Y'_n$ be its preimage in $Y'$.  Let $X_n = \varphi^{-1} X'_n$ be the preimage in $X$ of $X'_n$, and let $Y_n$ be the preimage in $Y$.  Let $Z_n$ be the pullback of $Y'$ to $X_n$ as a scheme with a logarithmic structure that is not necessarily integral.  The ideal of $Y'_{n-1}$ in $Y'_n$ is $\mathcal O_{Y'_0}(n)$.  Therefore the ideal of $Z_{n-1}$ in $Z_n$ is a quotient of $\mathcal O_{Z_0}(n)$.  Since $Y_{n-1} = Z_{n-1} \cap Y_n$ the ideal of $Y_{n-1}$ in $Y_n$ is thus also a quotient of $\mathcal O_{Z_0}(n)$; we denote this quotient by $J$ and write $K$ for the kernel.  We have an exact sequence:
\begin{equation*}
	H^1(Z_0, \mathcal O_{Z_0}(n)) \to H^1(Y_0, J) \to H^2(Z_0, K)
\end{equation*}
	Since $Z_0$ is $1$-dimensional, $H^2(Z_0, K)$ vanishes. But $H^1(Z_0, \mathcal O_{Z_0}(n))$ also vanishes, because $Z_0 \simeq \mathbf P^1_{X_0}$ and $n \geq 0$ (in fact $n \geq 1$).  Since we also have $H^2(Y_0, J) = 0$ by dimension considerations, we deduce that $H^1(Y, \mathcal O_Y^\ast) = H^1(Y_0, \mathcal O_{Y_0}^\ast) = H^1(\tau^{-1} x, \mathcal O_{\tau^{-1} x}^\ast)$, as before.  Thus~\eqref{eqn:133} is an isomorphism in this case, by the same argument as in the last step.

\vskip1ex
\paragraph{$X$ is the spectrum of a complete noetherian local ring.}
Suppose $X = \Spec A$ and $I$ is the maximal ideal of $A$.  Write $X_n$ for the vanishing locus of $I^{n+1}$ and $Y_n = \tau^{-1} X_n$.  By Grothendieck's existence theorem, $H^1(Y, \mathcal O_Y^\ast) = \varprojlim H^1(Y_n, \mathcal O_{Y_n}^\ast)$.  By proper base change $H^0(Y, \overnorm M_Y^{\rm gp}) / H^0(X, \overnorm M_X^{\rm gp}) = H^0(Y_0, \overnorm M_{Y_0}^{\rm gp}) / H^0(X_0, \overnorm M_{X_0}^{\rm gp})$.  Therefore we conclude by the previous case.

\vskip1ex
\paragraph{$X$ is the henselization of scheme of finite type at a geometric point.}
As before, let $X_n$ denote the $n$-th order infinitesimal neighborhood of $x$ in $X$ and let $Y_n$ be its preimage in $Y$.  We have a commutative diagram:
	\begin{equation*} \xymatrix{
			H^0(Y, \overnorm M_Y^{\rm gp}) / H^0(X, \overnorm M_X^{\rm gp}) \ar[r] \ar[d] & H^1(Y, \mathcal O_Y^\ast) \ar[d] \\
			H^0(\tau^{-1} x, \overnorm M_{\tau^{-1} x}^{\rm gp}) / \overnorm M_{X,x}^{\rm gp} \ar[r] & H^1(\tau^{-1} x, \mathcal O_{\tau^{-1} x}^\ast)
	} \end{equation*}
	The left vertical arrow is an isomorphism by proper base change, and the bottom horizontal arrow is an isomorphism by the special case of the lemma for a point.  Therefore the upper horizontal arrow is injective.
	
	We prove it is also surjective.  Suppose that $L$ is an invertible sheaf on $Y$.  Let $\hat X$ be the completion of $X$ and let $\hat Y$ be the base change of $Y$ to $\hat X$.  Let $\hat L$ be the restriction of $L$ to $\hat Y$.  Then by the previous case, there is some $\lambda \in H^0(Y, \overnorm M_Y^{\rm gp})$ such that $\hat L \simeq \mathcal O_{\hat Y}(\lambda)$.  Since the problem of specifying an isomorphism $L \simeq \mathcal O_Y(\lambda)$ is locally of finite presentation, as a functor of $X$, Artin's approximation theorem implies that there is an isomorphism $L \simeq \mathcal O_Y(\lambda)$.

\vskip1ex
\paragraph{$X$ is the spectrum of a henselian local ring.}
	We can present $X$ as a cofiltered inverse limit of spectra $X_i$ of henselian local rings of finite type with closed points $x_i$.  Then $\overnorm M_{X,x}^{\rm gp} = \varinjlim \overnorm M_{X_i,x_i}^{\rm gp}$ so we can assume that the element $\alpha \in \overnorm M_{X,x}^{\rm gp}$ used to construct $Y$ is induced from elements $\alpha_i \in \overnorm M_{X_i,x_i}^{\rm gp}$.  For each $i$, let $Y_i$ be the universal logarithmic scheme over $X_i$ where $\alpha_i$ is locally comparable to $0$ in $\overnorm M_{Y_i}^{\rm gp}$.  Then $H^1(Y, \mathcal O_Y^\ast) = \varinjlim H^1(Y_i, \mathcal O_{Y_i}^\ast)$ and $H^0(Y, \overnorm M_Y^{\rm gp}) / H^0(X, \overnorm M_X^{\rm gp}) = \varinjlim H^0(Y_i, \overnorm M_{Y_i}^{\rm gp}) / H^0(X_i, \overnorm M_{X_i}^{\rm gp})$.  The previous case shows that $H^0(Y_i, \overnorm M_{Y_i}^{\rm gp}) / H^0(X_i, \overnorm M_{X_i}^{\rm gp}) \to H^1(Y_i, \mathcal O_{Y_i}^\ast)$ is an isomorphism for every $i$, so we conclude by passage to the limit.
\end{proof}

\subsubsection{The logarithmically flat case}

\begin{proposition} \label{prop:injective}
	Suppose that $X$ and $Y$ are logarithmic schemes over $S$ and $\tau : Y \to X$ is a logarithmic modification.  Assume that both $X$ and $Y$ are integral over $S$ and $X$ is logarithmically flat over $S$.  Then the map $\mathcal O_X \to \mathrm R \tau_\ast \mathcal O_Y$ is a quasi\"isomorphism.
\end{proposition}
\begin{proof}
The assertion is \'etale-local in $S$ and in $X$, so we may assume that there is a cartesian diagram~\eqref{eqn:131} in which $X'$, $Y'$, and $S'$ are toric varieties and the maps between them are toric, and all horizontal arrows are strict:
\begin{equation} \label{eqn:131} \vcenter{\xymatrix{
Y \ar[r] \ar[d]_\tau & Y' \mathop\times_{S'} S \ar[r] \ar[d] & Y' \ar[d]^\rho \\
X \ar[r] \ar[dr] & X' \mathop\times_{S'} S \ar[r] \ar[d] & X' \ar[d] \\
& S \ar[r] & S'
}} \end{equation}
Since $X$ and $Y$ are integral over $S$, the toric varieties $X'$ and $Y'$ are flat over $S'$.  Since $X$ is logarithmically flat over $S$, the map $X \to X' \mathop\times_{S'} S$ is also flat.

	As $Y' \to X'$ is a toric modification, we have $\mathrm R \rho_\ast \mathcal O_{Y'} = \mathcal O_{X'}$ \cite[\S3.5, p.~76, Proposition]{Fulton}.  Since $X'$ and $Y'$ are flat over $S'$, this equality remains true after base change to $S$.%
	\footnote{The assertion is Zariski-local in $X'$, so we may assume that $X'$ is affine.  Then $\Gamma(X', \mathrm R \rho_\ast \mathcal O_{Y'}) = \mathrm R \Gamma(Y', \mathcal O_{Y'})$ so we may apply flat base change \cite[\href{https://stacks.math.columbia.edu/tag/02KH}{Tag 02KH}]{stacks-project}.}
Thus $\mathcal O_{X' \mathop\times_{S'} S} \to \mathrm R \rho_\ast \mathcal O_{Y' \mathop\times_{S'} S}$ is also a quasi\"isomorphism.  Then by flat base change along $X \to X' \mathop\times_{S'} S$, we obtain the quasi\"isomorphism $\mathcal O_X \to \mathrm R \tau_\ast \mathcal O_Y$ that we require.
\end{proof}

\begin{lemma} \label{lem:log-str-pf}
	Suppose $X$ and $Y$ are logarithmic schemes over $S$.  Assume that $X$ and $Y$ are both integral over $S$ and that $X$ is logarithmically flat over $S$.  Let $\tau : Y \to X$ be the logarithmic modification associated with a saturated section $\alpha$ of $\overnorm M_X^{\rm gp}$.  Then $\tau_\ast M_Y^{\rm gp} = M_X^{\rm gp}$.
\end{lemma}
\begin{proof}
	We have a commutative diagram with exact rows:
	\begin{equation*} \xymatrix{
			0 \ar[r] & \mathcal O_X^\ast \ar[r] \ar[d] & M_X^{\rm gp} \ar[r] \ar[d] & \overnorm M_X^{\rm gp} \ar[r] \ar[d] & 0 \\
			0 \ar[r] & \tau_\ast \mathcal O_Y^\ast \ar[r] & \tau_\ast M_Y^{\rm gp} \ar[r] & \tau_\ast \overnorm M_Y^{\rm gp} \ar[r] & \mathrm R^1 \tau_\ast \mathcal O_Y^\ast
	} \end{equation*}
	The map $\mathcal O_X^\ast \to \tau_\ast \mathcal O_Y^\ast$ is an isomorphism by Proposition~\ref{prop:injective}.  The map $\tau_\ast(\overnorm M_Y^{\rm gp}) / \overnorm M_X^{\rm gp} \to \mathrm R^1 \tau_\ast \mathcal O_Y^\ast$ is an isomorphism by Proposition~\ref{prop:sat-hyp}.  We conclude that $M_X^{\rm gp} \to \tau_\ast M_Y^{\rm gp}$ is an isomorphism by snake lemma and the 5-lemma.
\end{proof}

\begin{proof}[Proof of Proposition~\ref{prop:log-flat}]
	We make an observation that we will use repeatedly in the proof.  We will say that `the proposition holds for $\tau : Y \to X$' to mean that if $X$ is logarithmically flat over $S$ then every \'etale $M_Y^{\rm gp}$-torsor descends uniquely along $\tau$ to an $M_X^{\rm gp}$-torsor.  Suppose that we have logarithmic modifications $Z \xrightarrow{\rho} Y \xrightarrow{\tau} X$ and that the proposition is known to hold for $\rho$.  If $X$ is logarithmically flat over $S$ then so is $Y$, because $Y$ is a logarithmic modification of $X$.  Therefore \'etale $M_Y^{\rm gp}$-torsors descend uniquely along $\tau$ to $M_X^{\rm gp}$-torsors if and only if \'etale $M_Z^{\rm gp}$-torsors descend uniquely along $\tau\rho$ to $M_X^{\rm gp}$-torsors.  Indeed, if $L$ is an $M_Y^{\rm gp}$-torsor then it is the unique descent of $\rho^\ast L$  to $Y$ (since the proposition is known for $\rho$ and $Y$ is logarithmically flat over $S$), so $\rho^\ast L$ descends uniquely to $X$ if and only if $L$ descends uniquely to $X$.

Assume that $X$ is logarithmically flat over a valuative logarithmic scheme $S$ and $\tau : Y \to X$ is a logarithmic modification.  We wish to show that every $M_Y^{\rm gp}$-torsor descends uniquely to an $M_X^{\rm gp}$-torsor.  This is a local question in the \'etale topology of $X$, so we assume that $X$ is quasicompact and has a global chart by a sharp monoid $P$.  Let $\sigma$ be the rational polyhedral cone dual to $P$.  Since $X$ is quasicompact, every logarithmic modification of $X$ has a refinement that is induced from a subdivision of $\sigma$.  Every subdivision of $\sigma$  has a refinement that is an iterated subdivision along hyperplanes.  We can therefore find a logarithmic modification $\rho : Z \to Y$ such that both $\rho$ and $\tau\rho$ are iterated subdivisions along hyperplanes.  By the observation above, it is therefore sufficient to prove the proposition when $Y$ is an iterated subdivision of $X$ by hyperplanes.  By induction and another application of the observation, it suffices to assume $Y$ is a subdivision along a single hyperplane.

	We now assume $X$ has a global chart and that $\tau : Y \to X$ is a hyperplane subdivision of $X$ associated with some $\alpha \in \Gamma(X, \overnorm M_X^{\rm gp})$.  By Lemma~\ref{lem:root-sat}, there is a root stack $\rho : X' \to X$ and an integer $n > 0$ such that $n^{-1} \alpha \in \Gamma(X', \overnorm M_{X'}^{\rm gp})$ and $n^{-1} \alpha$ is saturated.  Let $\tau' : Y' \to X'$ and $\rho' : Y' \to Y$ be the morphisms induced by base change.  Note that $Y'$ is the logarithmic modification of $X'$ associated with the section $n^{-1} \alpha \in \Gamma(X', M_{X'}^{\rm gp})$.

	We know by Proposition~\ref{prop:root} that $M_{Y'}^{\rm gp}$-torsors descend uniquely to $Y$.  Therefore, by the observation from the beginning, our conclusion holds for $\tau$ if and only if it holds for $\tau \rho'$.  But we also know by Proposition~\ref{prop:sat-hyp} that every $M_{Y'}^{\rm gp}$-torsor descends uniquely to an $\tau'_\ast M_{Y'}$-torsor on $X'$.  On the other hand, both $X'$ and $Y'$ are integral over $S$ since $S$ is valuative and all morphisms to a valuative logarithmic scheme are integral, so $\tau'_\ast M_{Y'}^{\rm gp} = M_{X'}^{\rm gp}$ by Lemma~\ref{lem:log-str-pf}.  Finally, every $M_{X'}^{\rm gp}$-torsor descends uniquely to a $M_X^{\rm gp}$-torsor by Proposition~\ref{prop:root}, again.
\end{proof}

\subsubsection{Applications}

\begin{corollary} \label{cor:subdiv-curve}
	Suppose that $\tau : Y \to X$ is a subdivision of logarithmic curves over $S$.  Then every $M_Y^{\rm gp}$-torsor on $Y$ descends uniquely to an $M_X^{\rm gp}$-torsor on $X$.
\end{corollary}
\begin{proof}
	Since a subdivision of logarithmic curves $\tau : Y \to X$ can be presented \'etale-locally in $X$ as an iterated subdivision associated with a saturated section of $\overnorm M_X^{\rm gp}$, it suffices to consider only the case where $Y$ is the subdivision of $X$ associated with a single global section of $\overnorm M_X^{\rm gp}$.  In this case, Proposition~\ref{prop:sat-hyp} implies that every $M_Y^{\rm gp}$-torsor descends uniquely to a $\tau_\ast M_Y^{\rm gp}$-torsor on $X$.  By Lemma~\ref{lem:log-str-pf}, we have $\tau_\ast M_Y^{\rm gp} = M_X^{\rm gp}$, so we may conclude.
\end{proof}

\begin{corollary} \label{cor:log-etale-desc}
	Suppose that $X$ is a logarithmic curve over a logarithmic scheme $S$.  Then $\bLogPic(X/S)$ and $\LogPic(X/S)$ satisfy descent with respect to fppf covers and root stacks of $S$.  If $S$ is logarithmically flat then $\bLogPic(X/S)$ and $\LogPic(X/S)$ also satisfy descent with respect to logarithmic modifications.  In particular $\bLogPic(X/S)$ is a stack, and $\LogPic(X/S)$ is a sheaf, in the Kummer logarithmic flat topology on $S$.  If $S$ is logarithmically flat then $\bLogPic(X/S)$ is a stack, and $\LogPic(X/S)$ is a sheaf, in the small full logarithmic \'etale topology on $S$.
\end{corollary}
\begin{proof}
The Kummer logarithmic flat topology is generated by fppf covers and root stacks.  The logarithmic \'etale topology is generated by \'etale covers, logarithmic modifications, and root stacks of order prime to the characteristic \cite[Lemma~3.11]{nakayama2017logarithmic}.  Therefore it will suffice to show descent with respect to fppf covers, root stacks, and logarithmic modifications.

	We know from Propositions~\ref{prop:fppf},~\ref{prop:root} and~\ref{prop:log-flat} that $\mathrm B \logGm$ satisfies descent with respect to fppf covers, root stacks, and (if $S$ is logarithmically flat) logarithmic modifications.  Since $\bLogPic(X/S)$ is isomorphic to $\LogPic(X/S) \times \mathrm B\logGm$ \'etale-locally in $S$, the statements concerning $\LogPic(X/S)$ and $\bLogPic(X/S)$ are equivalent.  We will prove the statements involving $\bLogPic(X/S)$.

	Any fppf cover, root stack, or logarithmic modification of $S$ pulls back to an fppf cover, root stack, or logarithmic modification of $X$.  Since $X$ is logarithmically flat over $S$, it will be logarithmically flat if $S$ is.  We have just seen in the last paragraph (with $S$ replaced by $X$) that $M_X^{\rm gp}$-torsors satisfy descent along all of these kinds of covers.  Therefore all that is left is to see that boundedness of monodromy satisfies descent with respect to strict fppf covers, root stacks, and logarithmic modifications of $S$.

	By Proposition~\ref{prop:logpic-bdd-mono}, boundedness of monodromy can be verified at the valuative geometric points of $S$, so we are free to assume $S$ is a valuative geometric point.  It is immediate that boundedness of monodromy descents along fppf covers, since the condition only depends on the characteristic monoid.  Valuative geometric points have no nontrivial logarithmic modifications, so descent in that case is also trivial.  Finally, Lemma~\ref{lem:root-bdd} says that boundedness of monodromy descends along root stacks.
\end{proof}

The following corollary of Proposition~\ref{prop:sat-hyp} complements Theorem~\ref{thm:curve-val}:

\begin{corollary} \label{cor:r1}
Let $S$ be the spectrum of a valuation ring with generic point $\eta$ and let $X$ be a family of nodal curves over $S$.  Assume that $X_\eta$ and $\eta$ have been given logarithmic structures $M_{X_\eta}$ and $M_\eta$ making $X_\eta$ into a logarithmic curve over $\eta$, with $M_\eta$ valuative.  Let $M_X$ and $M_S$ be the maximal extensions, respectively, of $M_{X_\eta}$ and $M_\eta$ to $X$ and to $S$.  Let $j : X_\eta \to X$ be the inclusion of the generic fiber.  Then $R^1 j_\ast M_{X_\eta}^{\rm gp} = 0$.
\end{corollary}
\begin{proof}
	We wish to show that any $M_{X_\eta}^{\rm gp}$-torsor can be trivializated \'etale-locally on $X$.  Suppose that $L_\eta$ is an $M_{X_\eta}^{\rm gp}$-torsor.  Since the base has a valuative logarithmic structure, Corollary~\ref{cor:bdd-nbhd} implies that there is a logarithmic modification $\tilde X_\eta$ of $X_\eta$ on which $L_\eta$ is representable by an invertible sheaf $\tilde L_\eta$.  We can extend this logarithmic modification to a logarithmic modification $\tilde X$ of $X$.  Now, $\tilde L_\eta$ can be represented by a divisor $D_\eta$ in the smooth locus of $\tilde X_\eta$.  After replacing $\tilde X$ by a further logarithmic modification, we can assume that the closure of $D_\eta$ in $\tilde X$ is contained in the smooth locus.  Thus $\tilde L_\eta$ extends to an invertible sheaf on $\tilde X$.  Note that $\tilde X$ is an iterated subdivision of $X$ associated with saturated sections of $\overnorm M_X^{\rm gp}$.  Passing to the associated $M_{\tilde X}^{\rm gp}$-torsor, and applying Corollary~\ref{cor:subdiv-curve}, every $M_{\tilde X}^{\rm gp}$-torsor on $\tilde X$ descends uniquely to a $M_X^{\rm gp}$-torsor on $X$.  It follows that the $M_{X_\eta}^{\rm gp}$-torsor $L_\eta$ extends to an $M_X^{\rm gp}$-torsor $L$.  Since the $M_X^{\rm gp}$-torsor $L$ can be trivialized \'etale-locally, it follows that $L_\eta$ can be trivialized \'etale-locally in $X$ as well.  
\end{proof}

\numberwithin{theorem}{subsection}
\subsection{Degree}
\label{sec:degree}

Let $X$ be a proper, vertical logarithmic curve over $S$, whose underlying scheme is the spectrum of an algebraically closed field with a valuative logarithmic structure.  We construct a dashed arrow making diagram~\eqref{eqn:9} commute:
\numberwithin{equation}{subsection}
\begin{equation} \label{eqn:9} \vcenter{\xymatrix{
	H^0(X, \overnorm M_{X}^{\rm gp}) \ar[r] \ar[dr] & H^1(X, \mathcal O_X^\ast) \ar[r] \ar[d] & \LogPic(X) \ar@{-->}[d]^{\deg} \\
	& \mathbf Z^V \ar[r]^\Sigma & \mathbf Z
}} \end{equation}
Here, $V$ is the set of vertices of the dual graph of $X$, so $\mathbf Z^V$ is the N\'eron--Severi group of $X$, and the solid vertical arrow is the multidegree.  The map $\Sigma : \mathbf Z^V \to \mathbf Z$ is the sum.  We regard a section of $\overnorm M_X^{\rm gp}$ as a piecewise linear, $\overnorm M_S^{\rm gp}$-valued function on the dual graph of $X$, with integer slopes along the edges.  The diagonal map to $\mathbf Z^V$ sends such a function to tuple whose each component is the sum of the outgoing slopes from the corresponding vertex of the dual graph.  The composed map to $\mathbf Z$ therefore takes the sum of the outgoing slopes from every vertex; since each edge gets counted twice with opposite orientations ($X$ is vertical, so its dual graph is compact) the composition is zero.  This gives the vertical arrow on the image of $H^1(X, \mathcal O_X^\ast)$.

Given any logarithmic line bundle $L$ on $X$, Corollary~\ref{cor:bdd-nbhd} implies that there is a logarithmic modification $\tilde X$ of $X$ such that the restriction $\tilde L$ of $L$ to $\tilde X$ lies in the image of $H^1(\tilde X, \mathcal O_{\tilde X}^\ast)$.  We define the degree of $L$ to be the degree of any invertible sheaf $\tilde L$ representing $L$ on any logarithmic modification of $X$.  

This defines the degree at all \emph{valuative, geometric} points of $\bLogPic(X/S)$.  The following proposition extends the definition to families:

\setcounter{theorem}{\value{equation}}
\numberwithin{equation}{theorem}
\begin{proposition}
	The degree of a logarithmic line bundle well-defined and locally constant on $\bLogPic(X/S)$.
\end{proposition}
\begin{proof}
	Suppose that $L$ is a logarithmic line bundle on a logarithmic curve $X$ over $S$.  We can find a logarithmic modification $\tilde S$ and a semistable model $\tilde X$ of $X \mathop\times_S \tilde S$ such that $L$ can be represented by an invertible sheaf on $\tilde X$.  By construction, the total degree of this invertible sheaf is the degree of $\tilde L$, the pullback of $L$ to $\tilde X$.  Since the total degree of an invertible sheaf is locally constant, so is the total degree of $\tilde L$.  It follows that the degree, as defined above, is locally constant and well-defined on $\tilde S$.  But $\tilde S \to S$ is surjective, and every valuative geometric point of $S$ lifts to $\tilde S$, so the degree is also well-defined on $S$.  Furthermore, $\tilde S \to S$ is closed, so a function on $S$ that pulls back to a locally constant function on $\tilde S$ must have been locally constant on $S$.  Therefore the total degree of $L$ on $S$ is also locally constant.
\end{proof}

\begin{definition}
We write $\LogPic^d(X/S)$ for the open and closed substack of $\LogPic(X/S)$ parameterizing isomorphism classes of $\logGm$-torsors with bounded monodromy and degree $d$.
\end{definition}

\numberwithin{equation}{subsection}
\subsection{Quotient presentation}
\label{sec:log-av}

We construct a quotient presentation of $\LogPic^0(X/S)$.  Over the strata of $S$, this produces a logarithmic abelian variety with \emph{constant degeneration}, in the terminology of Kajiwara, Kato, and Nakayama \cite{kajiwara_kato_nakayama_2008a,kajiwara_kato_nakayama_2008b,kajiwara_kato_nakayama_2013,kajiwara_kato_nakayama_2015} (see Section~\ref{sec:strathom}).  Our presentation is inspired by Kajiwara's~\cite{kajiwara1993logarithmic}.

Let $X$ be a proper, vertical logarithmic curve over $S$, with connected geometric fibers.  Write $\Pic^{[0]}(X/S)$ for the multidegree zero part of $\Pic^0(X/S)$. 

\setcounter{theorem}{\value{equation}}
\numberwithin{equation}{theorem}
\begin{lemma} \label{lem:global-sections}
Let $X$ be a proper, vertical logarithmic curve over $S$ with connected geometric fibers.  Then the natural map $M_S^{\rm gp} \to \pi_\ast M_X^{\rm gp}$ is an isomorphism.
\end{lemma}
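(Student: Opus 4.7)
The plan is to run a five-lemma argument on the commutative diagram of \'etale sheaves on $S$
\begin{equation*}
\xymatrix{
0 \ar[r] & \mathcal O_S^\ast \ar[r] \ar[d]^-{\sim} & M_S^{\rm gp} \ar[r] \ar[d] & \overnorm M_S^{\rm gp} \ar[r] \ar[d] & 0 \\
0 \ar[r] & \pi_\ast \mathcal O_X^\ast \ar[r] & \pi_\ast M_X^{\rm gp} \ar[r] & \pi_\ast \overnorm M_X^{\rm gp}
}
\end{equation*}
The leftmost vertical arrow is an isomorphism because $\pi$ is proper with connected geometric fibers (so $\pi_\ast \mathcal O_X = \mathcal O_S$, and units match). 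The rightmost vertical arrow is injective because $\pi^{-1}\overnorm M_S^{\rm gp}$ embeds into $\overnorm M_X^{\rm gp}$ as the sheaf of locally constant functions, and $\pi$ has connected geometric fibers. Injectivity of the middle arrow then follows by chase; the substance of the argument is surjectivity.

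For surjectivity I would reduce to the case in which $S$ is a single geometric point $s$, so the claim becomes: every $\sigma \in H^0(X, M_X^{\rm gp})$ is pulled back from $M_{S,s}^{\rm gp}$. By Corollary~\ref{cor:GS}, the image $\overnorm\sigma \in H^0(X, \overnorm M_X^{\rm gp})$ corresponds to a piecewise linear function $h \colon \frak X \to \overnorm M_{S,s}^{\rm gp}$ with integer slopes, where $\frak X$ is the tropicalization of $X$. The existence of the lift $\sigma$ trivializes the underlying $\mathcal O_X^\ast$-torsor of $\overnorm\sigma$, so the line bundle $\mathcal O_X(-\overnorm\sigma)$ has multidegree zero on every component.

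By the discussion in Section~\ref{sec:degree} (the commutativity of diagram~\eqref{eqn:9}), the multidegree of $\mathcal O_X(-\overnorm\sigma)$ at the vertex $v$ of $\frak X$ is exactly the net outgoing slope of $h$ at $v$. Vanishing of this sum at every vertex is precisely the balancing condition of Definition~\ref{def:linear}, so $h$ is in fact \emph{linear}. Because $X$ is proper and vertical, $\frak X$ is compact; because its fibers are connected, $\frak X$ is connected. Lemma~\ref{lem:harmonic} then forces $h$ to be constant, so $\overnorm\sigma$ lies in the image of $\overnorm M_{S,s}^{\rm gp} \to \pi_\ast \overnorm M_X^{\rm gp}$. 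The standard diagram chase---lift the constant to some $\tau \in M_{S,s}^{\rm gp}$ and observe that the difference $\sigma - \tau$ lies in $\pi_\ast \mathcal O_X^\ast = \mathcal O_S^\ast \subset M_S^{\rm gp}$---then puts $\sigma$ in the image of $M_S^{\rm gp}$.

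The main obstacle I foresee is justifying the stalkwise reduction cleanly, since $M_X^{\rm gp}$ is neither torsion nor coherent and so $\pi_\ast M_X^{\rm gp}$ does not automatically satisfy proper base change. The remedy is to apply proper base change termwise to the defining exact sequence $0 \to \mathcal O_X^\ast \to M_X^{\rm gp} \to \overnorm M_X^{\rm gp} \to 0$, using cohomology and base change for $\mathcal O_X^\ast$ and the identification $t_\ast \overnorm M_X^{\rm gp} = \frak P_{\frak X}$ from Section~\ref{sec:tropicalizing-curves} for $\overnorm M_X^{\rm gp}$, and then to glue back via the five-lemma to recover the assertion for $\pi_\ast M_X^{\rm gp}$.
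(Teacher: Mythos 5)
Your proof is correct and follows essentially the same route as the paper: identify the image of a global section of $M_X^{\rm gp}$ in $\overnorm M_X^{\rm gp}$ with a piecewise linear function, show it is balanced (the paper phrases this as the section being generically a rational function, which is exactly your multidegree-zero observation), apply Lemma~\ref{lem:harmonic} on the compact connected tropicalization to conclude constancy, and finish with the five-lemma on the diagram of characteristic sequences. The only cosmetic difference is that the paper avoids your base-change worry by working over an atomic étale neighborhood where $\overnorm M_S$ and the dual graph are constant, rather than reducing to a geometric point.
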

\numberwithin{equation}{theorem}
\begin{proof}
This assertion is \'etale-local in $S$.  We can therefore assume that $S$ is atomic and that the dual graph of $X$ is constant on the closed stratum.  We denote it $\tropfont X$.  Now $H^0(X, \overnorm M_X^{\rm gp})$ is the group of piecewise linear function on $\tropfont X$ having integer slopes along the edges and taking values in $\overnorm M_S^{\rm gp}$.  Since sections of $M_X^{\rm gp}$ correspond generically on $X$ to rational functions, the associated piecewise linear function on $\tropfont X$ of such a section will be \emph{linear}.  That is, the sum of the outgoing slopes along the edges incident to any vertex of $\tropfont X$ will be zero.

On the other hand $\tropfont X$ is compact so every linear function on $\tropfont X$ is constant by Lemma~\ref{lem:harmonic}.  Therefore the section of $\overnorm M_X^{\rm gp}$ induced from any section of $M_X^{\rm gp}$ lies in the image $\overnorm M_S^{\rm gp}$, which is to say that there is a diagonal arrow as shown in the commutative diagram of exact sequences~\eqref{eqn:11}:
\begin{equation} \label{eqn:11} \vcenter{\xymatrix{
0 \ar[r] & \mathcal O_S^\ast \ar[r] \ar[d] & M_S^{\rm gp} \ar[r] \ar[d] & \overnorm M_S^{\rm gp} \ar[r] \ar[d] & 0 \\
0 \ar[r] & \pi_\ast \mathcal O_X^\ast \ar[r] & \pi_\ast M_X^{\rm gp} \ar[r] \ar[ur] & \pi_\ast \overnorm M_X^{\rm gp} \ar[r] & R^1 \pi_\ast \mathcal O_X^\ast
}} \end{equation}
As $X$ is proper and flat over $S$ with reduced, connected fibers, the map $\mathcal O_S^\ast \to \pi_\ast \mathcal O_X^\ast$ is an isomorphism.  We may therefore conclude by the $5$-lemma, applied to the induced diagram~\eqref{eqn:43}:
\begin{equation} \label{eqn:43} \vcenter{\xymatrix{
0 \ar[r] & \mathcal O_S^\ast \ar[r] \ar[d]^\wr & M_S^{\rm gp} \ar[r] \ar[d] & \overnorm M_S^{\rm gp} \ar[r] \ar[d]^\wr & 0 \\
0 \ar[r] & \pi_\ast \mathcal O_X^\ast \ar[r] & \pi_\ast M_X^{\rm gp} \ar[r] & \pi_\ast \overnorm M_X^{\rm gp} 
}} \end{equation}
\end{proof}

\begin{proposition} \label{prop:kernel}
The map $R^1 \pi_\ast \pi^\ast M_S^{\rm gp} \to R^1 \pi_\ast M_X^{\rm gp}$ induces a surjection from the multidegree~$0$ part onto the degree~$0$ part, with kernel~$H_1(\tropfont X)$.
\end{proposition}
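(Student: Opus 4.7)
The plan is to extract the proposition from the long exact sequence associated to the short exact sequence
\begin{equation*}
0 \to \pi^\ast M_S^{\rm gp} \to M_X^{\rm gp} \to \overnorm M_{X/S}^{\rm gp} \to 0
\end{equation*}
of sheaves on $X$, where $\overnorm M_{X/S}^{\rm gp}$ denotes the quotient.  By Theorem~\ref{thm:FKato}, $\overnorm M_{X/S}^{\rm gp}$ is supported at the nodes of $X$ with stalk $\mathbf Z$ at each node; it is therefore a skyscraper on the fibers of $\pi$, so $R^1\pi_\ast \overnorm M_{X/S}^{\rm gp} = 0$ and $\pi_\ast \overnorm M_{X/S}^{\rm gp}$ is canonically the sheaf $\mathscr E(\frak X)$ generated by the edges of the tropicalization.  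Lemma~\ref{lem:global-sections} forces the connecting map $\pi_\ast M_X^{\rm gp} \to \pi_\ast \overnorm M_{X/S}^{\rm gp}$ to vanish, since every global section of $M_X^{\rm gp}$ lifts from $\pi^\ast M_S^{\rm gp}$ and therefore dies in the quotient.  The long exact sequence thus collapses to
\begin{equation*}
0 \to \mathscr E(\frak X) \to R^1\pi_\ast \pi^\ast M_S^{\rm gp} \to R^1\pi_\ast M_X^{\rm gp} \to 0.
\end{equation*}

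The heart of the argument is to identify the composition $\mathscr E(\frak X) \hookrightarrow R^1\pi_\ast \pi^\ast M_S^{\rm gp} \to \bPic(X/S) \xrightarrow{\textrm{mdeg}} \mathbf Z^V$ with the tropical boundary $\partial : \mathscr E \to \mathscr V$ of the complex~\eqref{eqn:34}.  Indeed, the connecting homomorphism sends an oriented edge $e$ to the line bundle obtained by locally lifting the stalk generator $\alpha \in \overnorm M_{X/S}^{\rm gp}$ at the corresponding node to $M_X^{\rm gp}$; unwinding the local description of $\overnorm M_{X/S}^{\rm gp}$ at a node via Lemma~\ref{lem:GS} shows that this line bundle is the twist by the component on the selected side of the node, and its multidegree records a $+1$ and a $-1$ at the two endpoints of $e$, i.e.~$\partial(e)$.

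Granting this identification, the intersection of $\mathscr E(\frak X)$ with the multidegree-zero locus in $R^1\pi_\ast \pi^\ast M_S^{\rm gp}$ is $\ker(\partial) = H_1(\frak X)$ by Corollary~\ref{cor:Q}, which settles the kernel assertion.  For surjectivity, lift any degree-zero class $P \in R^1\pi_\ast M_X^{\rm gp}$ to some $\tilde P \in R^1\pi_\ast \pi^\ast M_S^{\rm gp}$ via the surjection above; the multidegree of $\tilde P$ lies in the kernel of the summation $\mathbf Z^V \to \mathbf Z$ (because $P$ has degree zero), and by Corollary~\ref{cor:Q} the image of $\partial : \mathscr E \to \mathscr V$ is exactly this kernel, since $\mathrm{coker}(\partial) = H_0(\frak X) = \mathbf Z$ for connected $\frak X$.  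Subtracting the appropriate element of $\mathscr E \subset R^1\pi_\ast \pi^\ast M_S^{\rm gp}$ from $\tilde P$ yields a multidegree-zero preimage of $P$.  The main obstacle is the explicit edge-to-boundary identification in the second paragraph, which requires careful bookkeeping with Lemma~\ref{lem:GS} and a coherent orientation convention; the remainder is a formal consequence of the tropical exact sequences of Section~\ref{sec:trop-deg}.
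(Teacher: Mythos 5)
Your proposal is correct and follows essentially the same route as the paper: both start from the short exact sequence $0 \to \pi^\ast M_S^{\rm gp} \to M_X^{\rm gp} \to \overnorm M_{X/S}^{\rm gp} \to 0$, use Lemma~\ref{lem:global-sections} and the fact that $\overnorm M_{X/S}^{\rm gp}$ is concentrated at the nodes to collapse the long exact sequence to $0 \to \mathbf Z^E \to R^1\pi_\ast \pi^\ast M_S^{\rm gp} \to R^1\pi_\ast M_X^{\rm gp} \to 0$, and then identify the composite $\mathbf Z^E \to \mathbf Z^V$ with the simplicial boundary of the dual graph to read off the kernel $H_1(\frak X)$ and the surjectivity onto degree~$0$. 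Your unwinding of the "explicit calculation" via Lemma~\ref{lem:GS} supplies exactly the detail the paper leaves implicit.
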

\begin{proof}
We may assume without loss of generality that $X$ has connected geometric fibers over $S$.  We use the exact sequence~\eqref{eqn:15}
\begin{equation} \label{eqn:15}
0 \to \pi^\ast M_S^{\rm gp} \to M_X^{\rm gp} \to \overnorm M_{X/S}^{\rm gp} \to 0
\end{equation}
and its associated long exact sequence in the top row of~\eqref{eqn:16}:  
\begin{equation} \label{eqn:16} \vcenter{\xymatrix{
0 \ar[r] & \pi_\ast \overnorm M_{X/S}^{\rm gp} \ar[r] \ar[d]^\wr & \mathrm R^1 \pi_\ast \pi^\ast M_S^{\rm gp} \ar[r] \ar[d] & \mathrm R^1 \pi_\ast M_X^{\rm gp} \ar[r] \ar@{-->}[d] & 0 \\
& \mathbf Z^E \ar[r] & \mathbf Z^V \ar[r] & \mathbf Z \ar[r] & 0
}} \end{equation}
Here $\mathbf Z^V$ is the sheaf of abelian groups freely generated by the irreducible components of the fibers, and $\mathbf Z^E$ is the sheaf whose stalks are freely generated by the nodes.  When a node is smoothed in $X$, the corresponding generator of the stalk of $\mathbf Z^E$ maps to zero under the generization map.

	Note that the first map in the first row of~\eqref{eqn:15} is injective because $M_S^{\rm gp} \to \pi_\ast M_X^{\rm gp}$ is an isomorphism by Lemma~\ref{lem:global-sections}.  A section of $\mathrm R^1 \pi_\ast \pi^\ast M_S^{\rm gp}$ induces isomorphism classes of line bundles on the components of $X$ and therefore has a well-defined multidegree.  This gives the vertical homomorphism in the middle term of diagram~\eqref{eqn:16}.  

By an explicit calculation, the map $\pi_\ast \overnorm M_{X/S}^{\rm gp} \to R^1 \pi_\ast \pi^\ast M_S^{\rm gp}$ commutes with the boundary map $\mathbf Z^E \to \mathbf Z^V$ computing the homology of the dual graph of $X$.  Therefore we recover the degree homomorphism by passing to cokernels, as indicated by the dashed arrow in~\eqref{eqn:16}.

	We write $\mathrm R^1 \pi_\ast (\pi^\ast M_S^{\rm gp})^{[0]}$ for the multidegree~$0$ part of $\mathrm R^1 \pi_\ast (\pi^\ast M_S^{\rm gp})$ and $\mathrm R^1 \pi_\ast (M_X^{\rm gp})^0$ for the degree zero part of $\mathrm R^1 \pi_\ast M_X^{\rm gp}$ (in other words, the kernels of the center and right vertical arrows of diagram~\eqref{eqn:16}).  It follows from the snake lemma that the map
\begin{equation} \label{eqn:53}
\mathrm R^1 \pi_\ast (\pi^\ast M_S^{\rm gp})^{[0]} \to \mathrm R^1 \pi_\ast (M_X^{\rm gp})^0
\end{equation}
is surjective with kernel $H_1(\tropfont X)$.
\end{proof}

\begin{corollary} \label{cor:quotient}
Let $X$ be a proper, vertical logarithmic curve over $S$.  Let $R^1 \pi_\ast (\pi^\ast \logGm)$ denote the sheaf on logarithmic schemes over $S$ whose value on a logarithmic scheme $T$ over $S$ is $R^1 \pi_\ast \pi^\ast M_T^{\rm gp}$ where $\pi$ abusively denotes the projection $X_T \to T$.  There is an exact sequence
\begin{equation*}
0 \to H_1(\tropfont X) \to {R^1 \pi_\ast (\pi^\ast \logGm)^{[0]}}^\dagger \to \LogPic^0(X/S) \to 0
\end{equation*}
where ${R^1 \pi_\ast (\pi^\ast \logGm)^{[0]}}^\dagger$ is the bounded monodromy, multidegree~$0$ subsheaf of $R^1 \pi_\ast (\pi^\ast \logGm)$.
\end{corollary}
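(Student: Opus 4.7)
The plan is to upgrade the exact sequence of Proposition~\ref{prop:kernel}, which was stated for a single geometric fiber, to one of sheaves on the category of logarithmic schemes over $S$, and then to cut down to the multidegree zero / bounded monodromy / degree zero subsheaves. To set things up, for each logarithmic scheme $T \to S$ I would consider the short exact sequence
\begin{equation*}
0 \to \pi^\ast M_T^{\rm gp} \to M_{X_T}^{\rm gp} \to \overnorm M_{X_T/T}^{\rm gp} \to 0
\end{equation*}
from~\eqref{eqn:15}, noting that $\overnorm M_{X_T/T}^{\rm gp}$ is supported in dimension $0$ (at the nodes) and is the skyscraper sheaf identified with the sheaf $\mathbf{Z}^E$ freely generated by the edges of the tropicalization $\frak X_T$. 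By Lemma~\ref{lem:global-sections} (applied to $T$) the map $M_T^{\rm gp} \to \pi_\ast M_{X_T}^{\rm gp}$ is an isomorphism, and since $R^1 \pi_\ast \overnorm M_{X_T/T}^{\rm gp} = 0$ the Leray long exact sequence yields the short exact sequence
\begin{equation*}
0 \to \mathbf{Z}^E \to R^1 \pi_\ast (\pi^\ast M_T^{\rm gp}) \to R^1 \pi_\ast M_{X_T}^{\rm gp} \to 0
\end{equation*}
on $T$. Taking global sections over $T$ and letting $T$ vary over logarithmic $S$-schemes gives an exact sequence of presheaves, and the third map is a surjection of sheaves because the same is true on each geometric fiber.

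Next I would restrict to the appropriate subsheaves. An element of $R^1 \pi_\ast (\pi^\ast \logGm)$ has a well-defined multidegree via its components (this is what the diagonal map in diagram~\eqref{eqn:16} computes), and an element of $R^1 \pi_\ast M_X^{\rm gp}$ has a degree via Section~\ref{sec:degree}. By the commutativity observed in the proof of Proposition~\ref{prop:kernel}, the boundary map $\mathbf{Z}^E \to \mathbf{Z}^V$ computing the homology of the dual graph fits in the kernel description, so the preimage of the multidegree zero part intersects $\mathbf{Z}^E$ exactly in $\ker(\mathbf{Z}^E \to \mathbf{Z}^V) = H_1(\frak X)$. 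The bounded monodromy dagger on the left is, by definition, the preimage of $\LogPic(X/S) \subset R^1 \pi_\ast M_X^{\rm gp}$, so on the middle term of the sequence the bounded monodromy and multidegree zero conditions intersect compatibly and the kernel of
\begin{equation*}
{R^1 \pi_\ast (\pi^\ast \logGm)^{[0]}}^\dagger \to \LogPic^0(X/S)
\end{equation*}
is identified with $H_1(\frak X)$.

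The remaining point is surjectivity of this restricted sheaf map. Given a section $P$ of $\LogPic^0(X_T/T)$, étale-locally on $T$ we may lift $P$ to a section $\tilde P$ of $R^1 \pi_\ast (\pi^\ast M_T^{\rm gp})$ by sheaf-level surjectivity. This lift has bounded monodromy by construction (since its image is $P$), so it lies in the dagger part, but its multidegree need not be zero; however, since $P$ has degree zero, the multidegree of $\tilde P$ lies in $\ker(\mathbf{Z}^V \to \mathbf{Z})$, which is the image of $\mathbf{Z}^E$ under the boundary map. We can therefore subtract a class in $\mathbf{Z}^E \subset R^1 \pi_\ast (\pi^\ast M_T^{\rm gp})$ to put $\tilde P$ into multidegree zero without changing its image in $\LogPic^0$.

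The main obstacle, as I see it, is keeping straight the three conditions (multidegree, bounded monodromy, degree) and checking they interact compatibly at the sheaf level --- in particular verifying that the boundary map $\mathbf{Z}^E \to R^1 \pi_\ast (\pi^\ast \logGm)$ produces elements with bounded monodromy (which it does because its image consists of classes pulled back from logarithmic modifications of $S$ that trivialize them on a suitable model, per Corollary~\ref{cor:bdd-nbhd}) and that the degree map on $\LogPic^0$ is compatible with the multidegree map on $R^1 \pi_\ast (\pi^\ast \logGm)$, which is exactly the content of diagram~\eqref{eqn:16} extended across logarithmic modifications and semistable models as in the discussion preceding Section~\ref{sec:log-av}.
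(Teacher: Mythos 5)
Your proposal is correct and takes essentially the same route as the paper, which states this corollary without a separate proof as an immediate consequence of Proposition~\ref{prop:kernel}: you sheafify the long exact sequence of~\eqref{eqn:15} over logarithmic $T/S$, identify the kernel $\mathbf Z^E$ meeting the multidegree-zero part in $H_1(\frak X)$, and observe that the bounded monodromy conditions on source and target match (via Lemma~\ref{lem:bdd-ind}) while $H_1(\frak X)\subset\mathbf Z^E$ automatically lies in the dagger part. The only cosmetic remark is that bounded monodromy of $\partial(e)$ follows most directly from Lemma~\ref{lem:bdd-ind} rather than the heavier Corollary~\ref{cor:bdd-nbhd}, but either suffices.
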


\numberwithin{equation}{subsection}
\subsection{Semiabelian structure}
\label{sec:strathom}

We assume that $X$ is a family of logarithmic curves over $S$ with constant degeneracy.  That is, the characteristic monoid of $S$ is constant, as is the dual graph of $X$.  Let $X^\nu$ be the normalization of the nodes of $X$.  We have an exact sequence~\eqref{eqn:12}
\begin{equation} \label{eqn:12} 
0 \to T  \to \Pic^{[0]}(X/S) \to \Pic^{[0]}(X^\nu/S) \to 0
\end{equation}
where $T$ is the torus $\Hom(H_1(\tropfont X), \Gm)$ and $\tropfont X$ is the dual graph of $X$.  

We obtain a similar sequence with $\pi^\ast M_S^{\rm gp}$ in place of $\mathcal O_X^\ast$.  The short exact sequence~\eqref{eqn:28} yields the long exact sequence~\eqref{eqn:29}:
\begin{gather} 
	\label{eqn:28}
0 \to \mathcal O_{X^\nu}^\ast \to \nu^\ast \pi^\ast M_S^{\rm gp} \to \nu^\ast \pi^\ast \overnorm M_S^{\rm gp} \to 0
\\
	\label{eqn:29} 
\pi_\ast \nu_\ast \nu^\ast \pi^\ast M_S^{\rm gp} \to   \pi_\ast \nu_\ast \nu^\ast \pi^\ast \overnorm M_S^{\rm gp} \to   R^1 (\pi_\ast \nu_\ast) \mathcal O_{X^\nu}^\ast \to   R^1 (\pi_\ast \nu_\ast) \nu^\ast \pi^\ast M_S^{\rm gp} \to   R^1 (\pi_\ast \nu_\ast) \nu^\ast \pi^\ast \overnorm M_S^{\rm gp}
\end{gather}
As $M_S^{\rm gp} \to \overnorm M_S^{\rm gp}$ is surjective, so is $\pi_\ast \nu_\ast \nu^\ast \pi^\ast M_S^{\rm gp} \to \pi_\ast \nu_\ast \nu^\ast \pi^\ast \overnorm M_S^{\rm gp}$.  Furthermore, the components of $X^\nu$ are irreducible curves over $S$, so they have no first cohomology valued in $\overnorm M_S^{\rm gp}$ because it is torsion-free and constant on the fibers.  The sequence therefore reduces to an isomorphism between $R^1(\pi_\ast \nu_\ast) \mathcal O_{X^\nu}^\ast$ an $R^1(\pi_\ast \nu_\ast)\nu^\ast \pi^\ast M_S^{\rm gp}$.  That is, we have an isomorphism between $\Pic(X^\nu/S)$ and the functor $T \mapsto \Gamma(T, R^1 \pi_\ast \nu_\ast \nu^\ast \pi^\ast M_T^{\rm gp})$ on logarithmic schemes over~$S$.

By pullback, we therefore obtain a morphism~\eqref{eqn:95}:
\begin{equation} \label{eqn:95}
R^1 \pi_\ast \pi^\ast M_S^{\rm gp} \to R^1 \pi_\ast \nu_\ast \nu^\ast \pi^\ast M_S^{\rm gp} \simeq \Pic(X^\nu / S)
\end{equation}
The kernel of this morphism consists of those $M_S^{\rm gp}$-torsors on $X$ that are trivial when restricted to $X^\nu$.  Such a torsor is specified by transition functions in $M_S^{\rm gp}$ along the nodes of $X$ and the kernel may therefore be identified with $T^{\log} = \Hom(H_1(\tropfont X), \logGm)$.  

Passing to the multidegree~$0$ parts of $R^1 \pi_\ast \pi^\ast M_S^{\rm gp}$ and $\Pic(X^\nu / S)$, we get an exact sequence~\eqref{eqn:30}:
\begin{equation} \label{eqn:30}
0 \to \Hom(H_1(\tropfont X), \logGm) \to R^1 \pi_\ast (\pi^\ast M_S^{\rm gp})^{[0]} \to \Pic^{[0]}(X^\nu / S) \to 0
\end{equation}

\subsection{Local description of the homology action}
\label{sec:lochom}

We retain the assumptions of Section~\ref{sec:strathom} and permit further \'etale localization in $S$.

Because we have assumed the logarithmic structure of $S$ is constant, $\overnorm M_S^{\rm gp}$ is a constant sheaf of finitely generated free abelian groups.  Working locally in $S$, we can assume that $M_S^{\rm gp} \to \overnorm M_S^{\rm gp}$ is split, and therefore that $M_S^{\rm gp} \simeq \mathcal O_S^\ast \times \overnorm M_S^{\rm gp}$.  We fix one such splitting $m : \overnorm M_S^{\rm gp} \to M_S^{\rm gp}$ splitting the surjection $M_S^{\rm gp} \to \overnorm M_S^{\rm gp}$.  Using this we get a splitting $\pi^\ast M_S^{\rm gp} = \mathcal O_X^\ast \times \pi^\ast \overnorm M_S^{\rm gp}$, and therefore also a splitting~\eqref{eqn:101}:
\begin{equation} \label{eqn:101}
R^1 \pi_\ast \pi^\ast \logGm \simeq \Pic(X/S) \times \Hom(H_1(\tropfont X), \ologGm)
\end{equation}
We have used the canonical identification $R^1 \pi_\ast \pi^\ast \ologGm \simeq \Hom(H_1(\tropfont X), \ologGm)$.

Our goal in this section is to explain the map $H_1(\tropfont X) \to R^1 \pi_\ast \pi^\ast (\logGm)^{[0]}$ from Corollary~\ref{cor:quotient}, which is induced from $\mathbf Z^E \to R^1 \pi_\ast \pi^\ast \logGm$, in terms of this splitting.  Given $\alpha \in \Gamma(X, M_{X/S}^{\rm gp}) = \mathbf Z^E$, we write $\pi^\ast M_S(\alpha)$ for its image in $R^1 \pi_\ast \pi^\ast \logGm$.

We work out the pullback of $\pi^\ast M_S(\alpha)$ to the normalization $X^\nu$ of $X$ along its nodes.  We let $\tropfont X^\nu$ be the union of the stars of $\tropfont X$.  In a sense that we do not make precise here, this is the tropicalization of $X^\nu$ when $X^\nu$ is given the logarithmic structure pulled back from $X$.  Every section $\alpha$ of $\mathbf Z^E = \Gamma(X, \overnorm M_{X/S}^{\rm gp})$ can be lifted to a section $\tilde\alpha$ of $\overnorm M_{X^\nu}^{\rm gp}$, which can also be regarded as a piecewise linear function on $\tropfont X^\nu$.  Then $\nu^\ast \pi^\ast M_S^{\rm gp}(\alpha)$ is represented by the line bundle $\mathcal O_{X^\nu}(\tilde\alpha)$.  Note that the isomorphism class of $\mathcal O_{X^\nu}(\tilde\alpha)$ depends only on $\alpha$ because $\tilde\alpha$ is uniquely determined up to the addition of a constant from $\overnorm M_S^{\rm gp}$ on each component, and the addition of a constant only changes $\mathcal O_{X^\nu}(\tilde\alpha)$ by a line bundle pulled back from $S$. 

Suppose that $X_0 \subset X^\nu$ is a component and $\tropfont X_0$ is the corresponding component of $\tropfont X^\nu$.  Then~\eqref{eqn:97} computes $\mathcal O_{X_0}(\tilde\alpha)$:
\begin{equation} \label{eqn:97}
\mathcal O_{X_0}(\tilde\alpha) = \mathcal O_{X_0}(\sum \alpha_e D_e)
\end{equation}
The sum is taken over the edges $e$ of $\tropfont X_0$, with $D_e$ denoting the node of $X$ corresponding to $e$, and $\alpha_e$ denoting the slope of $\alpha$ along $e$ when $e$ is oriented away from the central vertex of $\tropfont X_0$.  In order to understand $\pi^\ast M_S(\alpha)$, we will need to see how the line bundles $\mathcal O_{X_i}(\tilde\alpha)$ on the components of $X^\nu$ are glued to one another.

Recall that $m : \overnorm M_S^{\rm gp} \to M_S^{\rm gp}$ denotes a fixed splitting.  For each $\delta \in \overnorm M_S^{\rm gp}$, we write $m^{\delta} : \mathcal O_X \to \mathcal O_X(\delta)$ for map sending $\lambda \in \mathcal O_X$ to $\lambda  m(\delta) \in \mathcal O_X(\delta)$.  Suppose that $D$ is a node of $X$ joining components $X_0$ and $X_1$ and let $e$ be the corresponding edge of $\tropfont X$.  Recall that we have~\eqref{eqn:103},
\begin{equation}
\begin{gathered} \label{eqn:103}
\mathcal O_{X_0}(\tilde\alpha) \big|_D = \mathcal O_D(\tilde\alpha(0)) \otimes \mathcal O_{X_0}(\alpha_e D) \big|_D \\
\mathcal O_{X_1}(\tilde\alpha) \big|_D = \mathcal O_D(\tilde\alpha(1)) \otimes \mathcal O_{X_1}(-\alpha_e D) \big|_D 
\end{gathered}
\end{equation}
where $\alpha_e$ is the slope of $\alpha$ along the edge $e$ of $\tropfont X$ corresponding to $D$, oriented from $0$ to $1$, and $\tilde\alpha(i) \in \overnorm M_S^{\rm gp}$ is the value of $\tilde\alpha$ on the vertex $i$ of $\tropfont X$.  Using the trivializations $m$, we obtain an isomorphism:
\begin{equation} \label{eqn:108}
m^{\alpha_e \delta} : \mathcal O_{X_1}(-\alpha_e D) \big|_{D} \xrightarrow{\sim} \mathcal O_{X_1}(-\alpha_e D + \alpha_e \delta) = \mathcal O_{X_0}(\alpha_e D) \big|_D
\end{equation}
Combined with the trivializations $m^{\tilde\alpha(0)}$ and $m^{\tilde\alpha(1)}$ of $\mathcal O_D(\tilde\alpha(0))$ and $\mathcal O_D(\tilde\alpha(1))$, we obtain an isomorphism~\eqref{eqn:124}:
\begin{equation} \label{eqn:124}
m^{\tilde\alpha(1) - \tilde\alpha(0) - \alpha_e \delta} : \mathcal O_{X_0}(\tilde\alpha) \big|_D \xrightarrow{\sim} \mathcal O_{X_1}(\tilde\alpha) \big|_D
\end{equation}
We glue $\mathcal O_{X_0}(\tilde\alpha)$ to $\mathcal O_{X_1}(\tilde\alpha)$ along $D$ by this isomorphism and repeat the same process for each edge of $\tropfont X$ to produce a line bundle $L(\alpha, m)$ on $X$.  Note that $L(\alpha,m)$ depends on $\tilde\alpha$ only up to a canonical isomorphism determined by the trivialization $m$, so we omit the dependence on $\tilde\alpha$ from the notation.

\setcounter{theorem}{\value{equation}}
\begin{remark}
We could have chosen $\tilde\alpha$ canonically to take the value $0$ at every vertex of $\tropfont X^\nu$, but the added flexibility will be useful in the proof of Proposition~\ref{prop:formula}.
\end{remark}

\begin{remark} 
If $\tilde\alpha$ were actually well-defined on $X_{01} = X_0 \cup_D X_1$ then $\tilde\alpha(1) - \tilde\alpha(0) = \alpha_e \delta_e$, where $\delta_e$ is the length of $e$.  Then $m^{\tilde\alpha(1)-\tilde\alpha(0)} = m^{\alpha_e \delta_e}$, so the isomorphisms above agree with the canonical identification $\mathcal O_{X_0}(\tilde\alpha) \big|_D = \mathcal O_{X_{01}}(\tilde\alpha) \big|_D = \mathcal O_{X_1}(\tilde\alpha) \big|_D$.
\end{remark}

\numberwithin{equation}{theorem}
\begin{proposition} \label{prop:formula}
The isomorphism~\eqref{eqn:101} sends $\pi^\ast M_S(\alpha)$ to $(L(\alpha, m), -\partial(\alpha))$.
\end{proposition}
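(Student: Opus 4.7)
The plan is to compute the image of $\pi^\ast M_S(\alpha)$ separately under the two projections of the splitting~\eqref{eqn:101} and verify each matches the claimed description. The key input is the explicit realization of $\pi^\ast M_S(\alpha)$ via the boundary map $\pi_\ast \overnorm M_{X/S}^{\rm gp} \to R^1\pi_\ast \pi^\ast M_S^{\rm gp}$ coming from the short exact sequence $0 \to \pi^\ast M_S^{\rm gp} \to M_X^{\rm gp} \to \overnorm M_{X/S}^{\rm gp} \to 0$: on an \'etale neighborhood of a node $D_e$, Theorem~\ref{thm:FKato}~\ref{item:node} provides local sections $\alpha_{\text{loc}}, \beta_{\text{loc}} \in M_X^{\rm gp}$ with $\alpha_{\text{loc}}+\beta_{\text{loc}} = \delta_e$. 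The section $\alpha \in \mathbf Z^E$, with slope $\alpha_e$ at $D_e$, lifts to $\alpha_e \alpha_{\text{loc}}$ on the branch through the vertex $1$ and to $-\alpha_e\beta_{\text{loc}}$ on the branch through the vertex $0$. The resulting transition cocycle across $D_e$ is $\alpha_e(\alpha_{\text{loc}}+\beta_{\text{loc}}) = \alpha_e \delta_e \in \pi^\ast M_S^{\rm gp}$.

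For the second factor, I would project along $\pi^\ast M_S^{\rm gp} \to \pi^\ast \overnorm M_S^{\rm gp}$ to land in $R^1\pi_\ast \pi^\ast \ologGm = \Hom(H_1(\frak X), \ologGm)$. Given a loop $\gamma \in H_1(\frak X)$ represented as a sum of oriented edges, the monodromy of the resulting torsor around $\gamma$ is obtained by summing the transition contributions $\alpha_e \delta_e$ over the edges traversed, which equals $\sum_{e\in\gamma}\alpha_e \ell(e)$. By Lemma~\ref{lem:pairing}, this is exactly $\partial(\alpha).\gamma$, giving the second component.

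For the first factor, I would project the cocycle through the splitting $m$ onto $\mathcal O^\ast$. Because $m$ is a section of $M_S^{\rm gp} \to \overnorm M_S^{\rm gp}$, each transition cocycle $\alpha_e \delta_e$ decomposes as $m(\alpha_e \delta_e)$ together with the pure $\overnorm M_S^{\rm gp}$-part already accounted for above; thus the induced $\mathcal O^\ast$-cocycle across $D_e$ is multiplication by $m(\alpha_e\delta_e)^{-1} = m^{-\alpha_e\delta_e}$. Combined with the already-computed formula~\eqref{eqn:97} on the normalization (which yields $\mathcal O_{X_i}(\tilde\alpha)$ on each component $X_i$, independent of the choice of lift $\tilde\alpha$), this is precisely the gluing data defining $L(\alpha, m)$. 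Since $L(\alpha, m)$ was built from these very pieces, the first component is $L(\alpha, m)$.

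The main obstacle will be bookkeeping of signs and orientations: the identifications require matching the orientation on each edge $e$ used to define the slope $\alpha_e$ with the orientation used to orient the branches in the construction of $L(\alpha, m)$, and to check that replacing $\tilde\alpha$ by a different local lift changes the gluing data only by a coboundary. Once these are fixed consistently, the verification reduces to the local computation at each node that the splitting $m$ decomposes the canonical transition cocycle $\alpha_e \delta_e$ into the $\mathcal O^\ast$-part $m^{-\alpha_e \delta_e}$ and the $\overnorm M_S^{\rm gp}$-part $\alpha_e \delta_e$, which is immediate from the definition of $m$.
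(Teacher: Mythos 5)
Your proposal is correct in outline and rests on the same two pillars as the paper's proof: Lemma~\ref{lem:pairing} for the $\Hom(H_1(\frak X),\ologGm)$-component, and the decomposition of the transition datum $\alpha_e\delta_e$ under the splitting $m$ for the $\Pic(X/S)$-component. The difference is organizational. You compare \v{C}ech cocycles node by node on $X$ itself, which forces you to confront exactly the obstacle you name at the end: the local trivializations of $\pi^\ast M_S(\alpha)$ and of $L(\alpha,m)$ both depend on a choice of lift $\tilde\alpha$ that exists only on the normalization (well-defined up to an $\overnorm M_S^{\rm gp}$-constant per component), and the "$\mathcal O^\ast$-cocycle $m^{-\alpha_e\delta_e}$" is really the gluing isomorphism $\mathcal O_{X_0}(\alpha_e D)\big|_D \to \mathcal O_{X_1}(-\alpha_e D)\big|_D$ between restrictions of \emph{different} line bundles, not multiplication by a unit — so the sign/orientation and coboundary bookkeeping is genuinely the crux. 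The paper dissolves this by passing to the \'etale cover $\rho:\tilde X\to X$ corresponding to the universal cover of $\frak X$: since $H_1(\tilde{\frak X})=0$, the lift $\tilde\alpha$ exists globally on $\tilde X$, both torsors become honestly trivialized there, and the comparison collapses to a single monodromy computation per generator $\gamma$ of $\pi_1(\frak X)$, namely that the descent datum $\gamma^\ast\mathcal O_{\tilde X}(\tilde\alpha)\simeq\mathcal O_{\tilde X}(\tilde\alpha)\otimes\mathcal O_X(\partial(\alpha).\gamma)$ maps under $m$ to $(m^{-\partial(\alpha).\gamma},\partial(\alpha).\gamma)$, matching the product $\prod_e(m^{-\delta_e\alpha_e})^{\gamma_e}$ of your local gluings around the loop. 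So your route works, but expect the local version to cost more care than the sketch suggests; the universal-cover formulation is the cleaner way to discharge the coboundary-independence you defer.
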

\begin{proof}
The second component of the formula is implied by Lemma~\ref{lem:pairing}.  It can also be deduced from the argument below.

Let $\tilde {\tropfont X}$ be the universal cover of $\tropfont X$ and let $\rho : \tilde X \to X$ be the corresponding \'etale cover.  The fundamental group of $\tropfont X$ acts by deck transformations on $\tilde X$.  Since $H_1(\tilde{\tropfont X}) = 0$, we can find a lift of $\tilde\alpha$ to $\overnorm M_{\tilde X}$.  Without loss of generality, we can assume that the function on $X^\nu$ constructed before the statement of the proposition is induced from this $\tilde\alpha$ by restriction along some embedding $X^\nu \subset \tilde X$.

By construction $\rho^\ast L(\alpha, m)$ induces $\rho^\ast \pi^\ast M_S(\alpha)$.  We will prove that $\pi^\ast M_S(\alpha) = (L(\alpha, m), \partial(\alpha).\gamma)$ by comparing their transition data on the cover $\tilde X$.

If $\gamma \in \pi_1(\tropfont X)$ then $\gamma$ acts by deck transformations on $\tilde X$ and we have a canonical identification:
\begin{equation*}
\gamma^\ast \mathcal O_{\tilde X}(\tilde\alpha) = \mathcal O_{\tilde X}(\gamma^\ast \tilde\alpha) = \mathcal O_{\tilde X}(\tilde\alpha) \otimes \mathcal O_X(\partial(\alpha) . \gamma)
\end{equation*}
By definition, we have an inclusion $\mathcal O_X^\ast(\partial(\alpha).\gamma)$ inside $\pi^\ast M_S^{\rm gp}$ as the fiber over $-\partial(\alpha).\gamma \in \pi^\ast \overnorm M_S^{\rm gp}$.  This gives us a canonical identification $\gamma^\ast \rho^\ast \pi^\ast M_S(\tilde\alpha) = \rho^\ast \pi^\ast M_S(\tilde\alpha)$ that serves as a descent datum for $\rho^\ast \pi^\ast M_S(\tilde\alpha)$ from $\tilde X$ to $\pi^\ast M_S(\tilde\alpha)$ on $X$.

In terms of the splitting $m$, the map from $\mathcal O_{X}^\ast(\partial(\alpha).\gamma)$ to $\pi^\ast M_S^{\rm gp}$ is given by~\eqref{eqn:102}:
\begin{equation} \label{eqn:102}
(m^{-\partial(\alpha).\gamma}, -\partial(\alpha).\gamma) : \mathcal O_X^\ast(\partial(\alpha).\gamma) \to \mathcal O_X^\ast \times \pi^\ast \overnorm M_S^{\rm gp} 
\end{equation}
The second component of this formula gives the homomorphism $H_1(\tropfont X) \to \overnorm M_S^{\rm gp}$ that makes up the second component of~\eqref{eqn:101}.

The transition function for $L(\alpha, m)$ around the loop $\gamma$ is given by~\eqref{eqn:105}:
\begin{equation} \label{eqn:105}
\prod_e ( m^{-\delta_e\alpha_e} ) ^{\gamma_e} = m^{-\sum \alpha_e \gamma_e \delta_e}
\end{equation}
By definition of the intersection pairing, $\sum \alpha_e \gamma_e \delta_e = \partial(\alpha).\gamma$, so~\eqref{eqn:105} agrees with the first component of~\eqref{eqn:102}.
\end{proof}

\numberwithin{equation}{subsection}
\subsection{Tropicalizing the logarithmic Jacobian}
\label{sec:tropicalization}

For any proper, vertical logarithmic curve $X$ over $S$, we construct a morphism~\eqref{eqn:17}
\begin{equation} \label{eqn:17}
\LogPic^0(X/S) \to \TroJac(X/S)
\end{equation}
over $S$.  For each logarithmic scheme $T$ and object of $\LogPic^0(X/S)$, we must produce a section of $\TroJac(X/S)$.  By Corollary~\ref{cor:lfp}, it is sufficient to do this when $T$ is of finite type.  Under this assumption, the $T$-points of $\TroJac(X/S)$ are generization-compatible objects of $\TroJac(\tropfont X_t)$, for each geometric point $t$ of $T$.  We therefore describe the morphism first under the assumption that $X$ has constant dual graph over $S$ and $S$ has constant characteristic monoid (which covers the case of a geometric point) and then discuss generization.

If $X$ has constant dual graph and $S$ has constant characteristic monoid, we use the commutative diagram of exact sequences~\eqref{eqn:31}:
\begin{equation} \label{eqn:31} \vcenter{\xymatrix{
		0 \ar[r] & H_1(\tropfont X) \ar[r] \ar@{=}[dd] & {H^1(X,\pi^\ast \logGm)^{[0]}}^\dagger \ar[r] \ar[d] & \LogPic^0(X/S) \ar[r] \ar@{-->}[dd] & 0 \\
& & H^1(X, \pi^\ast \ologGm)^\dagger \ar@{=}[d] \\
0 \ar[r] & H_1(\tropfont X) \ar[r] & \Hom(H_1(\tropfont X), \ologGm)^\dagger \ar[r] & \TroJac(\tropfont X/S) \ar[r] & 0
}} \end{equation}
The first row of the diagram comes from Corollary~\ref{cor:quotient} and the bottom row is the definition of the tropical Jacobian from Section~\ref{sec:trojac}.  The identification between $H^1(X, \pi^\ast \ologGm)$ and $\Hom(H_1(\tropfont X), \ologGm)$ comes from the fact that $\overnorm M_S^{\rm gp}$ is a torsion-free sheaf:  since a smooth, proper curve has no nontrivial torsors under such a sheaf, any such torsor on a nodal curve can be trivialized on its normalization, and torsors under $\overnorm M_S^{\rm gp}$ on $X$ are determined uniquely by monodromy around the loops of the dual graph.  A unique dashed arrow exists by the universal property of the cokernel.

We show now that this morphism is compatible with generizations.  Any specialization $s \leadsto t$ in $\LogPic^0(X/S)$ can be represented by a map $T \to \LogPic^0(X/S)$ where $T$ is a strictly henselian valuation ring with some logarithmic structure, $s$ is its generic point, and $t$ is its closed point.  This map gives a logarithmic curve $X_T = X \mathop\times_S T$ over $T$ and an $M_{X_T}^{\rm gp}$-torsor $P$ on $T$ with bounded monodromy and degree~$0$.  Since $\LogPic^0(X/S)$ is the quotient of $H^1(X, \pi^\ast \logGm)$ by a discrete group, we can lift $P$ to a $\pi^\ast M_T^{\rm gp}$-torsor, $Q$, on~$X_T$.

We now have a commutative diagram~\eqref{eqn:18}:
\begin{equation} \label{eqn:18} \vcenter{\xymatrix{
H^1(X_t, \pi^\ast M_t^{\rm gp})^\dagger \ar[d] & H^1(X_T, \pi^\ast M_T^{\rm gp})^\dagger \ar@{..>}[r] \ar[d] \ar@{..>}[ddl] \ar[l] & H^1(X_s, \pi^\ast M_s^{\rm gp})^\dagger \ar@{..>}[d] \\
H^1(X_t, \pi^\ast \overnorm M_t^{\rm gp})^\dagger \ar@{=}[d] & H^1(X_T, \pi^\ast \overnorm M_T^{\rm gp})^\dagger \ar[l] \ar[r] & H^1(X_s, \pi^\ast \overnorm M_s^{\rm gp})^\dagger  \ar@{=}[d] \\
\Hom(H_1(\tropfont X_t), \overnorm M_t^{\rm gp})^\dagger \ar@{..>}[rr] & & \Hom(H_1(\tropfont X_s), \overnorm M_s^{\rm gp})^\dagger
}} \end{equation}
\vskip5mm
The commutativity of the trapezoid rendered in dotted arrows is precisely the compatibility of our map with generization.

\setcounter{theorem}{\value{equation}}
\numberwithin{equation}{theorem}
\begin{theorem} \label{thm:trop-ex-seq}
Let $X$ be a proper, vertical logarithmic curve over $S$.  There is an exact sequence:
\begin{equation} \label{eqn:25}
0 \to \Pic^{[0]}(X/S) \to \LogPic^0(X/S) \to \TroJac(X/S) \to 0
\end{equation}
\end{theorem}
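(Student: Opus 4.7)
The plan is to deduce the exact sequence from a snake-lemma comparison of the quotient presentation of $\LogPic^0(X/S)$ provided by Corollary~\ref{cor:quotient} with Definition~\ref{def:trojac}, working \'etale-locally on $S$. The tropicalization map $\LogPic^0(X/S) \to \TroJac(X/S)$ was constructed in Section~\ref{sec:tropicalization}, and the morphism $\Pic^{[0]}(X/S) \to \LogPic^0(X/S)$ is the one induced by the inclusion $\mathcal O_X^\ast \hookrightarrow M_X^{\rm gp}$.

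The statement is \'etale-local on $S$, so I would pass to an atomic neighborhood in which $S$ has constant characteristic monoid and $X$ has constant dual graph~$\frak X$. In this setting, Corollary~\ref{cor:quotient} and Definition~\ref{def:trojac} combine to give a commutative diagram of exact rows
\begin{equation*}
\begin{array}{ccccccccc}
0 & \to & H_1(\frak X) & \to & R^1\pi_\ast(\pi^\ast\logGm)^{[0]\dagger} & \to & \LogPic^0(X/S) & \to & 0 \\
  &     & \| & & \downarrow\nu & & \downarrow\tau & & \\
0 & \to & H_1(\frak X) & \to & \Hom(H_1(\frak X), \ologGm)^\dagger & \to & \TroJac(\frak X/S) & \to & 0
\end{array}
\end{equation*}
where $\nu$ is induced by the surjection $\pi^\ast M_S^{\rm gp} \to \pi^\ast \overnorm M_S^{\rm gp}$ and the right-hand square commutes by~\eqref{eqn:31}. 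By the snake lemma, it then suffices to identify $\ker\nu$ with $\Pic^{[0]}(X/S)$ and to verify that $\nu$ is surjective.

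For the kernel, the long exact sequence associated to $0 \to \mathcal O_X^\ast \to \pi^\ast M_S^{\rm gp} \to \pi^\ast \overnorm M_S^{\rm gp} \to 0$ begins with a map $\pi_\ast\pi^\ast M_S^{\rm gp} \to \pi_\ast\pi^\ast\overnorm M_S^{\rm gp}$ which Lemma~\ref{lem:global-sections} identifies with the surjection $M_S^{\rm gp} \twoheadrightarrow \overnorm M_S^{\rm gp}$; the coboundary to $R^1\pi_\ast \mathcal O_X^\ast = \Pic(X/S)$ therefore vanishes, yielding $\ker\nu = \Pic(X/S)$ before restriction. Intersecting with the multidegree~$0$, bounded-monodromy subsheaf cuts this down to $\Pic^{[0]}(X/S)$, since elements of $\Pic(X/S)$ have zero image on the tropical side and hence automatically satisfy bounded monodromy. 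For surjectivity of $\nu$, I would shrink $S$ further so that the extension $0 \to \mathcal O_S^\ast \to M_S^{\rm gp} \to \overnorm M_S^{\rm gp} \to 0$ splits, as in Section~\ref{sec:lochom}. The induced splitting~\eqref{eqn:101} presents $\nu$ as the projection $\Pic(X/S) \times \Hom(H_1(\frak X), \ologGm) \to \Hom(H_1(\frak X), \ologGm)$, and any $\mu \in \Hom(H_1(\frak X), \ologGm)^\dagger$ lifts to $(\mathcal O_X, \mu)$, which has multidegree zero and bounded monodromy.

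The main subtlety is that the splitting used to establish surjectivity of $\nu$ exists only \'etale-locally on $S$. Surjectivity of morphisms of \'etale sheaves of abelian groups is nevertheless a local property, so this presents no obstruction to the global statement. Finally, the snake-lemma identification $\ker\nu \xrightarrow\sim \ker\tau$ is visibly realized by the canonical inclusion $\Pic^{[0]}(X/S) \hookrightarrow \LogPic^0(X/S)$ coming from $\mathcal O_X^\ast \hookrightarrow M_X^{\rm gp}$, so the resulting short exact sequence is the one claimed.
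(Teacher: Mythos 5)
Your proposal is correct and follows essentially the same route as the paper: apply the snake lemma to diagram~\eqref{eqn:31} and reduce to the exactness of $0 \to \Pic(X/S) \to R^1\pi_\ast(\pi^\ast\logGm) \to \Hom(H_1(\frak X), \ologGm) \to 0$, which comes from the long exact sequence of $0 \to \mathcal O_X^\ast \to \pi^\ast M_S^{\rm gp} \to \pi^\ast \overnorm M_S^{\rm gp} \to 0$ together with the surjectivity of $\pi_\ast\pi^\ast M_S^{\rm gp} \to \pi_\ast\pi^\ast\overnorm M_S^{\rm gp}$. The only (harmless) divergence is that you obtain surjectivity of $\nu$ from a local splitting of $M^{\rm gp}\to\overnorm M^{\rm gp}$, whereas the paper deduces it from $R^2\pi_\ast\mathcal O^\ast = 0$ via Tsen's theorem.
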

\begin{proof} 
	Applying the snake lemma to diagram~\eqref{eqn:31}, and identifying $H^1(X, \pi^\ast \ologGm)  = \Hom(H_1(\tropfont X), \ologGm)$, we see that the exactness of~\eqref{eqn:25} is equivalent to that of~\eqref{eqn:62}:
	\begin{equation} \label{eqn:62}
		0 \to \Pic^{[0]}(X/S) \to {R^1 \pi_\ast (\pi^\ast \logGm)^{[0]}}^\dagger \to R^1 \pi_\ast ( \pi^\ast \ologGm)^\dagger \to 0
	\end{equation}
	We note that the bounded monodromy subgroup of $R^1 \pi_\ast(\pi^\ast \logGm)^{[0]}$ is simply the preimage of that in $\Hom(H_1(\tropfont X), \ologGm)$, and that the multidegree~$0$ subgroup of $\Pic(X/S)$ is the preimage of the multidegree~$0$ subgroup of $R^1 \pi_\ast (\pi^\ast \logGm)$.  Therefore it will be sufficient to demonstrate the exactness of~\eqref{eqn:63}:
	\begin{equation} \label{eqn:63}
		0 \to \Pic(X/S) \to R^1 \pi_\ast (\pi^\ast \logGm) \to \Hom(H_1(\tropfont X), \ologGm) \to 0
	\end{equation}
	This amounts to showing that, for each logarithmic scheme $T$ over $S$, the sequence~\eqref{eqn:24} is exact, where $Y = X \mathop\times_S T$:
	\begin{equation} \label{eqn:24}
		0 \to R^1 \pi_\ast \mathcal O_Y^\ast \to R^1 \pi_\ast \pi^\ast M_T^{\rm gp}  \to R^1 \pi_\ast \pi^\ast \overnorm M_T^{\rm gp} \to 0
	\end{equation} 
	The exact sequence~\eqref{eqn:24} arises from the long exact sequence~\eqref{eqn:23} associated with the short exact sequence~\eqref{eqn:22}:
	\begin{gather} 
	\label{eqn:22}
		0 \to \mathcal O_Y^\ast \to \pi^\ast M_T^{\rm gp} \to \pi^\ast \overnorm M_T^{\rm gp} \to 0 \\
	\label{eqn:23}
		\pi_\ast \pi^\ast M_T^{\rm gp} \to \pi_\ast \pi^\ast \overnorm M_T^{\rm gp} \to R^1 \pi_\ast \mathcal O_Y^\ast \to R^1 \pi_\ast \pi^\ast M_T^{\rm gp} \to R^1 \pi_\ast \pi^\ast \overnorm M_T^{\rm gp} \to R^2 \pi_\ast \mathcal O_Y^\ast
	\end{gather}
	We have $R^2 \pi_\ast \mathcal O_Y^\ast = 0$ by Tsen's theorem.  As $\pi^\ast \overnorm M_T^{\rm gp}$ is a constant sheaf on the fibers and $M_T^{\rm gp} \to \overnorm M_T^{\rm gp}$ is surjective, the map $\pi_\ast \pi^\ast M_T^{\rm gp} \to \pi_\ast \pi^\ast \overnorm M_T^{\rm gp}$ is surjective as well.  This gives the exactness of~\eqref{eqn:24} and completes the proof.
\end{proof}

\begin{corollary} \label{cor:log-bdd}
Let $X$ be a proper, vertical logarithmic curve over $S$.  For each degree $d$, the sheaf $\LogPic^d(X/S)$ and the stack $\bLogPic^d(X/S)$ are bounded.
\end{corollary}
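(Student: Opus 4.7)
The plan is to deduce boundedness of $\LogPic^d(X/S)$ and $\bLogPic^d(X/S)$ from the tropical boundedness theorem (Theorem~\ref{thm:trop-bdd}) by means of the short exact sequence of Theorem~\ref{thm:trop-ex-seq}, absorbing the kernel $\Pic^{[0]}(X/S)$ (resp.\ $\bPic^{[0]}(X/S)$) via its standard algebraicity and finite-type property.

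First I would reduce to $d = 0$. Working \'etale-locally on $S$, the smooth locus of $X \to S$ admits a section, producing a degree-one line bundle $L$; tensoring with $L^{\otimes d}$ yields an isomorphism of sheaves $\LogPic^0(X/S) \to \LogPic^d(X/S)$, and similarly for the associated stacks. Since boundedness is \'etale-local on $S$, this reduction is harmless.  Next, by Theorem~\ref{thm:trop-bdd}, after further \'etale-local shrinking of $S$ there is a logarithmic scheme $T$ of finite type over $S$ together with a morphism $T \to \TroJac(X/S)$ surjective on valuative geometric points.  I would form the fiber product
\begin{equation*}
T' = T \mathop\times_{\TroJac(X/S)} \LogPic^0(X/S).
\end{equation*}
By Theorem~\ref{thm:trop-ex-seq}, the projection $T' \to T$ is a $\Pic^{[0]}(X/S)$-pseudo-torsor, admitting local sections because $\LogPic^0(X/S) \to \TroJac(X/S)$ is a sheaf epimorphism. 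Since $\Pic^{[0]}(X/S)$ is an algebraic space of finite type over $S$, descent along a trivializing cover shows $T'$ is an algebraic space of finite type over $S$; replacing $T'$ by a logarithmically \'etale cover by a logarithmic scheme $T''$, also of finite type over $S$, completes the construction of the witness.

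It remains to verify that $T'' \to \LogPic^0(X/S)$ is surjective on valuative geometric points. Given such a point $P$ of $\LogPic^0(X/S)$ with image $\bar P$ in $\TroJac(X/S)$, the hypothesis on $T$ supplies a valuative geometric point $Q$ of $T$ over $\bar P$; the pair $(Q, P)$ then defines a valuative geometric point of $T'$ projecting to $P$, which lifts to $T''$ after a further base change. The argument for $\bLogPic^d(X/S)$ is formally identical, with $\Pic^{[0]}(X/S)$ replaced throughout by the algebraic stack $\bPic^{[0]}(X/S)$ (a $\Gm$-gerbe over the former), which is also of finite type over $S$. The only nontrivial input is the identification of $T' \to T$ as a locally trivial $\Pic^{[0]}$-torsor, which follows from the exactness of Theorem~\ref{thm:trop-ex-seq} together with the standard representability of the multidegree-zero Picard scheme as an algebraic space of finite type; no further estimate is required.
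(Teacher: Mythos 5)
Your argument is correct and follows essentially the same route as the paper: reduce to $d=0$, then use the exact sequence of Theorem~\ref{thm:trop-ex-seq} to exhibit $\LogPic^0(X/S)$ as a $\Pic^{[0]}(X/S)$-torsor over $\TroJac(X/S)$, and combine the boundedness of $\TroJac$ (Theorem~\ref{thm:trop-bdd}) with the finite-type algebraicity of $\Pic^{[0]}$; your construction of $T'$ and $T''$ just makes explicit what the paper leaves implicit. The only cosmetic difference is in the stacky case, where the paper writes $\bLogPic^d(X/S)$ locally as $\LogPic^d(X/S)\times\mathrm{B}\logGm$ and invokes boundedness of $\mathrm{B}\logGm$, whereas you run the same torsor argument with $\bPic^{[0]}(X/S)$; both work.
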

\begin{proof}
As $\LogPic^d(X/S)$ is a torsor under $\LogPic^0(X/S)$, it is sufficient to prove the corollary for $d = 0$.  By the exact sequence~\eqref{eqn:25}, $\LogPic^0(X/S)$ is a $\Pic^{[0]}(X/S)$-torsor over $\TroJac(X/S)$.  As both $\Pic^{[0]}(X/S)$ and $\TroJac(X/S)$ are bounded --- in the latter case by Corollary~\ref{cor:trop-bdd} --- it follows that $\LogPic^0(X/S)$ is also bounded.  

Finally, we note that $\bLogPic^d(X/S)$ is isomorphic, locally in $S$, to $\LogPic^d(X/S) \times \mathrm B\logGm$, so the conclusion follows from the boundedness of $\mathrm B\logGm$.
\end{proof}

\subsection{The valuative criterion for properness}
\label{sec:val-prop}

\begin{theorem} \label{thm:valcrit}
Let $X$ be a proper, vertical\kern1pt\ logarithmic curve over $S$.  Then $\bLogPic(X/S) \to S$ satisfies the valuative criterion for properness (Theorem~\ref{thm:val-prop}) over $S$.
\end{theorem}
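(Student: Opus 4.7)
The plan is to apply Theorem~\ref{thm:val-prop}: we must verify unique right lifting for inclusions $\eta \subset \overnorm S$, where $\overnorm S = \Spec R$ is the spectrum of a valuation ring, $\eta = \Spec K$ its generic point carrying a valuative logarithmic structure, and $\overnorm S$ carries the maximal extension. Given $\overnorm S \to S$ and a logarithmic line bundle $L_\eta$ on $X_\eta$, we must extend $L_\eta$ uniquely to a logarithmic line bundle on $X_{\overnorm S}$. By Theorem~\ref{thm:curve-val}, $X_{\overnorm S}$ is itself a proper vertical logarithmic curve over $\overnorm S$, and since degree is locally constant on the connected base, we may fix $d$ and work throughout with $\bLogPic^d$.

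For existence: because $\eta$ is valuative, Proposition~\ref{prop:logpic-bdd-mono} supplies a logarithmic modification $\tilde X_\eta \to X_\eta$---induced by a subdivision of the tropicalization $\frak X_\eta$ of the form provided by Proposition~\ref{prop:bounded}---on which $L_\eta$ is representable by a classical line bundle $\tilde L_\eta$.  This subdivision extends to a subdivision of $\frak X_{\overnorm S}$: the new vertices are specified by elements of $\overnorm M_{\overnorm S}^{\rm gp}$, and by maximality this sheaf shares a common associated group with $\overnorm M_\eta^{\rm gp}$ on $\eta$.  This yields a logarithmic modification $\tilde X_{\overnorm S} \to X_{\overnorm S}$ with generic fiber $\tilde X_\eta$.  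One extends $\tilde L_\eta$ to a classical line bundle on $\tilde X_{\overnorm S}$ by the closure-of-divisors argument over the valuation ring $R$, exactly as in the proof of Corollary~\ref{cor:r1}.  Viewing the result as a logarithmic line bundle and descending via Theorem~\ref{thm:log-mod-inv} and Corollary~\ref{cor:log-etale-desc} gives the desired extension to $X_{\overnorm S}$; its monodromy is bounded by construction.

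For uniqueness: suppose $L$ and $L'$ both extend $L_\eta$, and set $M = L \otimes (L')^{-1}$, which is trivial on $X_\eta$.  By Corollary~\ref{cor:quotient}, lift the class of $M$ in $\LogPic^0(X_{\overnorm S}/\overnorm S)$ to an element $\tilde m$ of ${R^1 \pi_\ast(\pi^\ast \logGm)^{[0]}}^\dagger$, where $\pi : X_{\overnorm S} \to \overnorm S$ is the projection; after modifying by an element of $H_1(\frak X)$ we may assume $\tilde m$ vanishes on $X_\eta$.  The cartesianness defining the maximal extension (the square labelled $B$ in the proof of Theorem~\ref{thm:val-prop}) then forces the components of $\tilde m$ under the local splitting of Section~\ref{sec:lochom}---a classical line bundle on $X_{\overnorm S}$ and a homomorphism $H_1(\frak X) \to \overnorm M_{\overnorm S}^{\rm gp}$---to vanish as well, so $\tilde m = 0$ and hence $M$ is trivial.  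The stack version follows by the same argument applied to automorphisms of logarithmic line bundles, which by Lemma~\ref{lem:global-sections} are sections of $M_{\overnorm S}^{\rm gp}$ and are rigid by the same maximality.

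The main obstacle is the uniqueness: distinct classical extensions of $\tilde L_\eta$ to $\tilde X_{\overnorm S}$, differing by twists along components of the special fiber, might a priori yield non-isomorphic logarithmic line bundles on $X_{\overnorm S}$; it is precisely the maximal logarithmic extension that identifies them.  Translating rigidity of sections of $M_{\overnorm S}^{\rm gp}$ into rigidity of the relevant $H^1$-classes, controlled by the bounded monodromy condition and the exact sequence of Corollary~\ref{cor:quotient}, is the technical heart of the argument.
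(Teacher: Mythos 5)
There is a genuine gap in your uniqueness argument, at exactly the point you flag as its ``technical heart.''  You reduce to showing that a class $\tilde m \in {R^1\pi_\ast(\pi^\ast\logGm)^{[0]}}^\dagger$ over $\overnorm S$ that restricts to zero over $\eta$ must itself vanish, and you assert that this follows from the cartesianness of the square $B$ in the proof of Theorem~\ref{thm:val-prop}.  That square is a statement about extending sections of log structures on the \emph{base} $\overnorm S$; it says nothing about $H^1$-classes on the total space $X_{\overnorm S}$, which is where the danger lies.  Indeed, the ``line bundle component'' of $\tilde m$ is a line bundle on $X_{\overnorm S}$ trivial on the generic fiber, and such a bundle need not be trivial: $\mathcal O_{X_{\overnorm S}}(C)$ for $C$ a component of the closed fiber is the standard counterexample --- precisely the twists you name in your closing paragraph and then do not dispose of.  (The splitting of Section~\ref{sec:lochom} is moreover not available here: it requires $\overnorm M_{S}^{\rm gp}$ to be finitely generated and $M_{S}^{\rm gp} \to \overnorm M_{S}^{\rm gp}$ to be split, neither of which holds over a general valuative log point.)

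The missing input is a statement on $X_{\overnorm S}$ itself, and this is what the paper's proof runs on: by Theorem~\ref{thm:curve-val} the log structure of $X_{\overnorm S}$ is the maximal extension of that of $X_\eta$, so $M_{X_{\overnorm S}}^{\rm gp} \to j_\ast M_{X_\eta}^{\rm gp}$ is an isomorphism, and by Corollary~\ref{cor:r1} one has $R^1 j_\ast M_{X_\eta}^{\rm gp} = 0$.  The Leray spectral sequence for $j$ then gives an equivalence $\pi_\ast \mathrm B M_{X_{\overnorm S}}^{\rm gp} \to j_\ast \pi_\ast \mathrm B M_{X_\eta}^{\rm gp}$, which delivers existence, uniqueness of the extension, and uniqueness of the isomorphism in one stroke, with no need for the quotient presentation or a separate automorphism argument.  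Your existence half is essentially a by-hand unwinding of Corollary~\ref{cor:r1} (subdivide until the torsor is a line bundle, extend by closure of divisors, descend by Theorem~\ref{thm:log-mod-inv}); it can be made to work, but it also leaves steps to the reader: extending the subdivision of $\frak X_\eta$ across the closed point requires lifting the subdivision points along the surjection $\overnorm M_{\overnorm S,s}^{\rm gp} \to \overnorm M_\eta^{\rm gp}$, which is not injective, and extending the line bundle over the valuation ring generally forces a further semistable modification beyond the one that trivialized the monodromy.  Once you have the two facts above, both halves collapse into the short Leray argument.
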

\begin{proof}

Let $R$ be a valuation ring with a valuative logarithmic structure and with field of fractions $K$.  We consider a lifting problem~\eqref{eqn:8} and show it has a unique solution:
\begin{equation} \label{eqn:8} \vcenter{\xymatrix{
	\Spec K \ar[r] \ar[d] & \bLogPic(X/S) \ar[d] \\
	\Spec R \ar[r] \ar@{-->}[ur] & S
}} \end{equation} 
These data give us a logarithmic curve $X_R$ over $R$ and a $M_{X_K}^{\rm gp}$-torsor $P_K$ on $X_K$ with bounded monodromy.  Let $j : X_K \to X_R$ denote the inclusion.

	By Theorem~\ref{thm:curve-val} and Corollary~\ref{cor:r1}, we have $R^1 j_\ast M_{X_K}^{\rm gp} = 0$ and $M_{X_R}^{\rm gp} \to j_\ast M_{X_K}^{\rm gp}$ is an isomorphism.  These imply that the morphism of group stacks $\mathrm B M_{X_R}^{\rm gp} \to j_\ast \mathrm B M_{X_K}^{\rm gp}$ induces isomorphisms on sheaves of isomorphism classes and sheaves of automorphisms, hence is an equivalence.  Pushing forward to $S$ gives~\eqref{eqn:59}:
\begin{equation} \label{eqn:59}
\pi_\ast \mathrm B M_{X_R}^{\rm gp} = j_\ast \pi_\ast \mathrm B M_{X_K}^{\rm gp} 
\end{equation}
But a section of $j_\ast \pi_\ast \mathrm B M_{X_K}^{\rm gp}$ is a commutative square~\eqref{eqn:8} (ignoring bounded monodromy), and a section of $\pi_\ast \mathrm B M_{X_R}^{\rm gp}$ is a diagonal arrow lifting it (again ignoring bounded monodromy).

	To conclude, we check that if $P_K$ has bounded monodromy then so does $P_R$.  Let $\tropfont X_R$ be the tropicalization of $X_R$ and let $\tropfont X_K$ be the tropicalization of $X_K$.  Let $\overnorm P_K$ and $\overnorm P_R$ be the $\PL$-torsors on $\tropfont X_K$ and $\tropfont X_R$ induced by $P_K$ and $P_R$.  Then the pullback of $\overnorm P_K$ along the contraction map $\tropfont X_R \to \tropfont X_K$ coincides with the extension of $\overnorm P_R$ along the map $\overnorm M_R^{\rm gp} \to \overnorm M_K^{\rm gp}$.  That is, $\overnorm P_R \otimes_{\overnorm M_R^{\rm gp}} \overnorm M_K^{\rm gp}$ descends along $\tropfont X_R \to \tropfont X_K$, so it must have trivial monodromy around all loops of $\tropfont X_R$ collapsed in $\tropfont X_K$.  This means that $\overnorm P_R$ has bounded monodromy.
\end{proof}

\begin{remark}
One can also argue using Proposition~\ref{prop:logpic-bdd-mono}.

Since $K$ has a valuative logarithmic structure, we may replace $X_R$ with a semistable model so that $P_K$ is represenatble by an invertible sheaf on $X_K$, and therefore by a divisor $D_K$.  Over an \'etale extension $K'$ of $K$ it is possible to represent $D_{K'}$ as a sum of sections $\sigma_i : \Spec K' \to X_{K'}$.  Let $R'$ be the integral closure of $R$ in $K'$.  For each $i$ there is a \emph{universal} choice of semistable model $X'_{R'}$ such that $\sigma_i$ extends to a section of the strict locus of $X'_{R'}$ over $R'$.  Therefore there is a universal choice of semistable model $X''_{R'} \to X_{R'}$ such that the closure of $D_{K'}$ lies in the strict locus of $X''_{R'}$.  Since this model is characterized by a universal property, it descends to a semistable model $X''_R \to X_R$ such that the closure $D_R$ of $D_K$ lies in the strict locus over $R$.  But then $\mathcal O_{X''_R}(D_R)$ represents a $M_{X_R}^{\rm gp}$-torsor extending $P_K$.  Logarithmic line bundles that are representable by invertible sheaves have bounded monodromy.

This proves the universal closedness part of the valuative criterion, but the choice of $\mathcal O_{X''_R}(D_R)$ depends on the choice of $D_K$, so further argument is necessary to prove separatedness.
\end{remark}

\begin{corollary} \label{cor:valcrit}
The projection $\LogPic(X/S) \to S$ satisfies the valuative criterion for properness.
\end{corollary}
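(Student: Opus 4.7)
The plan is to deduce Corollary~\ref{cor:valcrit} directly from Theorem~\ref{thm:valcrit} by passing from the stack $\bLogPic(X/S)$ to its sheaf of isomorphism classes. With notation as in Theorem~\ref{thm:val-prop}, let $R$ be a valuation ring with valuative logarithmic structure, $K$ its field of fractions, and $j : \Spec K \to \Spec R$ the inclusion of the generic point. The heart of the proof of Theorem~\ref{thm:valcrit} is the equivalence of group stacks $\mathrm{B} M_{X_R}^{\rm gp} \xrightarrow{\sim} j_\ast \mathrm{B} M_{X_K}^{\rm gp}$, assembled from the isomorphism $M_{X_R}^{\rm gp} \xrightarrow{\sim} j_\ast M_{X_K}^{\rm gp}$ of Theorem~\ref{thm:curve-val} together with the vanishing $R^1 j_\ast M_{X_K}^{\rm gp} = 0$ of Corollary~\ref{cor:r1}.

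Passing from this stackwise equivalence to the associated sheaves of isomorphism classes, and then pushing forward by $\pi$ to $S$, yields an isomorphism
\[
R^1 \pi_\ast M_{X_R}^{\rm gp} \xrightarrow{\sim} j_\ast R^1 \pi_\ast M_{X_K}^{\rm gp}
\]
of sheaves on $S$. This is exactly the valuative criterion for the presheaf $T \mapsto H^1(X_T, M_{X_T}^{\rm gp})$: every $M_{X_K}^{\rm gp}$-torsor on $X_K$ extends uniquely to an $M_{X_R}^{\rm gp}$-torsor on $X_R$. What is left is to check that the bounded monodromy condition that cuts $\LogPic(X/S)$ out of this larger sheaf is preserved by the extension.

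For this I would invoke Proposition~\ref{prop:logpic-bdd-mono}: a $\logGm$-torsor has bounded monodromy if and only if, after a logarithmic modification of the base and passage to the corresponding model of the curve, it becomes representable by an $\mathcal{O}^\ast$-torsor. A trivializing logarithmic modification $\tilde S_K \to \Spec K$ extends to a logarithmic modification $\tilde S \to \Spec R$ because $M_R$ is valuative, so any subdivision of the cone dual to $\overnorm M_K$ lifts to one of the cone dual to $\overnorm M_R$ (see Section~\ref{sec:subdiv-log}); on the resulting semistable model $\tilde X$ of $X_R$, the uniquely extended torsor is again represented by an $\mathcal{O}^\ast$-torsor by applying the stackwise equivalence above to $\tilde X$ in place of $X$. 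The main obstacle is the compatibility check between this tropical subdivision argument and the extension of the torsor — specifically, verifying that the edge contraction $\frak X_R \to \frak X_K$ behaves well enough that a subdivision trivializing the monodromy on $\frak X_K$ pulls back to one trivializing the monodromy on $\frak X_R$.
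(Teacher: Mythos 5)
Your proposal re-derives the extension of torsors from the inside of the proof of Theorem~\ref{thm:valcrit} rather than using that theorem as a black box, and this is where the trouble starts. Two concrete problems. First, the isomorphism $R^1\pi_\ast M_{X_R}^{\rm gp} \xrightarrow{\sim} j_\ast R^1\pi_\ast M_{X_K}^{\rm gp}$ does not follow by ``passing to sheaves of isomorphism classes and pushing forward'': taking isomorphism classes does not commute with $j_\ast$ (the discrepancy is controlled by $R^1 j_\ast$ of the automorphism sheaf $\pi_\ast M_{X_K}^{\rm gp} = M_K^{\rm gp}$), and for the same reason a $K$-point of the sheaf $\LogPic(X/S)$ need not be represented by an honest torsor on $X_K$ in the first place. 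Second, and more seriously, the bounded monodromy step you flag as ``the main obstacle'' is a genuine gap in your argument as written: the closed fibre of $X_R$ is in general \emph{more} degenerate than $X_K$, so $H_1$ of its dual graph contains loops that are contracted in the generic fibre, and boundedness of the monodromy of the extended torsor around those loops is simply not addressed by applying Proposition~\ref{prop:logpic-bdd-mono} over $K$ and extending the trivializing modification --- no subdivision of $\frak X_K$ sees those loops at all. So the compatibility you defer is not a routine check; it is the entire content of the missing step.

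The intended deduction sidesteps all of this. Since $\bLogPic(X/S) \to \LogPic(X/S)$ is a gerbe banded by $\logGm$ which, locally in $S$, admits a section (rigidify along a section of $X/S$), one gets locally an identification $\bLogPic(X/S) \simeq \LogPic(X/S) \times \mathrm B\logGm$ under which $\LogPic(X/S) \to \bLogPic(X/S)$ is a $\logGm$-torsor. The composite $\LogPic(X/S) \to \bLogPic(X/S) \to S$ then satisfies the valuative criterion because $\bLogPic(X/S) \to S$ does (Theorem~\ref{thm:valcrit}, quoted wholesale --- its points already carry the bounded monodromy condition by Definition~\ref{def:logpic}, so nothing needs to be re-verified) and because $\logGm$ does (a section of $M_K^{\rm gp}$ extends uniquely to the maximal extension $M_R$, since $M_R^{\rm gp} = M_K^{\rm gp}$). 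If you restructure your argument to cite Theorem~\ref{thm:valcrit} rather than unwind it, your final paragraph becomes unnecessary and the proof closes.
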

\begin{proof}
Locally in $S$ the projection $\bLogPic(X/S) \to \LogPic(X/S)$ has a section making $\LogPic(X/S)$ into a $\logGm$-torsor over $S$.  But $\logGm$ satisfies the valuative criterion for properness, so $\LogPic(X/S)$ does as well.
\end{proof}

Once we have demonstrated the algebraicity of $\LogPic^d$, we will be able to conclude that it is proper in Corollary~\ref{cor:logpic-proper}.

\subsection{Existence of a smooth cover}
\label{sec:cover}

\begin{definition} \label{def:log-sp}
We call a presheaf $X$ on logarithmic schemes a \emph{logarithmic space} if there is a logarithmic scheme $U$ and a morphism $U \to X$ that is surjective on valuative geometric points and representable by logarithmically smooth logarithmic schemes.
\end{definition}

\begin{theorem} \label{thm:algebraic}
Let $X$ be a proper logarithmic curve over $S$.  Then there is a logarithmic scheme and a universally surjective, logarithmically smooth morphism to $\bLogPic(X)$ that is representable by logarithmic spaces.
\end{theorem}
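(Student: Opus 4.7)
The plan is to reduce the problem, step by step, to constructing a log smooth cover of the tropical Jacobian, which can then be read off from the structural results of Section~\ref{sec:tropicjac}. First I would pass from $\bLogPic(X/S)$ to $\LogPic^0(X/S)$: Corollary~\ref{cor:log-covs} furnishes a log smooth cover of $\mathrm B\logGm$ by a log scheme, and $\bLogPic(X/S)$ is \'etale-locally a $\mathrm B\logGm$-gerbe over $\LogPic(X/S)$ (as observed in the proof of Corollary~\ref{cor:lfp}); the open substacks $\LogPic^d(X/S)$ cover $\LogPic(X/S)$ and are \'etale-locally torsors under $\LogPic^0(X/S)$. So it suffices to cover $\LogPic^0(X/S)$.

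Next I would invoke Theorem~\ref{thm:trop-ex-seq} to exhibit $\LogPic^0(X/S) \to \TroJac(X/S)$ as a torsor under the abelian scheme $\Pic^{[0]}(X/S)$. This projection is representable by smooth schemes, so it is enough to produce a log smooth cover of $\TroJac(X/S)$ by a log scheme via a morphism that is representable by log spaces. For this, Corollary~\ref{cor:indrep} ind-represents $\Hom(H_1(\frak X), \frak P)^\dagger$ by an explicit saturated convex region in a lattice, and $\TroJac(X/S)$ is the quotient of this ind-object by the translation action of $H_1(\frak X)$ via the intersection pairing $\partial$. By the boundedness theorem (Theorem~\ref{thm:trop-bdd}) -- indeed, by inspection of its proof, which builds an explicit finite type $Z$ surjecting on valuative geometric points -- I can select a finitely generated subcone of the convex region of Corollary~\ref{cor:indrep} whose associated affine toric logarithmic scheme $U$ maps to $\TroJac(X/S)$ and is surjective on valuative geometric points. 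The map $U \to \Hom(H_1(\frak X), \frak P)^\dagger$ is log smooth by the standard theory of toric morphisms, and the quotient map $\Hom(H_1(\frak X), \frak P)^\dagger \to \TroJac(X/S)$ is log \'etale because the $H_1(\frak X)$-action is discrete, owing to the positive-definiteness of $\partial$ and hence to its being of finite type (Corollary~\ref{cor:trop-diag-qcpt} combined with Corollary~\ref{cor:pairing-bdd}).

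The hard part will be verifying representability by log spaces. This requires that for any log scheme $T$ with a map to $\TroJac(X/S)$, the fiber product $U \mathop\times_{\TroJac(X/S)} T$ be a log space; equivalently, the diagonal of $\TroJac(X/S)$ must be representable by log spaces. The boundedness of the intersection pairing supplies exactly the input needed: locally in the tropical topology on $T$, the fiber of $U \to \TroJac(X/S)$ over a given $T$-point is a disjoint union of finitely many translates of $U$ under $H_1(\frak X)$, each of which is cut out as a log scheme of finite type by Proposition~\ref{prop:ologGm-zero} applied to the vanishing of the finitely many coordinate differences $\partial(\gamma)$. Composing with the schematic morphism $\LogPic^0(X/S) \to \TroJac(X/S)$ and with the cover of the $\mathrm B\logGm$-gerbe yields a log smooth cover of $\bLogPic(X/S)$ representable by log spaces, completing the proof.
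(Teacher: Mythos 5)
Your proposal is sound in outline but takes a genuinely different route from the paper.  The paper does not pass through the tropical Jacobian here: it covers $\bLogPic(X/S)$ directly by the algebraic Picard stacks $\bPic(Y/T)$ of semistable models $Y$ of $X$ over \'etale-local logarithmic modifications $T$ of $S$, the two key inputs being that $\bPic(Y/T) \to \bLogPic(Y/T)$ is representable and logarithmically \'etale (Lemma~\ref{lem:rep-log-et}, whose kernel $\pi_\ast\ologGm$ is analyzed through piecewise linear functions, i.e.\ through $\Hom(V,\ologGm)$ and $\Hom(E,\ologGm)$ and Corollary~\ref{cor:log-covs}), and that every valuative geometric point lifts to some $\bPic(Y/T)$ by Proposition~\ref{prop:bounded} (Lemma~\ref{lem:val-geom-surj}).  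You instead factor through the exact sequences of Corollary~\ref{cor:quotient} and Theorem~\ref{thm:trop-ex-seq}, cover $\TroJac(\frak X/S)$ by the toric charts implicit in Lemma~\ref{lem:prorep} and Corollary~\ref{cor:indrep} together with the explicit bounded chart from the proof of Theorem~\ref{thm:trop-bdd}, and pull back along the $\Pic^{[0]}(X/S)$-torsor $\LogPic^0(X/S) \to \TroJac(\frak X/S)$.  There is no circularity, since all of these results precede Theorem~\ref{thm:algebraic} in the paper.  What the paper's version buys is that the charts are honest Picard stacks of curves, which is reused verbatim to construct the tropicalization morphism in Section~\ref{sec:trop-pic}; what yours buys is a clean separation of the algebraic part (a torsor under a smooth group scheme, automatically representable by smooth algebraic spaces) from the purely tropical part.

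Two details deserve more care than you give them.  First, the functor represented by a monoid $\overnorm P$ finitely generated over $\overnorm M$ is an algebraic \emph{stack} with a logarithmic structure (an Artin cone such as $[\AA^1/\Gm]$), not a logarithmic scheme; your ``affine toric logarithmic scheme $U$'' must be taken to be a further strict smooth cover of it by a toric variety, exactly as Corollary~\ref{cor:log-covs} does for $\ologGm$.  Second, the chart $Z$ from the proof of Theorem~\ref{thm:trop-bdd} is built from the dual graph and characteristic monoid of the \emph{closed} stratum of an atomic neighborhood, and its surjectivity on valuative geometric points is established only over that stratum; as in the paper's proof, the cover should be the whole family of such charts over an atomic cover of $S$, jointly surjective because every point of $S$ lies in the closed stratum of some atomic neighborhood of itself.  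Finally, ``finitely many translates'' in your last paragraph is more than you need: an infinite disjoint union of logarithmic schemes is still a logarithmic scheme, so representability of $U \mathop\times_{\TroJac(\frak X/S)} T$ does not require the quasicompactness of $\partial$ from Corollary~\ref{cor:trop-diag-qcpt} --- that finiteness belongs to the proof of Theorem~\ref{thm:diag}, not this one.
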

\begin{proof}
We consider a map $T \to S$ that is a composition of \'etale maps and logarithmic modifications.  Let $Y$ be a semistable model of $X \mathop\times_S T$ over $T$.  Then $\bPic(Y/T)$ is representable by an algebraic stack over $T$.  When equipped with the logarithmic structure pulled back from $T$, we have a morphism to $\bLogPic(X/S)$:
\begin{equation} \label{eqn:10}
\bPic(Y/T) \to \bLogPic(Y/T) \to \bLogPic(X_T/T) \to \bLogPic(X/S)
\end{equation}
We will argue that these maps are a logarithmically \'etale cover of $\LogPic(X/S)$ using the following lemma:

\begin{lemma}  \label{lem:rep-log-et}
For any logarithmic curve $Y$ over $T$, the map $\bPic(Y/T) \to \bLogPic(Y/T)$ is representable by logarithmic spaces and is logarithmically \'etale.
\end{lemma}

\noindent Granting this lemma, we complete the proof of Theorem~\ref{thm:algebraic}.

We show first of all that $\bPic(Y/T) \to \bLogPic(X/S)$ is representable by logarithmic schemes and is logarithmically \'etale.  The first arrow in the sequence~\eqref{eqn:10} has these properties by Lemma~\ref{lem:rep-log-et}, the second is an isomorphism by Corollary~\ref{cor:subdiv-curve} and Lemma~\ref{lem:subdiv-bounded}, and the last arrow is the base change of the logarithmically \'etale morphism $T \to S$, by definition.  Their composition is therefore representable by logarithmic schemes and is logarithmically \'etale.

Proposition~\ref{prop:logpic-bdd-mono} implies that, as $T$ and $Y$ vary over logarithmic modifications of \'etale covers of $S$ and semistable models of $X_T$, respectively, the maps $\bPic(Y/T) \to \bLogPic(X/S)$ are surjective on valuative geometric points.  This completes the proof.
\end{proof}

\begin{proof}[Proof of Lemma~\ref{lem:rep-log-et}]
	We wish to show that, for any logarithmic scheme $T'$ over $T$, and any logarithmic line bundle $L$ on $Y' = Y \mathop\times_T T'$, there is a universal logarithmic scheme $U$ over $T'$ and lift of $L \big|_U$ to an invertible sheaf on $Y \mathop\times_T U$.  Without loss of generality, we can assume that $T = T'$ to lighten the notation.

	The logarithmic line bundle $L$ on $Y$ is a $\logGm$-torsor.  It induces a $\tropGm$-torsor $\overnorm L$ via the map $M_Y^{\rm gp} \to \overnorm M_Y^{\rm gp}$, and this torsor obstructs lifting $L$ to a $\Gm$-torsor, in the sense that sections of $\overnorm L$ are in natural bijection with lifts.  Let us write $\pi_\ast \overnorm L$ for the presheaf on logarithmic schemes over $T$ whose value on $T'$ is $\Gamma(Y', \overnorm L')$, where $Y'$ is the base change of $Y$ to $T'$ and $\overnorm L'$ is the pullback of $\overnorm L$ to a $\tropGm$-torsor on $Y'$.  We need to demonstrate that $\pi_\ast \overnorm L$ is representable by a logarithmic space that is strict and logarithmically \'etale over $T$.

	It is sufficient to prove that $\pi_\ast \overnorm L$ is representable by a logarithmic space that is logarithmically \'etale over $T$ in an \'etale neighborhood of each geometric point $t$ of $T$.  We will therefore assume that $T$ is affine and has a global chart by $\overnorm M_{T,t}$.  Since $\bLogPic(Y/T)$ is locally of finite presentation (Proposition~\ref{prop:lfp}), there is a morphism $T \to T_0$, a logarithmic curve $\pi_0 : Y_0 \to T_0$, and a logarithmic line bundle $L_0$ on $Y_0$ that pulls back to $L$.  Then ${\pi_0}_\ast \overnorm L_0$ pulls back to $\pi_\ast \overnorm L$ by proper base change.  If ${\pi_0}_\ast \overnorm L_0$ is representable by a logarithmically \'etale logarithmic space over $T_0$ then $\pi_\ast \overnorm L$ will be representable by its base change to $T$.  Hence we may replace $T$ and $Y$ by $T_0$ and $Y_0$.  We therefore assume without loss of generality that $T$ is of finite type in addition to being affine. Since $T$ is of finite type, and our problem is \'etale local on $T$, we can also assume that $T$ is an atomic neighborhood of $t$.  That is, we assume $T$ has a global chart by $\overnorm M_{T,t}$ and the logarithmic stratum of $T$ containing $t$ is connected.  After further \'etale localization, we assume as well that the dual graph of $Y$ is constant on the stratum of $T$ that contains $t$.

	We give a combinatorial interpretation of $\pi_\ast \overnorm L$.  Let $Y_t$ be the fiber of $Y$ over $t$ and let $\tropfont Y$ be its dual graph.  We write $\PL_t$ for the sheaf on $\tropfont Y$ that corresponds with $\overnorm M_{Y_t}^{\rm gp}$ on $Y_t$.  Since $\overnorm L$ is constant on the logarithmic strata of $Y$ (since these strata are normal and $\overnorm M_Y^{\rm gp}$ is constant and torsion free), it descends to a $\PL_t$-torsor $\tropfont L_t$ on $\tropfont Y$.  For each geometric point $t'$ of $T$, we have a homomorphism $\overnorm M_{T,t}^{\rm gp} \to \overnorm M_{T,t'}^{\rm gp}$.  This induces a homomorphism $\PL_t \to \PL_{t'}$ on $\tropfont Y$, and this induces a $\PL_{t'}$-torsor, $\tropfont L_{t'}$ on $\tropfont Y$.

	To specify a section of $\pi_\ast \overnorm L$ at $t'$ is the same as to give a section of $\tropfont L_{t'}$.  To specify a section of $\pi_\ast \overnorm L$ on some logarithmic scheme $T'$ over $T$ is the same as to give a section of $\tropfont L_{t'}$ for every geometric point $t'$ of $T'$, in a fashion that is compatible with the maps $\tropfont L_{t'} \to \tropfont L_{t''}$ associated with geometric specializations $t'' \leadsto t'$.  In other words, we may think of $\tropfont L$ as a sheaf on the constant family $\tropfont Y \times T$, and $\pi_\ast \overnorm L = \rho_\ast \tropfont L$, where $\rho : \tropfont Y \times Y \to T$ is the projection.

	Since $\tropfont L$ is a sheaf, we have an exact sequence:
	\begin{equation} \label{eqn:135}
		0 \to \rho_\ast \tropfont L \to \prod_v \tropfont L_v \to \prod_e \tropfont L_e
	\end{equation}
	Choose trivializations $\tropfont L_v \simeq \PL_v$ and $\tropfont L_e \simeq \PL_e$ over each vertex $v$, and each edge $e$, of $\tropfont Y$.  If $e$ is an edge of $\tropfont Y$ connecting vertices $v$ and $w$ then $\PL_e$ is the subgroup of $\PL_v \times \PL_w$ consisting of pairs $(f,g)$ such that $f - g$ lies in the subgroup $\mathbf Z \delta_e$ generated by the length $\delta_e$ of $e$.  Combining this observation with our trivializations and the isomorphism $\pi_\ast \overnorm L \simeq \rho_\ast \tropfont L$, the exact sequence~\eqref{eqn:135} translates into a cartesian square:
	\begin{equation*} \xymatrix{
			\pi_\ast \overnorm L \ar[r] \ar[d] & \displaystyle \prod_{e \in E} \mathbf Z \delta_e \ar[d] \\
			\displaystyle \prod_{v \in V} \tropGm \ar[r] & \displaystyle \prod_{e \in E} \tropGm
	} \end{equation*}
	Thus $\pi_\ast \overnorm L$ is a fiber product of logarithmic spaces that are logarithmically \'etale over $T$, hence is a logarithmic space that is logarithmically \'etale over $T$.
\end{proof}

\begin{corollary} \label{cor:sheaf-algebraic}
There is a logarithmic scheme $W$ and a cover of $W \to \LogPic(X/S)$ that is logarithmically smooth and representable by logarithmic spaces.
\end{corollary}
\begin{proof}
Locally in $S$, we can identify $\bLogPic(X/S) = \LogPic(X/S) \times \mathrm B\logGm$ by identifying $\LogPic(X/S)$ with the sheaf of logarithmic line bundles on $X$ trivialized over a section.  A section $\LogPic(X/S) \to \bLogPic(X/S)$ makes $\LogPic(X/S)$ into a $\logGm$-bundle over $\bLogPic(X/S)$.  If $U \to \bLogPic(X/S)$ is a logarithmically smooth cover by a logarithmic scheme, then its pullback is a logarithmically smooth cover $W \to \LogPic(X/S)$, and $W$ is a $\logGm$-torsor over the logarithmic scheme $U$, hence a logarithmic space.  Replacing $W$ by a logarithmically smooth cover, we can arrange for $W$ to be a logarithmic scheme as required.
\end{proof}

\begin{corollary} \label{cor:diag}
The diagonals of $\LogPic(X/S)$ and $\bLogPic(X/S)$ are representable by logarithmic spaces.
\end{corollary}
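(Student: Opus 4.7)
The plan is to exploit the group (resp.\ group stack) structure of $\LogPic(X/S)$ and $\bLogPic(X/S)$ to reduce representability of the diagonal to representability of the identity section, then to use the tropical exact sequence of Theorem~\ref{thm:trop-ex-seq} to split the problem into a piece controlled by $\Pic^{[0]}(X/S)$ (which is algebraic) and a piece controlled by $\TroJac(X/S)$ (which is handled by Corollary~\ref{cor:pairing-bdd}).

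Since $\LogPic(X/S)$ is a commutative group sheaf over $S$, the fiber of $\Delta_{\LogPic(X/S)}$ over $(a,b) : T \to \LogPic(X/S) \times_S \LogPic(X/S)$ agrees, via $(a,b) \mapsto a-b$, with the fiber of the zero section $S \to \LogPic(X/S)$ above $a-b : T \to \LogPic(X/S)$. By translation we may assume $a-b$ factors through $\LogPic^0(X/S)$. Theorem~\ref{thm:trop-ex-seq} yields the exact sequence $0 \to \Pic^{[0]}(X/S) \to \LogPic^0(X/S) \to \TroJac(X/S) \to 0$, and therefore decomposes this preimage in two stages: first pull back the zero section of $\TroJac(X/S)$ to a subfunctor $T' \subset T$, and then, noting that $a-b\big|_{T'}$ factors uniquely through $\Pic^{[0]}(X/S)$, pull back the zero section of $\Pic^{[0]}(X/S)$ over $T'$. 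The second step is representable by an algebraic space since $\Pic^{[0]}(X/S)$ is a scheme, so it suffices to show $T' \hookrightarrow T$ is representable by a logarithmic space.

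For this, by Definition~\ref{def:trojac}, $\TroJac(X/S) = \Hom(H_1(\frak X), \ologGm)^\dagger/H_1(\frak X)$. Using Corollary~\ref{cor:lfp}, I reduce to $T$ of finite type and lift the map to $\psi : T \to \Hom(H_1(\frak X), \ologGm)^\dagger$ \'etale-locally. The preimage of the zero section then becomes the fiber product $T \times_{\Hom(H_1(\frak X), \ologGm)} H_1(\frak X)$ along the intersection pairing $\partial$, which by Corollary~\ref{cor:pairing-bdd} is representable by an affine logarithmic scheme of finite type over $T$. This concludes the argument for $\LogPic$.

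For $\bLogPic(X/S)$, I use that the canonical projection $\bLogPic(X/S) \to \LogPic(X/S)$ is a $\pi_\ast\logGm$-gerbe: given $(L_1, L_2) : T \to \bLogPic \times_S \bLogPic$, the sheaf $\Isom(L_1, L_2)$ is empty unless $(L_1, L_2)$ factors through $\Delta_{\LogPic(X/S)}$ (a logarithmic space by the previous step) and, on that locus, is a torsor under $\Aut(L_1) = \pi_\ast\logGm$. By Lemma~\ref{lem:global-sections}, $\pi_\ast\logGm$ is identified (on connected fibers) with $\logGm$, and Corollary~\ref{cor:log-covs} endows $\logGm$ with a logarithmically smooth cover by a logarithmic scheme; hence $\Isom(L_1, L_2)$ is representable by a logarithmic space, as required. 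The main technical content of the argument is the analysis of the diagonal of $\TroJac(X/S)$, which rests ultimately on the finite-type property of the intersection pairing $H_1(\frak X) \to \Hom(H_1(\frak X), \ologGm)$ established via the tropical topology in Corollary~\ref{cor:pairing-bdd}.
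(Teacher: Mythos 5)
Your argument is correct, but it is not the route the paper takes for this corollary. The paper's proof of Corollary~\ref{cor:diag} is purely formal: given the logarithmically smooth cover $U \to Z$ representable by logarithmic spaces from Theorem~\ref{thm:algebraic} and Corollary~\ref{cor:sheaf-algebraic}, one writes the pullback of the diagonal along $V \to Z \times Z$ after base change to $U \times U$ as a fiber product of the logarithmic spaces $U \mathop\times_Z U$ and $V \mathop\times_{Z\times Z}(U\times U)$ over the logarithmic scheme $U \times U$; no group structure, no tropicalization sequence, and the same three lines handle $\LogPic$ and $\bLogPic$ uniformly. What you have written is essentially the paper's proof of the \emph{stronger} Theorem~\ref{thm:diag} and its supporting results: your analysis of the zero section of $\TroJac(X/S)$ is Lemma~\ref{lem:trop-diag-ft}, and the two-stage decomposition via $0 \to \Pic^{[0]}(X/S) \to \LogPic^0(X/S) \to \TroJac(X/S) \to 0$ is Corollary~\ref{cor:affine-ft}; you then extend this to $\bLogPic$ via the gerbe structure, which the paper does not need to do at that stage. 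Your approach buys more for $\LogPic$ (affineness and finite type of the zero section, hence with the valuative criterion the finiteness of the diagonal), at the cost of invoking the substantial Theorem~\ref{thm:trop-ex-seq}; the paper's approach buys economy and uniformity. One small citation correction: Corollary~\ref{cor:pairing-bdd} asserts only that $\partial : H_1(\frak X) \to \Hom(H_1(\frak X), \ologGm)$ is of finite type (i.e., has quasicompact pullbacks); the representability of the vanishing locus of a section of $\ologGm$ is Proposition~\ref{prop:ologGm-zero}, and it is their combination (packaged as Lemma~\ref{lem:trop-diag-ft}) that yields representability by affine logarithmic schemes of finite type. This does not affect the validity of your argument, since both ingredients are available.
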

\begin{proof}
Let $Z$ be $\LogPic(X/S)$ or $\bLogPic(X/S)$.  We have a logarithmically smooth cover $U \to Z$ that is representable by logarithmic spaces.  We wish to show that $W = V \mathop\times_{Z \times Z} Z$ is representable by logarithmic spaces whenever $V$ is a logarithmic scheme with two maps to $Z$.  But 
\begin{equation*}
W \mathop\times_{Z \times Z} (U \times U) = (V \mathop\times_{Z \times Z} (U \times U)) \mathop\times_{U \times U} (U \mathop\times_Z U)
\end{equation*}
is the fiber product of the logarithmic space $U \mathop\times_Z U$ with the logarithmic space $V \mathop\times_{Z \times Z} (U \times U)$ over the logarithmic scheme $U \times U$, hence is a logarithmic space.
\end{proof}

\subsection{Representability of the diagonal}
\label{sec:diagonal}

Our algebraicity result is slightly stronger for $\LogPic$.

\begin{theorem} \label{thm:diag}
The diagonal of $\LogPic(X/S)$ over $S$ is representable by finite morphisms of logarithmic schemes.
\end{theorem}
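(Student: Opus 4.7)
Since $\LogPic(X/S)$ is a sheaf of abelian groups, its diagonal over $S$ is the base change of the identity section $e\colon S\to\LogPic(X/S)$ along the difference map $\LogPic\times_S\LogPic\to\LogPic$.  It therefore suffices to show that for any logarithmic scheme $T$ and a logarithmic line bundle $L$ on $X_T$ classifying a section of $\LogPic(X/S)$, the subsheaf $Z\subset T$ where $[L]=0$ is representable by a finite morphism of logarithmic schemes.  Decomposing along the degree homomorphism, and using that each $\LogPic^d$ is locally on $S$ a torsor under $\LogPic^0$, I may reduce to $L$ of degree~$0$.

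I propose to split the condition $[L]=0$ by means of the exact sequence of Theorem~\ref{thm:trop-ex-seq}:
\[
0\to\Pic^{[0]}(X/S)\to\LogPic^0(X/S)\to\TroJac(X/S)\to 0.
\]
Let $Z_1\subset T$ be the locus where $\trop(L)=0$ in $\TroJac(X_T/T)$.  By the exact sequence, $L_{Z_1}$ lifts uniquely to $\Pic^{[0]}(X_{Z_1}/Z_1)$, and $Z\subset Z_1$ is the locus where this lift is trivial.  The plan is to show that $Z\to T$ is quasi-finite, and then to upgrade quasi-finite to finite by invoking properness of the diagonal of $\LogPic(X/S)\to S$, which is supplied by the separatedness part of Corollary~\ref{cor:valcrit}, combined with Zariski's main theorem.

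The main obstacle is the tropical step $Z_1\to T$.  \'Etale-locally on $T$, the tropicalization lifts to a homomorphism $\alpha\colon H_1(\mathfrak X_T)\to\overnorm M_T^{\rm gp}$ of bounded monodromy, and $Z_1$ is then the pullback of the intersection pairing $\partial\colon H_1(\mathfrak X_T)\to\Hom(H_1(\mathfrak X_T),\overnorm M_T^{\rm gp})$ along $\alpha$.  Since $\partial$ is positive definite (Corollary~\ref{cor:trop-diag-qcpt}), Corollary~\ref{cor:pairing-bdd} shows that $Z_1\to T$ is representable by a logarithmic scheme of finite type; moreover, the proof of that corollary stratifies $T$ into finitely many logarithmic pieces on each of which, by the injectivity of $\partial$ as extracted from Corollary~\ref{cor:sing-nbhd}, there is at most one $\gamma\in H_1(\mathfrak X)$ with $\alpha=\partial(\gamma)$.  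Together these give the quasi-finiteness of $Z_1\to T$; the key input is that bounded monodromy, in the presence of a positive definite pairing, forces the preimage lattice $H_1(\mathfrak X)$ to be discrete in the relevant topology.

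The algebraic step $Z\hookrightarrow Z_1$ is a closed immersion, because $\Pic^{[0]}(X/S)$ is a separated (in fact semiabelian) algebraic group over $S$ whose identity section is a closed immersion.  Combining, $Z\to T$ is quasi-finite; and being the pullback of the diagonal of $\LogPic(X/S)\to S$, it is universally closed by the separatedness of $\LogPic(X/S)\to S$ asserted in Corollary~\ref{cor:valcrit}.  A quasi-finite, proper morphism is finite, which will complete the argument.
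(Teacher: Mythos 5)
Your proposal is correct and follows essentially the same route as the paper: reduce to the zero section via the group structure, split the condition along the exact sequence of Theorem~\ref{thm:trop-ex-seq} into a tropical piece controlled by the positive-definite intersection pairing (Corollaries~\ref{cor:trop-diag-qcpt} and~\ref{cor:pairing-bdd}, resting on Proposition~\ref{prop:ologGm-zero}) and an algebraic piece that is a closed immersion by separatedness of $\Pic^{[0]}(X/S)$, then combine with the valuative criterion of Corollary~\ref{cor:valcrit}. The only cosmetic difference is the last step: you conclude via quasi-finite plus proper implies finite, while the paper's written proof concludes via affine plus finite type plus the valuative criterion, though your variant is exactly the strategy the paper announces in its own preamble to the theorem.
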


In other words, we are to show that if $X$ is a proper, vertical logarithmic curve over $S$ with two logarithmic line bundles $L$ and $L'$ then there is a universal logarithmic scheme $T$ over $S$ such that $L_T \simeq L'_T$ and, moreover, the underlying scheme of $T$ is finite over that of $S$.  This assertion only depends on the difference between $L$ and $L'$ in the group structure of $\LogPic$, so we can assume $L'$ is trivial.  The assertion is also local in the strict \'etale topology on $S$, so we freely replace $S$ by an \'etale cover.  By Corollary~\ref{cor:valcrit}, the diagonal of $\LogPic^d(X/S)$ satisfies the valuative criterion for properness, so it will suffice to prove that the diagonal is schematic, quasicompact, and locally quasifinite.  In fact, morphisms of algebraic spaces that are separated and locally quasifinite are schematic \cite[\href{http://stacks.math.columbia.edu/tag/03XX}{Tag 03XX}]{stacks-project}, so we only need to show the diagonal is representable by algebraic spaces, locally quasifinite, and quasicompact.

\begin{lemma}
The relative diagonal of $\LogPic(X/S)$ over $S$ is quasicompact.
\end{lemma}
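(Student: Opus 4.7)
The plan is to reduce, via the tropical exact sequence of Theorem~\ref{thm:trop-ex-seq}, quasicompactness of the diagonal to quasicompactness of the intersection pairing on $H_1(\frak X)$, which was established in Corollary~\ref{cor:trop-diag-qcpt}.

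First, I reduce to showing that the zero section $e : S \to \LogPic^0(X/S)$ is a quasicompact morphism.  Given a logarithmic scheme $T$ over $S$ and two logarithmic line bundles $L, L'$ on $X_T$, the locus in $T$ where $L \simeq L'$ is the preimage of the zero section under the difference morphism $L - L' : T \to \LogPic(X/S)$; since isomorphic line bundles share a degree, this difference factors through $\LogPic^0(X/S)$.  Hence the diagonal of $\LogPic(X/S)$ over $S$ is obtained as a base change of the zero section~$e$, and its quasicompactness reduces to that of~$e$.

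Second, I invoke the short exact sequence
\begin{equation*}
0 \to \Pic^{[0]}(X/S) \to \LogPic^0(X/S) \xrightarrow{\ t\ } \TroJac(X/S) \to 0
\end{equation*}
of Theorem~\ref{thm:trop-ex-seq}.  Exactness at the middle term provides a cartesian diagram
\begin{equation*}
\xymatrix{
\Pic^{[0]}(X/S) \ar@{^{(}->}[r] \ar[d] & \LogPic^0(X/S) \ar[d]^{t} \\
S \ar[r]_-{\bar e} & \TroJac(X/S)
}
\end{equation*}
in which $\bar e$ denotes the zero section of $\TroJac(X/S)$.  Thus $e$ factors as
\begin{equation*}
S \xrightarrow{\ e_P\ } \Pic^{[0]}(X/S) \hookrightarrow \LogPic^0(X/S),
\end{equation*}
where $e_P$ is the zero section of the proper algebraic space $\Pic^{[0]}(X/S)$ (hence a closed immersion) and the inclusion is the base change of $\bar e$ along $t$.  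Composition of quasicompact morphisms is quasicompact, so it suffices to show that $\bar e$ is quasicompact.

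Third, by Definition~\ref{def:trojac}, $\TroJac(X/S) = \Hom(H_1(\frak X), \ologGm)^\dagger / H_1(\frak X)$ as a sheaf, and the pullback of $\bar e$ along the quotient map $q : \Hom(H_1(\frak X), \ologGm)^\dagger \to \TroJac(X/S)$ is the intersection pairing $\partial : H_1(\frak X) \to \Hom(H_1(\frak X), \ologGm)^\dagger$.  By Corollary~\ref{cor:trop-diag-qcpt}, $\partial$ is of finite type and hence quasicompact.  Because $q$ is surjective as a morphism of sheaves and any section lifts strict-\'etale-locally to $\Hom(H_1(\frak X), \ologGm)^\dagger$, quasicompactness of $\bar e$ follows from that of $\partial$ by descent.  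The step requiring the most care is this final descent along~$q$, together with the identification of $\Pic^{[0]}(X/S)$ as the sheaf-theoretic fiber product in Step~2; both are essentially formal, but must be carried out in the strict \'etale topology in which these sheaves are defined.
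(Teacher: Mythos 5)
Your argument is correct and, like the paper's, bottoms out in Corollary~\ref{cor:trop-diag-qcpt}, but it reaches that point by a different decomposition. The paper stays ``upstairs'': it covers $\LogPic^0(X/S)$ by $V = {R^1\pi_\ast(\pi^\ast\logGm)^{[0]}}^\dagger$ via Corollary~\ref{cor:quotient}, identifies $V\mathop\times_{\LogPic^0(X/S)}V \to V\times V$, after the shear $(v,w)\mapsto(v,w-v)$, with the base change of $H_1(\frak X)\to V$, and then uses the left-hand square of~\eqref{eqn:31} to replace $V$ by $\Hom(H_1(\frak X),\ologGm)^\dagger$. You instead push the zero section ``downstairs'': you factor it through $\Pic^{[0]}(X/S)$ using Theorem~\ref{thm:trop-ex-seq}, observe that the inclusion $\Pic^{[0]}(X/S)\hookrightarrow\LogPic^0(X/S)$ is the base change of the zero section of $\TroJac(X/S)$, and pull that section back along the presentation $\Hom(H_1(\frak X),\ologGm)^\dagger\to\TroJac(X/S)$ to recover $\partial$. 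The two routes traverse diagram~\eqref{eqn:31} in opposite orders and are of comparable length; yours additionally needs the zero section of $\Pic^{[0]}(X/S)$ to be quasicompact. Beware that you justify this by calling $\Pic^{[0]}(X/S)$ \emph{proper}, which it is not --- its failure to be universally closed is the motivation for the whole paper --- but separatedness holds and is all you actually use, so the step stands. Two things worth making explicit: the identification $\TroJac(\frak X/S)=\Hom(H_1(\frak X),\ologGm)^\dagger/H_1(\frak X)$ with a constant $H_1(\frak X)$ is only available after localizing in the constructible and \'etale topologies on $S$ (a harmless reduction, as the paper performs at the outset, since quasicompactness of a morphism may be checked on such a cover); and in your final descent step the \'etale cover of a quasicompact test object $T$ on which the lift to $\Hom(H_1(\frak X),\ologGm)^\dagger$ exists should be refined to a finite one before concluding.
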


\begin{proof}
It is sufficient to demonstrate that $\LogPic^0(X/S)$ has quasicompact diagonal over $S$.  This assertion is local in the constructible topology on $S$, so we assume that the dual graph of $X$ is constant over $S$ and that $\overnorm M_S$ is a constant sheaf on $S$.  Let $\tropfont X$ denote the tropicalization of $X$.  In this situation, Corollary~\ref{cor:quotient} gives an \'etale cover of $\LogPic^0(X/S)$ by $V = {R^1 \pi_\ast(\pi^\ast \logGm)^{[0]}}^\dagger$.  By \'etale descent, it is sufficient to show that $V\mathop\times_{\LogPic^0(X/S)} V \to V \times V$ is quasicompact.

We can recognize this map as the base change to $V \times V$ along $V \times V \to V : (v,w) \mapsto v-w$ of $H_1(\tropfont X) = \ker(V \to \LogPic^0(X/S)) \to V$.  It therefore suffices to demonstrate that $H_1(\tropfont X) \to V$ is quasicompact.

Since $\Pic^{[0]}(X/S)$ is separated, Theorem~\ref{thm:trop-ex-seq} implies that the map $\LogPic^0(X/S) \to \TroJac(X/S)$ is separated, and in particular quasiseparated.  By base change, the compatibility square~\eqref{eqn:31} then shows that $V \to \Hom(H_1(\tropfont X), \ologGm)^\dagger$ is also quasiseparated.  Therefore the quasicompactness of $H_1(\tropfont X) \to V$ follows from the quasicompactness of $H_1(\tropfont X) \to \Hom(H_1(\tropfont X), \ologGm)^\dagger$, which is Corollary~\ref{cor:trop-diag-qcpt}.  
\end{proof}

\begin{lemma} \label{lem:trop-diag-ft}
	Let $S$ be a logarithmic scheme, let $\tropfont X$ be a compact tropical curve over $S$.  Then the zero section of $\TroJac(\tropfont X/S)$ is representable by logarithmic schemes of finite type that are, \'etale-locally in $S$, affine over $H_1(\tropfont X) \mathop\times_S \TroJac(\tropfont X/S)$ (viewing $H_1(\tropfont X)$ as an \'etale sheaf over $S$ and identifying it with its espace \'etal\'e).
\end{lemma}
\begin{proof}
	Suppose we are given a section $S \to \TroJac(\tropfont X/S)$.  Let $Z$ be the pullback of the zero section of $\TroJac(\tropfont X/S)$ to $S$.  We wish to show $Z$ is representable by a logarithmic scheme that is of finite type over $S$ and is affine over $H_1(\tropfont X)$.  This is an \'etale-local assertion on $S$, so we can work locally in $S$ and assume $S \to \TroJac(\tropfont X/S)$ can be lifted to $S \to \Hom(H_1(\tropfont X), \ologGm)$ of $S \to \TroJac(\tropfont X/S)$.  We can realize $Z$ as the pullback of $\partial : H_1(\tropfont X) \to \Hom(H_1(\tropfont X), \ologGm)$ to $S$.  We therefore have~\eqref{eqn:112}:
\begin{equation} \label{eqn:112}
Z = S \mathop\times_{\Hom(H_1(\tropfont X), \ologGm)} H_1(\tropfont X) = \bigcup_{\alpha \in H_1(\tropfont X)} S \mathop\times_{\Hom(H_1(\tropfont X), \ologGm)} \{ \partial(\alpha) \}
\end{equation}
	Here the union ranges over local sections $\alpha$ of $H_1(\tropfont X)$ over $S$.  But $\partial$ is quasicompact by Corollary~\ref{cor:trop-diag-qcpt}, so, locally in $S$, we only need to consider finitely many of the $\alpha \in H_1(\tropfont X)$.  We can therefore assume there is a single $\alpha$.  We write $Z_\alpha$ for the component of $Z$ that corresponds.

Locally in $S$, we can choose a surjection from a finitely generated free abelian group $A$ onto $H_1(\tropfont X)$.  This induces an embedding $\Hom(H_1(\tropfont X), \ologGm) \to \Hom(A, \ologGm)$, which is a product of copies of $\ologGm$.  Applying Proposition~\ref{prop:ologGm-zero} on each copy, we get the result.
\end{proof}

\begin{corollary} \label{cor:affine-ft}
	Let $S$ be a logarithmic scheme and let $X$ be a proper, vertical logarithmic curve over $S$.  Then the zero section of $\LogPic(X/S)$ is representable by logarithmic schemes that are of finite type and, \'etale-locally in $S$, affine over $H_1(\tropfont X) \mathop\times_S \LogPic(X/S)$, where $\tropfont X$ is the tropicalization of $X$.
\end{corollary}
\begin{proof}
	We use the exact sequence from Theorem~\ref{thm:trop-ex-seq}.  By Lemma~\ref{lem:trop-diag-ft}, the map from $\Pic^{[0]}(X/S)$ to $\LogPic^0(X/S)$ is representable by logarithmic schemes of finite type and affine over $H_1(\tropfont X)$.  But the zero section of $\Pic^{[0]}(X/S)$ is a closed embedding because $\Pic^{[0]}(X/S)$ is separated and schematic over $S$; in particular, it is affine and of finite type.  We deduce that the zero section of $\LogPic(X/S)$ is of finite type and is affine over $H_1(\tropfont X)$.
\end{proof}

\begin{proof}[Proof of Theorem~\ref{thm:diag}]
	The diagonal of $\LogPic(X/S)$ is the base change of the embedding of the zero section, so it is sufficient to demonstrate that the embedding of the zero section is finite.  We have seen that it is of finite type in Corollary~\ref{cor:affine-ft}.  Corollary~\ref{cor:affine-ft} also shows that it is affine over an algebraic space that is \'etale over $S$.  In particular, it has affine fibers.  But it also satisfies the valuative criterion for properness by Corollary~\ref{cor:valcrit}.  Therefore the fibers are both affine and proper, hence are finite.  Since the zero section is also of finite type, it is therefore quasifinite.  A quasifinite separated morphism is schematic \cite[Tag 03XX]{stacks-project}, so the zero section is schematic.  Since it is quasifinite and proper, it is finite \cite[Tag 02LS]{stacks-project}.
\end{proof}

\begin{corollary} \label{cor:logpic-proper}
For each integer $d$, the sheaf $\LogPic^d(X/S)$ and the stack $\bLogPic^d(X/S)$ are proper over $S$.
\end{corollary}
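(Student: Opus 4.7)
The plan is to assemble the properness from the five pieces already in place: algebraicity, local finite presentation, boundedness, the valuative criterion, and finiteness of the diagonal. Since $\LogPic^d(X/S)$ is a torsor under $\LogPic^0(X/S)$ (and similarly for the stack version), and torsors inherit properness, it suffices in principle to handle $d = 0$; however, since every ingredient cited below is already stated uniformly in $d$, I will just quote each piece directly.

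First I would verify that both objects are algebraic in the required sense: Corollary~\ref{cor:sheaf-algebraic} and Theorem~\ref{thm:algebraic} give logarithmically smooth covers by logarithmic schemes, with diagonal representable by logarithmic spaces (Corollary~\ref{cor:diag}). Next, local finite presentation over $S$ is Proposition~\ref{prop:lfp} and Corollary~\ref{cor:lfp}. Combining this with the boundedness statement of Corollary~\ref{cor:log-bdd} yields that $\LogPic^d(X/S) \to S$ and $\bLogPic^d(X/S) \to S$ are of finite type: locally in $S$ there is a finite-type logarithmic scheme $T$ over $S$ with a morphism $T \to \bLogPic^d(X/S)$ surjective on valuative geometric points, and through the logarithmically smooth cover this shows the source can be presented by a quasicompact logarithmic scheme with finitely presented relations.

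To get separatedness, I would invoke Theorem~\ref{thm:diag}, which says the diagonal of $\LogPic(X/S)$ over $S$ is representable by finite morphisms of logarithmic schemes, in particular proper; the same holds for the degree-$d$ component since it is open and closed. For $\bLogPic^d(X/S)$, the diagonal is representable by logarithmic spaces (Corollary~\ref{cor:diag}), and it inherits separatedness via the $\logGm$-gerbe relationship $\bLogPic(X/S) \to \LogPic(X/S)$: the diagonal of the gerbe band $\logGm$ satisfies the valuative criterion for properness, so by composition the diagonal of $\bLogPic^d(X/S)$ does as well.

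The valuative criterion for universal closedness is then Theorem~\ref{thm:valcrit} for the stack and Corollary~\ref{cor:valcrit} for the sheaf. Putting these together — algebraic, finite type, separated, and satisfying the valuative criterion for properness — gives properness over $S$, which is the content of the corollary. The only subtlety (and the step I would examine most carefully) is the compatibility of the valuative criterion of Theorem~\ref{thm:val-prop} with the logarithmic notion of properness built on top of logarithmically smooth covers by logarithmic schemes rather than \'etale covers by algebraic spaces: one needs to confirm that universal closedness in the valuative sense, combined with quasicompactness and separatedness, really implies that the image of any logarithmic modification in the target is closed, which follows because the valuative criterion of Theorem~\ref{thm:val-prop} was formulated precisely to accommodate valuative logarithmic enlargements of the base.
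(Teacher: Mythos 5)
Your proposal is correct and assembles the properness from the same three ingredients the paper uses: boundedness (Corollary~\ref{cor:log-bdd}), finiteness of the diagonal (Theorem~\ref{thm:diag}), and the valuative criterion (Theorem~\ref{thm:valcrit} and Corollary~\ref{cor:valcrit}), with the stack case deduced from the sheaf case via the $\logGm$-gerbe structure. The paper's own proof is just a more compressed version of this same argument, so no further comparison is needed.
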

\begin{proof}
We have shown that $\LogPic^d(X/S)$ has finite diagonal by Theorem~\ref{thm:diag}, is bounded by Corollary~\ref{cor:log-bdd}, and satisfies the valuative criterion by Theorem~\ref{thm:valcrit}.  The properness of $\bLogPic^d(X/S)$ follows because it is a gerbe banded by the proper group $\logGm$ over $\LogPic^d(X/S)$.
\end{proof}

\begin{remark}
Of course, the statements demonstrated here are all well-known for the Picard group of a proper family of smooth curves.  The separatedness of the Picard group implies that the diagonal is finite, by definition.  However, the Picard stack of a proper family of smooth curves is not separated because line bundles have the non-proper group $\Gm$ acting as automorphisms.  This is resolved in the logarithmic Picard stack because the automorphism group of a logarithmic line bundle is the \emph{proper} logarithmic group, $\logGm$.

For a proper family of nodal curves, the Picard group can fail to be separated because of the possibility of twists by components of the special fiber.
\end{remark}

\subsection{Smoothness}
\label{sec:smooth}

\begin{theorem} \label{thm:smooth}
Let $X$ be a logarithmic curve over $S$.  Then $\LogPic(X/S)$ is logarithmically smooth.
\end{theorem}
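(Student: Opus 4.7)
The plan is to verify the infinitesimal lifting criterion for log smoothness directly from Definition~\ref{def:logpic}; local finite presentation is already Corollary~\ref{cor:lfp}, so only formal smoothness remains. Let $T \hookrightarrow T'$ be a strict square-zero thickening of logarithmic schemes over $S$ with ideal $I \subset \mathcal O_{T'}$, and let $P$ be a $T$-point of $\LogPic(X/S)$, i.e., an $M_{X_T}^{\rm gp}$-torsor of bounded monodromy. I wish to produce, \'etale locally on $T$, a lift of $P$ to an $M_{X_{T'}}^{\rm gp}$-torsor of bounded monodromy.

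Strictness of the thickening identifies $\overnorm M_{T'}^{\rm gp}$ with $\overnorm M_T^{\rm gp}$, and since the thickening is nilpotent the small \'etale sites of $X_{T'}$ and $X_T$ agree, giving $\overnorm M_{X_{T'}}^{\rm gp} = \overnorm M_{X_T}^{\rm gp}$ after this identification. Comparing the extensions $0 \to \mathcal O^\ast \to M^{\rm gp} \to \overnorm M^{\rm gp} \to 0$ on $X_{T'}$ and $X_T$ and using the exponential isomorphism $\exp : I \xrightarrow{\sim} 1+I$ available for a square-zero ideal, one obtains a short exact sequence of abelian sheaves on (the common \'etale site of) $X_T$:
\begin{equation*}
0 \to \pi^\ast I \to M_{X_{T'}}^{\rm gp} \to M_{X_T}^{\rm gp} \to 0.
\end{equation*}
Standard obstruction theory places the obstruction to lifting $P$ across this sequence in $H^2(X_T, \pi^\ast I)$ and shows that, when nonempty, the set of lifts is a torsor under $H^1(X_T, \pi^\ast I)$.

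The obstruction vanishes \'etale locally on $T$: since $\pi : X_T \to T$ is a proper nodal family of relative dimension~$1$, $R^{\geq 2} \pi_\ast$ annihilates every quasi-coherent sheaf. After shrinking $T$ to an affine \'etale neighborhood, the Leray spectral sequence gives $H^2(X_T, \pi^\ast I) = H^0(T, R^2\pi_\ast \pi^\ast I) = 0$, and hence a lift $P'$ to $X_{T'}$ always exists locally.

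It remains to check that $P'$ has bounded monodromy so that it represents a $T'$-point of $\LogPic(X/S)$. By Proposition~\ref{prop:logpic-bdd-mono}, bounded monodromy of $P'$ is a condition on its image $\overnorm P' \in H^1(X_{T'}, \overnorm M_{X_{T'}}^{\rm gp})$; under the identification $\overnorm M_{X_{T'}}^{\rm gp} = \overnorm M_{X_T}^{\rm gp}$ we have $\overnorm P' = \overnorm P$, which has bounded monodromy by hypothesis. The principal conceptual point throughout is the strictness of the thickening, which simultaneously delivers the exact sequence used in obstruction theory and guarantees that bounded monodromy transports automatically to the lift; the rest is the routine cohomological dimension~$1$ estimate for proper nodal curves together with the deformation theory of abelian sheaf torsors.
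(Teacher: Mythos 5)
Your proof is correct and takes essentially the same approach as the paper: the paper likewise reduces to the infinitesimal criterion (citing Proposition~\ref{prop:lfp} for local finite presentation), uses strictness of the thickening and invariance of the \'etale site to extend the $\overnorm M^{\rm gp}$-torsor uniquely and to see that bounded monodromy is automatic, and then observes that extensions form a gerbe banded by $\mathcal O_X \otimes J$ whose obstruction class in $H^2(X, \mathcal O_X \otimes J)$ vanishes because $X$ is a curve. Your exact sequence $0 \to \pi^\ast I \to M_{X_{T'}}^{\rm gp} \to M_{X_T}^{\rm gp} \to 0$ and the torsor-lifting obstruction theory are just a reformulation of that gerbe argument.
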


There are two parts to smoothness:  the infinitesimal criterion and local finite presentation.  Local finite presentation was addressed in Proposition~\ref{prop:lfp}.

\begin{lemma}
$\bLogPic(X/S)$ satisfies the infinitesimal criterion for smoothness over $S$.  Its logarithmic tangent stack is $\pi_\ast \BGa$, meaning isomorphism classes of deformations are a torsor under $H^1(X, \mathcal O_X)$ and automorphisms are in bijection with $H^0(X, \mathcal O_X)$.
\end{lemma}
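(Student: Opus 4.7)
The plan is to verify both infinitesimal lifting and the tangent computation from a single exponential short exact sequence. Let $j : T \hookrightarrow T'$ be a strict square-zero closed immersion of logarithmic $S$-schemes with ideal sheaf $I \subset \mathcal O_{T'}$. Strictness gives $\overnorm M_{T'} = j_\ast \overnorm M_T$ on the common \'etale site, and because $I^2 = 0$ the map $a \mapsto 1+a$ identifies $\pi^\ast I$ (as an abelian sheaf) with $\ker(\mathcal O_{X_{T'}}^\ast \to \mathcal O_{X_T}^\ast)$. Combined with the coincidence of characteristics, a snake-lemma chase produces a short exact sequence
\begin{equation*}
0 \to \pi^\ast I \to M_{X_{T'}}^{\rm gp} \to M_{X_T}^{\rm gp} \to 0
\end{equation*}
on the \'etale site of $X_T = X_{T'}$, and the whole argument will consist of applying the long exact sequence in cohomology of this sequence.

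For essential surjectivity of $\bLogPic(X/S)(T') \to \bLogPic(X/S)(T)$, given a logarithmic line bundle $L$ on $X_T$ the obstruction to extending $L$ to an $M_{X_{T'}}^{\rm gp}$-torsor lies in $H^2(X_T, \pi^\ast I)$. Since $\pi : X \to S$ is a flat family of nodal curves of relative dimension one and $\pi^\ast I$ is quasi-coherent, $R^2\pi_\ast\pi^\ast I = 0$, so a lift $L'$ exists. The class of $L'$ in $H^1(X_{T'}, \overnorm M_{X_{T'}}^{\rm gp})$ coincides, via the equality of characteristic sheaves, with the class of $L$ in $H^1(X_T, \overnorm M_{X_T}^{\rm gp})$; by Proposition~\ref{prop:bounded} applied at the geometric points of $T$, bounded monodromy is therefore preserved, and $L'$ is a logarithmic line bundle.

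The same long exact sequence identifies the set of isomorphism classes of lifts of a fixed $L$ with an $H^1(X_T, \pi^\ast I)$-torsor and identifies the kernel of $\Aut(L') \to \Aut(L)$ with $H^0(X_T, \pi^\ast I)$. Specializing to $T' = T \times_{\Spec \mathbf Z} \Spec \mathbf Z[\epsilon]/(\epsilon^2)$ with trivial logarithmic structure along $\epsilon$ gives $\pi^\ast I = \mathcal O_{X_T}$, from which the logarithmic tangent stack is $\pi_\ast \BGa$ with isomorphism classes of deformations a torsor under $H^1(X, \mathcal O_X)$ and infinitesimal automorphisms $H^0(X, \mathcal O_X)$, as claimed.

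The only obstacle worth noting is the preservation of bounded monodromy, and it is dispatched immediately by the fact that the characteristic sheaf is unchanged in a strict square-zero extension; the cohomological input $R^{\geq 2}\pi_\ast = 0$ on quasi-coherent sheaves for a family of nodal curves is standard.
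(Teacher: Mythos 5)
Your proposal is correct and follows essentially the same route as the paper: both exploit that a strict square-zero extension leaves the \'etale site and the characteristic sheaf unchanged (so bounded monodromy is automatic), identify the kernel of $M_{X_{T'}}^{\rm gp} \to M_{X_T}^{\rm gp}$ with the ideal sheaf via $a \mapsto 1+a$, and conclude from the vanishing of $H^2$ of a quasi-coherent sheaf on a relative curve, with deformations a torsor under $H^1(X,\mathcal O_X)$ and automorphisms $H^0(X,\mathcal O_X)$. The only difference is cosmetic: you read this off the long exact sequence of the exponential short exact sequence, while the paper packages the same computation as the cohomological classification of the gerbe of lifts banded by $\mathcal O_X \otimes J$.
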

\begin{proof}
Consider a lifting problem~\eqref{eqn:33}
\begin{equation} \label{eqn:33} \vcenter{\xymatrix{
T \ar[r] \ar[d] & \bLogPic(X/S) \ar[d] \\
T' \ar@{-->}[ur] \ar[r] & S
}} \end{equation}
in which $T'$ is a strict infinitesimal square-zero extension of $T$.  The lower horizontal arrow gives a logarithmic curve $X'$ over $T'$ with fiber $X$ over $T$, and the upper horizontal arrow gives a logarithmic line bundle $L$ on $X$.  We wish to extend this to $T'$.  It is sufficient to assume that $T'$ is a square-zero extension with ideal $J$.

Let $\overnorm L$ be the $\overnorm M_X^{\rm gp}$-torsor induced from $L$.  As this is a torsor under an \'etale sheaf, and the \'etale sites of $X$ and $X'$ are identical, $\overnorm L$ extends uniquely to $\overnorm L'$.  We therefore assume $\overnorm L'$ is fixed.  We note that the bounded monodromy condition for a putative $L'$ extending $L$ depends only on $\overnorm L'$, and is equivalent to that for $\overnorm L$, hence is automatically satisfied.

We wish to show that $\overnorm L'$ can be lifted to an $M_{X'}^{\rm gp}$-torsor.  Locally in $X$ there is no obstruction to extending $L$ to $L'$.  If we take any two local extensions of $L$, their difference $L' \otimes {L''}^\vee$ is a $M_{X'}^{\rm gp}$-torsor whose restriction to $X$ is trivialized, as is its induced $\overnorm M_{X'}^{\rm gp}$-torsor.  Therefore the $M_{X'}^{\rm gp}$-torsor $L' \otimes {L''}^\vee$ is induced from a uniquely determined $\mathcal O_{X'}^\ast$-torsor exending the trivial one from $X$.

It follows that extensions of $L$ form a gerbe on $X$ banded by $\mathcal O_X \otimes J$.  Obstructions to producing a lift --- in other words, obstructions to producing a section of this gerbe --- lie in $H^2(X, \mathcal O_X \otimes J)$, which vanishes locally in $S$ because $X$ is a curve over $S$.  By the cohomological classification of banded gerbes, deformations form a torsor under $H^1(X, \mathcal O_X \otimes J)$ and automorphisms are in bijection with $H^0(X, \mathcal O_X \otimes J)$.

To get the logarithmic tangent space, we take a trivial extension $T'$ of $T$ by $J = \mathcal O_T$.  Then isomorphisms between a given extension $L'$ and the trivial extension form a torsor under the group of automorphisms of the trivial extension, $\mathbf G_a$.
\end{proof}

\numberwithin{equation}{subsection}
\subsection{Tropicalizing the logarithmic Picard group}
\label{sec:trop-pic}

Let $X$ be a proper, vertical logarithmic curve over $S$ and let $\tropfont X$ denote the tropicalization of $X$.  We construct a tropicalization map:
\begin{equation} \label{eqn:36}
\bLogPic(X/S) \to \bTroPic(\tropfont X/S)
\end{equation}

Since $\bTroPic(\tropfont X/S)$ is locally constant on the logarithmic strata of $S$, our strategy will be to construct~\eqref{eqn:36} stratumwise and then show its compatibility with generization. 

Assume first that $S$ has constant characteristic monoid and that the dual graph of $X$ is constant over $S$.  Under these assumptions, we have an anticontinuous tropicalization map $t : X \to \tropfont X$.

Suppose that $Q$ is a $M_X^{\rm gp}$-torsor on $X$.  Let $\tropfont U \to \tropfont X$ be a local isomorphism and let $U = t^{-1} \tropfont U$.  Let $\NS(U)$ denote the N\'eron--Severi group of $U$.  Then $\NS$ is a functor on finite $X$-schemes and we observe that the sheaf $\mathsf V$ on $\tropfont X$ (whose sections are members of the free abelian group generated by the vertices) is isomorphic to $t_\ast \NS$.  Combined with Lemma~\ref{lem:log-pwl} and the exact sequence in the middle column of~\eqref{eqn:34}, this proves Proposition~\ref{prop:log-lin}:

\setcounter{theorem}{\value{equation}}
\begin{proposition} \label{prop:log-lin}
Let $X$ be a logarithmic curve over $S$, where $S$ has constant characteristic monoid and $X$ has constant dual graph.  Let $\tropfont X$ be the tropicalization of $X$.  Then the sheaf of linear functions $\L$ on $\tropfont X$ is quasi-isomorphic to $t_\ast [ \overnorm M_X^{\rm gp} \to \NS ]$.
\end{proposition}

For the reader's convenience, we recall a few facts about strictly commutative $2$-groups.  Suppose that we have a short exact sequence of sheaves on a site $T$:
\begin{equation*}
\xymatrix{
0 \ar[r] & K \ar[r]^{i} &  G \ar[r]^{q} & H \ar[r] & 0 }
\end{equation*}
This induces a sequence of morphisms of $2$-groups: 
\begin{equation*}
\xymatrix{
\cdots \ar[r]  & H \ar[r] & \mathrm BK \ar[r] &  \mathrm BG \ar[r] & \mathrm BH \ar[r] & \cdots}
\end{equation*} 
The connecting homomorphism $H \to BK$ takes an element $h \in H$ to its preimage $\{g \in G: q(g) = h \}$ in $G$; this is a coset of $K$ and, in particular, a $K$-torsor when the embedding in $G$ is forgotten.  The remaining maps are given by extension of structure group: a $G$-torsor $P$ induces an $H$-torsor $P/K$, and a $K$-torsor $L$ similarily induces a $G$-torsor $L \times^K G = (L \times G)/K$, with the quotient taken by the antidiagonal action of $K$. 

The sequence is `long exact' in the sense that $H$ is the kernel of $\mathrm BK \to \mathrm BG$, and $\mathrm BK$ is the kernel of $\mathrm BG \to \mathrm BH$.  Explicitly, a $K$-torsor with a trivialization of its induced $G$-torsor is the same thing as a $K$-torsor with a $K$-equivariant map to $G$, i.e., a coset of $K$ in $G$, i.e., a section of $H$.  Likewise, a $G$-torsor with a trivialization of its induced $H$-torsor is a $G$-torsor with a $G$-equivariant map to $H$; any such torsor admits a unique reduction to a $K$-torsor by taking the fiber over $0 \in H$.

We now specialize to the cases of interest, which are 
\begin{itemize}
\item $K = \mathbf{G}_m, G = \logGm, H = \ologGm$ on the logarithmic curve $X \to S$, and
\item $K = \L, G = \PL, H = \mathsf{V}$ on the tropicalization $\tropfont X$. 
\end{itemize} 
We will write $\logGm \otimes L$ for the $\logGm$-torsor associated to a $\Gm$-torsor $L$, and $\overnorm{P}$ for the $\ologGm$-torsor associated to a $\logGm$-torsor $P$. The tropicalization map $\bLogPic(X/S) \to \bTroPic(\tropfont X)$ arises from the relationship between $\ologGm$ and $\PL$. Since $t_* \overnorm{M}_X^{\rm{gp}} = \PL$, and $\overnorm M_X^{\rm gp}$-torsors are locally in $S$ trivial on the fibers of $t : X \to \tropfont X$ (namely, the strata of $X$), pushforward along $t$ gives an equivalence:
\begin{equation*}
\mathrm B \ologGm(X) \xrightarrow{\sim} \mathrm B \PL(\tropfont X) : \overnorm P \mapsto t_\ast \overnorm P
\end{equation*}

Furthermore, if $P$ is a $\logGm$-torsor on $X$, with induced $\ologGm$-torsor $\overnorm P$ then the fiber of $P$ over a section $\alpha$ of $\overnorm P$ is a $\mathcal O_X^\ast$-torsor, $P(\alpha)$.  We obtain a map from $\overnorm P$ to the N\'eron--Severi presheaf $\NS$ sending $\alpha$ to the class of $P(\alpha)$.  Since $t_\ast \NS = \mathsf V$, we obtain $t_\ast \overnorm P \to \mathsf V$.  That is, the $\PL$-torsor on $\tropfont X$ associated to a $M_X^{\rm gp}$-torsor on $X$ comes with a canonical trivialization of its induced $\mathsf V$-torsor, hence descends uniquely to a $\L$-torsor that we call $\trop(P)$.  Explicitly, $\trop(P)$ is the $\L$-torsor of multidegree~$0$ lifts of $P$ to an $\mathcal O_X^\ast$-torsor:%
\footnote{The formula for $\trop(P)$ should be interpreted as a groupoid.  Since $\mathcal O_X^\ast \to M_X^{\rm gp}$ is injective, there is at most one isomorphism between any two objects of $\trop(P)$, so this groupoid is equivalent to a set.}
\begin{equation*}
\trop(P) = \{ (L,\alpha: L^{\textup{log}} \xrightarrow{~\sim} P) \: \big| \: \mathrm{multideg}(L) = 0 \}
\end{equation*}

In order to extend this construction to one valid over a general base, we will need to prove its compatibility with the generization maps for $\bTroPic(\tropfont X/S)$, given by Proposition~\ref{prop:tropic-func}.  Note that the bounded monodromy condition has not yet entered into the discussion; indeed, it only becomes necessary when considering families of nonconstant degeneracy.

\begin{proposition} \label{prop:bdd-mono-gen}
Let $X$ be a proper logarithmic curve over $S$ and let $s$ be a geometric point of $S$.  Then $\pi_\ast (\mathrm B \overnorm M_X^{\rm gp})_s \to \Gamma(X_s, \mathrm B \overnorm M_{X_s}^{\rm gp})$ is fully faithful and restricts to an isomorphism on the bounded monodromy subgroups.
\end{proposition}
\begin{proof}
	Full faithfulness follows from proper base change for \'etale cohomology~\cite[Th\'eor\`eme~5.1~(i) and (ii)]{sga4-XII}, so the point is to prove surjectivity on the bounded monodromy subgroup.  The assertion is \'etale-local in $S$, so we may assume that the logarithmic structure of $S$ has a global chart.  The chart gives a stratification of $X$ into finitely many locally closed subschemes, and we can assume without loss of generality that only one is closed and that it contains $s$.

Suppose that $\overnorm L_s$ is an $\overnorm M_X^{\rm gp}$-torsor on $X_s$ with bounded monodromy.  We extend $\overnorm L_s$ to an $\overnorm M_X^{\rm gp}$-torsor on $X$ inductively over the strata of $S$.  By induction, we can assume that $\overnorm L_Z$ has already been constructed on a closed union of strata $Z$ containing $s$ and that the complement of $Z$ in $S$ is an open subset $U$ on which $\overnorm M_S$ is constant.  Let $j$ denote the inclusion of $U$ in~$S$.

The homomorphism $\overnorm M_S^{\rm gp} \to j_\ast \overnorm M_U^{\rm gp}$ induces a homomorphism $\overnorm M_X^{\rm gp} \to \overnorm N_X^{\rm gp}$ by pushout.  Let $\overnorm K_Z$ be the $\overnorm N_X^{\rm gp}$-torsor on $X_Z$ induced from $\overnorm L_s$ along this homomorphism.

Let $\tropfont X_U$ denote the dual graph of a geometric fiber of $X$ over $U$ and let $\tropfont V_U$ be its universal cover.  Pulling back along the projection $\tropfont X_s \to \tropfont X_U$ we obtain an \'etale cover $\tropfont V_s$ of $\tropfont X_s$, which corresponds to an \'etale cover of $X_s$.  By construction, this cover extends to an \'etale cover $\rho : V \to X$ of all of $X$.

We also use $\rho$ to denote the restriction of $\rho$ to the preimage of $Z$.  The pullback $\rho^\ast \overnorm K_Z$ is trivial.  Indeed, it suffices to trivialize $\rho^\ast \overnorm K_s$, and $\overnorm K_s$ has trivial monodromy  around all loops in $\tropfont V_s$, by its construction and the assumption of bounded monodromy in $\overnorm L$.  Then $\rho^\ast \overnorm K_Z$ extends trivially to an $\overnorm N_X^{\rm gp}$-torsor $\overnorm K'$ on $V$ and the action of deck transformations extends as well.  By descent, we obtain an $\overnorm N_X$-torsor $\overnorm K$ on $X$ extending $\overnorm K_Z$.

We may now define $\overnorm L = \overnorm K \mathop\times_{i_\ast \overnorm K_Z} i_\ast \overnorm L_Z$ where $i$ is the inclusion of $X_Z$ in $X$.  This is a torsor under $\overnorm N \mathop\times_{i_\ast \overnorm N_{X_Z}} i_\ast \overnorm M_{X_Z}$, which is isomorphic to $\overnorm M_X$ by the canonical map.
\end{proof}

Suppose now that $S$ is a strictly henselian valuation ring with special point $\xi$ and generic point $\eta$.  We have a commutative diagram:
\begin{equation*} \xymatrix{
\Gamma(X_\xi, \mathrm B M_{X_\xi}^{\rm gp}) \ar[d] & \Gamma(X, \mathrm B M_X^{\rm gp}) \ar[d] \ar[r] \ar[l] & \Gamma(X_\eta, \mathrm B M_{X_\eta}^{\rm gp}) \ar[d] \\
\Gamma(X_\xi, \mathrm B \overnorm M_{X_\xi}^{\rm gp}) \ar[d] & \Gamma(X, \mathrm B M_X^{\rm gp}) \ar[d] \ar[r] \ar[l] & \Gamma(X_\eta, \mathrm B \overnorm M_{X_\eta}^{\rm gp}) \ar[d] \\
\mathrm B\NS(X_\xi) & \Gamma(S, \mathrm B\NS(X/S)) \ar[r] \ar[l] & \mathrm B\NS(X_\eta)
} \end{equation*}
Upon passage to the bounded monodromy subgroups and composing, we obtain~\eqref{eqn:120}:
\setcounter{equation}{\value{theorem}}
\begin{equation} \label{eqn:120} \vcenter{\xymatrix{
\bLogPic(X/S)(\xi) \ar[d] & \bLogPic(X/S)(S) \ar[d] \ar[r] \ar[l] & \bLogPic(X/S)(\eta) \ar[d] \\
\Gamma(X_\xi, \mathrm B\overnorm M_{X_\xi}^{\rm gp})^\dagger  \ar[d] & \Gamma(X, \mathrm B\overnorm M_X^{\rm gp})^\dagger \ar[l]_-\sim \ar[r] \ar[d] & \Gamma(X_\eta, \mathrm B\overnorm M_{X_\eta})^\dagger \ar[d] \\
\mathrm B \NS(X_\xi) & (\mathrm B \NS(X/S))(S) \ar[r] \ar[l]_-\sim & \mathrm B \NS(X_\eta)
}} \end{equation}
The isomorphism in the second row is Proposition~\ref{prop:bdd-mono-gen} and we get the isomorphism $(\mathrm B \NS(X/S))(S) \simeq \mathrm B \NS(X_\xi)$ from the knowledge that $\NS(X/S)$ is an \'etale sheaf over $S$.  

The vertical compositions in diagram~\eqref{eqn:120} are canonically trivialized, as was discussed earlier.  Proposition~\ref{prop:log-lin} implies that $\bTroPic(\tropfont X_\xi)$ is the kernel of $\Gamma(\tropfont X_\xi, \mathrm B\overnorm M_{X_\xi}^{\rm gp})^\dagger \to \mathrm B\NS(X_\xi)$ (and similarly over $\eta$) so we obtain a commutative diagram~\eqref{eqn:121} :
\begin{equation} \label{eqn:121} \vcenter{\xymatrix{ 
\bLogPic(X/S)(\xi) \ar[d] & \bLogPic(X/S)(S) \ar[r] \ar[l] \ar[dl] & \bLogPic(X/S)(\eta) \ar[d] \\
\bTroPic(\tropfont X_\xi) \ar[rr] & & \bTroPic(X_\eta)
}} \end{equation}
We leave it to the reader to verify that the construction in the proof of Proposition~\ref{prop:bdd-mono-gen} is the same as the one used in the proof of Proposition~\ref{prop:tropic-func} so that the map $\bTroPic(\tropfont X_\xi) \to \bTroPic(X_\eta)$ displayed above is indeed the same as the one guaranteed by Proposition~\ref{prop:tropic-func}.  The commutativity of the inner trapezoid gives the compatibility of the tropicalization map with generization.

\setcounter{theorem}{\value{equation}}
\numberwithin{equation}{theorem}
\begin{theorem} \label{thm:trop-pic}
Let $X$ be a proper, vertical logarithmic curve over $S$ and let $\tropfont X$ be its tropicalization.  Then there are exact sequences (in the \'etale topology):
\begin{gather*}
	0 \to \bPic^{[0]}(X/S) \to \bLogPic(X/S) \to \bTroPic(\tropfont X/S) \to 0 \\
	0 \to \Pic^{[0]}(X/S) \to \LogPic(X/S) \to \TroPic(\tropfont X/S) \to 0
\end{gather*}
\end{theorem}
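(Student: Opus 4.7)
The plan is to reduce the theorem to Theorem~\ref{thm:trop-ex-seq} by tracking the degree and then to promote the resulting sheaf statement to the stack statement by a gerbe argument.

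First I would verify that the tropicalization morphism constructed via diagram~\eqref{eqn:60} is compatible with the degree maps on the two sides.  The total degree on $\LogPic(X/S)$ defined in Section~\ref{sec:degree} and the degree on $\TroPic(\frak X/S)$ of Corollary~\ref{cor:tropic} are both computed via the natural multidegree map to $\rho_\ast \mathscr V$, and the compatibility amounts to the commutativity of the right-hand square in~\eqref{eqn:60}.  Consequently, the tropicalization map sends $\LogPic^d$ into $\TroPic^d$ for each integer $d$, and the theorem decomposes as a disjoint union over $d$ of sequences $0 \to \Pic^{[0]} \to \LogPic^d \to \TroPic^d \to 0$.

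Next I would reduce to the case $d = 0$.  Locally in the \'etale topology on $S$, the family $X \to S$ has a section through its smooth locus, giving a logarithmic line bundle of degree~$1$ and hence one of every degree by tensor powers.  Choosing such a bundle turns $\LogPic^d$ and $\TroPic^d$ into torsors under $\LogPic^0$ and $\TroPic^0$, compatibly with tropicalization and with the inclusions of $\Pic^{[0]}$.  At $d = 0$, Corollary~\ref{cor:tropic} identifies $\TroPic^0 = \TroJac$, so the sheaf sequence
\begin{equation*}
0 \to \Pic^{[0]}(X/S) \to \LogPic^0(X/S) \to \TroJac(\frak X/S) \to 0
\end{equation*}
is precisely Theorem~\ref{thm:trop-ex-seq}.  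One must still check that the map $\LogPic^0 \to \TroJac$ induced by~\eqref{eqn:60} agrees with the quotient map of~\eqref{eqn:31}; both arise from the natural homomorphism $R^1\pi_\ast\pi^\ast\ologGm \to \Hom(H_1(\frak X), \ologGm)$, and the identification reduces to a diagram chase using Lemma~\ref{lem:pairing} to match intersection pairings on the two sides.

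Finally I would pass from sheaves to stacks.  By Lemma~\ref{lem:global-sections}, the projection $\bLogPic(X/S) \to \LogPic(X/S)$ is a gerbe banded by $\logGm$; by an analogous fiberwise computation using Lemma~\ref{lem:harmonic}, the projection $\bTroPic(\frak X/S) \to \TroPic(\frak X/S)$ is a gerbe banded by $\ologGm$; and $\bPic^{[0]}(X/S) \to \Pic^{[0]}(X/S)$ is a $\Gm$-gerbe.  These three gerbes fit into a compatible ladder whose bands form the exact sequence $0 \to \Gm \to \logGm \to \ologGm \to 0$, and a formal snake lemma argument in the $2$-category of commutative group stacks lifts the sheaf sequence to the claimed exact sequence of group stacks.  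I expect the hardest step to be the degree compatibility and diagram chase identifying the two tropicalization maps on $\LogPic^0$; the reduction to degree~$0$ and the stackification are essentially formal once that identification is in hand.
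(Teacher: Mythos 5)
Your proposal is correct in outline, but it traverses the argument in the opposite direction from the paper, and the comparison is instructive. The paper proves the \emph{stack} sequence first: surjectivity of $\bLogPic(X/S) \to \bTroPic(\frak X/S)$ is extracted from the surjectivity of $\pi_\ast \mathrm B\Gm \to \rho_\ast \mathscr V$ in diagram~\eqref{eqn:60} together with the logarithmic \'etale descent used to define the tropicalization map, and the kernel is identified by representing a logarithmic line bundle with trivial tropicalization by an honest line bundle on a subdivision $Y/U$, normalizing its multidegree, and only then invoking Theorem~\ref{thm:trop-ex-seq}; the sheaf sequence is afterwards obtained by dividing termwise by $0 \to \mathrm B\Gm \to \mathrm B\logGm \to \mathrm B\ologGm \to 0$. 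You instead prove the \emph{sheaf} sequence first by a degree decomposition and translation by a section of the smooth locus, reducing to Theorem~\ref{thm:trop-ex-seq} at degree~$0$, and then ascend to stacks via the ladder of gerbes banded by $\Gm$, $\logGm$ (Lemma~\ref{lem:global-sections}), and $\ologGm$ (Lemma~\ref{lem:harmonic}) and a nine-lemma for Picard stacks. Both routes rest on the same two inputs --- Theorem~\ref{thm:trop-ex-seq} and the band sequence $0 \to \Gm \to \logGm \to \ologGm \to 0$ --- and both require the identification of the descent-theoretic tropicalization map of~\eqref{eqn:60} with the quotient-presentation map of~\eqref{eqn:31} on $\LogPic^0$; you flag this check explicitly where the paper uses it silently in its final paragraph, which is a point in your favor. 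Your route buys explicit degree bookkeeping and avoids having to argue surjectivity of the stack map directly from subdivisions, at the cost of the translation argument (which needs \'etale-local sections through the smooth locus and connected fibers) and of making the two-categorical snake lemma precise; note also that the degree compatibility you invoke is not quite just the right-hand square of~\eqref{eqn:60}, since for logarithmic line bundles not representable by honest line bundles one must also use the invariance of the total degree under subdivision recorded in Section~\ref{sec:degree}. These are gaps of exposition rather than of substance, and the argument goes through.
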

\begin{proof}
	The second exact sequence is obtained from the first by dividing, term by term, by the exact sequence~\eqref{eqn:92}:
	\begin{equation} \label{eqn:92}
		0 \to \mathrm B\Gm \to \mathrm B\logGm \to \mathrm B\ologGm \to 0
	\end{equation}

	We have exact sequences~\eqref{eqn:122}:
\begin{gather} \label{eqn:122} 
0 \to \mathcal O_X^\ast \to M_X^{\rm gp} \to \overnorm M_X^{\rm gp} \to 0 \qquad \text{(on $X$)} \\
0 \to \L \to \PL \to \mathsf V \to 0 \qquad \text{(on $\tropfont X$)} \notag
\end{gather}
Rotating these sequences, pushing forward to $S$, and restricting to bounded monodromy, we get a commutative diagram of exact sequences (with $\rho_\ast \mathrm B \PL$ denoting the stack on $S$ of $\PL$-torsors on $\tropfont X$):
\begin{equation*} \xymatrix{
0 \ar[r] & \bPic(X/S) \ar[r] \ar[d] & \bLogPic(X/S) \ar[r] \ar[d] & \pi_\ast (\mathrm B \overnorm M_X^{\rm gp})^\dagger \ar@{=}[d] \ar[r] & 0 \\
0 \ar[r] & \NS(X/S) \ar[r] & \bTroPic(\tropfont X/S) \ar[r] & \rho_\ast ( \mathrm B \PL )^\dagger \ar[r] & 0
} \end{equation*}
The kernel of $\bLogPic(X/S) \to \bTroPic(X/S)$ therefore coincides with the kernel of the map $\bPic(X/S) \to \NS(X/S)$, which is $\bPic^{[0]}(X/S)$.  Likewise, $\bPic(X/S)$ surjects onto $\NS(X/S)$ so $\bLogPic(X/S) \to \bTroPic(X/S)$ is surjective as well.
\end{proof}

\numberwithin{equation}{subsection}
\subsection{Logarithmic abelian variety structure}
\label{sec:log-ab-var}

In this section, we explain how the logarithmic Jacobian carries the structure of a log abelian variety, in the sense of \cite{kajiwara_kato_nakayama_2015}. For the convenience of the reader, we recall briefly the necessary definitions. For details and proofs, we refer the reader to \cite{kajiwara_kato_nakayama_2015}.  We try to keep the notations of \cite{kajiwara_kato_nakayama_2015} as much as possible, but some changes will be necessary in order to avoid conflicts with notation already introduced here. We fix a base logarithmic scheme $S$, and form the site $\rm{fs}/S$, whose objects are fine and saturated log schemes over $S$ and whose coverings are strict \'etale surjections.  

Let $G$ be a semiabelian group scheme, that is, an extension 

\begin{equation}
1 \rightarrow T \rightarrow G \rightarrow A \rightarrow 1
\end{equation} 

\noindent of an abelian variety $A$ by a torus $T = \Spec \ZZ[H] = \underline\Hom(H, \Gm)$. Here $H$ is a sheaf of lattices over $\rm{fs}/S$.   Just as $\logGm$ extends $\Gm$, there is a sheaf $T^{\rm{log}} = \logGm \otimes_{\Gm} T$ extending $T$ that can be defined on $\rm{fs}/S$ by the folowing formula: 
\begin{equation}
T^{\rm{log}}(S') = \Hom(H,M_{S'}^{\rm{gp}})
\end{equation}

Equivalently, $T^{\rm{log}} = \underline{\Hom}(H,\logGm)$, where we regard $H$ as a sheaf on $\rm{fs}/S$, and $\underline{\Hom}$ denotes the sheaf of homomorphisms.  There is an evident inclusion $T \rightarrow T^{\log}$ induced from $\Gm \to \logGm$, and pushing out $T \rightarrow G$ along this map we obtain an exact sequence 
\begin{equation}
1 \rightarrow T^{\rm{log}} \rightarrow G^{\rm{log}} \rightarrow A \rightarrow 1
\end{equation} 
where $G^{\rm{log}} = T^{\rm{log}} \oplus_T G$. 

\setcounter{theorem}{\value{equation}}
\begin{definition}[{\cite[Definition~2.2]{kajiwara_kato_nakayama_2008b}}]
A \emph{log 1-motif} is a map $K \rightarrow G^{\rm{log}}$, where $G$ is a semiabelian group scheme and $K$ is sheaf of locally free abelian groups of finite rank on $\rm{fs}/S$. 
\end{definition}

The map $K \rightarrow G^{\rm{log}}$ naturally defines a subsheaf $G_{(K)}^{\rm{log}} \subset G^{\rm{log}}$ as follows. The composed map from $K$ to the quotient $G^{\rm{log}}/G \cong T^{\log}/T = \overnorm{T}^{\rm{log}}$ determines a pairing $\langle\,,\,\rangle:H \times K \rightarrow \ologGm$, and a subsheaf $\overnorm{T}^{\rm{log}}_{(K)}$, determined by the formula 

\setcounter{equation}{\value{theorem}}
\begin{equation}
\overnorm{T}^{\rm{log}}_{(K)}(S') = \Big\{ \phi \in \overnorm{T}^{\rm{log}}(S') \: \Big| \: \parbox{6.6cm}{$\forall \textup{ geometric points } s \in S', x \in H_{s}, \newline \exists y,y' \in K \textup{ s.t } \langle x,y \rangle \le \phi(x) \le \langle x,y' \rangle$} \Big\}
\end{equation}

We thus obtain $G^{\rm{log}}_{(K)}$ by simply pulling back $\overnorm{T}^{\rm{log}}_{(K)}$ under the map $G \rightarrow \overnorm{T}^{\rm{log}}$. A log 1-motif defines an \emph{abelian variety with constant degeneration}, by assigning to $K \rightarrow G^{\rm{log}}$ the quotient sheaf $G^{\rm{log}}_{(K)}/K$. 

\setcounter{theorem}{\value{equation}}
\numberwithin{equation}{theorem}
\begin{definition}[{\cite[Definition~4.1]{kajiwara_kato_nakayama_2008}}]
\label{definition:logabelianvariety}
A \emph{log abelian variety} is a sheaf $\mathcal{A}$ on $\rm{fs}/S$ such that all of the following properties hold: 
\begin{enumerate}[label=(\arabic{*})]
\item For each geometric point $s \in S$, the pullback of $\mathcal{A}$ to $\rm{fs}/s$ is a log abelian variety with constant degeneration. 
\item \'Etale locally on $S$, there is an exact sequence~\eqref{eqn:70} for some semiabelian variety $G$ over $S$, some bilinear form $H \times K \to \Gamma(S, \overnorm M_S^{\rm gp})$, and $\overnorm T^{\log} = \underline\Hom(H, \ologGm)$:
\begin{equation} \label{eqn:70}
0 \rightarrow G \rightarrow \mathcal{A} \rightarrow \overnorm{T}^{\rm{log}}_{(K)}/K \rightarrow 0
\end{equation} 
\item Let $\overnorm{K}$ denote the image of $K$ in $\underline{\Hom}(H,\ologGm)$ and $\overnorm{H}$ the image of $H$ in $\underline{\Hom}(K,\ologGm)$. For each geometric point $s \in S$, there exists a map $\phi: \overnorm{K}_s \rightarrow \overnorm{H}_s$ with finite cokernel such that $\langle \phi(y), z \rangle = \langle y , \phi(z) \rangle$ for all $y,z \in \overnorm{K}_s$, and $\langle \phi(y),y\rangle \in \overnorm{M}_{S,s}$.  
\item The diagonal $\mathcal{A} \rightarrow \mathcal{A} \times \mathcal{A}$ is representable by finite morphisms. 
\end{enumerate}
\end{definition}
We are now ready to indicate how the logarithmic Jacobian fits into this context. 

\begin{theorem} \label{thm:log-ab-var}
Let $X$ be a proper, vertical logarithmic curve over $S$.  Then $\LogPic^0(X/S)$ is a logarithmic abelian variety in the sense of Kajiwara, Kato, and Nakayama~\cite{kajiwara_kato_nakayama_2008}.
\end{theorem}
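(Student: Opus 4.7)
The plan is to identify each piece of Definition~\ref{definition:logabelianvariety} with a structure already constructed in the paper and then verify the four conditions in order. I will take
\begin{equation*}
G = \Pic^{[0]}(X/S), \qquad T = \underline{\Hom}(H_1(\frak X), \Gm), \qquad H = K = H_1(\frak X),
\end{equation*}
with the pairing $\langle\ ,\ \rangle : H \times K \to \ologGm$ given by the intersection pairing $\partial$ of Section~\ref{sec:intersection}. Under these identifications, $T^{\log} = \underline{\Hom}(H_1(\frak X), \logGm)$ and $G^{\log} = T^{\log} \oplus_T G$ coincides with $R^1\pi_\ast(\pi^\ast\logGm)^{[0]}$ by the analysis carried out in Section~\ref{sec:strathom}; moreover the bounded monodromy condition of Definition~\ref{def:bdd-mono} matches, term by term, the inequalities defining $\overnorm T^{\log}_{(K)}$ inside $\overnorm T^{\log}$, so that $\overnorm T^{\log}_{(K)} = \Hom(H_1(\frak X), \ologGm)^\dagger$ and $G^{\log}_{(K)} = {R^1\pi_\ast(\pi^\ast\logGm)^{[0]}}^\dagger$. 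The map $K \to G^{\log}$ in the definition of a log 1-motif is then precisely the map $H_1(\frak X) \to {R^1\pi_\ast(\pi^\ast\logGm)^{[0]}}^\dagger$ of Corollary~\ref{cor:quotient}.

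With the dictionary in place, condition~(2) of Definition~\ref{definition:logabelianvariety} becomes the exact sequence
\begin{equation*}
0 \to \Pic^{[0]}(X/S) \to \LogPic^0(X/S) \to \overnorm T^{\log}_{(K)}/K \to 0,
\end{equation*}
which is exactly Theorem~\ref{thm:trop-ex-seq} once $\TroJac(\frak X/S) = \Hom(H_1(\frak X),\ologGm)^\dagger/H_1(\frak X)$ is recognized, by Definition~\ref{def:trojac}, as $\overnorm T^{\log}_{(K)}/K$. Condition~(1), which asks that every geometric fiber be a log abelian variety with constant degeneration, then follows by specializing this sequence to a geometric point and using the extension $1 \to T \to \Pic^{[0]}(X/S) \to \Pic^{[0]}(X^\nu/S) \to 1$ exhibiting $G$ as semiabelian. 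Condition~(4), finite diagonal, is Theorem~\ref{thm:diag}.

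The remaining work is condition~(3), the polarization/positivity axiom. Here I would simply take $\phi$ to be the identity of $H_1(\frak X)$ (viewed, after embedding into $\underline\Hom(H_1(\frak X),\ologGm)$ via $\partial$, as a map $\overnorm K \to \overnorm H$). Symmetry $\langle\phi(y),z\rangle = \langle y,\phi(z)\rangle$ is symmetry of the intersection pairing, immediate from Definition~\ref{def:intersection}. The condition $\langle\phi(y),y\rangle \in \overnorm M_{S,s}$ is the computation $\partial(y).y = \sum_e y_e^2 \ell(e) \geq 0$, which is effective because each $\ell(e)$ lies in $\overnorm M_{S,s}$. Finiteness of the cokernel of $\phi$ is trivial since $\phi$ is the identity between free abelian groups of the same finite rank.

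The main obstacle I expect is the bookkeeping needed to match the paper's bounded monodromy subsheaf with Kajiwara--Kato--Nakayama's subfunctor $\overnorm T^{\log}_{(K)}$: one must check that the universal bounds witnessing membership in $\overnorm T^{\log}_{(K)}$ (pairings with elements of $K$) are the same, pointwise and functorially in the base, as the bounds provided by Proposition~\ref{prop:bounded} and Lemma~\ref{lem:bdd-ind}. Once this identification is set up carefully, the four axioms drop out of Theorems~\ref{thm:trop-ex-seq} and~\ref{thm:diag} and the formal properties of the intersection pairing.
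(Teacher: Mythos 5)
Your proposal is correct and follows essentially the same route as the paper: the same identifications $H = K = H_1(\frak X)$, $G = \Pic^{[0]}(X/S)$, the intersection pairing for $\langle\,,\,\rangle$, and $\phi = \mathrm{id}$, with conditions (1), (2), (4) supplied by Corollary~\ref{cor:quotient}, Theorem~\ref{thm:trop-ex-seq}, and Theorem~\ref{thm:diag} respectively. The one step you defer as ``bookkeeping'' --- that $\overnorm T^{\log}_{(K)}$ coincides with the bounded monodromy subsheaf --- is precisely the paper's Lemma~\ref{lemma:boundedmonodromyisYpart}; the only non-immediate direction is the converse, where a sandwich $\langle x,y\rangle \le \phi(x) \le \langle x,y'\rangle$ with $y,y' \in H_1(\frak X)$ is converted into $-m\langle x,x\rangle \le \phi(x) \le n\langle x,x\rangle$ via the estimate $\langle x,y\rangle \le n\langle x,x\rangle$ with $n$ the maximum coefficient of $y$ as a combination of edges.
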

\begin{proof}
We verify the conditions of Definition~\ref{definition:logabelianvariety}.

Given a family of logarithmic curves $X \rightarrow S$, with dual graph $\tropfont{X}$, we obtain a sheaf of lattices $H_1(\tropfont{X})$. We set $H = K = H_1(\tropfont{X})$ for the lattices appearing in the definition above, and take $\langle \,,\, \rangle$ to be the intersection pairing.  We let $G = \rm{Pic}^{[0]}(X)$ denote the multidegree $0$ part of $\Pic(X/S)$. 

The third condition in the definition is immediate in our context: The two lattices $H$ and $K$ are $H_1(\tropfont{X})$, and we may take $\phi = \mathrm{id}$. For any $y \in H_1(\tropfont X_s)$, the pairing $\langle y, y \rangle$ is a sum of elements of $\overnorm M_{S,s}$ by Definition~\ref{def:intersection}, and therefore is in $\overnorm M_{S,s}$.  

The last condition is exactly Theorem~\ref{thm:diag}. 

The first and second condition follow from Corollary~\ref{cor:quotient} and the exact sequence of \ref{thm:trop-ex-seq} respectively, once we observe: 
\begin{lemma}
\label{lemma:boundedmonodromyisYpart}
For $K = H_1(\tropfont{X})$, the subsheaf $\overnorm{T}_{(K)}^{\rm{log}}$ coincides with the subsheaf of elements with bounded monodromy $(\overnorm{T}^{\rm{log}})^{\dagger}$ in $\overnorm{T}^{\rm{log}}$.
\end{lemma} 
\begin{proof}
Since both the bounded monodromy condition and the condition defining $\overnorm{T}_{(Y)}^{\rm{log}}$ are defined pointwise, we may check that the two groups are the same on a logarithmic scheme $s$ whose underlying scheme is the spectrum of an algebraically closed field. If $\phi: H_1(\tropfont X) \rightarrow \overnorm M^{\rm{gp}}$ has bounded monodromy then, by definition, there are integers $m$ and $n$ such that $m\langle x,x \rangle \le \phi(x) \le n \langle x, x \rangle $.  Thus $\phi \in \overnorm{T}^{\rm{log}}_{(Y)}$ as it verifies the definition with $y = mx,y'=nx$. 

For the converse, suppose that $\phi : H_1(\tropfont X) \to \overnorm M_s^{\rm gp}$ and, for every $x \in H_1(\tropfont X)$, there are $y,y' \in H_1(\tropfont X)$ such that $\langle x, y \rangle \leq \phi(x) \leq \langle x, y' \rangle$.  For any $y \in H_1(\tropfont X)$, we have $\langle x, y \rangle \leq n \langle x, x \rangle$ for some positive integer $n$.  Indeed, we may take $n$ to be the maximum of the coefficients of $y$ as a linear combination of edges of $\tropfont X$.  We likewise have $\langle x, y' \rangle \geq -m \langle x, x \rangle$ for some positive integer $m$, and therefore $-m \langle x, x \rangle \leq \phi(x) \leq n \langle x, x \rangle$, as required.
\end{proof}

\end{proof}

\subsection{Prorepresentability}
\label{sec:logpic-prorep}

The logarithmic Picard group and logarithmic Jacobian cannot be represented by schemes, or even by algbraic stacks, with logarithmic structures.  This follows from the nonrepresentability of the logarithmic multiplicative group, which was proved in Proposition~\ref{prop:nonrep}.  We have already seen in Section~\ref{sec:cover} that both are nearly representable in the sense that they have logarithmically smooth covers by logarithmic schemes.  In this section we will consider another near-representability property.

Let $\tropfont X$ be the tropicalization of a logarithmic curve $X$ over $S$.  Theorem~\ref{thm:trop-ex-seq} shows that $\LogPic^0(X/S)$ is a torsor under the algebraic group $\Pic^{[0]}(X/S)$ over $\TroJac(\tropfont X/S)$ and Theorem~\ref{thm:trop-pic} shows that $\LogPic(X/S)$ is a $\Pic^{[0]}(X/S)$-torsor over $\TroPic(\tropfont X/S)$.  Therefore the nonrepresentability of $\LogPic(X/S)$ can be attributed to the nonrepresentability of $\TroPic(\tropfont X/S)$.  However, we saw in Section~\ref{sec:trojac-subdiv} that $\TroPic(\tropfont X/S)$ is prorepresentable.  We might therefore reasonably expect $\TroPic(X/S)$ to be similarly prorepresentable.

We saw in Proposition~\ref{prop:prorep} that $\Hom(H_1(\tropfont X), \ologGm)^\dagger$ is, locally in $S$, pro-representable by a collection of submonoids of $\overnorm M_S^{\rm gp} + H_1(\tropfont X)$.  Each of these submonoids represents a functor on logarithmic schemes that can be represented by an algebraic stack with a logarithmic structure (see \cite[Section~6]{cavalieri2017moduli} for further details).  Therefore we can think of $\Hom(H_1(\tropfont X), \ologGm)$ as ind-representable on logarithmic schemes by algebraic stacks with logarithmic structure.  Since $\TroJac(\tropfont X/S)$ is a quotient of $\Hom(H_1(\tropfont X), \ologGm)^\dagger$ by $H_1(\tropfont X)$, we conclude that $\TroJac(\tropfont X/S)$ is, locally in $S$, the quotient of an ind-algebraic stack with logarithmic structure by $H_1(\tropfont X)$.  The same applies to $\TroPic^d(\tropfont X/S)$ for all $d$, since it is a torsor under $\TroPic^0(\tropfont X/S) = \TroJac(\tropfont X/S)$.

\begin{proposition} \label{prop:tropic-indrep}
	$\LogPic(X/S)$ is, locally in $S$, the quotient of an ind-algebraic stack with a logarithmic structure by the action of $H_1(\tropfont X)$.
\end{proposition}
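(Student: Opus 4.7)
The plan is to bootstrap the proposition from the exact sequence of Theorem~\ref{thm:trop-pic} and the pro-representability results of Section~\ref{sec:trojac-subdiv}. First, I would work locally in $S$ so that the logarithmic structure has constant characteristic monoid and the dual graph $\frak X$ is constant; this puts us in a setting where Lemma~\ref{lem:prorep} and Corollary~\ref{cor:indrep} apply and where $\Pic^{[0]}(X/S)$ is an algebraic group over $S$. I would also reduce to a fixed degree component $\LogPic^d(X/S)$, since $\LogPic(X/S)$ is a disjoint union of these over $d \in H_0(\frak X)$.

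Next, I would assemble the ind-presentation of $\TroPic^d(\frak X/S)$ that is sketched in the paragraph preceding the proposition. By Corollary~\ref{cor:indrep}, the functor $\Hom(H_1(\frak X), \frak P)^\dagger$ is ind-representable by a saturated convex region $\tau$ in the lattice $\Hom(\overnorm M_S^{\rm gp}, \mathbf Z) \times \Hom(H_1(\frak X), \mathbf Z)$, and each finitely generated subcone of $\tau$ corresponds dually to an affine toric logarithmic scheme (an algebraic stack with logarithmic structure in the sense of \cite{cavalieri2017moduli}, since we are taking quotients by the unit group). Writing $F = \varinjlim F_i$ for the resulting filtered colimit of log-algebraic stacks, the $H_1(\frak X)$-action on $\tau$ by translation gives $\TroJac(\frak X/S) = F / H_1(\frak X)$, and consequently $\TroPic^d(\frak X/S) = F^d / H_1(\frak X)$ for any torsor $F^d$ in the same family (which exists locally in $S$).

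Then I would form the fiber product $\tilde E = \LogPic^d(X/S) \mathop\times_{\TroPic^d(\frak X/S)} F^d$. By Theorem~\ref{thm:trop-pic}, this is a $\Pic^{[0]}(X/S)$-torsor over $F^d$, and $\Pic^{[0]}(X/S)$ is an algebraic group over $S$. Torsors under algebraic groups preserve ind-algebraicity stratum by stratum: writing $\tilde E = \varinjlim E_i$ where $E_i$ is the pullback of the torsor over $F_i$, each $E_i$ is representable by an algebraic stack with logarithmic structure (the pullback of an algebraic group torsor along a log-algebraic morphism). The $H_1(\frak X)$-action on $F^d$ lifts canonically to $\tilde E$ using the group structure of $\LogPic^d(X/S)$ provided by Corollary~\ref{cor:quotient}, because $H_1(\frak X)$ sits inside the universal cover $R^1\pi_\ast(\pi^\ast \logGm)^{[0]\dagger}$ that maps to $\LogPic^0(X/S)$. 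The quotient $\tilde E / H_1(\frak X)$ then gives back $\LogPic^d(X/S)$, exhibiting the required presentation.

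The main obstacle I anticipate is verifying that the $H_1(\frak X)$-action lifts compatibly through the torsor structure of $\tilde E$ over $F^d$, and that the lifted action preserves the filtered system $\{ E_i \}$ (up to passing to a cofinal subsystem of translates of the cones $F_i$). Once this compatibility is in place, the quotient statement is formal; but care is required because the exact sequence from Theorem~\ref{thm:trop-pic} is only split locally in $S$, and the splitting used in Section~\ref{sec:lochom} to describe the action of $H_1(\frak X)$ via the formula of Proposition~\ref{prop:formula} is exactly what converts the abstract $H_1(\frak X)$-quotient on the tropical side into a $H_1(\frak X)$-quotient on the logarithmic side. I would use Proposition~\ref{prop:formula} as the bridge between the two quotient presentations, which should make the identification $\tilde E/H_1(\frak X) = \LogPic^d(X/S)$ transparent.
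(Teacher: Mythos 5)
Your proposal is correct and follows essentially the same route as the paper: the paper's own proof is the one-line observation that $\LogPic(X/S)$ is a $\Pic^{[0]}(X/S)$-torsor over $\TroPic(\frak X/S)$, combined with the ind-presentation of the tropical side from Lemma~\ref{lem:prorep} and Corollary~\ref{cor:indrep} established in the preceding paragraph. The extra care you take with lifting the $H_1(\frak X)$-action through the torsor is a reasonable elaboration of a point the paper leaves implicit, but it does not change the argument.
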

\begin{proof}
	$\LogPic(X/S)$ is a torsor over $\TroPic(\tropfont X/S)$ under the algebraic group $\Pic^{[0]}(X/S)$.  
\end{proof}

For a moduli problem $F$ on logarithmic schemes, one defines a minimal logarithmic structure on an $S$-point of $F$ in such a way that when $F$ is representable, minimality corresponds to strictness of the morphism $S \to F$.  We introduce a similar notion that corresponds to strictness \emph{at the level of associated groups}.

\begin{definition} \label{def:pseudominimal}
	Let $S$ be a logarithmic scheme and let $F$ be a covariant functor valued in sets on logarithmic structures over $M_S$ such that $F(M_S)$ has one element.  We say that a logarithmic structure $N$ over $M_S$ and an object $\xi \in F(N)$ is \emph{pseudominimal} if, for every $\eta \in F(P)$, there is a unique morphism $u : N^{\rm gp} \to P^{\rm gp}$ and $\xi' \in F(u^{-1} P \cap N)$ that is sent to $\xi$ under $u^{-1} P \cap N \to N$ and is sent to $\eta$ under $u^{-1} P \cap N \to P$.

	If $F$ is a presheaf on logarithmic schemes then we say $\xi \in F(T)$ is \emph{pseudominimal} if $\xi$ is pseudominimal when $F$ is regarded as a functor on logarithmic structures over $M_T$.
\end{definition}

Note that if $\xi_1 \in F(N_1)$ and $\xi_2 \in F(N_2)$ are both pseudominimal then there is a canonical isomorphism $N_1^{\rm gp} \simeq N_2^{\rm gp}$.

\begin{proposition}
With notation as in Definition~\ref{def:pseudominimal}, if pseudominimal elements exist then the collection of pseudominimal objects of $F$ pro-represents $F$.
\end{proposition}
\begin{proof}
Let $G(P) = \varinjlim_{\text{$(N, \xi)$}} \Hom(N, P)$ with the colimit taken over all pseudominimal $(N,\xi)$.  There is a canonical morphism $G \to F$ that we want to show is an isomorphism.  It is surjective by the existence of pseudominimal objects.  Now suppose that $\xi_1 \in F(N_1)$ and $\xi_2 \in F(N_2)$ are pseudominimal objects projecting to the same $\eta \in F(P)$.  By definition of pseudominimality, there is a unique morphism $u : N_1^{\rm gp} \to N_2^{\rm gp}$ and an object $\xi' \in F(u^{-1} N_2 \cap N_1)$ projecting to both $\xi_1$ and $\xi_2$ along the canonical maps to $N_1$ and $N_2$.  It is immediate that $\xi'$ is pseudominimal, which implies that the pseudominimal objects are cofiltered.  Moreover, diagram~\eqref{eqn:125} commutes
\begin{equation} \label{eqn:125} \vcenter{ \xymatrix{
u^{-1} N_2 \cap N_1 \ar[r] \ar[d] & N_1 \ar[d] \\
N_2 \ar[r] & P
}}\end{equation}
so the homomorphisms $N_1 \to P$ and $N_2 \to P$ represent the same element of $G(P)$.  This demonstrates the injectivity of $G \to F$ and completes the proof.
\end{proof}

\begin{proposition} \label{prop:logpic-pseudo}
	A $T$-point of $\LogPic^0(X/S)$ over $f : T \to S$ is pseudominimal if and only if the canonical map $f^\ast \overnorm M^{\rm gp}_S + H_1(\tropfont X) \to \overnorm M_T^{\rm gp}$ is a bijection.
\end{proposition}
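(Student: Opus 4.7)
The plan is to reduce the statement to a corresponding one for the tropical Jacobian, and then to apply the pro-representation from Lemma~\ref{lem:prorep}. The exact sequence
\begin{equation*}
0 \to \Pic^{[0]}(X/S) \to \LogPic^0(X/S) \to \TroJac(X/S) \to 0
\end{equation*}
of Theorem~\ref{thm:trop-ex-seq} shows that $\Pic^{[0]}(X/S)$ is a strict, schematic factor in $\LogPic^0(X/S)$, so it contributes no data to the logarithmic structure of a $T$-point. Varying $M_T$ while keeping the $T$-point of $\LogPic^0(X/S)$ fixed therefore amounts to varying $M_T$ while keeping the image in $\TroJac(X/S)(T)$ fixed, and the canonical homomorphism in the statement depends only on this image. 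Hence it suffices to prove: a $T$-point of $\TroJac(X/S)$ is pseudominimal if and only if the canonical map $f^\ast \overnorm M^{\rm gp}_S + H_1(\frak X) \to \overnorm M^{\rm gp}_T$ is a bijection.

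The second step is to choose, locally on $T$, a lift of the $T$-point of $\TroJac(X/S)$ to a homomorphism $\mu : H_1(\frak X) \to \overnorm M_T^{\rm gp}$ with bounded monodromy, via the defining quotient
\begin{equation*}
0 \to H_1(\frak X) \xrightarrow{\partial} \Hom(H_1(\frak X), \ologGm)^\dagger \to \TroJac(X/S) \to 0.
\end{equation*}
Two lifts differ by a homomorphism $\partial(\gamma_0)$ valued in the image of $f^\ast \overnorm M_S^{\rm gp}$, which verifies both that the canonical map in the statement is well defined and that its bijectivity does not depend on the choice of lift. Consequently one is reduced to proving the pseudominimality criterion for an element of $\Hom(H_1(\frak X), \ologGm)^\dagger$.

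The third step applies Lemma~\ref{lem:prorep}: the functor $\Hom(H_1(\frak X), \frak P)^\dagger$ on monoids under $\overnorm M_S$ is prorepresented by the filtered system of finitely generated submonoids $\overnorm P \subset \overnorm M_S^{\rm gp} + H_1(\frak X)$ with $\gamma \prec \ell(\gamma)$, all of which have the common associated group $\overnorm P^{\rm gp} = \overnorm M_S^{\rm gp} + H_1(\frak X)$. Our datum $\mu$ corresponds to a unique factorization $\overnorm M_S \to \overnorm P \to \overnorm M_T$ for some $\overnorm P$ in the system. Given any competing $\eta$ over $P$, the prorepresentation furnishes a canonical factorization through a suitable $\overnorm P'$, and compatibility with $\xi$ amounts to identifying $\overnorm P'$ with the pullback of $\overnorm M_T$ and $P$ along the resulting arrow $u : \overnorm M_T^{\rm gp} \to \overnorm P^{\rm gp}$. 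Such a $u$ exists and is unique precisely when the map $\overnorm P^{\rm gp} \to \overnorm M_T^{\rm gp}$ is a bijection, which in view of $\overnorm P^{\rm gp} = \overnorm M_S^{\rm gp} + H_1(\frak X)$ is the stated condition.

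The main obstacle will be faithfully translating Definition~\ref{def:pseudominimal} into the language of factorizations through the pro-representation. The subtlety is that pseudominimality asks for a factorization through $u^{-1} P \cap N$ rather than through $N$, and one must check that the bounded-monodromy condition forces $u^{-1} P \cap N$ to contain the specific submonoid $\overnorm P$ carrying the $\TroJac$-datum. Once this verification is in place, uniqueness of $u$ is immediate from the bijectivity assumption (since $u$ is determined on $f^\ast \overnorm M_S^{\rm gp}$ by the structure over $M_S$ and on the image of $H_1(\frak X)$ by compatibility with $\mu$ and the chosen lift of $\eta$), and existence follows by defining $u$ summand-by-summand using the same recipe.
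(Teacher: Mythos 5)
Your proposal is correct and follows essentially the same route as the paper: the paper's proof simply observes that $\LogPic^0(X/S) \to \TroJac(X/S)$ is strict, so pseudominimality is detected on the induced $T$-point of $\TroJac(X/S)$, and then invokes the pro-representation of $\Hom(H_1(\frak X), \frak P)^\dagger$ from Lemma~\ref{lem:prorep}. Your version fills in more of the intermediate bookkeeping (the choice of lift to $\Hom(H_1(\frak X), \ologGm)^\dagger$ and the translation of Definition~\ref{def:pseudominimal} into factorizations through the pro-system), which the paper leaves implicit.
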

\begin{proof}
	Since $\LogPic^0(X/S) \to \TroJac(X/S)$ is strict, a $T$-point of $\LogPic^0(X/S)$ is pseudominimal if and only if the induced $T$-point of $\TroJac(X/S)$ is pseudominimal.  The proposition therefore follows from Proposition~\ref{prop:prorep}.
\end{proof}

\begin{example}
The group $\logGm$ is not prorepresentable, because it lacks pseudominimal objects.  For simplicity we will work with $\ologGm$ instead.  Indeed, consider a logarithmic structure over a point with characteristic monoid $\overnorm M = \mathbf N e_1 + \mathbf N e_2$.  Then $\ologGm(S) = \mathbf Z e_1 + \mathbf Z e_2$.  Consider the element $e_1 - e_2$.  Any pseudominimal object $(\overnorm N, \xi)$ must have $\overnorm N^{\rm gp} = \mathbf Z$.  But if $u : \mathbf Z \to \overnorm M$ is a homomorphism taking $1$ to $e_1 - e_2$ then $u^{-1} \overnorm M = \{ 0 \}$, and then there is no element of $\ologGm(0) = 0$ inducing $e_1 - e_2$ via a homomorphism to $\overnorm M$.

On the other hand, we have already seen that the bounded monodromy subfunctors $\Hom(H, \ologGm)^\dagger \subset \Hom(H, \ologGm)$ associated to a positive definite quadratic form on $H$ do admit pseudominimal objects.  For a concrete example, we work over a base monoid containing a nonzero element $\delta$ and consider $H = \mathbf Z$ with the quadratic form $\ell$ that has $\ell(1) = \delta$.  Let $\overnorm N$ be a submonoid of $\overnorm M \times H$ that contains $(n \delta, 1)$ and $(n \delta, -1)$ for some positive integer $n$, and let $\xi \in \ologGm^\dagger(H) = \overnorm M^{\rm gp} \times \mathbf Z$ be the element $(0,1)$.  We will check that $(\overnorm N, \xi)$ is pseudominimal relative to any element of $\ologGm^\dagger(\overnorm M)$.  Indeed, suppose that $\eta \in \ologGm^\dagger(\overnorm P) \subset \overnorm P^{\rm gp}$.  Then there is a positive integer $m$ such that $-m \delta \leq \eta \leq m \delta$, so $\overnorm P$ contains both $\eta + m \delta$ and $m \delta - \eta$; for simplicity we choose $m \geq n$.  Therefore the preimage of $\overnorm P$ under the natural homomorphism $u : \overnorm N^{\rm gp} \to \overnorm P^{\rm gp}$ contains $(m \delta, 1)$ and $(m \delta, -1)$.  Hence $(0,1)$ lies in the associated group of $u^{-1} \overnorm P \cap \overnorm N$, so we can find $\xi' = (0,1) \in \ologGm(u^{-1} \overnorm P \cap \overnorm N)$ lifting both $\xi \in \ologGm(\overnorm N)$ and $\eta \in \ologGm(\overnorm P)$.
\end{example}

\subsection{Schematic models}
\label{sec:prop-mod}

We show that the combinatorics of the tropical Picard group can be used to construct toroidal compactifications of $\LogPic^d(X/S)$.  This section is inspired directly by Kajiwara, Kato, and Nakayama~\cite{kajiwara1993logarithmic,kajiwara_kato_nakayama_2015} and is, for the most part, only a tropical reinterpretation of their results.

Suppose that $X$ is a logarithmic curve over a logarithmic scheme $S$ with tropicalization $\tropfont X$.  For simplicitly, we assume that $S$ is atomic, or at least that it has a morphism to $\sigma$ for some rational polyhedral cone $\sigma$, dual to $\overnorm M$, and that $\tropfont X$ is pulled back from a tropical curve $\tropfont Y$ over $\sigma$ (we abuse notation here and do not distinguish notationally between $\sigma$ and its Artin cone).  Then $\TroPic(\tropfont X/S)$ is pulled back from $\TroPic(\tropfont Y)$.  A subdivision $\tropfont Z$ of $\TroPic(\tropfont Y)$ induces a subdivision of $\TroPic(\tropfont X/S)$ and a subdivision $\LogPic(X/S)_{\tropfont Z}$ of $\LogPic(X/S)$ by pullback.  Since subdivisions are proper and $\LogPic(X/S)$ is proper, the subdivision, $\LogPic(X/S)_{\tropfont Z}$, is proper as well.

Suppose now that $\tropfont Z$ is actually representable by a cone space in the sense of~\cite{cavalieri2017moduli}.  Then $\tropfont Z \mathop\times_{\sigma} S$ is representable by an algebraic stack over $S$ with a logarithmic structure.  By Theorem~\ref{thm:trop-pic}, $\LogPic(X/S)$ is a torsor over $\TroPic(\tropfont X/S)$ under the group scheme $\Pic^{[0]}(X/S)$.  Therefore $\LogPic(X/S)_{\tropfont Z}$ is also a torsor over $\tropfont Z_S$ under the same group scheme.  This implies that $\LogPic(X/S)_{\tropfont Z}$ is representable by an algebraic stack with a logarithmic structure.

\begin{lemma} \label{lem:aut}
	Let $L$ be a logarithmic line bundle on a proper, vertical logarithmic curve $X$ over $S$.  Assume that the logarithmic structure of $S$ is pseudominimal.  Then the automorphism group of $L$, fixing the underlying schemes of $X$ and $S$ and the minimal logarithmic structure of $X$, is $\Gamma(S, M_S^{\rm gp})$.
\end{lemma}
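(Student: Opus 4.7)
The plan is to reduce the statement to Lemma~\ref{lem:global-sections}. An automorphism of a $\logGm$-torsor $L$ on $X$, which fixes both the underlying schemes and the minimal logarithmic structure on $X$, is the same thing as a global section of $\logGm = M_X^{\rm gp}$ on $X$ (acting on $L$ by translation). This is the standard fact that automorphisms of a $G$-torsor, for a sheaf of abelian groups $G$, are in natural bijection with $\Gamma(X, G)$. Hence the automorphism group in question is exactly $H^0(X, M_X^{\rm gp})$.

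Having identified the automorphism group with $\Gamma(X, M_X^{\rm gp})$, I would then invoke Lemma~\ref{lem:global-sections}, which asserts that for a proper, vertical logarithmic curve with connected geometric fibers, $M_S^{\rm gp} \to \pi_\ast M_X^{\rm gp}$ is an isomorphism. Taking global sections on $S$ gives $\Gamma(S, M_S^{\rm gp}) \xrightarrow{\sim} \Gamma(X, M_X^{\rm gp})$, which yields the claim. If $X$ has several connected components over a geometric point of $S$, one reduces to the connected case component by component and verifies that the resulting identifications patch together, but no essentially new content arises.

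The role of the pseudominimality hypothesis is to ensure that the statement is the right one to make: it guarantees that the $S$-point of $\bLogPic(X/S)$ given by $L$ has no logarithmic deformations which could enlarge the group of automorphisms beyond $\Gamma(S, M_S^{\rm gp})$, so that the finite (or rather, constant) description of the inertia matches the minimal-log-structure picture used elsewhere in the paper. The actual \emph{computation} of the automorphism group, however, goes through verbatim: the only nontrivial input is Lemma~\ref{lem:global-sections}, which does not itself require pseudominimality. I therefore do not anticipate a genuine obstacle; the main thing to be careful about is the bookkeeping needed to ensure that the bijection between automorphisms and global sections of $M_X^{\rm gp}$ is the one respecting the chosen fixings (of $X$, of $S$, and of the minimal logarithmic structure on $X$), so that we genuinely land inside $\Gamma(S, M_S^{\rm gp})$ rather than some auxiliary extension.
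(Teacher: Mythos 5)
There is a genuine gap: you have computed the wrong automorphism group. If the automorphisms in question were merely automorphisms of the $\logGm$-torsor $L$ over the \emph{identity} of $M_X$, then your argument (torsor automorphisms $=\Gamma(X,M_X^{\rm gp})$, then Lemma~\ref{lem:global-sections}) would be complete --- but then the pseudominimality hypothesis would be idle and the lemma would be a triviality. The phrase ``fixing \dots the \emph{minimal} logarithmic structure of $X$'' is the tell: only the logarithmic structure determined by the curve itself is fixed, while the actual (pseudominimal) logarithmic structure of $S$ is part of the data of $L$ and may be moved. An automorphism of $L$ is therefore a pair $(\phi,u)$ with $\phi$ an automorphism of $\pi^\ast M_S$ restricting to the identity on the minimal logarithmic structure and $u\colon L\to\phi^\ast L$ an isomorphism of torsors. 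Pseudominimality forces $\overnorm M_S^{\rm gp}$ to contain a copy of $H_1(\frak X)$ beyond the minimal characteristic monoid, so the relevant $\phi$ form the torus $\Hom(H_1(\frak X),\mathcal O_S^\ast)$, and a priori the automorphism group is an extension of (a subgroup of) this torus by $\Gamma(X,M_X^{\rm gp})$. The actual content of the lemma --- entirely absent from your proposal --- is that no nontrivial $\phi$ admits an isomorphism $u$.

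The paper handles this by forming the sheaf $A$ on $X$ of such pairs $(\phi,u)$, which sits in an exact sequence $0\to M_X^{\rm gp}\to A\to\Hom(H_1(\frak X),\Gm)_X\to 0$ (local triviality of $L$ gives the surjectivity), and pushing forward to $S$. The boundary map $\Hom(H_1(\frak X),\mathcal O_S^\ast)\to R^1\pi_\ast M_X^{\rm gp}$ sends $\phi$ to the multidegree~$0$ line bundle obtained by regluing along the loops of $\frak X$ using $\phi$; this is the inclusion of the torus part of $\Pic^{[0]}(X/S)$ into $\LogPic(X/S)$ and in particular is injective, so $\pi_\ast M_X^{\rm gp}\to\pi_\ast A$ is bijective. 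Only at that point does Lemma~\ref{lem:global-sections} finish the argument, exactly as in your last step. Note also that your closing paragraph has the role of pseudominimality backwards: it is what \emph{enlarges} the a priori automorphism group, by making the logarithmic structure of $S$ strictly bigger than the minimal one for the curve, rather than what prevents the group from being enlarged.
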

\begin{proof}
	Let $N_S$ and $N_X$ be the minimal logarithmic structures on $X$ and on $S$, respectively, associated with the curve $X$ over $S$.  Then $\overnorm M_S^{\rm gp}$ is isomorphic to the direct sum $\overnorm N_S^{\rm gp} + H_1(\tropfont X)$, where $\tropfont X$ is the tropicalization of $X$ over $S$.  We choose the splitting as follows: $L$ induces $\bar\lambda \in H^1(\tropfont X, \PL) = \Hom(H_1(\tropfont X), \overnorm M_S^{\rm gp}) / \partial \mathsf E(\tropfont X)$.  Locally in $S$, we choose a lift of $\bar\lambda$ to $\lambda \in \Hom(H_1(\tropfont X), \overnorm M_S^{\rm gp})$.  Then identify $H_1(\tropfont X)$ with its image in $\overnorm M_S^{\rm gp}$ under $\lambda$.

	Now consider an automorphism of $L$ as in the statement of the lemma.  This comprises an automorphism of $\phi$ of $M_S$ and an isomorphism $L \simeq \phi_\ast L$.  Since $L$ and $\phi_\ast L$ are isomorphic, the homomorphisms $\lambda$ and $\phi \lambda$ must coincide modulo $\partial \mathsf E(\tropfont X)$.  That is, $\phi \lambda = \lambda + \sum a_i \partial(e_i)$.  In particular, for each $\gamma \in H_1(\tropfont X)$, we must therefore have
	\begin{equation}
		\phi \lambda(\gamma) = \gamma + \sum_{e_i \in \gamma} a_i \ell(e_i)
	\end{equation}
	where the sum is taken over the constituent edges of $\gamma$.  If any $a_i < 0$ then $\phi^k(\gamma)$ cannot lie in $\overnorm M_S$ for $k \gg 0$.  Similarly, if any $a_i > 0$ then $\phi^k(\gamma)$ will not lie in $\overnorm M_S$ for $k \ll 0$.  Therefore $a_i = 0$ for all $i$ and $\phi$ acts as the identity on $\overnorm M_S^{\rm gp}$.

	The automorphism $\phi$ of $M_S$ is therefore determined uniquely by a homomorphism $\overnorm M_S^{\rm gp} \to \mathcal O_S^\ast$.  This homomorphism must vanish on $\overnorm N_S^{\rm gp} \subset \overnorm M_S^{\rm gp}$, so it descends to a homomorphism $H_1(\tropfont X) \to \mathcal O_S^\ast$.  Writing $A$ for the automorphism group in the statment of the lemma, this gives us an exact sequence:
	\begin{equation} \label{eqn:94}
		0 \to M_X^{\rm gp} \to A \to \Hom(H_1(\tropfont X), \mathcal O_S^\ast)
	\end{equation}
	The $M_X^{\rm gp}$ on the left represents the automorphisms of $L$ when the logarithmic structure of $S$ is held fixed.  We wish to show that only the zero homomorphism $H_1(\tropfont X) \to \mathcal O_S^\ast$ lifts to $A$.

	Consider the sheaf $\tilde A$ on $X$ whose sections over an \'etale $U \to X$ consist of an automorphism $\phi$ of $\pi^\ast M_S \big|_U$ that fixes the minimal logarithmic structure of $X$ and the characteristic monoid of $\pi^\ast M_S \big|_U$ and an isomorphism between $L \big|_U$ and $\phi_\ast L \big|_U$.  Since logarithmic line bundles are locally trivial, an isomorphism between $L \big|_U$ and $\phi_\ast L \big|_U$ always exists locally in $X$ and there is therefore an exact sequence of sheaves on $X$:
	\begin{equation*}
		0 \to M_X^{\rm gp} \to \tilde A \to \Hom(H_1(\tropfont X), \mathcal O_X^\ast) \to 0
	\end{equation*}
	Pushing forward to $S$, we get an extension~\eqref{eqn:66} of~\eqref{eqn:94}:
	\begin{equation} \label{eqn:66}
		0 \to \pi_\ast M_X^{\rm gp} \to A \to \Hom(H_1(\tropfont X), \mathcal O_S^\ast) \to \mathrm R^1 \pi_\ast M_X^{\rm gp}
	\end{equation}
	The map $\Hom(H_1(\tropfont X), \mathcal O_S^\ast) \to \mathrm R^1 \pi_\ast M_X^{\rm gp}$ sends a homomorphism $\phi$ to the multidegree~$0$ line bundle on $X$ obtained by gluing using $\phi$ around the loops of $\tropfont X$.  It is, in other words, the inclusion of the torus part of $\Pic^{[0]}(X)$ in $\LogPic(X)$, and in particular is injective.  It follows that $\pi_\ast M_X^{\rm gp} \to A$ is bijective.  By Lemma~\ref{lem:global-sections}, $\pi_\ast M_X^{\rm gp} = M_S^{\rm gp}$ and the lemma is proved.
\end{proof}

\begin{corollary}
	Let $\LogPic(X/S)_{\tropfont Z}$ be a subdivision of $\LogPic(X/S)$ that is representable by an algebraic stack with a logarithmic structure.  Then $\LogPic(X/S)_{\tropfont Z}$ is representable by an algebraic \emph{space} with a logarithmic structure.
\end{corollary}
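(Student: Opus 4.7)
The plan is to leverage the fact that $\LogPic(X/S)$ is a sheaf of sets by Definition~\ref{def:logpic} (being the sheaf of isomorphism classes of $\bLogPic(X/S)$), so that the subdivision $\LogPic(X/S)_{\frak Z}$ is likewise set-valued. Any algebraic stack with a logarithmic structure that represents a set-valued sheaf must have trivial $2$-automorphism groups on its objects, hence trivial inertia, and so is an algebraic space.

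More concretely, I would verify this using Lemma~\ref{lem:aut}. Given a logarithmic scheme $T$ and an object of the representing stack $\mathcal Y(T)$---a logarithmic line bundle $L$ on $X_T$ satisfying the bounded monodromy and subdivision conditions---Lemma~\ref{lem:aut} identifies the group of automorphisms of $L$ (fixing $X_T$, $T$, and the minimal logarithmic structure on $X_T$) with $\Gamma(T, M_T^{\rm gp})$. This exhibits $\bLogPic(X/S)_{\frak Z}$ as a $\logGm$-gerbe over $\LogPic(X/S)_{\frak Z}$. Since $\LogPic$ is obtained from $\bLogPic$ by passing to isomorphism classes, all such automorphisms descend to the identity of $\LogPic(X/S)_{\frak Z}(T)$. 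Therefore any two objects of $\mathcal Y(T)$ mapping to the same $T$-point of $\LogPic(X/S)_{\frak Z}$ admit a unique isomorphism between them, forcing $\mathcal Y$ to have trivial inertia and hence to be an algebraic space with its given logarithmic structure.

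The key technical input is the precise match in Lemma~\ref{lem:aut} between the automorphism group of $L$ and the gerbe-banding group $\logGm$. The main obstacle, to the extent that there is one, is confirming that no additional automorphisms arise from the logarithmic structure of the representing stack itself; once Lemma~\ref{lem:aut} is in hand, this follows from the sheaf-of-isomorphism-classes definition of $\LogPic$ together with the universal property of rigidification by $\logGm$.
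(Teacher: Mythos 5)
Your opening claim --- that an algebraic stack with a logarithmic structure representing a set-valued sheaf on logarithmic schemes automatically has trivial inertia --- is false, and the paper itself supplies a counterexample: a root stack (Definition~\ref{def:root-stack}) is by definition an algebraic stack with a logarithmic structure representing a \emph{subfunctor} of a representable functor, hence a sheaf of sets, and yet it generically has nontrivial (finite group) inertia. The inertia of such a representing stack does not come from automorphisms of objects of the moduli problem over a fixed logarithmic scheme (those are indeed invisible to a sheaf of isomorphism classes); it comes from automorphisms of the pseudominimal logarithmic structure that is part of the data of a point of the representing stack over a bare scheme. Neither the ``sheaf-of-isomorphism-classes definition of $\LogPic$'' nor ``rigidification by $\logGm$'' rules these out: rigidification only removes the central $\logGm$ of automorphisms covering the \emph{identity} of the logarithmic structure.

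This is precisely what Lemma~\ref{lem:aut} is for, and you are citing it for the wrong thing. The lemma does not merely compute the automorphisms of $L$ over a fixed logarithmic structure (that is Lemma~\ref{lem:global-sections} together with the definition of $\bLogPic$, and it only re-establishes the gerbe banding you mention). Its content is that the group of \emph{pairs} $(\phi, \iota)$, where $\phi$ is an automorphism of the pulled-back logarithmic structure and $\iota : L \to \phi^\ast L$ is an isomorphism, is still only $\Gamma(S, M_S^{\rm gp})$ --- that is, every such pair has $\phi = \operatorname{id}$. Combined with the observation that points of $\LogPic(X/S)_{\frak Z}$ are pseudominimal (the hypothesis under which Lemma~\ref{lem:aut} holds, and which you never invoke), this shows that no nontrivial automorphism of the logarithmic structure fixes the isomorphism class of $L$, so the representing stack has trivial inertia. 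Your argument, as written, would equally ``prove'' that every root stack is an algebraic space.
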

\begin{proof}
	Since objects of $\LogPic(X/S)_{\tropfont Z}$ are pseudominimal, Lemma~\ref{lem:aut} shows that objects of $\LogPic(X/S)_{\tropfont Z}$ have no nontrivial automorphisms.  Therefore $\LogPic(X/S)_{\tropfont Z}$ is a sheaf, and hence an algebraic space.
\end{proof}

\subsection{Unintegrable torsors}
\label{sec:unint}

We will show that a $\logGm$-torsor on a logarithmic curve that deforms to all infinitesimal orders does not necessarily integrate to a $\logGm$-torsor over a complete noetherian local ring.  Such objects are excluded from the logarithmic Picard group by the bounded monodromy condition of Definition~\ref{def:bdd-mono}, and this section is meant to explain the necessity of that condition.

In this section, we can take cohomology either in the Zariski topology or the \'etale topology.

Let $P$ be a $\logGm$-torsor on a logarithmic scheme $X$.  By the projection $\logGm \to \ologGm$, this induces a $\ologGm$-torsor $\overnorm P$ over $X$.  We note that there is an exact sequence:
\begin{equation*}
H^1(X, M_X^{\rm gp}) \to H^1(X, \overnorm M_X^{\rm gp}) \to H^2(X, \mathcal O_X^\ast)
\end{equation*}
As $H^2(X, \mathcal O_X^\ast)$ vanishes for a curve over an algebraically closed field (or, more generally, over an artinian local ring with algebraically closed residue field), every $\ologGm$-torsor on such a curve lifts to a $\logGm$-torsor.  To prove the existence of an unintegrable $\logGm$-torsor, it will therefore suffice to give an example of an unintegrable $\ologGm$-torsor on a family of logarithmic curves over a complete noetherian local ring with algebraically closed residue field.

Let $S = \Spec \mathbf C[[t]]$ and let $X$ be a family of curves with smooth total space such that the general fiber is smooth and connected, but the special fiber has two irreducible components, joined to each other at two ordinary double points, but is otherwise smooth.  This is essentially the simplest example where \'etale cohomology with non-torsion coefficients does not commute with base change~\cite[\S2]{sga4-XII}.  In this example, cohomology in the Zariski topology also fails to commute with base change.

Let $M_S$ be the divisorial logarithmic structure on $S$ and let $M'_S$ be an extension of $M_S$ with $\overnorm M'_S = \overnorm M_S^{\rm gp} \times \mathbf Z$.  One may simply take $\overnorm M'_S = \overnorm M_S \times \mathbf N$, but it will be convenient later to have a valuative example; for this one can take a logarithmic structure with characteristic monoid $\mathbf N$ on the generic point of $S$ and give $S$ the logarithmic structure obtained by pushforward.  It is convenient to write $S' = (S, M'_S)$, so that $M'_S = M_{S'}$.  If $M_X$ is the divisorial logarithmic structure on $X$ then let $X' = (X, M'_X)$ be the pullback of $(X, M_X) \to (S, M_S)$ along $S' \to (S, M_S)$.  We construct a $\overnorm M_{X'}^{\rm gp}$-torsor on the special fiber $X'_0$ that lifts to all finite orders (this is automatic, by infinitesimal invariance of the \'etale site) but not to~$X'$.

We compute $H^1(X, \overnorm M_{X'}^{\rm gp})$ by means of the following exact sequence:
\begin{equation*}
H^0(X, \overnorm M_{X'/S'}^{\rm gp}) \to H^1(X, \pi^{-1} \overnorm M_{S'}^{\rm gp}) \to H^1(X, \overnorm M_{X'}^{\rm gp}) \to H^1(X, \overnorm M_{X'/S'}^{\rm gp})
\end{equation*}
As $\overnorm M_{X'/S'}^{\rm gp}$ is concentrated in dimension~$0$ on $X$, the last term in the sequence vanishes.  The group $H^1(X, \pi^{-1} \overnorm M_{S'}^{\rm gp})$ vanishes because $X$ is normal (see~\cite[\S2]{sga4-XII}).  Hence $H^1(X, \overnorm M_{X'}^{\rm gp}) = 0$.

On the other hand, in the exact sequence
\begin{equation*}
H^0(X_0, \overnorm M_{X'/S'}^{\rm gp}) \xrightarrow{\partial} H^1(X_0, \pi^{-1} \overnorm M_{S'}^{\rm gp}) \to H^1(X_0, \overnorm M_{X'}^{\rm gp}) \to H^1(X_0, \overnorm M_{X'/S'}^{\rm gp})
\end{equation*}
we still have $H^1(X_0, \overnorm M_{X'/S'}^{\rm gp}) = 0$, for the same reason, but 
\begin{equation*}
H^1(X_0, \pi^{-1} \overnorm M_{S'}^{\rm gp}) = H^1(X_0, \mathbf Z^2) \simeq \mathbf Z^2
\end{equation*}
since the fundamental group of $X_0$ is $\mathbf Z$ in the Zariski topology.  (In the \'etale topology, it is the non-torsion part of the fundamental group that is $\mathbf Z$.)

The sheaf $\overnorm M_{X'/S'}^{\rm gp}$ is a skyscraper $\mathbf Z$, concentrated at the nodes of $X_0$.  Therefore $H^0(X_0, \overnorm M_{X'/S'}^{\rm gp}) = \mathbf Z^2$.  The map $\partial$ is the intersection pairing and one can verify directly that its rank is~$1$.  Alternatively, one may observe that it is induced by pushout from the intersection pairing on $X$, which certainly has rank at most~$1$ because $H^1(X_0, \pi^{-1} \overnorm M_S) \simeq \mathbf Z$.  In any case, there is a nonzero element in $H^1(X_0, \overnorm M_{X'}^{\rm gp})$ (and one can verify that this group is free of rank~$1$).

This gives a formal collection of elements of $H^1(X_n, M_{X'}^{\rm gp})$, where $X_n$ is the reduction of $X$ modulo $t^{n+1}$, for every $n \geq 0$, whose image in $H^1(X_n, \overnorm M_{X'}^{\rm gp})$ is nonzero.  However, $H^1(X, \overnorm M_{X'}^{\rm gp}) = 0$, so this formal collection cannot be integrated.

\begin{proposition}
Let $X'$ and $S'$ be as above and let $Z$ be either the category fibered in groupoids on $\LogSch/S'$ whose value is the groupoid of $\logGm$-torsors on $X'_T$, or the the sheaf of isomorphism classes of such.  Then $Z$ has no logarithmically smooth cover by a logarithmic scheme.
\end{proposition}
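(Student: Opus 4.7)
The plan is to assume $q : U \to Z$ is a logarithmically smooth cover by a logarithmic scheme and to derive a contradiction by integrating the formal family of $\logGm$-torsors hidden in the cohomology discussion above, contradicting the vanishing $H^1(X, \overnorm M_{X'}^{\mathrm{gp}}) = 0$ already established there.  First I would convert the cohomology discussion into a formal family of $Z$-points.  Let $s_n = \Spec \mathbf C[[t]]/(t^{n+1})$ with the log structure pulled back from $S'$ and let $X_n = X \times_{S'} s_n$.  Strictness of $X_n \subset X_{n+1}$ identifies $\overnorm M^{\mathrm{gp}}_{X'}|_{X_n}$ with $\overnorm M^{\mathrm{gp}}_{X'}|_{X_0}$, so the nonzero class $\alpha \in H^1(X_0, \overnorm M^{\mathrm{gp}}_{X'})$ produces compatible $\alpha_n \in H^1(X_n, \overnorm M^{\mathrm{gp}})$.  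Because $X_n$ is a curve over an artinian local ring with algebraically closed residue field, $H^2(X_n, \mathcal O_{X_n}^\ast) = 0$, so each $\alpha_n$ lifts to a $\logGm$-torsor $P_n$; the obstruction to extending $P_n$ along $X_n \subset X_{n+1}$ lies in $H^2(X_n, \mathcal O_{X_n}) = 0$ and the indeterminacy is governed by $H^1(X_n, \mathcal O_{X_n})$, so the $P_n$ may be chosen compatibly.  This gives a compatible formal family of $Z$-points $\xi_n : s_n \to Z$.

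Next I would lift this family through $q$ and algebraize.  By Lemma~\ref{lem:extend}, enlarge $\overnorm M_{s_0}$ to a valuative monoid, producing a valuative geometric point $\tilde s_0 \to s_0$; surjectivity of $q$ on valuative geometric points then supplies a lift $\tilde \xi_0 : \tilde s_0 \to U$ of the pullback of $\xi_0$.  Let $\tilde s_n$ denote $\Spec \mathbf C[[t]]/(t^{n+1})$ equipped with the log structure whose characteristic monoid is the valuative extension at $s_0$ (well-defined because $s_0 \subset s_n$ is strict).  The inclusions $\tilde s_n \subset \tilde s_{n+1}$ are strict square-zero thickenings, so the infinitesimal lifting criterion for logarithmically smooth $q$ supplies compatible $\tilde \xi_n : \tilde s_n \to U$ with $q \tilde \xi_n$ equal to the pullback of $\xi_n$.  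Since the underlying scheme of $U$ is locally Noetherian, the compatible morphisms of underlying schemes $\tilde \xi_n$ assemble into a morphism of schemes $\Spec \mathbf C[[t]] \to U$; equipping $\Spec \mathbf C[[t]]$ with the valuative log structure $\tilde M_{S'}$ that extends $M'_{S'}$ and restricts to $\tilde M_{s_0}$ yields a morphism of logarithmic schemes $\tilde \xi : \tilde S' \to U$.

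Composing with $q$ then gives a $\logGm$-torsor $\tilde P$ on $X'_{\tilde S'}$ whose restriction to $\tilde s_0$ is the image of $P_0$ under the injection $\overnorm M^{\mathrm{gp}}_{X' \times s_0} \hookrightarrow \overnorm M^{\mathrm{gp}}_{X' \times \tilde s_0}$; because this injection is injective and $\alpha$ is nonzero, the restriction of $\tilde P$ to $X_0$ represents a nonzero class in $H^1(X_0, \overnorm M^{\mathrm{gp}}_{X' \times \tilde s_0})$.  On the other hand, the exact-sequence argument given above the proposition applies verbatim with $S'$ replaced by $\tilde S'$: the relative characteristic $\overnorm M^{\mathrm{gp}}_{X' \times \tilde S'/\tilde S'}$ is still concentrated in dimension zero, and $H^1(X, \pi^{-1} \overnorm M^{\mathrm{gp}}_{\tilde S'})$ still vanishes because $X$ is normal and the sheaf is constant.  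Hence $H^1(X, \overnorm M^{\mathrm{gp}}_{X' \times \tilde S'}) = 0$, forcing $\tilde P$ to be trivial in $\overnorm M^{\mathrm{gp}}$-cohomology and hence trivial on $X_0$ — contradiction.  The principal obstacle is the middle step: one must carry a single valuative enlargement of the log structure compatibly through every infinitesimal thickening and then across the algebraization from the formal system to an actual $\Spec \mathbf C[[t]]$-point; strictness of the thickenings $\tilde s_n \subset \tilde s_{n+1}$ and completeness of $\mathbf C[[t]]$ make this propagation formal, but it is the delicate technical point of the argument.
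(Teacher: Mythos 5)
Your proposal is correct and follows essentially the same route as the paper: construct the formal family of torsors from the nonvanishing class on the closed fiber, lift it through the hypothetical cover $U$ via the infinitesimal lifting property of logarithmic smoothness, integrate in the logarithmic \emph{scheme} $U$ over $\Spec \mathbf C[[t]]$, and contradict the vanishing of $H^1(X, \overnorm M_{X'}^{\rm gp})$ on the total space. The only addition is your preliminary valuative enlargement of the log structure to secure the initial lift (harmless, since Lemma~\ref{lem:extend} preserves $\overnorm M^{\rm gp}$ and hence all the cohomology computations), a point the paper's terse proof elides.
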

\begin{proof}
Suppose that $U$ is a logarithmic scheme and $U \to Z$ is a logarithmically smooth cover.  Choose $S'$ as in the discussion preceding the statement of the proposition, with algebraically closed residue field.  Since the logarithmic structure of $S'_0$ is valuative, the map $S'_0 \to Z$ lifts to $U$.  Then the formal family of points $S'_n \to Z$ constructed above lifts to $S'_n \to U$ by the infinitesimal criterion for logarithmic smoothness.  Since $U$ is a logarithmic scheme, this family can be integrated to a map $S' \to U$, and therefore the maps $S'_n \to Z$ can be integrated to $S' \to Z$.  We have just seen no such integration exists.
\end{proof}

\numberwithin{equation}{subsection}
\section{Examples}
\label{sec:ex}

We calculate some examples of $\rm{LogPic}(X/S)$, over a base $S$ whose underlying scheme is the spectrum of an algebraically closed field $k$.  We use the quotient presentation of Corollary~\ref{cor:quotient}, which requires an explicit understanding of $H^1(X^\nu, \logGm)$ and the map $H_1(\tropfont X) \to H^1(X^\nu, \logGm)$.

\subsection{The Tate curve}
\label{sec:tate}

Let $Y \rightarrow \Spec k[[t]]$ be a family of curves whose generic fiber $Y_\eta$ is a smooth curve of genus~$1$ and whose special fiber $X$ consists of $n$ rational curves arranged in a circle.  We give $\Spec k[[t]]$ its divisorial logarithmic structure and we take $S$ to be the closed point of $\Spec k[[t]]$, with the logarithmic structure induced by restriction.

Let $\tropfont X$ be the tropicalization of $X$.  This is a graph with $n$ vertices in a circle, and we have $H_0(\tropfont X) = \mathbf Z$ and $H_1(\tropfont X) = \mathbf Z$.  The intersection pairing $\mathbf Z \times \mathbf Z \to \overnorm M_S^{\rm gp}$ sends $(a,b)$ to $ab \delta$ where $\delta$ is the sum of the lengths of the edges of $\tropfont X$.  Corollary~\ref{cor:tropic} then gives exact sequences~\eqref{eqn:75} and~\eqref{eqn:76}:
\begin{gather}
0 \to \mathbf Z \xrightarrow{\delta} \ologGm^\dagger \to \TroJac(\tropfont X/S) \to 0 \label{eqn:75} \\
0 \to \TroJac(\tropfont X/S) \to \TroPic(\tropfont X/S) \to \mathbf Z \to 0 \label{eqn:76}
\end{gather}
That is $\TroJac(\tropfont X/S) = \ologGm^\dagger / \mathbf Z\delta$.  In particular, if $\overnorm M_T = \mathbf R_{\geq 0}$ then the $T$-points of $\TroJac(\tropfont X/S)$ may be identified with $\mathbf R / \mathbf Z\delta$.  By Theorem~\ref{thm:trop-pic}, $\LogPic^0(X/S)$ is an extension of ${\ologGm}^\dagger / \mathbf Z \delta$ by $\Pic^{[0]}(X/S) \simeq \Gm$.

In order to understand this extension more explicitly, we will use the quotient presentation of Corollary~\ref{cor:quotient}.  Recall from Equation~\eqref{eqn:30} that we may identify $H^1(X, \pi^\ast \logGm)^{[0]}$ with $\Hom(H_1(\tropfont X), \logGm)$.  Therefore Corollary~\ref{cor:quotient} gives us the exact sequence~\eqref{eqn:77}:
\begin{equation} \label{eqn:77}
0 \to H_1(\tropfont X) \to \Hom(H_1(\tropfont X), \logGm)^\dagger \to \LogPic^0(X/S) \to 0
\end{equation}
The pairing $H_1(\tropfont X) \times H_1(\tropfont X) \to \logGm$ lifts the intersection pairing on $\tropfont X$, valued in $\ologGm$.  Substituting $H_1(\tropfont X) = \mathbf Z$, we obtain $\LogPic^0(X/S) = \logGm^\dagger / \mathbf Z \tilde\delta$ where $\logGm^\dagger$ denotes the subfunctor of $\logGm$ that is bounded by $\delta$, and $\tilde\delta$ is a lift of $\delta$ to $M_S$.

The tropicalization sequence from Theorem~\ref{thm:trop-pic} now becomes~\eqref{eqn:78}:
\begin{equation} \label{eqn:78}
0 \to \Gm \to \logGm^\dagger / \mathbf Z \tilde\delta \to \ologGm^\dagger / \mathbf Z \delta \to 0
\end{equation}

The element $\tilde\delta \in \logGm$ can be understood as a `logarithmic period', in the following sense.  The map $d \log : M_X^{\rm gp} \to \Omega_{X/S}^{\log}$ factors through $M_{X/S}^{\rm gp}$ and therefore gives us a logarithmic differential $\phi$ on $X$.  We wish to compute $\int_\gamma \phi$ where $\gamma$ is a basis for $H_1(\tropfont X)$, without attempting to introduce any general theory of integration.

Let $\tilde X$ be the `universal cover' of $X$, whose tropicalization $\tilde{\tropfont X}$ has vertices indexed by the integers, with consecutive vertices connected by an edge.  We can recognize $\tilde X$ as a subdivision of $\logGm^\dagger$ and we have $X = \tilde X / H_1(\tropfont X) = \tilde X / \mathbf Z \gamma$.

Locally in $X$, there is no obstruction to lifting $\phi$ to $M_{\tilde X}^{\rm gp}$, so there is a global section $\Phi$ of $M_{\tilde X}^{\rm gp}$ lifting $\phi$.  Then $\Phi(\gamma . x) - \Phi(x)$ is a function of $x \in \tilde X$ valued in $\pi^\ast M_S^{\rm gp}$.  It is therefore constant and represents the coboundary of $\gamma$ in $H^1(X, \pi^\ast M_S^{\rm gp}) = M_S^{\rm gp}$.

\subsection{A curve of genus~$2$}
\label{sec:ex-gen2}

\begin{figure}

\begin{tikzpicture}[
	arcnode/.style={
		decoration={
			markings,
			mark=at position .5 with {\arrow[scale=2]{>}};
		},
		postaction={decorate}
	}
]

\node [circle,fill=black,inner sep=2pt,label=left:$v_1$] (v1) at (-2,0) {} ;
\node [circle,fill=black,inner sep=2pt,label=right:$v_2$] (v2) at (2,0) {} ;

\draw [arcnode] (v1) arc (180:0:2) coordinate [midway,label=above:$e_1$];
\draw [arcnode] (v1) -- (v2) coordinate [midway,label=above:$e_2$];
\draw [arcnode] (v1) arc (180:360:2) coordinate [midway,label=above:$e_3$];

\end{tikzpicture}

\label{fig:genus-2}
\caption{A tropical curve of genus 2.}
\end{figure}

Let $X$ consist of $2$ rational components joined along $3$ nodes.  The tropicalization $\tropfont X$ has $2$ vertices, $v_1$ and $v_2$ and $3$ edges, $e_1$, $e_2$, and $e_3$, which we choose to orient from $v_1$ to $v_2$, as shown in Figure~\ref{fig:genus-2}.  We write $\delta_i$ for the length of $e_i$ in $\overnorm M_S$.  The differences $e_1 - e_2$ and $e_2 - e_3$ form a basis for $H_1(\tropfont X)$.  In this basis, the matrix of the intersection pairing is~\eqref{eqn:79}:
\begin{equation} \label{eqn:79}
A = 
\begin{pmatrix}
\delta_1 + \delta_2 & -\delta_2 \\
-\delta_2 & \delta_2 + \delta_3
\end{pmatrix}
\end{equation}
The presentation $\TroJac(X/S) = \Hom(H_1(\tropfont X),\ologGm)^\dagger / H_1(\tropfont X)$ becomes~\eqref{eqn:80}:
\begin{equation} \label{eqn:80}
\TroJac(X/S) = (\ologGm \times \ologGm)^\dagger / A \mathbf Z^2 
\end{equation}
In particular, the real points are $\mathbf R^2 / A \mathbf Z^2 \simeq S^1 \times S^1$.

The commutative diagram in~\eqref{eqn:34} gives a morphism~\eqref{eqn:115}:
\begin{equation} \label{eqn:115}
H^0(\tropfont X, \mathsf V) \to \TroPic(\tropfont X) \subset H^1(\tropfont X, \L)
\end{equation}
In concrete terms, this sends an integer linear combination of vertices $D$ on $\tropfont X$ to the torsor of piecewise linear functions on $\tropfont X$ that are linear along the edges of $\tropfont X$ and whose failure of linearity at each vertex $v$ of $\tropfont X$ is $D(v)$.  We denote this torsor $\L(D)$.

The exact sequence in the first row of~\eqref{eqn:34} shows that lifts of $\L(D)$ to $H^1(\tropfont X, \overnorm M_S^{\rm gp}) = \Hom(H_1(\tropfont X), \overnorm M_S^{\rm gp})$ correspond to the trivializations of the induced $\mathsf H$-torsor $\mathsf H(D)$.  This torsor is the sheaf of assignments of integers to the vertices of $\tropfont X$ such that the sum of outgoing slopes at each vertex $v$ is $D(v)$.

\begin{figure}
\begin{tikzpicture}

\def\clipleft{-3.8}
\def\clipright{9.4}
\def\clipbottom{-4.3}
\def\cliptop{8.8}

	\clip (\clipleft,\clipbottom) rectangle (\clipright,\cliptop);

\def\da{5}
\def\db{3}
\def\dc{5}

\def\typeall#1#2#3#4#5#6{ 
	\pgfmathsetmacro{\shiftx}{.5*\da*#1 + .5*\da*#2 - .5*\db*#3 - .5*\db*#4}
	\pgfmathsetmacro{\shifty}{.5*\db*#3 + .5*\db*#4 - .5*\dc*#5 - .5*\dc*#6}

	\pgfmathparse{(\shiftx >= \clipleft-2) && (\shiftx <= \clipright+2) &&
				  (\shifty >= \clipbottom-2) && (\shifty <= \cliptop+2)}
	\ifnum\pgfmathresult=1

	\begin{scope}[
		arcnode/.style={
			decoration={
				markings,
				mark=at position .5 with {\arrow{>}};
			},
			postaction={decorate}
		},
		shift={(\shiftx,\shifty)},
		scale=.4,
		every node/.style={scale=.6}
	]

		\coordinate (v1) at (-1,0) {} ;
		\coordinate (v2) at (1,0) {} ;
		\coordinate (v3) at (0,1) {} ;
		\coordinate (v4) at (0,0) {} ;
		\coordinate (v5) at (0,-1) {} ;

		\ifnum#1=#2 
			\draw [arcnode] (v1) arc (180:0:1) coordinate [midway,label=above:#1];
		\else
			\draw [arcnode] (v1) arc (180:90:1) coordinate [midway,label=above left:#1];
			\draw [arcnode] (v3) arc (90:0:1) coordinate [midway,label=above right:#2];
			\node [circle,fill,inner sep=1.5pt,label=above:1] at (v3) {};
		\fi

		\ifnum#3=#4
			\draw [arcnode] (v1) -- (v2) coordinate [midway,label=above:#3];
		\else
			\draw [arcnode] (v1) -- (v4) coordinate [midway,label=below:#3];
			\draw [arcnode] (v4) -- (v2) coordinate [midway,label=below:#4];
			\node [circle,fill,inner sep=1.5pt,label=above:1] at (v4) {};
		\fi

		\ifnum#5=#6
			\draw [arcnode] (v1) arc (180:360:1) coordinate [midway,label=below:#5];
		\else
			\draw [arcnode] (v1) arc (180:270:1) coordinate [midway,auto,label=below left:#5];
			\draw [arcnode] (v5) arc (270:360:1) coordinate [midway,auto,label=below right:#6];
			\node [circle,fill,inner sep=1.5pt,label=below:1] at (v5) {};
		\fi

		\pgfmathtruncatemacro{\leftsum}{#1+#3+#5}
		\pgfmathparse{\leftsum > 0}
		\ifnum\pgfmathresult=1
			\node [circle,fill,inner sep=1.5pt,label=left:\leftsum] at (v1) {};
		\fi

		\pgfmathtruncatemacro{\rightsum}{-#2-#4-#6}
		\pgfmathparse{\rightsum < 0}
		\ifnum\pgfmathresult=1
			\node [circle,fill,inner sep=1.5pt,label=right:\rightsum] at (v2) {};
		\fi
	\end{scope}

	\coordinate (p1) at (#2*\da - #4*\db, #4*\db - #6*\dc);
 	\coordinate (p2) at (#1*\da - #3*\db, #3*\db - #5*\dc);

	\ifnum11=
		\ifnum#1=#2 1\fi \ifnum#3=#4 1\fi \ifnum#5=#6 1\fi
		
		\coordinate (start) at ($(p1)!.15!(p2)$);
		\coordinate (end) at ($(p1)!.35!(p2)$);
		\draw [color=gray] (start) -- (end);
		\coordinate (start) at ($(p1)!.65!(p2)$);
		\coordinate (end) at ($(p1)!.85!(p2)$);
		\draw [color=gray] (start) -- (end);
	\fi

	\fi
}

\foreach \al in {-1,0,1,2} {
	\foreach \bl in {-1,0,1} {
		\foreach \s in {0,1} {
			\foreach \t in {0,1} {
				\foreach \u in {0,1} {
					\pgfmathtruncatemacro{\v}{\s+\t+\u}
					\ifnum\v<3
						\pgfmathtruncatemacro{\maxltotal}{2-\v}
						\foreach \ltotal in {0, ..., \maxltotal} {
							\pgfmathtruncatemacro{\cl}{\ltotal-\al-\bl}
							\pgfmathtruncatemacro{\ar}{\al+\s}
							\pgfmathtruncatemacro{\br}{\bl+\t}
							\pgfmathtruncatemacro{\cr}{\cl+\u} 
							\typeall \al \ar \bl \br \cl \cr
						}
					\fi
				}
			}
		}
	}
}

\draw [color=red,thick,fill=pink,opacity=.4] (0,0) -- (\da+\db,-\db) -- (\da,\dc) -- (-\db,\db+\dc) -- cycle;

\end{tikzpicture}

\label{fig:fund-dom}
\caption{A fundamental domain for the quotient $\Hom(H_1(\tropfont X), \mathbf R) / \partial H_1(\tropfont X)$ and the subdivision, under an isomorphism to $\TroPic^2(\tropfont X)$, into regions parameterizing balanced tropical divisors on quasistable models of $\tropfont X$.}
\end{figure}

The same reasoning applies equally well to any subdivision $\tropfont Y$ of $\tropfont X$.  Since $\TroPic(\tropfont Y) = \TroPic(\tropfont X)$ and $\Hom(H_1(\tropfont Y), \overnorm M_S^{\rm gp}) = \Hom(H_1(\tropfont X), \overnorm M_S^{\rm gp})$, giving $D \in H^0(\tropfont Y, \mathsf V)$ and a trivialization of $\mathsf H(D)$ will also produce points in $\TroPic(\tropfont X)$ and $\Hom(H_1(\tropfont X), \overnorm M_S^{\rm gp})$.  Figure~\ref{fig:fund-dom} shows a piece of $\Hom(H_1(\tropfont X), \mathbf R)$ with horizontal coordinate $e_1 - e_2$ and vertical coordinate $e_2 - e_3$.  For $D \in H^0(\tropfont X, \mathsf V)$ and trivialization of $\mathsf H(D)$ chosen according to the following rules, we have plotted a picture of those data at the corresponding position in $\Hom(H_1(\tropfont X), \mathbf R)$:  
\begin{enumerate}
\item $D$ is supported on a quasistable model $\tropfont Y$ of $\tropfont X$, meaning that each edge of $\tropfont X$ is subdivided at most once;
\item if $v \in \tropfont Y$ is a point of subdivision of $\tropfont X$ then $D(v) = 1$;
\item we have $0 \leq D(v_1) \leq 2$ and $-2 \leq D(v_2) \leq 0$.
\end{enumerate}
In the picture, each vertex $v$ is labelled by $D(v)$ unless $D(v) = 0$ and each edge is labelled by the slope it has been assigned in a choice of trivialization of $\mathsf H(D)$.  The shaded parallelogram is the fundamental domain~\eqref{eqn:116} for the quotient by $\partial H_1(\tropfont X)$.
\begin{equation} \label{eqn:116}
\big\{ x \partial(e_1 - e_2) + y \partial(e_2 - e_3) \: \big| \:  0 \leq x \leq 1 \text{ and } 0 \leq y \leq 1 \big\}
\end{equation}

This subdivision is suggested by Caporaso's compactification of $\Pic^2(X)$.  We originally computed it with the help of Margarida Melo, Martin Ulirsch, and Filippo Viviani.  The same example also appears in \cite[Figure~1]{abks} and \cite[Figure~4]{abreu-pacini}.

\subsection{Nonmaximal degeneracy}
\label{sec:nonmax}

Let us finally look at an example which is not maximally degenerate. Suppose $X$ is the union of two curves $Y_1$ and $Y_2$, glued along two points $p_1,p_2$, with $p_i$ in the first copy glued to $p_i$ in the second copy. The dual graph $\tropfont X$ of $X$ is again topologically a circle, with two vertices, $v_1$ and $v_2$, and two edges, $e_1$ and $e_2$, with lengths $\delta_1$ and $\delta_2$.  As in Section~\ref{sec:tate}, we find that $\TroJac(\tropfont X/S) = \ologGm^\dagger / \mathbf Z (\delta_1 + \delta_2)$ and $\LogPic^0(X/S)$ is an extension of this torus by the algebraic Jacobian.

To compute $\LogPic^0(X/S)$, we use the quotient presentation from Corollary~\ref{cor:quotient}.  Equation~\eqref{eqn:30} presents $H^1(X, \pi^\ast \logGm)$ as an extension of $H^1(X^\nu, \Gm) = \Pic(Y_1) \times \Pic(Y_2)$ by $\Hom(H_1(\tropfont X), \logGm)$:
\begin{equation} \label{eqn:81}
0 \to \logGm^\dagger \to H^1(X, \pi^\ast \logGm)^\dagger \to \Pic^0(Y_1) \times \Pic^0(Y_2) \to 0
\end{equation}
Then Corollary~\ref{cor:quotient} says that $\LogPic^0(X/S)$ is the quotient of $H^1(X, \pi^\ast \logGm)^\dagger$ by $H_1(\tropfont X)$.  

In general, the composition~\eqref{eqn:83} is nonzero:
\begin{equation} \label{eqn:83}
H_1(\tropfont X) \to H^1(X, \pi^\ast \logGm) \to H^1(X^\nu, \Gm) = \Pic^0(Y_1) \times \Pic^0(Y_2)
\end{equation} 
Indeed, recall that the map $H_1(\tropfont X) \to H^1(X, \pi^\ast \logGm)$ is induced from the composition~\eqref{eqn:82},
\begin{equation} \label{eqn:82}
H_1(\tropfont X) \subset H^0(X, \overnorm M_{X/S}^{\rm gp}) \to H^1(X, \pi^\ast \overnorm M_S^{\rm gp})
\end{equation}
which was itself induced from the short exact sequence~\eqref{eqn:15}.  Identifying $H^0(X, \overnorm M_{X/S}^{\rm gp}) = \mathbf Z^E$, where $E$ is the set of edges of $\tropfont X$, the basis element $e$ corresponding to the node $p$ is sent to $(\mathcal O_{Y_1}(p), \mathcal O_{Y_2}(-p))$.  Therefore the basis $e_1 - e_2$ of $H_1(\tropfont X)$ is sent to $(\mathcal O_{Y_1}(p_1 - p_2), \mathcal O_{Y_2}(-p_1 + p_2))$.

If $Y_1$ or $Y_2$ has positive genus, the map $H_1(\tropfont X) \to \Pic^{[0]}(X)$ is therefore nonzero, and will even be injective if $\mathcal O_{Y_i}(p_1 - p_2)$ is not a torsion point of the Jacobians of both curves.  This shows that the surjection ${H^1(X, \pi^\ast \logGm)^{[0]}}^\dagger \to \Pic^{[0]}(X^\nu/S)$ does not factor through $\LogPic^0(X/S)$, even though its restriction to $\Pic^{[0]}(X/S) \subset H^1(X, \pi^\ast \logGm)$ does factor through its image in $\LogPic^0(X/S)$.  Indeed, the map $\Pic^{[0]}(X/S) \to \LogPic^0(X/S)$ is injective by Theorem~\ref{thm:trop-pic}.

\bibliographystyle{amsalpha}
\bibliography{logpic}

\end{document}